\newcommand{\uset}[1]{\underset{#1}{\otimes}{}}
\DeclareMathOperator{\Deriv}{\mathscr{D}\text{\kern -3pt {\calligra\large er}}\,}
\DeclareMathOperator{\Endo}{\mathscr{E}\text{\kern -3pt {\calligra\large nd}}\,}
\DeclareMathOperator{\Hom}{Hom \text{\kern -3pt }\,}
\DeclareMathOperator{\Der}{Der \text{\kern -3pt }\,}
\DeclareMathOperator{\End}{End \text{\kern -3pt }\,}
\DeclareMathOperator{\Coh}{Coh \text{\kern -2pt }\,}
\DeclareMathOperator{\QCoh}{QCoh \text{\kern -2pt }\,}
\DeclareMathOperator{\res}{res \text{\kern -1pt }\,}
\DeclareMathOperator{\act}{act \text{\kern -1pt }\,}
\DeclareMathOperator{\Loc}{Loc \text{\kern -2pt }\,}
\DeclareMathOperator{\Lie}{Lie \text{\kern -2pt }\,}
\DeclareMathOperator{\loc}{loc \text{\kern -2pt }\,}
\DeclareMathOperator{\ob}{ob \text{\kern -2pt }\,}
\DeclareMathOperator{\ad}{ad \text{\kern -2pt }\,}
\DeclareMathOperator{\fg}{fg \text{\kern -2pt }\,}
\DeclareMathOperator{\co}{co \text{\kern -3pt }\,}
\DeclareMathOperator{\Mod}{Mod \text{\kern -2pt }\,}
\DeclareMathOperator{\Ann}{Ann \text{\kern -2pt }\,}
\DeclareMathOperator{\Sym}{Sym \text{\kern -2pt }\,}
\DeclareMathOperator{\Dif}{Dif \text{\kern -2pt }\,}
\DeclareMathOperator{\Ad}{Ad \text{\kern -2pt }\,}
\DeclareMathOperator{\gr}{gr \text{\kern -2pt }\,}
\DeclareMathOperator{\im}{im \text{\kern -2pt }\,}
\DeclareMathOperator{\coim}{coim \text{\kern -2pt }\,}
\DeclareMathOperator{\id}{id \text{\kern -2pt }\,}
\DeclareMathOperator{\bimod}{bimod \text{\kern -2pt }\,}
\DeclareMathOperator{\height}{ht \text{\kern -2pt }\,}
\DeclareMathOperator{\tensor}{tensor \text{\kern -2pt }\,}
\DeclareMathOperator{\coker}{coker \text{\kern -2pt }\,}
\newcommand{\tig}{\operatorname{\tilde{\textit{i}_{\mathfrak{g}}}}}
\DeclareMathOperator{\Spec}{Spec \text{\kern -2pt }\,}
\newtheorem{theorem}{Theorem}[section]
\newtheorem{lemma}[theorem]{Lemma}
\newtheorem{proposition}[theorem]{Proposition}
\newtheorem{corollary}[theorem]{Corollary}
\newtheorem{question}[theorem]{Question}
\newtheorem{definition}[theorem]{Definition}
\newtheorem{observation}[theorem]{Observation}
\newtheorem{assumption}[theorem]{Assumption}
\title{Towards Affinoid Duflo's Theorem I: Twisted differential operators}
\author{Ioan Stanciu}
\date{}
\begin{document}
\maketitle

\section{Introduction}

This paper is the first of a series of papers that aims to answer Question A from \cite{Munster} regarding the classification of primitive ideals in the affinoid enveloping algebra of a semisimple Lie algebra defined over a discrete valuation ring.


\begin{theorem}
\label{d1Dutheorem}
Let $\mathfrak{g}$ be a semisimple Lie algebra defined over a field $K$ of characteristic $0$. Then any primitive ideal in $U(\mathfrak{g})$ with $K$-rational infinitesimal central character is the annihilator of the simple quotient of some Verma module. In case $K=\mathbb{C}$, the theorem gives a classification of all primitive ideals.
\end{theorem}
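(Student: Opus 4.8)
The plan is to isolate the arithmetic content (rationality and descent) from the geometric heart of the statement, to prove the heart after extension of scalars to $\overline{K}$ using the twisted differential operators developed in this paper, and to recover the $K$-rational case by Galois descent.

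\textbf{Step 1 (central characters and reduction).} A primitive ideal is by definition $I=\Ann(V)$ for a simple $U(\mathfrak{g})$-module $V$, and $Z(\mathfrak{g})$ acts on $V$ by $U(\mathfrak{g})$-module endomorphisms. I read the $K$-rationality hypothesis as the statement that this action is through scalars in $K$, giving $\chi_I\colon Z(\mathfrak{g})\to K$ equal to the Harish-Chandra character $\chi_\lambda$ of a weight $\lambda\in\mathfrak{h}^{*}$ that is rational over $K$ (over $\mathbb{C}$ this holds automatically). Using the Harish-Chandra isomorphism $Z(\mathfrak{g})\cong\Sym(\mathfrak{h})^{W}$ I would fix such a $\lambda$; since $W$ is defined over $K$, the whole dot-orbit $W\cdot\lambda$ lies in $\mathfrak{h}^{*}(K)$, so each $L(\mu)$ with $\mu\in W\cdot\lambda$ is already defined over $K$. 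The theorem then reduces to the claim that every primitive ideal with central character $\chi_\lambda$ equals $\Ann(L(\mu))$ for some $\mu\in W\cdot\lambda$.

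\textbf{Step 2 (the geometric heart over $\overline{K}$).} I would establish this claim after base change to $\overline{K}$. For $\lambda$ dominant and regular, Beilinson--Bernstein localization gives an equivalence between modules over $U(\mathfrak{g})/\ker\chi_\lambda$ and modules over the sheaf $\mathcal{D}_\lambda$ of $\lambda$-twisted differential operators on the flag variety $G/B$, under which simple modules correspond to simple twisted $\mathcal{D}_\lambda$-modules. The strategy is to show that although $V$ ranges over infinitely many isomorphism classes, $\Ann(V)$ takes only finitely many values, each of the form $\Ann(L(w\cdot\lambda))$. The mechanism is that $\Ann(V)$ is far coarser than $V$: its associated variety is the image under the moment map $T^{*}(G/B)\to\mathcal{N}$ into the nilpotent cone $\mathcal{N}$ of the characteristic cycle of the corresponding $\mathcal{D}_\lambda$-module, hence a union of nilpotent orbit closures, and by Joseph's theorem it is the closure of a single orbit. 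Matching these orbits with the characteristic cycles of the $B$-equivariant simple modules --- the intersection-cohomology $\mathcal{D}_\lambda$-modules of the Schubert cells, whose global sections are the $L(w\cdot\lambda)$ --- should identify each $\Ann(V)$ with some $\Ann(L(w\cdot\lambda))$. Singular $\lambda$ I would reduce to the regular case by the Jantzen translation principle, whose functors are exact and compatible with annihilators.

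\textbf{Step 3 (descent and the complex case).} Since every weight in $W\cdot\lambda$ is $K$-rational, each $L(\mu)$ and its annihilator are defined over $K$ and fixed by $\mathrm{Gal}(\overline{K}/K)$, and base change commutes with forming these annihilators. Extending $I$ to $U(\mathfrak{g}_{\overline{K}})$ and decomposing into its (Galois-stable) minimal primes, Step 2 identifies each with some annihilator $\Ann(L(\mu_j))$ over $\overline{K}$; faithful flatness together with the $K$-rationality of the $\mu_j$ then lets me recover $I=\Ann(L(\mu))$ over $K$. When $K=\mathbb{C}$ every central character is automatically $\mathbb{C}$-rational, so Steps 1 and 3 are vacuous and Step 2 yields the full classification.

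The main obstacle is Step 2: the assertion that the annihilator of an arbitrary simple module is already the annihilator of a highest weight module is exactly the substance of Duflo's theorem, and making the characteristic-cycle argument rigorous requires the Borho--Brylinski/Ginzburg comparison between the associated variety of a primitive ideal and the moment-map image of characteristic cycles, the irreducibility of that associated variety, and a careful verification that the $B$-equivariant modules genuinely exhaust the possible annihilators. By comparison the rationality and descent steps are largely formal, but some care is still needed to guarantee that a $K$-rational central character arises from a genuinely $K$-rational weight rather than merely from a Galois-stable $W$-orbit without a $K$-point.
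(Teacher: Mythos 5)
First, a point of calibration: this paper never proves Theorem \ref{d1Dutheorem}. It is Duflo's classical theorem, quoted in the introduction as motivation; the introduction only sketches the geometric route --- combine the Borho--Brylinski equivalence \cite[Proposition 3.6]{BoBr} between $G$-equivariant coherent $\mathcal{D}_{X\times X}$-modules and $B$-equivariant coherent $\mathcal{D}_X$-modules with Beilinson--Bernstein localization \cite{BB} --- and defers the actual proof to the sequel \cite{Sta2}. So your proposal can only be compared against that intended route, not against a written proof.

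Measured against it, your Step 2 has a genuine gap, and it sits exactly where you locate it yourself. The mechanism you propose --- compute the associated variety of $\Ann(V)$ as the moment-map image of a characteristic cycle, invoke Joseph's irreducibility theorem, and then ``match orbits'' with those of the $B$-equivariant simple modules --- cannot identify $\Ann(V)$ with a specific $\Ann(L(w\cdot\lambda))$, because the associated variety is a far coarser invariant than the primitive ideal: distinct primitive ideals with the same central character routinely share their associated variety (already in type $A$ with regular integral character, primitive ideals are parametrized by standard Young tableaux while associated varieties only see the shape of the tableau). Orbit-matching therefore constrains $\Ann(V)$ but can never pin it down, and your closing admission that one still needs ``a careful verification that the $B$-equivariant modules genuinely exhaust the possible annihilators'' concedes that the proposal reduces Duflo's theorem to itself. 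The missing engine is the Harish-Chandra bimodule picture, which is precisely what the $X\times X$ equivalence cited in the paper is for: $U(\mathfrak{g})/I$ is an $\ad$-locally finite bimodule of finite length with central characters on both sides; since $I$ is prime, a product-of-annihilators argument along a bimodule composition series yields $I=\operatorname{LAnn}(W)$ for some simple subquotient $W$; and the classification of simple Harish-Chandra bimodules --- geometrically, simple $G$-equivariant $\mathcal{D}_{X\times X}$-modules, which under \cite[Proposition 3.6]{BoBr} plus localization correspond to the IC modules of Schubert cells --- shows that every such left annihilator equals some $\Ann(L(w\cdot\lambda))$. No characteristic-cycle computation is needed, and none would suffice. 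Your Steps 1 and 3 on rationality and Galois descent are reasonable in outline, but they are the formal part; the heart of the theorem is the bimodule argument your sketch replaces with an invariant too weak to carry it.
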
 

Assume for now that our ground field is $\mathbb{C}$ and let $\mathfrak{g}$ be a semisimple $\mathbb{C}$-Lie algebra. For a complex variety $X$, we will denote $\mathcal{D}_X$ the sheaf of differential operators on $X$. Let $G$ be the semisimple connected affine algebraic group associated with $\mathfrak{g}$ and fix $B$ a Borel subgroup. Let $X=G/B$ be the flag variety associated to $\mathfrak{g}$. In \cite[Proposition 3.6]{BoBr}, the authors prove an equivalence of categories between $G$-equivariant coherent $\mathcal{D}_{X \times X}$-modules and $B$-equivariant coherent $\mathcal{D}_{X}$-modules. Further, the Beilinson-Bernstein theorem \cite{BB} establishes an equivalence of categories between coherent $\mathcal{D}_X$-modules and finitely generated $U(\mathfrak{g})$-modules with trivial central character. Combining these two results, one obtains a geometric proof of Duflo's theorem for ideals with \emph{trivial central character}. In the next article, we remove some restrictions from \cite[Proposition 3.6]{BoBr}: we prove that the results hold over a general commutative Noetherian ring and that the equivalence holds between coherent modules over certain \emph{homogeneous sheaves of twisted differential operators}, which can be regarded as  \emph{equivariant} twisted differential operators.

There is a well established theory of twisted differential operators and homogeneous twisted differential operators over a \emph{complex} variety introduced in \cite{BB2} and treated in more detail in \cite{Mil2}. The authors also explore the connections between twisted differential operators(tdo's) and Picard algebroids and the following question is answered:

\begin{question}
Let $f:Y \to X$ be a map of smooth complex varieties and let $\mathcal{D}$ be a tdo on $X$. How should we define the pullback of $\mathcal{D}$, call it $f^{\bullet} \mathcal{D}$, such that $f^{\bullet} \mathcal{D}$ is a tdo on $Y$?

\end{question}

The solution proposed in \cite{BB2} was to define $f^{\bullet} \mathcal{D}$ using the $\Dif$ functor introduced by Grothendieck: 
                        $$f^{\bullet} \mathcal{D}:=\Dif_{f^{-1} \mathcal{D}}(f^* \mathcal{D},f^*\mathcal{D})$$

is the sheaf of differential operators from $f^{*} \mathcal{D}$ to itself that commute with the right $f^{-1} \mathcal{D}$-action. In particular when $\mathcal{D}=\mathcal{D}_X$, we obtain $f^{\bullet} \mathcal{D}_X= \mathcal{D}_Y$.

Now, let $R$ be a commutative base ring and $X$ be an $R$-scheme that is smooth, separated and locally of finite type. In order to build a good theory of twisted differential operators over a commutative ring, there are two basic questions we need to answer: 

\textbf{Question}
What constitutes a good definition of a tdo on $X$?

\textbf{Question}
Given $f:Y \to X$ a map of smooth, separated and locally of finite type $R$-schemes and $\mathcal{D}$ a tdo on $X$, how should we define the pullback of $\mathcal{D}$ such that it is also a tdo on $Y$?

There are two possible candidates of sheaves of differential operators that one can define over $X$: $\mathcal{D}_X$-the sheaf of crystalline differential operators and $\mathscr{D}_X$-the sheaf of Grothendieck's differential operators.

One of the key properties satisfied by twisted differential operators  over complex varieties is that they come equipped with a filtration such that the associated graded is isomorphic with the symmetric algebra of the tangent sheaf over the ring of functions. The sheaf of Grothendieck's differential operators $\mathscr{D}_X$ has a natural filtration given by the order of differential operators, but the associated grading ring does not satisfy the desired property. Therefore, we choose to work with the sheaf of crystalline differential operators.


Attempting working with the classical definition of the pullback of tdo's we immediately encounter a problem. Assume that $Y=\mathbb{A}^1$ is the affine line over $R$, let $X=\Spec R$ be the base of $Y$ and $f:Y \to X$ be the natural projection. Let $\mathcal{D}_X \cong R$ be the sheaf of crystalline differential operators on $X$. Then using the classical definition we obtain

             $$f^{\bullet} \mathcal{D} \cong \mathscr{D}_{\mathbb{A}^1}, $$

which is the sheaf of \emph{Grothendieck's differential operators}. In particular, if we work with the classical definition we obtain that the pullback of a twisted differential operator does not satisfy the desired property.

To resolve the problem with the definition, we will explore the correspondence between twisted differential operators (shortened tdo's frow now on) and \emph{Picard algebroids} and define the pullback of tdo's by first defining the pullback of Picard algebroids. 

Specifically, we will be able to go from tdo's to Picard algebroids and vice-versa using two functors $\Lie$ and $\mathscr{T}$. For a Picard algebroid $\mathcal{L}$ on $X$, we can consider the pullback $f^{\#} \mathcal{L}$ as a Lie algebroid on $Y$ and for a general tdo on $X$ we define

                $$f^{\#} \mathcal{D}:=\mathscr{T}(f^{\#}(\Lie(\mathcal{D}))).$$
In the case when $R=\mathbb{C}$, our definition coincides with the definition found in \cite{BB2} and \cite{Mil2}.

\textbf{Statement of the main results of the article} 

Let $G$ be a smooth affine algebraic group of finite type over $\Spec R$ and let $f:Y \to X$ be a locally trivial $G$-torsor. Then for a tdo $\mathcal{D}$ equipped with a suitable $G$-action (we call this a $G$-htdo), we may define its descent $f_{\#}\mathcal{D}^G$, which is a tdo on $X$.  

\begin{proposition}[Corollary \ref{d1descentGhtdocorresp}]
\label{d1introprop}
The functors $f_{\#}(-)^G$ and $f^{\#}(-)$ induce quasi-inverse equivalences between $G$-htdo's on $Y$ and tdo's on $X$.

\end{proposition}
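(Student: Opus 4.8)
The plan is to reduce the statement, via the tdo/Picard-algebroid correspondence, to a descent statement for Picard algebroids, and then to prove that statement by faithfully flat descent along the torsor. Recall that $\Lie$ and $\mathscr{T}$ are quasi-inverse equivalences between tdo's and Picard algebroids on any fixed scheme, and that both $f^{\#}\mathcal{D}$ and $f_{\#}\mathcal{D}^G$ are defined by sandwiching the corresponding operations on Picard algebroids between $\Lie$ and $\mathscr{T}$. So the first step is to upgrade this correspondence to an equivalence between $G$-htdo's on $Y$ and $G$-equivariant Picard algebroids on $Y$; this should be essentially formal, since $\Lie$ and $\mathscr{T}$ are natural in $Y$ and therefore carry a $G$-action on one side to a $G$-action on the other. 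Granting this, it suffices to prove that descent $f_{\#}(-)^G$ and pullback $f^{\#}(-)$ are quasi-inverse equivalences between $G$-equivariant Picard algebroids on $Y$ and Picard algebroids on $X$.

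Second, I would establish the Picard-algebroid version by descent. Since $f$ is a $G$-torsor it is faithfully flat, and the torsor condition gives the standard identification $Y \times_X Y \cong Y \times_R G$. Under this isomorphism, descent data on a sheaf along $f$ coincides with a $G$-equivariant structure, so faithfully flat descent identifies quasi-coherent sheaves on $X$ with $G$-equivariant quasi-coherent sheaves on $Y$, with $f^{*}$ and $(f_{*}(-))^G$ as quasi-inverse functors. The task is then to check that this equivalence is compatible with the extra Picard-algebroid structure: that the defining extension $0 \to \mathcal{O} \to \mathcal{L} \to \mathcal{T} \to 0$, the anchor, the bracket, and the Leibniz constraints all descend. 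For the anchor this amounts to the relative tangent sequence $0 \to \mathcal{T}_{Y/X} \to \mathcal{T}_Y \to f^{*}\mathcal{T}_X \to 0$ together with the trivialization $\mathcal{T}_{Y/X} \cong \mathcal{O}_Y \otimes_R \mathfrak{g}$ coming from the $G$-action; taking $G$-invariant pushforward kills the relative part and recovers $\mathcal{T}_X$, which is exactly what makes $f^{\#}$ (built from $\mathcal{T}_Y$) and the descent $f_{\#}(-)^G$ land in Picard algebroids.

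Because the torsor is \emph{locally trivial}, I would verify the equivalence first on a trivializing open $U \subseteq X$, where $Y|_U \cong U \times_R G$ and the computation is explicit, and then glue: both functors commute with restriction to opens, so the locally defined isomorphisms patch to global natural isomorphisms $f^{\#}(f_{\#}\mathcal{D}^G) \cong \mathcal{D}$ and $f_{\#}(f^{\#}\mathcal{E})^G \cong \mathcal{E}$.

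The hard part will be the counit $f^{\#}(f_{\#}\mathcal{D}^G) \cong \mathcal{D}$ for a $G$-htdo $\mathcal{D}$ on $Y$: one must check not merely that the underlying sheaf and Picard-algebroid structure are recovered, but that the reconstructed $G$-equivariant structure agrees with the original one, and this is precisely where the compatibility axioms defining a $G$-htdo (the lift of the $\mathfrak{g}$-action along $\mathcal{T}_{Y/X} \cong \mathcal{O}_Y \otimes_R \mathfrak{g}$) are used. Verifying that the descent really produces a Picard algebroid, rather than a more general transitive Lie algebroid, and that pullback followed by descent reconstructs the equivariant data faithfully, is the crux; once this is in place, the unit and counit are natural isomorphisms and the two functors are quasi-inverse equivalences.
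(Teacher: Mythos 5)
Your high-level skeleton matches the paper's: reduce via the $\Lie$/$\mathscr{T}$ correspondence to a descent statement for equivariant Picard algebroids (Corollary \ref{d1htdoPicalgcorresp}), and rest the sheaf-level part on equivariant $\mathcal{O}$-module descent (Proposition \ref{d1Omodulesequivariantdescent}). But there is a genuine error at exactly the step you yourself identify as the crux. You claim that ``taking $G$-invariant pushforward kills the relative part and recovers $\mathcal{T}_X$,'' i.e.\ that $(f_*\mathcal{T}_Y)^G \cong \mathcal{T}_X$, so that the $\mathcal{O}$-module descent of $\mathcal{L}$ inherits a Picard structure. This is false: invariant sections of the relative tangent sheaf do not vanish. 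Already for $X=\Spec R$ and $Y=G$ one has $(f_*\mathcal{T}_G)^G \cong \mathfrak{g} \neq 0 = \mathcal{T}_X$ (left-invariant vector fields). In general the paper's Lemma \ref{d1sesliealgeqdes} gives a short exact sequence
\begin{equation*}
0 \to \widetilde{\mathfrak{g}_X} \to (f_*\mathcal{T}_Y)^G \to \mathcal{T}_X \to 0, \qquad \widetilde{\mathfrak{g}_X}=(f_*\mathcal{O}_Y\otimes\mathfrak{g})^G,
\end{equation*}
with $\widetilde{\mathfrak{g}_X}$ locally free of rank $\dim G$. Consequently $(f_*\mathcal{L})^G$ is a transitive Lie algebroid on $X$ whose anchor has kernel an extension of $\widetilde{\mathfrak{g}_X}$ by $\mathcal{O}_X$ --- it is too big to be a Picard algebroid, and no amount of ``checking compatibility'' of the $\mathcal{O}$-module descent equivalence with the extra structure will fix this, because the underlying $\mathcal{O}_X$-module is already wrong (rank $1+\dim Y$ instead of $1+\dim X$).

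This is precisely why the paper does not define $f_{\#}\mathcal{L}^G$ as $(f_*\mathcal{L})^G$ but as the quotient $(f_*\mathcal{L})^G/\tig(r\widetilde{\mathfrak{g}_X})$, where $\tig$ is the descended equivariance map $i_{\mathfrak{g}}$; the htdo axiom $\eta|_{r\mathfrak{g}}=\rho\circ i_{\mathfrak{g}}$ is not a compatibility to be verified after the fact but a structural ingredient needed to form this quotient, and freeness of the $G$-action (giving injectivity of $\tilde{\alpha}$, hence $\ker\tilde{\rho}\cap\tig(r\widetilde{\mathfrak{g}_X})=0$) together with $r$-torsion-freeness is what makes the quotient a Picard algebroid (Lemma \ref{d1descenteqpicalg}). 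The quasi-inverse property then also takes a different shape than you suggest: $f^{\#}$ of the quotient recovers $\mathcal{L}$ because the fiber product with $r\mathcal{T}_Y$ adds the fiber directions back (Lemma \ref{d1descentinverseseqdesliealg}), and in the other direction one computes $(f_*f^{\#}\mathcal{P})^G\cong\mathcal{P}\times_{r\mathcal{T}_X}r\widetilde{\mathcal{T}}_X$ and again quotients by $i_{\mathfrak{g}}(r\widetilde{\mathfrak{g}_X})$ to land on $\mathcal{P}$ (Lemma \ref{d1eqdescentPicardrightinverse}). Your proposal, as written, never introduces this quotient, so the functor you would construct is not the one in the statement and the claimed equivalence would fail.
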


For a $G$-htdo $\mathcal{D}$ we may define the $\Coh(\mathcal{D},G)$ the category of $G$-equivariant coherent $\mathcal{D}$-modules. Further, let $\mathcal{A}$ be a tdo on $X$ and $\mathcal{M}$ a $\mathcal{A}$-module. Then we may endow the $\mathcal{O}$-module pullback $f^*\mathcal{M}$ with the structure of a $f^{\#}\mathcal{A}$-module and we call this module $f^{\#} \mathcal{M}$.

\begin{theorem}[Theorem \ref{d1maintheorem}]
\label{d1introtheorem}
Assume that $R$ is a Noetherian ring. Let $G$ be a smooth affine algebraic group of finite type. Let $X,Y$ be smooth separated and locally of finite type $R$-schemes and let $f:Y \to X$ be a locally trivial $G$-torsor. Further, let $\mathcal{D}$ be a sheaf of $G$-homogeneous twisted differential operators on $Y$. The functors:

\begin{equation}
\begin{split}
&f_* (-)^G: \Coh(\mathcal{D},G) \to \Coh(f_{\#} \mathcal{D}^G)\\
&f^{\#}(-): \Coh(f_{\#} \mathcal{D}^G) \to \Coh(\mathcal{D},G).
\end{split} 
\end{equation}

are quasi-inverse equivalences of categories between coherent $G$-equivariant $\mathcal{D}$-modules and coherent $(f_{\#} \mathcal{D})^G$-modules.
\end{theorem}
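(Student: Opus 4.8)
The plan is to prove this as a descent / faithfully flat descent statement for modules over a torsor, upgraded to the twisted setting. The key structural input is Proposition \ref{d1introprop}: the functors $f_\#(-)^G$ and $f^\#(-)$ already give an equivalence at the level of the sheaves of operators themselves, so $f_\#\mathcal{D}^G$ is a genuine tdo on $X$ and $f^\#(f_\#\mathcal{D}^G) \cong \mathcal{D}$ as $G$-htdo's. This means that once I establish the untwisted module equivalence in a form compatible with the tdo structures, the twisting takes care of itself. The heart of the matter is therefore ordinary $G$-torsor descent for quasi-coherent sheaves, promoted to modules over the operator sheaves.

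First I would verify that the two functors are well-defined as stated: that $f_*(\mathcal{M})^G$ is a coherent $f_\#\mathcal{D}^G$-module for $\mathcal{M}\in\Coh(\mathcal{D},G)$, and that $f^\#(\mathcal{N})$ (the $\mathcal{O}$-module pullback $f^*\mathcal{N}$ with its induced $f^\#(f_\#\mathcal{D}^G)\cong\mathcal{D}$-action and canonical equivariant structure) is a coherent $G$-equivariant $\mathcal{D}$-module for $\mathcal{N}\in\Coh(f_\#\mathcal{D}^G)$. Coherence is where Noetherianity of $R$ enters, together with the hypothesis that $G$ is of finite type, smooth and affine and that $f$ is a locally trivial torsor — these guarantee $f$ is faithfully flat and affine-enough for pushforward to preserve coherence on $G$-invariants. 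Because the statement is about sheaves, and the torsor is \emph{locally trivial}, the natural strategy is to reduce to the local model: over an open $U\subseteq X$ where the torsor trivializes, $f^{-1}(U)\cong U\times_R G$ and the claim becomes the standard equivalence between $G$-equivariant modules on $U\times_R G$ and modules on $U$, under which equivariant descent is transparent.

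The two unit and counit natural transformations are the next step. I would construct $\eta:\id \Rightarrow f_*(f^\#(-))^G$ and $\epsilon:f^\#(f_*(-)^G)\Rightarrow\id$ as morphisms of (twisted) $\mathcal{O}$-modules first, checking they are compatible with the operator actions, and then prove they are isomorphisms. For the counit, the key point is the projection-type isomorphism $f^*(f_*\mathcal{M}^G)\xrightarrow{\sim}\mathcal{M}$ which holds precisely because $\mathcal{M}$ carries a $G$-equivariant structure, i.e.\ its descent data are effective; faithful flatness of $f$ then upgrades this to an isomorphism of $\mathcal{D}$-modules. For the unit, $f_*(f^*\mathcal{N})^G\xrightarrow{\sim}\mathcal{N}$ is the statement that taking invariants of the pulled-back sheaf recovers the original, which is again immediate in the trivialized local picture $f^{-1}(U)\cong U\times_R G$ since $(\mathcal{O}_G)^G=\mathcal{O}_U$, i.e.\ the global sections of the structure sheaf of $G$ relative to the invariants is trivial. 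Throughout, I would keep track of the $\mathcal{D}$- and $f_\#\mathcal{D}^G$-module structures so that the $\mathcal{O}$-linear isomorphisms are promoted to isomorphisms in the respective categories of twisted modules, using Proposition \ref{d1introprop} to transport structure.

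The main obstacle, I expect, is not the descent mechanism itself — which is classical for quasi-coherent sheaves on torsors — but rather the bookkeeping needed to show that the descent isomorphisms are compatible with the \emph{twisted} operator actions, and that the functors land in coherent (as opposed to merely quasi-coherent) subcategories. Concretely, the delicate step is verifying that $f_*(-)^G$ preserves coherence: one must show that $G$-invariants of the pushforward of a coherent equivariant module remain coherent over the descended tdo, which requires the finite-type and smoothness hypotheses on $G$ together with Noetherianity of $R$ to control the invariants functor (exactness or at least finiteness properties of taking $G$-invariants in characteristic zero, or via the local triviality reducing invariants to a split projection). I would handle this by localizing to a trivializing cover, where invariants become a direct summand and coherence is preserved manifestly, then glue. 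Once coherence and the operator-compatibility of $\eta$ and $\epsilon$ are secured, the equivalence follows formally.
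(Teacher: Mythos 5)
Your overall skeleton agrees with the paper's: establish the equivalence at the quasi-coherent level by transporting ordinary $\mathcal{O}$-module torsor descent (Proposition \ref{d1Omodulesequivariantdescent}, proved via Hopf modules) through the twisted structure, then deal with coherence separately. The paper organises the first part through the Picard-algebroid module correspondence (Corollaries \ref{d1repequivalencenonequivliealgtdo} and \ref{d1repequivalentequivliealgtdo}, Proposition \ref{d1descentrepalgeqcat}, yielding Corollary \ref{d1noncohdescenttorsorequivalence}), whereas you propose to build unit and counit directly; that difference is cosmetic. The genuine gap is in the step you yourself single out as delicate: that the functors restrict to the coherent subcategories.

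Your proposed mechanism is that in the local model $f^{-1}(U)\cong G\times U$ ``invariants become a direct summand and coherence is preserved manifestly.'' The direct summand statement is true: by the fundamental theorem of Hopf modules $M\cong\mathcal{O}(G)\otimes_R M^G$, and the counit splitting $\mathcal{O}(G)\cong R\oplus\ker\epsilon$ exhibits $M^G$ as a direct summand of $M$ as $\mathcal{D}(X)_r$-modules. But this gives nothing: $M$ is coherent, i.e.\ finitely generated, over the \emph{big} ring $\mathcal{D}(Y)_r\cong\mathcal{D}(G)_r\otimes\mathcal{D}(X)_r$, not over $\mathcal{D}(X)_r$ (indeed $\mathcal{O}(G)\otimes M^G$ is finitely generated over $\mathcal{D}(X)_r$ essentially only when $G$ is finite), and $M^G$ is not a $\mathcal{D}(Y)_r$-submodule of $M$, so ``direct summand of a finitely generated module'' is unavailable over either ring. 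The appeal to characteristic-zero finiteness properties of the invariants functor is also out of place: the paper assumes nothing on the characteristic and uses no Reynolds-type operator. What actually closes the gap, and is where Noetherianity of $R$ enters, is the lemma preceding Corollary \ref{d1tdopullbackpreservecoherence}: since $R$ is Noetherian, $\mathcal{D}(X)_r$ and $\mathcal{D}(Y)_r$ are Noetherian (Lemma \ref{d1crystallinedifoplocnoether}), so coherence is finite generation, and one shows that $f^{\#}$ both preserves and \emph{detects} coherence: if $M$ is not Noetherian over $\mathcal{D}(X)_r$, a strictly ascending chain $M_1\subsetneq M_2\subsetneq\cdots$ of submodules stays strictly ascending after applying $\mathcal{O}(G)\otimes_R(-)$, because $\mathcal{O}(G)$ is faithfully flat over $R$; hence $\mathcal{O}(G)\otimes M$ is not Noetherian over $\mathcal{D}(Y)_r$ either. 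Coherence preservation by $f_*(-)^G$ is then a \emph{formal} consequence of the quasi-coherent equivalence: for $\mathcal{M}\in\Coh(\mathcal{D},G)$ one has $f^{\#}\bigl((f_*\mathcal{M})^G\bigr)\cong\mathcal{M}$ coherent, so $(f_*\mathcal{M})^G$ is coherent by detection. To repair your proof, replace the direct-summand step by this faithful-flatness argument (or equivalently push generators: finitely many generators of $\mathcal{O}(G)\otimes M^G$ have finitely many components in $M^G$, generating $N\subseteq M^G$ with $\mathcal{O}(G)\otimes N=\mathcal{O}(G)\otimes M^G$, whence $N=M^G$ by faithful flatness). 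A secondary omission: you should also verify that $(f_*\mathcal{M})^G$ is genuinely a module over the descended tdo, which is a quotient construction; the paper does this by noting that differentiating the trivial $G$-action on invariants forces the ideal coming from $r\widetilde{\mathfrak{g}_X}$ to act by zero.
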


In fact, we generalise the proposition and the theorem in two directions. First, for $r \in R$ a regular element, we define the notion of $r$-deformed Picard algebroids and $r$-deformed $G$-htdo's. Then we may prove that the descent of an $r$-deformed $G$-htdo is an $r$-deformed tdo and similarly the theorem holds for coherent modules over $r$-deformed $G$-htdo's.

Secondly, for another smooth affine algebraic group of finite type $B$ acting on $X$ and $Y$, we prove that for a $G \times B$-htdo on $Y$, its descent under a $B$-equivariant $G$-torsor is a $B$-htdo. A similar result also holds for the corresponding coherent $G \times B$-modules. This generality will be needed in future applications: in \cite{Sta2} we use this framework to give a geometric proof of \ref{d1Dutheorem} and in \cite{Sta3} we prove an affinoid version of the same theorem.

\textbf{Structure of the paper}

In Sections \ref{d1sectionequivariantOmodules} and \ref{d1sectionequivariantdescentOmod}, we review the theory of equivariant $\mathcal{O}$-modules and equivariant descent for a locally trivial torsor. Next, we define in Section \ref{d1sectiontdo's} the sheaf of crystalline differential operators and the notion of $r$-deformed twisted differential operators on a smooth, separated and locally of finite type $R$-scheme. In the next two sections, we establish correspondences between $r$-deformed Picard algebroids/equivariant Picard algebroids and $r$-deformed twisted differential operators/homogeneous twisted differential operators. We then define the pullback of $r$-deformed Picard algebroids and $r$-deformed tdo's in Section \ref{d1sectionpullbackofLiealg}. In the next two sections, we explore the connections between modules over $r$-deformed Picard algebroids and $r$-deformed twisted differential operators.

Finally, in Section \ref{d1sectiondescentliealg}, we prove equivariant descent for $r$-deformed homogeneous twisted differential operators (Proposition \ref{d1introprop}) and in Section \ref{d1sectiontdomoddescent}, we prove equivariant descent for modules over $r$-deformed homogeneous twisted differential operators (Theorem \ref{d1introtheorem}).

\textbf{Conventions}

Throughout this document, $R$ will denote a commutative ring of arbitrary characteristic and all the schemes will be $R$-schemes. For a map $f:Y \to X$ of $R$-schemes, we will denote $f^*$ the pullback in the category of $\mathcal{O}$-modules and $f_*$ the pushforward sheaf. Unadorned tensor products will be assumed to be taken over $R$. An element $r \in R$ is called regular if it is not a zero divisor.

\section{Equivariant \texorpdfstring{$\mathcal{O}$}{O}-modules}
\label{d1sectionequivariantOmodules}

Let $G$ be an affine algebraic group scheme acting on a  scheme $X$; denote the action by $\sigma_X: G \times X \to X$. Furthermore, we denote $p_X:G \times X \to X$ and $p_{2X}:G \times G \times X \to X$ the projections onto the $X$ factor, $p_{23X}:G \times G \times X \to G \times X$ the projection onto the second and third factor and $m:G \times G \to G$ the multiplication of the group $G$.

\begin{definition}

Let $G$ an algebraic group scheme acting on a scheme $X$. A $G$-equivariant $\mathcal{O}_X$-module is a pair $(\mathcal{M},\alpha)$, where $\mathcal{M}$ is a quasi-coherent $\mathcal{O}_X$-module and $\alpha:\sigma_X^*\mathcal{M} \to p_X^*\mathcal{M}$ is an isomorphism of $\mathcal{O}_{G \times X}$-modules such that the diagram

\begin{center}
\begin{tikzcd}

&(1_G \times \sigma_X)^*p_X^*\mathcal{M} \arrow[r," p_{23X}^* \alpha"]   &p_{2X}^*\mathcal{M} \\
&(1_G \times \sigma_X)^* \sigma_X^* \mathcal{M} \arrow [u,"(1_G \times \sigma_X)^* \alpha "] \arrow[r,leftrightarrow,"id "] &(m \times 1_X)^*\sigma_X^* \mathcal{M} \arrow[u, " (m \times 1_X)^* \alpha"]
\end{tikzcd}
\end{center}

of $\mathcal{O}_{G \times G \times X}$-modules commutes (the cocycle condition) and the pullback 
                        $$(e \times 1_X)^* \alpha: \mathcal{M} \to \mathcal{M}$$
is the identity map.
\end{definition}

We prove a crucial lemma that will be used in the future; it is stated on the stack project, but the proof is omitted.
\begin{lemma}\cite[\href{https://stacks.math.columbia.edu/tag/03LG}{03LG}]{StackProject}
\label{d1Oequivpreservefunctor}

Let $G$ be an affine algebraic group acting on schemes $X$ and $Y$ and let $f:Y \to X$ be a $G$-equivariant morphism. Then the pullback functor $f^*$ given by $$(\mathcal{M},\alpha) \mapsto (f^*\mathcal{M},(1_G \times f)^* \alpha)$$ defines a functor from $G$-equivariant $\mathcal{O}_X$-modules to $G$-equivariant $\mathcal{O}_Y$-modules.

\end{lemma}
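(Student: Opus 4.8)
The plan is to set $\beta := (1_G \times f)^* \alpha$ and to check, in turn, that (i) $\beta$ has the correct source and target, (ii) $\beta$ is an isomorphism, (iii) $\beta$ satisfies the cocycle condition, (iv) $\beta$ restricts to the identity along $e$, and finally (v) that the assignment is functorial in $(\mathcal{M},\alpha)$. The engine behind every step is the pseudofunctoriality of $(-)^*$, that is the canonical isomorphisms $(g \circ h)^* \cong h^* g^*$, combined with the $G$-equivariance of $f$, which I record as the relation $\sigma_X \circ (1_G \times f) = f \circ \sigma_Y$. Throughout I write $\sigma_Y : G \times Y \to Y$ for the action and $p_Y, p_{2Y}, p_{23Y}, m$ for the evident analogues on $Y$ of the maps introduced for $X$.

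First I would verify the typing. Equivariance gives $\sigma_X \circ (1_G \times f) = f \circ \sigma_Y$, while the identity $p_X \circ (1_G \times f) = f \circ p_Y$ holds on the nose. Pulling $\alpha \colon \sigma_X^* \mathcal{M} \to p_X^* \mathcal{M}$ back along $1_G \times f$ and applying the canonical isomorphisms therefore yields a map
$$\beta \colon \sigma_Y^* f^* \mathcal{M} \longrightarrow p_Y^* f^* \mathcal{M},$$
which is exactly the datum required of an equivariant structure on $f^* \mathcal{M}$. Since $\alpha$ is an isomorphism and $(-)^*$ preserves isomorphisms, $\beta$ is an isomorphism.

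The heart of the argument is the cocycle condition, which I would obtain by pulling the cocycle square for $(\mathcal{M},\alpha)$ back along $1_G \times 1_G \times f \colon G \times G \times Y \to G \times G \times X$. The point is that every structure map appearing in the diagram commutes with the insertion of $f$: one checks directly that
$$p_{23X} \circ (1_G \times 1_G \times f) = (1_G \times f) \circ p_{23Y}, \quad (m \times 1_X) \circ (1_G \times 1_G \times f) = (1_G \times f) \circ (m \times 1_Y),$$
$$(1_G \times \sigma_X) \circ (1_G \times 1_G \times f) = (1_G \times f) \circ (1_G \times \sigma_Y), \quad p_{2X} \circ (1_G \times 1_G \times f) = f \circ p_{2Y},$$
the third of which again uses equivariance. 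Because $(-)^*$ is a functor, it carries the commuting cocycle square for $\alpha$ to a commuting square on $G \times G \times Y$; using the displayed relations together with the canonical isomorphisms, this square is identified with the cocycle square for $\beta$, which is thus commutative. For the unit condition I would compute
$$(e \times 1_Y)^* \beta = \bigl((1_G \times f) \circ (e \times 1_Y)\bigr)^* \alpha = \bigl((e \times 1_X) \circ f\bigr)^* \alpha = f^*\bigl((e \times 1_X)^* \alpha\bigr) = f^*(\mathrm{id}) = \mathrm{id},$$
using $(1_G \times f) \circ (e \times 1_Y) = (e \times 1_X) \circ f$ and the unit axiom for $\alpha$. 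Functoriality is then routine: a morphism $\phi$ of equivariant modules is an $\mathcal{O}_X$-linear map intertwining the two equivariant structures, and applying $(1_G \times f)^*$ to that intertwining relation shows $f^* \phi$ intertwines $\beta_{\mathcal{M}}$ and $\beta_{\mathcal{N}}$; that $f^*$ preserves identities and composition of underlying $\mathcal{O}$-module maps is standard.

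The only genuine subtlety — and the step I expect to demand the most care — is the coherent bookkeeping of the pseudofunctoriality isomorphisms $(g \circ h)^* \cong h^* g^*$ that are suppressed in the identifications above. Each such isomorphism is canonical, but one must verify that they are mutually compatible, so that the pulled-back cocycle square genuinely coincides with the cocycle square of $\beta$ rather than merely being isomorphic to it. This reduces to a diagram chase invoking the associativity and unit coherence constraints of the pullback pseudofunctor; it is tedious but entirely formal, and carries no geometric content beyond the equivariance relations already recorded.
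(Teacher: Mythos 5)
Your proposal is correct and follows essentially the same route as the paper: identify source and target via the relations $\sigma_X \circ (1_G \times f) = f \circ \sigma_Y$ and $p_X \circ (1_G \times f) = f \circ p_Y$, obtain the cocycle condition by exhibiting the cocycle square on $G \times G \times Y$ as the pullback of the one on $G \times G \times X$ along $1_G \times 1_G \times f$ (checking the same list of commutation relations), and verify the unit axiom from $(1_G \times f) \circ (e \times 1_Y) = (e \times 1_X) \circ f$. If anything, your write-up is slightly more careful than the paper's, since you flag the pseudofunctoriality coherence issue explicitly, treat functoriality on morphisms, and get the ordering of pullbacks in the unit computation right where the paper's displayed formula contains a harmless transposition.
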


\begin{proof}
Let $\mathcal{M}$ be a $G$-equivariant $\mathcal{O}_X$-module. Since $\alpha:\sigma_X^*\mathcal{M} \to p_X^* \mathcal{M}$ is an isomorphism, we get that $(1_G \times f)^* \alpha: (1_G \times f)^* \sigma_X^* \mathcal{M} \to (1_G \times f)^* p_X^* \mathcal{M}$ is also an isomorphism. We have

\begin{equation}
\begin{split}
&\sigma_X \circ (1_G \times f)(g,y)=gf(y)=f(gy)=f \circ \sigma_Y(g,y), \text{ so } (1_G \times f)^* \sigma_X^* \mathcal{M}=\sigma_Y^* f^* \mathcal{M}.   \\
&p_X \circ (1_G \times f)(g,y)=f(y)= f \circ p_Y, \text{ so } (1_G \times f)^* p_X^* \mathcal{M}= p_Y^* f^* \mathcal{M}.
\end{split}
\end{equation}

Thus, $(1_G \times f)^* \alpha: \sigma_Y^*(f^* \mathcal{M}) \to  p_Y^*(f^* \mathcal{M}) \text{ is an isomorphism}.$ 

Next, we need to prove that the morphism $(1_G \times f)^* \alpha$ satisfies the cocycle condition; that is we need to show that the diagram

\begin{equation}
\label{d1GequivariantYdiagram}
\begin{tikzcd}
&(1_G \times \sigma_Y)^*p_Y^* f^*\mathcal{M} \arrow[r," p_{23Y}^* (1_G \times f)^* \alpha"]   &p_{2Y}^*f^*\mathcal{M} \\
&(1_G \times \sigma_Y)^* \sigma_Y^*f^* \mathcal{M} \arrow [u,"(1_G \times \sigma_Y)^* (1_G \times f)^* \alpha "] \arrow[r,leftrightarrow,"\id "] &(m \times 1_Y)^*\sigma_Y^* f^* \mathcal{M} \arrow[u, " (m \times 1_Y)^* (1_G \times f)^* \alpha"]
\end{tikzcd}
\end{equation}

of $\mathcal{O}_{G \times G \times Y}$-modules commutes given that the diagram

\begin{equation}
\label{d1GequivariantXdiagram}
\begin{tikzcd}
&(1_G \times \sigma_X)^*p_X^*\mathcal{M} \arrow[r," p_{23X}^* \alpha"]   &p_{2X}^*\mathcal{M} \\
&(1_G \times \sigma_X)^* \sigma_X^* \mathcal{M} \arrow [u,"(1_G \times \sigma_X)^* \alpha "] \arrow[r,leftrightarrow,"\id "] &(m \times 1_X)^*\sigma_X^* \mathcal{M} \arrow[u, " (m \times 1_X)^* \alpha"]
\end{tikzcd}
\end{equation}

of $\mathcal{O}_{G \times G \times X}$-modules commutes.

We shall prove that the diagram \ref{d1GequivariantYdiagram} is the pullback of the diagram \ref{d1GequivariantXdiagram} under the morphism $(1_G \times 1_G \times f)$. We have that

\begin{equation}
\begin{split}
&f \circ  p_Y \circ  (1_G \times \sigma_Y)(g_1,g_2,y)=f(g_2y)=p_X \circ (1_G \times \sigma_X) \circ (1_G \times 1_G \times f) (g_1,g_2,y). \\
&f \circ p_{2Y}(g_1,g_2,y)=y=p_{2X} \circ (1_G \times 1_G \times f)(g_1,g_2,y). \\
&f \circ \sigma_Y \circ (1_G \times \sigma_Y) (g_1,g_2,y)= f(g_1g_2y)=\sigma_X \circ (1_G \times \sigma_X) \circ (1_G \times 1_G \times f) (g_1,g_2,y). \\
&f \circ \sigma_Y \circ ( m \times 1_Y)(g_1,g_2,y)=g_1g_2f(y)= \sigma_X \circ (m \times 1_X) \circ (1_G \times 1_G \times f) (g_1,g_2,y).
\end{split}
\end{equation}

Therefore,

\begin{equation}
\label{d11G1Gfobjects}
\begin{split}
((1_G \times \sigma_Y)^*p_Y^*) f^* \mathcal{M} &= (1_G \times 1_G \times f)^* ((1_G \times \sigma_X)^*p_X^* \mathcal{M}). \\
p_{2Y}^*(f^* \mathcal{M}) &= (1_G \times 1_G \times f)^* (p_{2X}^* \mathcal{M}). \\
((1_G \times \sigma_Y)^* \sigma^*) f^* \mathcal{M} &= (1_G \times 1_G \times f)^* ((1_G \times \sigma_X)^*\sigma_X^* \mathcal{M}).\\
((m \times 1_Y)^* \sigma^*) f^* \mathcal{M} &= (1_G \times 1_G \times f)^* ((m \times 1_X)^* \sigma_X^* \mathcal{M}).
\end{split}
\end{equation}

Similarly, we get 

\begin{equation}
\begin{split}
&(1_G \times f)\circ p_{23Y}(g_1,g_2,y)=(g_2,f(y))=p_{23X} \circ (1_G \times 1_G \times f) (g_1,g_2,y). \\
&(1_G \times f) \circ (1_G \times \sigma_Y) (g_1,g_2,y)=(g_1,f(g_2y))= (1_G \times \sigma_X) \circ (1_G \times 1_G \times f) (g_1,g_2,y). \\
&(1_G \times f) \circ (m \times 1_Y)(g_1,g_2,y)=(g_1g_2,f(y)= (m \times 1_X) \circ (1_G \times 1_G \times f) (g_1,g_2,y).
\end{split}
\end{equation}

Thus, 

\begin{equation}
\label{d11G1Gfmorphism}
\begin{split}
p_{23Y}^* (1_G \times f)^* \alpha &= (1_G \times 1_G \times f)^* p_{23X}^* \alpha. \\
(1_G \times \sigma)^* (1_G \times f)^* \alpha &= (1_G \times 1_G \times f)^* (1_G \times \sigma_X)^* \alpha. \\
(m \times 1_X)^*(1_G \times f)^* \alpha &= (1_G \times 1_G \times f)^* (m \times 1_X)^* \alpha.
\end{split}
\end{equation}

Combining equations \eqref{d11G1Gfobjects} and \eqref{d11G1Gfmorphism}, we get that the diagram \eqref{d1GequivariantYdiagram} is indeed the pullback of the diagram \eqref{d1GequivariantXdiagram} under the morphism $1_G \times 1_G \times f$, so $(1_G \times f)^* \alpha$ satisfies the cocycle condition. 

Finally, we need to prove  that the map $(e \times 1_Y)^*(1_G \times f)^*\alpha:f^*\mathcal{M} \to f^*\mathcal{M}$  is the identity map using   $(e \times 1_X)^* \alpha: \mathcal{M} \to \mathcal{M}$ is the identity map. 

We have that $(1_G \times f)\circ (e \times 1_Y) (g,y)=(e,f(y))=(e \times 1_X) (1_G \times f)$, so

$$ (e \times 1_Y)^* (1_G \times f)^* \alpha = (1_G \times f)^* (e \times 1_X)^* \alpha = (1_G \times f)^* (\id)= \id,$$

since the identity map is preserved by any functor.
\end{proof}

\begin{definition}
Let $G$ an affine algebraic group acting on a scheme $X$ via $\sigma_X$. We define the category of $G$-equivariant quasi-coherent $\mathcal{O}_X$-modules. Objects are given by $G$-equivariant $\mathcal{O}_X$-modules.

A morphism of $G$-equivariant $\mathcal{O}_X$-modules $(\mathcal{M},\alpha_M)$ and $(\mathcal{N},\alpha_N)$ is a map $\phi \in \Hom_{\mathcal{O}_X}(\mathcal{M},\mathcal{N})$ such that the following diagram commutes:
\begin{center}
\begin{tikzcd}
&\sigma_X^* \mathcal{M} \arrow[d,"\sigma_X^*\phi"] \arrow[r,"\alpha_M"] &p_X^* \mathcal{M} \arrow[d,"p_X^* \phi"] \\
&\sigma_X^* \mathcal{N}  \arrow[r,"\alpha_N"] &p_X^* \mathcal{N}. 
\end{tikzcd} 
\end{center}

We call such a morphism $G$-equivariant and denote the category of $G$-equivariant $\mathcal{O}_X$-modules together with $G$-equivariant morphisms by $\QCoh(\mathcal{O}_X,G)$.


\end{definition}

\begin{proposition}
\label{d1GequivariantAbeliancat}

Let $G$ an affine algebraic group acting on a scheme $X$ via $\sigma_X$. Then the category $\QCoh(\mathcal{O}_X,G)$ is Abelian.

\end{proposition}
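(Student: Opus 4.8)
The plan is to deduce the abelian structure on $\QCoh(\mathcal{O}_X,G)$ from that of $\QCoh(\mathcal{O}_X)$ by transporting kernels and cokernels through the equivariance datum. Write $U:\QCoh(\mathcal{O}_X,G)\to\QCoh(\mathcal{O}_X)$ for the forgetful functor $(\mathcal{M},\alpha)\mapsto\mathcal{M}$; it is faithful, and since an equivariant morphism $\phi$ that happens to be an $\mathcal{O}_X$-isomorphism has inverse $\phi^{-1}$ which again makes the defining square commute (rearrange the square $\alpha_N\circ\sigma_X^*\phi=p_X^*\phi\circ\alpha_M$ after inverting $\sigma_X^*\phi$ and $p_X^*\phi$), the functor $U$ reflects isomorphisms. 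First I would check additivity: the equivariant morphisms $(\mathcal{M},\alpha_M)\to(\mathcal{N},\alpha_N)$ form a subgroup of $\Hom_{\mathcal{O}_X}(\mathcal{M},\mathcal{N})$ because the commuting-square condition is additive in $\phi$, composition is bilinear, and $(0,\id)$ is a zero object. A biproduct is $(\mathcal{M}\oplus\mathcal{N},\alpha_M\oplus\alpha_N)$: here I use that $\sigma_X^*$ and $p_X^*$, being pullback functors, are left adjoints and hence commute with finite direct sums, so $\alpha_M\oplus\alpha_N$ is a well-defined isomorphism inheriting the cocycle and unit conditions.

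The technical heart is the exactness of $\sigma_X^*,p_X^*:\QCoh(\mathcal{O}_X)\to\QCoh(\mathcal{O}_{G\times X})$. Since $G$ is flat over $R$, the projection $p_X$ is flat, so $p_X^*$ is exact. For the action map I would use the shear automorphism $\psi:G\times X\to G\times X$, $(g,x)\mapsto(g,\sigma_X(g,x))$, with inverse $(g,x)\mapsto(g,\sigma_X(g^{-1},x))$; it satisfies $p_X\circ\psi=\sigma_X$, so $\sigma_X^*\cong\psi^*\circ p_X^*$ is a composite of exact functors, pullback along the isomorphism $\psi$ being exact. The same device shows that every pullback functor occurring in the cocycle diagram (those along $p_{23X}$, $p_{2X}$, $1_G\times\sigma_X$ and $m\times 1_X$) is exact, using flatness of $m$ and the projections. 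This is the step on which everything else rests.

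Granting exactness, kernels and cokernels are built by transport. For equivariant $\phi:(\mathcal{M},\alpha_M)\to(\mathcal{N},\alpha_N)$ let $\mathcal{K}=\ker\phi$ in $\QCoh(\mathcal{O}_X)$. Applying the exact functors gives $\sigma_X^*\mathcal{K}=\ker(\sigma_X^*\phi)$ and $p_X^*\mathcal{K}=\ker(p_X^*\phi)$, and the square expressing equivariance of $\phi$ shows that $\alpha_M$ carries the first kernel isomorphically onto the second, producing a unique isomorphism $\alpha_K:\sigma_X^*\mathcal{K}\to p_X^*\mathcal{K}$ compatible with the inclusion $\mathcal{K}\hookrightarrow\mathcal{M}$. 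I would then verify the cocycle and unit conditions for $\alpha_K$: every pullback functor in those diagrams is exact and the structure maps are natural, so applying them to $\mathcal{K}\hookrightarrow\mathcal{M}$ realizes the diagrams for $\alpha_K$ as subobjects of the already-commuting diagrams for $\alpha_M$, forcing commutativity. One checks directly that $(\mathcal{K},\alpha_K)$ with its inclusion is a kernel in $\QCoh(\mathcal{O}_X,G)$, and cokernels are produced by the exactly dual argument, now using right exactness of $\sigma_X^*$ and $p_X^*$.

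Finally I would show the canonical comparison $\coim\phi\to\im\phi$ is an isomorphism in $\QCoh(\mathcal{O}_X,G)$. Its image under $U$ is the coimage-to-image map of the abelian category $\QCoh(\mathcal{O}_X)$, hence an isomorphism of $\mathcal{O}_X$-modules; since $U$ reflects isomorphisms, the equivariant map is an isomorphism as well. Together with additivity and the existence of kernels and cokernels, this establishes that $\QCoh(\mathcal{O}_X,G)$ is abelian. I expect the main obstacle to be the exactness of $\sigma_X^*$ via the shear automorphism, together with the bookkeeping confirming that the induced isomorphisms $\alpha_K$ and $\alpha_{\coker\phi}$ genuinely satisfy the cocycle condition; the latter is the most calculation-heavy part, but becomes routine once exactness is in hand.
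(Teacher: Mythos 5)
Your proof is correct and follows essentially the same route as the paper's: establish additivity, use exactness of $\sigma_X^*$ and $p_X^*$ to pull back the kernel--cokernel sequence, transport the equivariance isomorphism to $\ker\phi$ and $\coker\phi$ with the cocycle condition inherited, and then confirm the final abelian axiom. The differences are only presentational refinements on your side — you justify exactness of $\sigma_X^*$ via the shear automorphism where the paper simply asserts that $\sigma_X$ and $p_X$ are smooth, you restrict $\alpha_M$ to kernels directly instead of invoking the Five Lemma, and you verify the coimage-to-image axiom via the isomorphism-reflecting forgetful functor rather than the paper's appeal to normality of monomorphisms and epimorphisms.
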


\begin{proof}

By construction, we have that $\QCoh(\mathcal{O}_X,G)$ is additive. Let $(\mathcal{M}, \alpha_M)$ and $(\mathcal{N}, \alpha_N) \in \QCoh(\mathcal{O}_X,G)$ and let $\phi \in \Hom_{\QCoh(\mathcal{O}_X,G)}(\mathcal{M},\mathcal{N})$. Consider the exact sequence:

$$0 \to \ker(\phi) \to \mathcal{M} \to \mathcal{N} \to \coker(\phi) \to 0.$$

We aim to prove that $\ker(\phi)$ and $\coker(\phi)$ are in $\QCoh(\mathcal{O}_X,G)$. Since $\sigma_X$ and $p_X$ are smooth morphisms, the pullback functors $\sigma_X^*$ and $p_X^*$ are exact, so we get two short exact sequences:

\begin{equation}
\begin{split}
&0 \to \sigma_X^*\ker(\phi) \to  \sigma_X^*\mathcal{M} \to  \sigma_X^*\mathcal{N} \to  \sigma_X^*\coker(\phi) \to 0,\\
&0 \to p_X^*\ker(\phi) \to p_X^*\mathcal{M} \to p_X^*\mathcal{N} \to p_X^*\coker(\phi) \to 0.
\end{split}
\end{equation}

Consider now the diagram:

\begin{tikzcd}
&0 \arrow[r] &0 \arrow[r] &\sigma_X^*\ker(\phi) \arrow[r] &\sigma_X^*\mathcal{M} \arrow[d,"\alpha_{\mathcal{M}}"] \arrow[r] &\sigma_X^*\mathcal{N} \arrow[d,"\alpha_{\mathcal{N}}"]\\
&0 \arrow[r] &0 \arrow[r] &p_X^*\ker(\phi) \arrow[r] &p_X^*\mathcal{M} \arrow[r] &p_X^*\mathcal{N}.
\end{tikzcd}

By construction, we have that $\alpha_{\mathcal{M}}$ and $\alpha_{\mathcal{N}}$ are isomorphisms, so by the Five Lemma we obtain an isomorphism $\beta: \sigma_X^* \ker(\phi) \to p_X^* \ker(\phi)$. Furthermore, since $\alpha_{\mathcal{M}}$ and $\alpha_{\mathcal{N}}$ satisfy the cocycle condition, so does $\beta$. Thus, we have proven that $\ker(\phi) \in \QCoh(\mathcal{O}_X,G)$. A similar argument applying the Five Lemma shows that $\coker(\phi) \in \QCoh(\mathcal{O}_X,G)$. Finally, by construction we have that any monorphism  and epimorphism in $\QCoh(\mathcal{O}_X,G)$ is normal, so $\QCoh(\mathcal{O}_X,G)$ is indeed Abelian category.
\end{proof}

From now on, when we use the notion of morphism of $G$-equivariant $\mathcal{O}_X$-modules, we always view it as a morphism in the category $\QCoh(\mathcal{O}_X,G)$.

\textbf{A reformulation of equivariance}

We wish to reformulate the notion of an equivariant $\mathcal{O}$-module. Until the end of the section, we fix $X$ a scheme defined over $R$ acted on by an affine algebraic group $G$. For any $R$-algebra $A$, we define $X_A:=\Spec A \times_{\Spec R} X$. We start with a very simple observation: viewing $\mathcal{O}_X$ as a left $\mathcal{O}_X$-module, $(\mathcal{O}_X,\id)$ is a $G$-equivariant $\mathcal{O}_X$-module. We may reformulate this following ideas in \cite{MvdB}: for each $R$-algebra $A$ inducing a map $s:\Spec A \to \Spec R$ and for each geometric point $i_g:\Spec A \to G$ which induces an automorphism $g:X_{A} \to X_{A}$ there exists an isomorphism

                                                                    $$q_g: s^* \mathcal{O} \to (g^{-1})^*s^* \mathcal{O}, \text{ satisfying }$$

\begin{equation}
\label{d1structureequiveq}
              q_e=\id \text{ and } q_{gh}=(g^{-1})^*(q_h)q_g
\end{equation}              
in such a way that $(q_g)$'s are compatible with base change. Let $r_g=g^* \circ q_g$. For each $U \subset X_A$ affine open, $r_g$ induces a map $\mathcal{O}_{X_A}(U) \to \mathcal{O}_{X_A}(g^{-1}U)$. The equation \eqref{d1structureequiveq} translates as $r_e=\id$ and $r_{gh}=r_hr_g$. Furthermore, the $\mathcal{O}$-module compatibility requires that for any $f_1,f_2 \in \mathcal{O}_{X_A}(U)$, we have $r_g(f_1f_2)=r_g(f_1)r_g(f_2)$.  

We define $r_g$ via $r_g(f)(x)=f(g^{-1}x)$ for all $R$-algebras $A$, $U \subset X_A$ affine open, $x \in U$, $f \in \mathcal{O}_{X_A}(U)$, $g:X_{A} \to X_{A}$ and it is easy to see that $r_g$'s make $\mathcal{O}_X$ a $G$-equivariant $\mathcal{O}_X$-module according to equation \eqref{d1structureequiveq}. We may now make an abuse of notation: for each $i_g: \Spec A \to G$ and each $ f \in \mathcal{O}_{X_A}(U)$, we denote $g.f=r_{g^{-1}}(f)$ and we translate  the equivariance structure as

$$ e.f_1=f, \quad g.(h.f_1)=(gh).f_1, \quad g.(f_1f_2)=(g.f_1)(g.f_2) \text{ for all  } g,h \in G,  f_1,f_2 \in \mathcal{O}_X.$$

\begin{lemma}
A $\mathcal{O}_X$-module  $\mathcal{M}$ is $G$-equivariant if and only if for each $R$-algebra $A$, for each $s:\Spec A \to \Spec R$ and for each geometric point $i_g: \Spec A \to G$ which induces an automorphism $g:X_A \to X_A$ there exists an isomorphism of $\mathcal{O}_{A}$-modules

      $$ q_g:s^* \mathcal{M} \to (g^{-1})^* s^* \mathcal{M}        $$

satisfying

\begin{equation}
\label{d1weaklyequiveqOmod}
q_{e}=\id \text{ and }  q_{gh}=(g^{-1})^*(q_h)q_g
\end{equation}

in such a way that $(q_g)$'s are compatible with base change.

\end{lemma}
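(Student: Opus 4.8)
The plan is to set up a dictionary, via the functor of points, between a single isomorphism $\alpha \colon \sigma_X^*\mathcal{M} \to p_X^*\mathcal{M}$ satisfying the cocycle condition and a base-change-compatible family $(q_g)$ indexed by all geometric points $i_g \colon \Spec A \to G$. The key observation is that, since $G$ is affine, it is represented by $\mathcal{O}(G)$; hence every such family is determined by its value at the \emph{universal} point $A = \mathcal{O}(G)$, $i_g = \id_G$, and conversely any isomorphism at the universal point spreads out to a compatible family by base change. Once this dictionary is in place, the two defining properties of $\alpha$, namely the cocycle condition \eqref{d1GequivariantXdiagram} and the normalization $(e \times 1_X)^*\alpha = \id$, will be matched exactly with the two conditions in \eqref{d1weaklyequiveqOmod}.

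For the forward direction, given $(\mathcal{M}, \alpha)$ and a point $i_g \colon \Spec A \to G$, I would form the morphism $\psi_g \colon X_A \to G \times X$ given by $i_g$ composed with the projection $X_A \to \Spec A$ on the first factor and by the projection $\pi_A \colon X_A \to X$ on the second. A direct check on points yields $p_X \circ \psi_g = \pi_A$ and $\sigma_X \circ \psi_g = \pi_A \circ g$, where $g \colon X_A \to X_A$ is the automorphism induced by $i_g$. Consequently $\psi_g^*\alpha$ is an isomorphism $g^* s^*\mathcal{M} \to s^*\mathcal{M}$, and I would set
$$q_g := (g^{-1})^*(\psi_g^*\alpha) \colon s^*\mathcal{M} \to (g^{-1})^* s^*\mathcal{M}.$$
Because quasi-coherent pullback commutes with base change, the resulting family $(q_g)$ is automatically compatible with base change. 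At the universal point $\psi_g$ is the identity, so $q_{\mathrm{univ}} = (g^{-1})^*\alpha$, and applying $g^*$ recovers $\alpha$ exactly; under the identifications $g^* s^*\mathcal{M} = \sigma_X^*\mathcal{M}$ and $s^*\mathcal{M} = p_X^*\mathcal{M}$ one checks $g^* q_{\mathrm{univ}} = \alpha$. This shows the two packets of data determine each other and supplies the reverse direction as well: from a compatible family $(q_g)$ define $\alpha := g^* q_{\mathrm{univ}}$ at the universal point.

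It then remains to match the conditions. The normalization is the easy case: taking $A = R$ and $i_g = e$ the identity section, the map $\psi_e$ is $(e \times 1_X)$ and the induced automorphism is $\id_X$, so $q_e = (e \times 1_X)^*\alpha$, whence $q_e = \id$ if and only if $(e \times 1_X)^*\alpha = \id$. The cocycle condition is the substantive step, and I expect it to be the main obstacle. By the universal-point principle it suffices to verify the correspondence for the universal pair $A = \mathcal{O}(G) \otimes \mathcal{O}(G)$, with $g, h$ the two projections $G \times G \to G$ and $gh = m$ their product; all other pairs follow by base change. Here one must identify the pullback functors $(1_G \times \sigma_X)^*$, $(m \times 1_X)^*$, $p_{23X}^*$ and $p_{2X}^*$ appearing in \eqref{d1GequivariantXdiagram} with the automorphisms $g$, $h$, $gh$ of $X_A$, using identifications analogous to those in the proof of Lemma \ref{d1Oequivpreservefunctor}, and then trace how $(g^{-1})^*(q_h)\,q_g$ unwinds into a traversal of that square. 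The delicate bookkeeping is precisely the twist $(g^{-1})^*$ in front of $q_h$: it records that $q_h$ is transported along $g^{-1}$ before composition, which is exactly what the left-hand vertical and top horizontal legs of the diagram produce once the source and target of each pullback are identified. Carefully tracking these pullbacks in both directions establishes that the cocycle diagram commutes if and only if $q_{gh} = (g^{-1})^*(q_h)\,q_g$, completing the equivalence.
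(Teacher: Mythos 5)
Your proposal is correct, but there is essentially no paper proof to compare it against: the paper disposes of this lemma in one sentence, deferring to \cite[Proposition 2.2/Proposition 1.3.1]{MvdB} with the remark that the argument there carries over from fields to commutative rings and from $\mathcal{D}$-modules to $\mathcal{O}$-modules. Your universal-point dictionary (a base-change-compatible family $(q_g)$ is determined by its value at $A=\mathcal{O}(G)$, $i_g=\id_G$, and conversely any isomorphism there spreads out by base change) is the standard mechanism behind that reference, so in substance you are reconstructing the outsourced argument rather than taking a different route; what your write-up adds over the paper is the explicit construction $q_g=(\tilde g^{-1})^*(\psi_g^*\alpha)$ and the matching of the normalization conditions. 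The one step you assert rather than perform --- the equivalence of the cocycle square \eqref{d1GequivariantXdiagram} with $q_{gh}=(g^{-1})^*(q_h)\,q_g$ --- does go through, and since you flagged it as the expected obstacle, here is the bookkeeping that finishes it. At the universal pair $A=\mathcal{O}(G\times G)$, so $X_A=G\times G\times X$, write $\tilde g,\tilde h,\widetilde{gh}$ for the automorphisms of $X_A$ induced by the two projections $G \times G \to G$ and their product, write $p_{13}:G\times G\times X\to G\times X$ for the projection onto the first and third factors, and note that $\psi_g=p_{13}$, $\psi_h=p_{23X}$, $\psi_{gh}=m\times 1_X$. Two pointwise identities do all the work: $\widetilde{gh}=\tilde g\circ\tilde h$, hence $(\widetilde{gh})^*=\tilde h^*\,\tilde g^*$, and $1_G\times\sigma_X=\psi_g\circ\tilde h$. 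Applying $(\widetilde{gh})^*$ to $q_{gh}=(\tilde g^{-1})^*(q_h)\,q_g$ cancels every inverse twist and leaves
\begin{equation*}
\psi_{gh}^*\alpha=\psi_h^*\alpha\circ\tilde h^*(\psi_g^*\alpha),\qquad\text{i.e.}\qquad (m\times 1_X)^*\alpha=p_{23X}^*\alpha\circ(1_G\times\sigma_X)^*\alpha,
\end{equation*}
which is exactly the commutativity of \eqref{d1GequivariantXdiagram}; running the cancellation backwards recovers the $q$-identity from the square, and base-change compatibility then propagates the equivalence from the universal pair to every pair of points over every $A$, as you claim. So the only criticism is presentational: the ``delicate bookkeeping'' you defer is a five-line computation, and your proof is complete once it is included.
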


\begin{proof}

The proof repeats the argument in \cite[Proposition 2.2/Proposition 1.3.1]{MvdB} working over a commutative ring rather than a field and using the structure sheaf $\mathcal{O}$ instead of the sheaf of differential operators $\mathcal{D}.$
\end{proof}

Again, by setting $s_g=g^* \circ q_g$, we may reformulate equation \eqref{d1weaklyequiveqOmod} as:  for each $R$-algebra $A$ and for each $i_g:\Spec A \to G$, we have an isomorphism of $\mathcal{O}$-modules $s_g: \mathcal{M}_{X_A} \to \mathcal{M}_{X_A}$ such that for each $U \subset X_A$ affine open:

\begin{equation}
\label{d1alternativeeqOmod}
\begin{split}
&s_{e}=\id,\\
&s_{gh}=s_{h}s_{g},\\
&s_{g}'s \text{ are compatible with base change},\\
& r_g(f.m)=r_g(f).s_g(m) \text{, for all }  f \in \mathcal{O}_{X_A}(U), m \in \mathcal{M}(U).
\end{split}
\end{equation}

Again, we make an abuse of notation:  for each $i_g: \Spec A \to G$ and each $ m \in \mathcal{M}_A(U)$, we denote $g.m=s_{g^{-1}}(m)$ and we translate  the equivariance structure as:

\begin{equation}
\label{d1easyweakOmodequivariant}
\begin{split}
&e.m=m, \\
&gh.m= g.(h.m),\\
&g.(f.m)=(g.f).(g.m),
\end{split}
\end{equation}

for all $g,h \in G$, $m \in \mathcal{M}$,  $ f \in \mathcal{O}_X$.

Using the definition above, we reformulate the notion of $G$-equivariance of a morphism of $G$-equivariant $\mathcal{O}_X$-modules, $\phi: \mathcal{M} \to \mathcal{N}$ as:

\begin{equation}
\label{d1easymorphsimequivariance}
g.\phi(m)=\phi(g.m) \text{ for all } g \in G, m \in \mathcal{M}
\end{equation}

\section{Equivariant descent for \texorpdfstring{$\mathcal{O}$}{O}-modules}
\label{d1sectionequivariantdescentOmod}

\begin{definition}\cite[Section 4.3]{Annals}

Let $G$ be a smooth affine algebraic group of finite type, $Y$  a scheme equipped with an action $G \times Y \to Y$ and lastly, let  $X$ be a scheme. We say that a morphism $\xi: Y \to X$ is a $G$-torsor if $\xi$ is faithfully flat and locally of finite type, the action of $G$ respects $\xi$ and the map 

        $$G \times Y \to Y \times_{X} Y, \qquad (g,y) \mapsto (gy,y) $$

is an isomorphism.

An open subscheme $U$ of $X$ is said to trivialise the torsor $\xi$ if there is a $G$-invariant isomorphism
         $$ G \times U \to \xi^{-1}(U),$$
         
where $G$ acts on $G \times U$ by left multiplication on the first factor. 

Finally, let $\mathcal{S}_X$ be the set of affine open subschemes $U \subset X$ such that $U$ trivialises $\xi$ and $\mathcal{O}(U)$ is a finitely generated $R$-algebra. We say that $\xi$ is a locally trivial torsor if it can be covered by opens in $\mathcal{S}_X$.

\end{definition}

\begin{definition}[definition-proposition]
\label{d1pushforwardwelldefined}

Let $\xi:Y \to X$ be a locally trivial $G$-torsor and let $(\mathcal{M},\alpha_M)$ be a quasi-coherent $G$-equivariant $\mathcal{O}_Y$-module. Then the presheaf $(\xi_* \mathcal{M})^G$ acquires the structure of a quasi-coherent $\mathcal{O}_X$-module. Furthermore, if we are given $\psi: (\mathcal{M},\alpha_M) \to (\mathcal{N},\alpha_N)$ a map of $G$-equivariant $\mathcal{O}_Y$-modules there is a canonical induced map $(\xi_*)^G\psi: (\xi_* \mathcal{M})^G \to (\xi_* \mathcal{N})^G$.

\end{definition}

\begin{proof}

The  question is local, so we may assume that $X$ is affine, $\xi: Y \to X$ is $p_X: G \times X \to X$ and $G$ acts on $G \times X$ via left multiplication on the first factor. Since $G$ and $X$ are affine, the category of $G$-equivariant $\mathcal{O}_{G \times X}$-modules is equivalent to the category of $( \mathcal{O}(G \times X),G)$-modules by the same arguments as in the proof of \cite[Proposition 1.4.1]{MvdB}. Modules in this category are modules equipped with compatible actions of the ring $\mathcal{O}(G \times X)$ and of the group $G$.

Let $\mathcal{M}$ be a $G$-equivariant $\mathcal{O}_{G \times X}$-module and let $M=\Gamma(G \times X,\mathcal{M})$ be its global sections. Since $M$ is a $( \mathcal{O}(G \times X),G)$-module it acquires a comodule structure $\rho_M: M \to \mathcal{O}(G) \uset{R} M$. Furthermore, let $\rho_G: \mathcal{O}(G \times X) \to \mathcal{O}(G) \uset{R} \mathcal{O}(G \times X)$ denote the comodule structure on $\mathcal{O}(G \times X)$ induced from the $G$-action on $G \times X$ by left multiplication on the first factor. 

As $X$ is affine, to prove the first statement, it is enough to prove that the Abelian group $(p_* \mathcal{M})^G(X)=M^G$ has the structure of an $\mathcal{O}(X)$-module.

Let $f \in \mathcal{O}(X)$ and define $\phi \in \mathcal{O}(G \times X)$ by $\phi(h,x):= f(x) \text{ for all } h \in G, x \in X,$ and for any $m \in M^G$ define
$ f . m := \phi . m$

We need to prove that $f.m \in M^G$. By construction, it is clear that $\rho_G (\phi) = 1 \otimes \phi$. Since $M$ is a $(\mathcal{O}(G \times X),G)$-module, we  have

$$\rho_M ( \phi. m)= \rho_G(\phi) \rho_M(m)= (1 \otimes \phi) (1 \otimes m)= 1 \otimes \phi. m,$$

so $\phi.m \in M^G$, thus the action of $f$ is well-defined.

For the second statement, notice that if $\varphi: (M,\alpha_M) \to (N,\alpha_N)$ is a morphism of $( \mathcal{O}(G \times X),G)$-modules, it is $G$-equivariant, so $\varphi$ restricts to a map $M^G \to N^G$. 

In general if $\xi: \mathcal{M} \to \mathcal{N}$ is a map of $G$-equivariant $\mathcal{O}_Y$ modules and we let $U$ be affine open in $X$, we have canonical maps  $(\xi_*)^G\psi: (\xi_* \mathcal{M})^G(U) \to (\xi_* \mathcal{N})^G(U)$ compatible with restrictions given by restricting the map $\psi$. Therefore, glueing together the local morphisms  we get a map of sheaves.
\end{proof}

In particular, we have proven that if $\xi:Y \to X$ is a locally trivial $G$-torsor, we obtain a functor $\xi_*^G$ from $G$-equivariant $\mathcal{O}_Y$-modules to $\mathcal{O}_X$-modules. We would like to prove that this is an equivalence of categories. 

Recall that for an  $R$-Hopf algebra $H$, a Hopf module $M$ is a left $H$-module, together with a comodule map $\rho:M \to H \uset{R} M$ such that $\rho$ is a map of $H$-modules; here we view $H$ as a module over itself via left multiplication.

\begin{theorem}\cite[Theorem 4.13]{FundHopf}
\label{d1fundhopftheorem}
Let $H$ be a Hopf algebra over a commutative ring $R$ and $M$ a Hopf module. Denote $M^{\co H}$ the coinvariants of $M$. Then the map
                 
               $$\mu: H \uset{R} M^{\co H} \to M, \quad \mu(h \otimes m)=h.m$$
is an isomorphism.               
\end{theorem}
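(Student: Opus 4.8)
The plan is to write down an explicit two-sided inverse to $\mu$, built from the antipode $S$ of $H$; this is the classical strategy for the fundamental theorem of Hopf modules, and it goes through verbatim over a commutative ring. First I would fix Sweedler notation, writing the comultiplication as $\Delta(h)=\sum h_{(1)}\otimes h_{(2)}$, the counit as $\epsilon$, and the comodule structure map of the Hopf module as $\rho(m)=\sum m_{(-1)}\otimes m_{(0)}$, subject to the comodule coassociativity $(\Delta\otimes\id)\rho=(\id\otimes\rho)\rho$ and the counit axiom $(\epsilon\otimes\id)\rho=\id$. Recall that $M^{\co H}=\{m\in M:\rho(m)=1\otimes m\}$ and that the defining compatibility of a Hopf module reads $\rho(h\cdot m)=\Delta(h)\,\rho(m)=\sum h_{(1)}m_{(-1)}\otimes h_{(2)}m_{(0)}$. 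The central device is the $R$-linear projection
\[
P\colon M\to M,\qquad P(m)=\sum S(m_{(-1)})\cdot m_{(0)}.
\]

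The first, and most delicate, step is to show that $P$ takes values in $M^{\co H}$. I would compute $\rho(P(m))$ by feeding $h=S(m_{(-1)})$, $x=m_{(0)}$ into the compatibility relation, then simplify using the anti-comultiplicativity of the antipode $\Delta\circ S=(S\otimes S)\circ\tau\circ\Delta$, coassociativity to align the iterated coactions, the antipode identity $\sum S(k_{(1)})k_{(2)}=\epsilon(k)1$, and finally the counit axiom; the upshot is $\rho(P(m))=1\otimes P(m)$. A short separate check gives $P|_{M^{\co H}}=\id$, since for $m\in M^{\co H}$ one has $\rho(m)=1\otimes m$ and $S(1)=1$. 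With $P$ established, I would define the candidate inverse
\[
\nu\colon M\to H\otimes_R M^{\co H},\qquad \nu(m)=\sum m_{(-1)}\otimes P(m_{(0)}),
\]
which is well defined precisely because $P$ lands in the coinvariants.

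It then remains to check that the two composites are identities. For $\mu\circ\nu=\id_M$, I would expand $\mu(\nu(m))=\sum m_{(-1)}\,S(m_{(0)(-1)})\,m_{(0)(0)}$, rewrite the iterated coaction by coassociativity as $\sum m_{(-2)}\,S(m_{(-1)})\,m_{(0)}$, collapse the first two factors by $\sum k_{(1)}S(k_{(2)})=\epsilon(k)1$, and conclude with the counit axiom to land on $m$. For $\nu\circ\mu=\id$, I would take $h\otimes n$ with $n\in M^{\co H}$, use $\rho(n)=1\otimes n$ together with the compatibility to get $\rho(h\cdot n)=\sum h_{(1)}\otimes h_{(2)}n$, whence $\nu(h\cdot n)=\sum h_{(1)}\otimes P(h_{(2)}n)=\sum h_{(1)}\otimes S(h_{(2)})h_{(3)}n$; the antipode identity $\sum S(k_{(1)})k_{(2)}=\epsilon(k)1$ and the counit axiom then reduce this to $h\otimes n$.

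Since there is no single hard idea, the main obstacle is bookkeeping: correctly managing the interaction between the module and comodule structures — especially invoking the Hopf-module compatibility in the right place when computing $\rho(P(m))$ and $\rho(h\cdot n)$ — and keeping the repeated applications of coassociativity and the antipode axioms aligned in the Sweedler calculus. One should also confirm that every construction is $R$-linear and that the Sweedler manipulations never secretly use invertibility or a field hypothesis, so that the argument is genuinely valid over an arbitrary commutative base ring $R$.
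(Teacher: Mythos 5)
The paper itself contains no proof of this statement: it is imported wholesale as a citation of \cite[Theorem 4.13]{FundHopf}, so there is no internal argument to compare yours against. Your proof is the classical antipode argument for the fundamental theorem of Hopf modules (define the projection $P(m)=\sum S(m_{(-1)})\cdot m_{(0)}$, check it lands in $M^{\co H}$, and invert $\mu$ by $\nu=(\id_H\otimes P)\circ\rho$), and it is correct as outlined: every step --- the Hopf-module compatibility $\rho(h\cdot m)=\Delta(h)\rho(m)$, coassociativity, anti-comultiplicativity of $S$, the antipode identity and the counit axiom --- is purely equational Sweedler calculus, so nothing secretly uses a field hypothesis or flatness, and the theorem indeed holds over an arbitrary commutative base ring exactly as you claim; note also that $\nu$ is well defined because you corestrict $P$ to a map $M\to M^{\co H}$ before tensoring, so no injectivity of $H\otimes_R M^{\co H}\to H\otimes_R M$ is needed. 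The one point worth making explicit is interpretive: the paper's phrasing of the compatibility (``$\rho$ is a map of $H$-modules, viewing $H$ as a module over itself via left multiplication'') must be read with the codiagonal $H$-action $h\cdot(k\otimes m)=\sum h_{(1)}k\otimes h_{(2)}m$ on $H\otimes_R M$, which is exactly the form $\rho(h\cdot m)=\Delta(h)\rho(m)$ that you used; under the literal ``act on the first factor only'' reading the statement would not be the fundamental theorem at all, so your interpretation is the intended one.
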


We may now prove the main result of this section.
\begin{proposition}
\label{d1Omodulesequivariantdescent}[Equivariant descent for $\mathcal{O}$-modules]
Let $G$ be a smooth affine algebraic group of finite type and let $\xi:Y \to X$ be a locally trivial $G$-torsor. Then the functors $\xi_* (-)^G$ and $\xi^*(-)$ induce quasi-inverse equivalences of categories between $G$-equivariant quasi-coherent $\mathcal{O}_Y$-modules and quasi-coherent $\mathcal{O}_X$-modules.

\end{proposition}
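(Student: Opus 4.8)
The plan is to construct canonical unit and counit natural transformations between the two functors and then verify they are isomorphisms, reducing to the case of a trivial torsor where the Fundamental Theorem of Hopf Modules (Theorem \ref{d1fundhopftheorem}) does the real work.

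First I would check that $\xi^*(-)$ genuinely lands in $G$-equivariant $\mathcal{O}_Y$-modules. Since the $G$-action respects $\xi$, the target $X$ carries the trivial $G$-action (so $\sigma_X=p_X$) and $\xi$ is $G$-equivariant; hence by Lemma \ref{d1Oequivpreservefunctor} the pullback of any $\mathcal{O}_X$-module $\mathcal{N}$, equipped with its trivial equivariant structure $\alpha=\id$, is a $G$-equivariant $\mathcal{O}_Y$-module. The functor $\xi_*(-)^G$ is available by Definition \ref{d1pushforwardwelldefined}. Using the ordinary sheaf adjunction $\xi^*\dashv\xi_*$, the unit $\mathcal{N}\to\xi_*\xi^*\mathcal{N}$ factors through the invariants (because $\mathcal{N}$ is trivially equivariant), giving $\eta_{\mathcal{N}}:\mathcal{N}\to(\xi_*\xi^*\mathcal{N})^G$; and for a $G$-equivariant $\mathcal{M}$ the counit, precomposed with $\xi^*$ of the inclusion $(\xi_*\mathcal{M})^G\hookrightarrow\xi_*\mathcal{M}$, gives $\varepsilon_{\mathcal{M}}:\xi^*(\xi_*\mathcal{M})^G\to\mathcal{M}$. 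Both are natural, are defined without reference to any trivialization, and commute with restriction to open subschemes of $X$, as do the two functors.

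Consequently, checking that $\eta$ and $\varepsilon$ are isomorphisms is local on $X$, so I may replace $X$ by a member of $\mathcal{S}_X$ and assume the torsor is trivial: $X=\Spec\mathcal{O}(X)$ is affine, $Y=G\times X$, $\xi=p_X$, with $G$ acting by left multiplication on the first factor. As in the proof of Definition \ref{d1pushforwardwelldefined}, $G$-equivariant $\mathcal{O}_Y$-modules are then identified with global sections $M=\Gamma(G\times X,\mathcal{M})$ carrying compatible actions of $\mathcal{O}(G\times X)=\mathcal{O}(G)\otimes_R\mathcal{O}(X)$ and a comodule map $\rho_M:M\to\mathcal{O}(G)\otimes_R M$. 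Setting $H=\mathcal{O}(G)$, restriction of the action along $h\mapsto h\otimes 1$ makes $M$ an $H$-module, and the equivariance compatibility makes $(M,\rho_M)$ a Hopf module over $H$, all structures being $\mathcal{O}(X)$-linear. Under this dictionary the $G$-invariants $(\xi_*\mathcal{M})^G(X)=M^G$ are exactly the comodule coinvariants $M^{\co H}$, while for an $\mathcal{O}(X)$-module $N$ the pullback $\xi^*\mathcal{N}$ has global sections $\mathcal{O}(G\times X)\otimes_{\mathcal{O}(X)}N=H\otimes_R N$, the cofree Hopf module with comodule structure $\Delta\otimes\id$. Now I invoke Theorem \ref{d1fundhopftheorem}: globalizing $\varepsilon_{\mathcal{M}}$ yields exactly the multiplication map $\mu:H\otimes_R M^{\co H}\to M$, $h\otimes m\mapsto h.m$, which is an isomorphism, so $\varepsilon_{\mathcal{M}}$ is an isomorphism; and applying the theorem to $M=H\otimes_R N$ identifies its coinvariants with $N$ via $n\mapsto 1\otimes n$, which is the globalization of $\eta_{\mathcal{N}}$, so $\eta_{\mathcal{N}}$ is an isomorphism. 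Since both are isomorphisms locally on $X$ and are defined globally, they are isomorphisms, proving the two functors are quasi-inverse equivalences.

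I expect the main obstacle to be the bookkeeping in the reduction step: one must verify that the two functors and the unit and counit are genuinely compatible with restriction to the trivializing opens, so that the local isomorphisms assemble into global ones, and that the translation into Hopf-module language correctly matches $(\xi_*\mathcal{M})^G$ with $M^{\co H}$ and $\xi^*(-)$ with $H\otimes_R(-)$, including the comodule structures. Once this dictionary is set up carefully, Theorem \ref{d1fundhopftheorem} supplies the essential isomorphisms with no further computation.
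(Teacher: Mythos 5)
Your proposal is correct and takes essentially the same route as the paper: reduce to the trivial torsor $Y = G\times X$ by naturality of the unit and counit, translate $G$-equivariant modules into Hopf modules over $\mathcal{O}(G)$, and invoke Theorem \ref{d1fundhopftheorem} to see that the counit is an isomorphism. The only (minor) divergence is in the unit step: the paper proves $\mathcal{N} \to (\xi_*\xi^*\mathcal{N})^G$ is an isomorphism via flatness of $\mathcal{O}(G)$ and a direct-limit argument, whereas you identify the coinvariants of the cofree Hopf module $\mathcal{O}(G)\otimes_R N$ with $N$ --- a true statement, but one that follows from a short direct computation with the counit of $\mathcal{O}(G)$ (or from the theorem plus faithful flatness of $\mathcal{O}(G)$ over $R$), not from Theorem \ref{d1fundhopftheorem} alone as worded.
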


\begin{proof}

For $\mathcal{M} \in \QCoh(\mathcal{O}_Y,G)$ and $\mathcal{N} \in \QCoh(\mathcal{O}_X)$ we obtain by functoriality maps $\mathcal{M} \to \xi^*(\xi_* \mathcal{M})^G $ and $(\xi_*(\xi^* \mathcal{N}))^G \to \mathcal{N}$, respectively. Thus we only need to prove the statement locally. We may then assume that $Y=G \times X$, $p:G \times X \to X$ is the projection onto the second factor and $G$ acts on $G \times X$ via left multiplication on the first factor.

We start by constructing a natural isomorphism $\eta:\id \to (p_*)^Gp^*$. Let $\mathcal{M}$ be a $\mathcal{O}_X$-module. We aim to define a map $\eta_M: \mathcal{M} \to p_*^Gp^* \mathcal{M}$.  For any open affine $U \subset X$, we have
  
$$(p_*^Gp^* \mathcal{M})(U)= (\mathcal{O}(G) \uset{R} \mathcal{M}(U))^G.$$ 

Let $\eta_{\mathcal{M}U}: \mathcal{M}(U) \to (\mathcal{O}(G) \uset{R} \mathcal{M}(U))^G$ be defined by $m \mapsto 1 \otimes m$.


Since $G$ is flat over $R$, $\mathcal{O}(G)$ is a flat $R$-module, so a direct limit of free $R$-modules. Write $\mathcal{O}_G= \lim M_i$, where $M_i$ is a free $R$-module. We have that  $(M_i \otimes_R \mathcal{M}_U)^G \cong M_i$ and since rational cohomology commutes with direct limits by \cite[Lemma I.4.17]{Jan1}, we obtain that $\eta_{\mathcal{M}U}$ is indeed an isomorphism. 

Let $V \subset U$ be open affine and let $\res_{UV}: \mathcal{M}(U) \to \mathcal{M}(V)$ be the restriction map.  It is easy to see that the following diagram is commutative:

\begin{center}
\begin{tikzcd}
& \mathcal{M}(U) \arrow[d,"\res_{UV}"] \arrow[r,"\eta_{\mathcal{M}U}"] & (\mathcal{O}(G) \uset{R} \mathcal{M}(U))^G \arrow[d,"\id \uset{R} \res_{UV}"] \\
& \mathcal{M}(V) \arrow[r,"\eta_{\mathcal{M}V}"] & (\mathcal{O}(G) \uset{R} \mathcal{M}(V))^G.

\end{tikzcd}
\end{center}

Thus, $\eta$ is a map of sheaves. Next, we prove that the isomorphism $\eta$ is natural. Let $\varphi: \mathcal{M} \to \mathcal{N}$ be a morphism of $\mathcal{O}_X$-modules. It is enough  to show that the  following diagram is commutative:

\begin{center}
\begin{tikzcd}
 
&\mathcal{M} \arrow[d,"\varphi"]  \arrow[r,"\eta_M"]  &(p_*)^Gp^* \mathcal{M} \arrow[d,"(p_*)^Gp^* \varphi"] \\
&\mathcal{N} \arrow[r,"\eta_N"] & (p_*)^Gp^* \mathcal{N}.

\end{tikzcd}
\end{center}

We can work locally. Let $U \subset X$ be affine open and let $m \in \mathcal{M}(U)$. Then $\eta_N (\varphi(m))= 1 \otimes \varphi(m)$. On the other hand we have $p^* \varphi: p^*\mathcal{M}( G \times U) \to p^*\mathcal{N}(G \times U)$ defined by $p^*\varphi(F \otimes m)= F \otimes \varphi(m)$ for any $F \in \mathcal{O}(G), m \in \mathcal{M}(U)$.

Thus, we have that $(p_*)^G p^* \varphi:(p_*)^Gp^* \mathcal{M}(U) \to (p_*)^Gp^* \mathcal{M}(U) $ is defined by $(p_*)^G p^* \varphi (1 \otimes m)= 1 \otimes \varphi(m)$, for all $m \in \mathcal{M}(U)$. In particular, we get that 

$$(p_*)^Gp^* \varphi (\eta_M(m))= (p_*)^Gp^* \varphi (1 \otimes m)= 1 \otimes \varphi(m)= \eta_N (\varphi(m)),$$

which shows that the diagram is indeed commutative.

Now, let $(\mathcal{M},\alpha)$ be a $G$-equivariant $\mathcal{O}_{G \times X}$-module. By construction $(p_*)^G \mathcal{M}$ is a subsheaf of $p_* \mathcal{M}$ and since there is a canonical sheaf map $p^*(p_*) \mathcal{M} \to \mathcal{M}$, we get by functoriality that there is a map $ \nu_{\mathcal{M}}:p^*(p_*)^G \mathcal{M} \to \mathcal{M}$.

Let $U \subset X$ open affine. Then we have the induced map $$\nu_{\mathcal{M}_{G \times U}}:\mathcal{O}(G) \uset{R} \mathcal{M}(G \times U)^G = p^*(p_*)^G \mathcal{M} (G \times U) \to \mathcal{M}(G \times U) $$

given by $\nu_{\mathcal{M}_{G \times U}}( f \otimes m)= f.m$. We aim to prove that this map is an isomorphism. Since $\mathcal{M}$ is $G$-equivariant the isomorphism $\alpha$ induces an automorphism of $\mathcal{O}_{G \times G \times X}$-modules on $p^* \mathcal{M}$, so in particular we obtain an automorphism on $\mathcal{O}(G) \otimes \mathcal{M}(G \times U)$ of $\mathcal{O}(G \times G \times U)$-modules. This induces a Hopf module structure on $\mathcal{M}(G \times U)$ for the Hopf algebra $\mathcal{O}(G)$. Thus, by Theorem \ref{d1fundhopftheorem} we get that $\nu_{\mathcal{M}_{G \times U}}$ is indeed an isomorphism.

Let $\{U_i\}_{i \in I}$ be an affine open cover of $X$ and let $\{G \times U_i\}_{i \in I}$ be the corresponding affine cover of $G \times X$. As $\nu_{\mathcal{M}_{G \times U_i}}$ is an isomorphism, the sheaves $p^*(p_*)^G \mathcal{M}$ and $\mathcal{M}$ agree on an affine open cover and there is a sheaf map between the two, $\nu$ is an isomorphism.

To finish the proof, notice that by construction, we have $\nu:p^*(p_*)^G \to \id$ is a natural isomorphism. This concludes the proof of the proposition.
\end{proof}

We will also consider a slightly more general setting. Let $Y$ be a variety acted on by two smooth affine algebraic groups of finite type, $G$ and $B$. Let us denote the two actions by $.$ and $\star$.

\begin{observation}
\label{d1easyobservation}
Let $\mathcal{M}$ be a $G \times B$-equivariant $\mathcal{O}_Y$-module. Then $\mathcal{M}^G$ is a $B$-equivariant submodule of $\mathcal{M}$. Let $\phi: \mathcal{M} \to \mathcal{M}'$ be a $G \times B$ equivariant map of $G \times B$-equivariant modules. Then $\phi$ restricts to a $B$-equivariant map $\phi:\mathcal{M}^G \to \mathcal{M}'^G$.

\end{observation}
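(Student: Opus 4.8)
The plan is to reduce everything to the reformulated description of equivariance from \eqref{d1easyweakOmodequivariant} and \eqref{d1easymorphsimequivariance}, exploiting the single structural fact that the two subgroups $G \hookrightarrow G \times B$, $g \mapsto (g,e)$, and $B \hookrightarrow G \times B$, $b \mapsto (e,b)$, have commuting images. Writing $g.m = (g,e).m$ and $b \star m = (e,b).m$ for the induced actions of geometric points, I would apply the multiplicativity relation $gh.m = g.(h.m)$ to the identity $(g,e)(e,b) = (g,b) = (e,b)(g,e)$ in $G \times B$ to obtain $g.(b \star m) = (g,b).m = b \star (g.m)$; that is, the $G$- and $B$-actions on $\mathcal{M}$ commute. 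This is the only substantive input.

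For the first claim, recall that $\mathcal{M}^G$ is the subsheaf cut out by the condition $g.m = m$ for all geometric points $g$ of $G$ (equivalently $\rho_M(m) = 1 \otimes m$). Given $m \in \mathcal{M}^G$ and a geometric point $b$ of $B$, the commutation above gives $g.(b \star m) = b \star (g.m) = b \star m$, so $b \star m$ is again $G$-invariant. Hence each structure isomorphism $s_b$ of the $B$-action restricts to an automorphism of the subsheaf $\mathcal{M}^G$; since $s_e = \id$, $s_{bb'} = s_{b'}s_b$, and compatibility with base change hold on $\mathcal{M}$, they hold a fortiori on $\mathcal{M}^G$, so $\mathcal{M}^G$ is $B$-equivariant and the inclusion $\mathcal{M}^G \hookrightarrow \mathcal{M}$ is a morphism of $B$-equivariant sheaves. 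The only bookkeeping point is that, because $g.(f.m) = (g.f).(g.m)$, the subsheaf $\mathcal{M}^G$ is a module over the $G$-invariant functions rather than over all of $\mathcal{O}_Y$; this refinement does not affect the $B$-equivariance statement.

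For the morphism claim, I would write the $G \times B$-equivariance of $\phi$ as $(g,b).\phi(m) = \phi\big((g,b).m\big)$ for all geometric points and specialize it: taking $(g,e)$ recovers $G$-equivariance and taking $(e,b)$ recovers $B$-equivariance, so \eqref{d1easymorphsimequivariance} holds for each action separately. The $G$-equivariance shows that $\phi$ carries invariants to invariants: for $m \in \mathcal{M}^G$ one has $g.\phi(m) = \phi(g.m) = \phi(m)$, whence $\phi(m) \in \mathcal{M}'^G$. The restricted map $\phi \colon \mathcal{M}^G \to \mathcal{M}'^G$ is then automatically $B$-equivariant, since the $B$-structures on the two subsheaves are the restrictions of those on $\mathcal{M}$ and $\mathcal{M}'$ and $\phi$ already satisfies $b \star \phi(m) = \phi(b \star m)$.

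I expect no genuine obstacle here — consistent with the \emph{observation} label. The single nontrivial step is the commutation of the two actions, which is forced by the product-group structure and the faithfulness of the reformulation; all remaining work is the routine verification that $G$-invariance together with the identity, cocycle, and base-change conditions are inherited by a stable subsheaf, which is immediate.
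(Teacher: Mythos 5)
Your proposal is correct and takes essentially the same route as the paper's own proof: both work with the reformulated pointwise equivariance of \eqref{d1easyweakOmodequivariant} and \eqref{d1easymorphsimequivariance}, use the commutation $g.(b \star m) = b \star (g.m)$ to show that $\mathcal{M}^G$ is stable under the $B$-action, and then restrict $\phi$ by specializing its equivariance to $G$ and to $B$ separately. The only difference is one of detail: you explicitly derive the commutation relation from the identity $(g,e)(e,b)=(e,b)(g,e)$ in $G \times B$ and verify that the identity, cocycle, and base-change conditions are inherited by the stable subsheaf (and you flag that $\mathcal{M}^G$ is only a module over $G$-invariant functions), whereas the paper simply asserts the commutation and leaves these checks implicit.
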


\begin{proof}
We view the equivariance structure via the equations \eqref{d1easyweakOmodequivariant}.  We have that for $g \in G, b \in B $ and $m \in \mathcal{M}$ that

$$g.(b \star m)= b \star (g.m).$$

If $m \in \mathcal{M}^G$, then $g.m=m$, so by the equation above $g.(b \star m)= b \star m$, so $b \star m \in \mathcal{M}^G$. Thus, the $B$-equivariance on $\mathcal{M}$ induces $B$-equivariance on $\mathcal{M}^G$.

Similarly, using the equivariance of morphisms in equation \eqref{d1easymorphsimequivariance}, we have since $\phi$ is particular $G$-equivariant that for $m \in \mathcal{M}^G$, $g.\phi(m)=\phi(g.m)=\phi(m)$, so $\phi$ restricts to a map $\mathcal{M}^G \to \mathcal{M}'^G$.

Finally, since $\phi$ is in particular $B$-equivariant, we have that for $m \in \mathcal{M}^G$

 $$b \star \phi(m)= \phi( b \star m),$$
 
concluding the proof. 
\end{proof}

\begin{lemma}
\label{d1Omoduledescentpreserveequivariance}
Let $G$ and $B$ be smooth affine algebraic groups of finite type acting on $R$-schemes $X$ and $Y$ such that the action of $B$ and $G$ on $Y$ commute. Let $\xi:Y \to X$ be a locally trivial $G$-torsor that is $B$-equivariant.
 
\begin{itemize}

\item {Let $\mathcal{M}$ be a $G \times B$-equivariant $\mathcal{O}_Y$-module. Then $(\xi_* \mathcal{M})^G$ is a $B$-equivariant $\mathcal{O}_X$-module.}
\item {Let $\mathcal{N}$ be a $B$-equivariant $\mathcal{O}_X$-module. Then $\xi^* \mathcal{N}$ is a $G \times B$-equivariant $\mathcal{O}_Y$-module.}
\end{itemize} 

\end{lemma}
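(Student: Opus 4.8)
The plan is to dispatch the two bullet points separately, each reducing to a result already in hand. I will use the reformulation of equivariance of \eqref{d1easyweakOmodequivariant} and \eqref{d1easymorphsimequivariance} throughout, together with the fact that, $\xi$ being a $G$-torsor, $G$ acts trivially on the base $X$. Combined with the hypotheses that $\xi$ is $B$-equivariant and that the $G$- and $B$-actions on $Y$ commute, this exhibits $\xi$ as a $(G \times B)$-equivariant morphism, where $G \times B$ acts on $X$ with $G$ acting trivially (that is, through the projection $G \times B \to B$) and on $Y$ through the two commuting actions.

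For the second bullet this observation is essentially the whole proof. First I would promote $\mathcal{N}$ to a $(G \times B)$-equivariant $\mathcal{O}_X$-module by keeping its given $B$-structure and letting $G$ act trivially; since $G$ acts trivially on $X$, the cocycle condition and the normalisation at the identity for this product structure follow immediately from those for the $B$-structure. As $\xi$ is $(G \times B)$-equivariant, Lemma \ref{d1Oequivpreservefunctor} applied to the group $G \times B$ then shows that $\xi^*\mathcal{N}$ is a $(G \times B)$-equivariant $\mathcal{O}_Y$-module, which is exactly the assertion.

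For the first bullet I would build the $B$-structure on $(\xi_*\mathcal{M})^G$ by transporting the one on $\mathcal{M}$. Given a geometric point $i_b\colon \Spec A \to B$ and an affine open $U \subseteq X_A$, the $B$-equivariance of $\xi$ yields $\xi^{-1}(bU) = b\cdot\xi^{-1}(U)$, so the operator $b\star(-)$ furnished by the $B$-structure on $\mathcal{M}$ sends $\mathcal{M}(\xi^{-1}U)$ into $\mathcal{M}(\xi^{-1}(bU))$. The key point is that it preserves $G$-invariant sections: if $g.m = m$ for all $g \in G$, then, since the two actions commute, $g.(b\star m) = b\star(g.m) = b\star m$, so $b\star m$ again lies in $(\xi_*\mathcal{M})^G$. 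This is exactly the computation of Observation \ref{d1easyobservation}, now performed on $Y_A$. The remaining axioms --- $e\star m = m$, $(bb')\star m = b\star(b'\star m)$, compatibility with base change, and the $\mathcal{O}_X$-semilinearity $b\star(fm) = (b\star f)(b\star m)$ for $f\in\mathcal{O}_X$ --- I would inherit from the corresponding identities for $\mathcal{M}$; the semilinearity uses that the $\mathcal{O}_X$-action on $(\xi_*\mathcal{M})^G$ factors through $\xi^{\#}\colon \mathcal{O}_X\to\xi_*\mathcal{O}_Y$ and that $\xi^{\#}$ intertwines the $B$-actions because $\xi$ is $B$-equivariant.

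I expect the only real care to be needed in the first bullet, in the interaction between passing to $G$-invariants and the $B$-action: one must check that $b\star(-)$ stabilises the $\mathcal{O}_X$-submodule $(\xi_*\mathcal{M})^G$ cut out in Definition--Proposition \ref{d1pushforwardwelldefined}, and that the resulting assignment is a morphism of sheaves compatible with restriction and base change. Both facts reduce to the commutation of the $G$- and $B$-structures on $\mathcal{M}$, so that $(\xi_*\mathcal{M})^G$ is a $B$-stable $\mathcal{O}_X$-submodule; beyond this, the verification is bookkeeping, which may be carried out locally on the trivialising opens of the torsor, where no further obstruction arises.
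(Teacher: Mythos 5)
Your proposal is correct and follows essentially the same route as the paper: the second bullet is handled by endowing $\mathcal{N}$ with the trivial $G$-action and invoking Lemma \ref{d1Oequivpreservefunctor} for the group $G \times B$, and the first bullet by using the commutation of the two actions (Observation \ref{d1easyobservation}) to see that the $B$-action on $\mathcal{M}$ stabilises $G$-invariants, with the $\mathcal{O}_X$-compatibility coming from the $B$-equivariance of $\xi$. Your write-up merely spells out in more detail (geometric points, semilinearity via $\xi^{\#}$) what the paper's terser proof leaves implicit.
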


\begin{proof}
Since $\xi$ is $B$-equivariant, the $B$-action on $\mathcal{O}_X \cong (\xi_* \mathcal{O}_Y)^G$ is induced from the $B$-action on $\mathcal{O}_Y$. Further, using the observation above we may define a $B$-action on $(\xi_* \mathcal{M})^G$ which is compatible with the $B$-action on $\mathcal{O}_Y$ since $\mathcal{M}$ is $B$-equivariant, so the first claim is proven. 

For the second claim, we let $G$ act on $\mathcal{N}$ via $g . n=n$ for all $g \in G$ and $n \in \mathcal{N}$, so that $\mathcal{N}$ is $G \times B$-equivariant. The claim follows from Lemma \ref{d1Oequivpreservefunctor}.
\end{proof}

\begin{corollary}
\label{d1BequivarianteqdescentOmodules}
Let $G$ and $B$ be smooth affine algebraic groups of finite type acting on $Y$ and $X$ such that the action of $B$ and $G$ on $Y$ commute. Let $\xi:Y \to X$ be a locally trivial $G$-torsor that is $B$-equivariant. The functors $\xi_* (-)^G$ and $\xi^*(-)$ induce quasi-inverse equivalences of categories between $G \times B$-equivariant quasi-coherent $\mathcal{O}_Y$-modules and quasi-coherent $B$-equivariant $\mathcal{O}_X$-modules.

\end{corollary}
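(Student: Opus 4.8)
The plan is to upgrade the equivalence of Proposition \ref{d1Omodulesequivariantdescent} to keep track of the extra $B$-action, reusing as much of that proof as possible. Concretely, I would observe that Corollary \ref{d1BequivarianteqdescentOmodules} is exactly Proposition \ref{d1Omodulesequivariantdescent} applied to the $G$-torsor $\xi$, together with the verification that (i) the two functors $\xi_*(-)^G$ and $\xi^*(-)$ land in the $B$-equivariant categories, and (ii) the unit and counit natural isomorphisms $\eta:\id\to(\xi_*)^G\xi^*$ and $\nu:\xi^*(\xi_*)^G\to\id$ constructed there are in fact morphisms of $B$-equivariant modules. Step (i) is already supplied: Lemma \ref{d1Omoduledescentpreserveequivariance} shows precisely that $(\xi_*\mathcal{M})^G$ is $B$-equivariant when $\mathcal{M}$ is $G\times B$-equivariant, and that $\xi^*\mathcal{N}$ is $G\times B$-equivariant when $\mathcal{N}$ is $B$-equivariant. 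So the real content is step (ii), checking $B$-equivariance of $\eta$ and $\nu$.

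For step (ii) I would work locally, as in the proof of Proposition \ref{d1Omodulesequivariantdescent}, reducing to the trivial torsor $Y=G\times X$ with $p:G\times X\to X$ the projection and $G$ acting by left multiplication on the first factor. Here the $B$-action on $X$ lifts to a $B$-action on $G\times X$ acting only on the $X$-factor (since the two actions commute and $\xi$ is $B$-equivariant), so $B$ acts trivially on the $\mathcal{O}(G)$ tensor-factor appearing in $p^*$ and $(p_*)^G$. On sections over a $B$-stable affine open $U\subset X$, the unit $\eta_{\mathcal{M}U}:m\mapsto 1\otimes m$ then visibly intertwines the $B$-action, because $b\star(1\otimes m)=1\otimes(b\star m)$ by the trivial action on the first factor; using the formulation \eqref{d1easymorphsimequivariance} of $B$-equivariance of a morphism, this is exactly the identity $b\star\eta_{\mathcal{M}U}(m)=\eta_{\mathcal{M}U}(b\star m)$. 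The counit $\nu_{\mathcal{M}_{G\times U}}:f\otimes m\mapsto f.m$ is handled the same way: since the $G$- and $B$-actions commute, the multiplication map $\mathcal{O}(G)\otimes_R\mathcal{M}(G\times U)^G\to\mathcal{M}(G\times U)$ is $B$-equivariant, so $\nu$ is a map of $B$-equivariant $\mathcal{O}_Y$-modules.

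Having checked that both $\eta$ and $\nu$ are $B$-equivariant isomorphisms, I would conclude that they are isomorphisms in the categories $\QCoh(\mathcal{O}_X,B)$ and $\QCoh(\mathcal{O}_Y,G\times B)$ respectively, which combined with step (i) yields that $\xi_*(-)^G$ and $\xi^*(-)$ are quasi-inverse equivalences between $G\times B$-equivariant quasi-coherent $\mathcal{O}_Y$-modules and $B$-equivariant quasi-coherent $\mathcal{O}_X$-modules. The main obstacle I anticipate is the bookkeeping needed to justify that the local trivialisation can be chosen compatibly with the $B$-action, so that one genuinely reduces to the case where $B$ acts only on the $X$-factor of $G\times X$; once that reduction is clean, the $B$-equivariance of $\eta$ and $\nu$ is essentially formal, following from the commuting-actions hypothesis and the explicit formulas for the unit and counit. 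Since the underlying equivalence is already established in Proposition \ref{d1Omodulesequivariantdescent}, no new homological input is required, and the proof reduces to transporting the $B$-structure through the constructions already in place.
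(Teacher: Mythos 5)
Your overall skeleton matches the paper's proof, which is a one-line combination of Proposition \ref{d1Omodulesequivariantdescent}, Lemma \ref{d1Omoduledescentpreserveequivariance} and Observation \ref{d1easyobservation}, and your step (i) is exactly the role of Lemma \ref{d1Omoduledescentpreserveequivariance}. The gap is in your step (ii). You propose to verify $B$-equivariance of the unit and counit by reducing to a trivialised torsor $G\times U$ on which $B$ acts only through the $U$-factor, and you describe the existence of such a $B$-compatible trivialisation as ``bookkeeping''. It is not bookkeeping: the reduction is impossible in general. The trivialising opens $U\in\mathcal{S}_X$ need not be $B$-stable, and even when one is, the trivialisation need not intertwine the $B$-actions. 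Concretely, take $B=SL_2$ acting on $X=\mathbb{P}^1$, $G=\mathbb{G}_m$, and $\xi:\mathbb{A}^2\setminus\{0\}\to\mathbb{P}^1$ the tautological torsor: the actions commute and $\xi$ is $B$-equivariant, but the only nonempty $B$-stable open of $X$ is $X$ itself, over which the torsor is nontrivial; and on the $B'$-stable chart $\{x_0\neq 0\}$ (for $B'$ the upper-triangular subgroup) the unipotent element sending $(x_0,x_1)$ to $(x_0+tx_1,x_1)$ rescales the fibre coordinate by $1+tu$, so it does not act only on the base factor even there. This is precisely the flag-variety situation the theory is aimed at. Moreover, $B$-equivariance of a morphism cannot be checked one non-stable open at a time: in the formulation \eqref{d1easymorphsimequivariance}, the action of $b\in B$ carries sections over $U$ to sections over a translate of $U$, where your maps $\eta$ and $\nu$ are given by formulas in a different trivialisation, so the identity $b\star\eta_{\mathcal{M}U}(m)=\eta_{\mathcal{M}U}(b\star m)$ does not even parse as a statement about sections over $U$.

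The repair is to abandon local verification of equivariance and use canonicity, which is what the paper's citation of Observation \ref{d1easyobservation} accomplishes. The unit $\mathcal{N}\to(\xi_*\xi^*\mathcal{N})^G$ and counit $\xi^*(\xi_*\mathcal{M})^G\to\mathcal{M}$ are assembled from the adjunction unit and counit for $\xi^*\dashv\xi_*$ --- which are $B$-equivariant by naturality, since $\xi$ is $B$-equivariant and the $B$-structures are defined by pullback along the action maps --- composed with the inclusion of $G$-invariants (or its pullback), which is $B$-equivariant by Observation \ref{d1easyobservation}; that observation is exactly the ingredient missing from your argument. Once the unit and counit are known to be morphisms in $\QCoh(\mathcal{O}_X,B)$ and $\QCoh(\mathcal{O}_Y,G\times B)$, the assertion that they are isomorphisms concerns only the underlying $\mathcal{O}$-modules and is local on $X$, so it follows from Proposition \ref{d1Omodulesequivariantdescent} as you intended; no $B$-stable trivialisation is ever needed.
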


\begin{proof}
This follows from Proposition \ref{d1Omodulesequivariantdescent},  Lemma \ref{d1Omoduledescentpreserveequivariance} and Observation \ref{d1easyobservation}.
\end{proof}

\section{Deformed twisted differential operators}
\label{d1sectiontdo's}

\begin{definition}
\label{d1definitionofRvariety}
We call an $R$-scheme $X$ that is smooth, separated and locally of finite type an \emph{$R$-variety}.

\end{definition}
 We write $\mathcal{T}_X$ for the sheaf of sections of the tangent bundle $TX$.
\begin{definition}\cite[Definition 4.2]{Annals}
\label{d1crystallinedifferentialoperatorsdefinition}

Let $X$ be an $R$-variety. The sheaf of crystalline differential operators is defined to be the enveloping algebra $\mathcal{D}_X$ of the Lie algebroid $\mathcal{T}_X$.
\end{definition}

We can view $\mathcal{D}_X$  as a sheaf of ring generated by $\mathcal{O}_X$ and $\mathcal{T}_X$ modulo the relations:

\begin{itemize}
\item{$f \partial =f \cdot \partial$;}
\item{$ \partial f - f \partial= \partial (f)$;}
\item{$ \partial \partial' - \partial' \partial=[\partial,\partial'],$}
\end{itemize}

for all $f \in \mathcal{O}_X$ and $\partial,\partial' \in \mathcal{T}_X$. The sheaf $\mathcal{D}_X$ comes equipped with a natural PBW filtration:

 $$0 \subset F_0 (\mathcal{D}_X) \subset  F_1 (\mathcal{D}_X) \subset \ldots $$

consisting of coherent $\mathcal{O}_X$-modules such that

$$F_0(\mathcal{D}_X)= \mathcal{O}_X, \quad F_1( \mathcal{D}_X)= \mathcal{O}_X \oplus \mathcal{T}_X, \quad F_m(\mathcal{D}_X) = F_1 (\mathcal{D}_X) \cdot F_{m-1}( \mathcal{D}_X)  \text{ for } m >1.                   $$

Since $X$ is smooth, the tangent sheaf $\mathcal{T}_X$ is locally free and the associated graded sheaf of algebras of $\mathcal{D}_X$ is isomorphic to the symmetric algebra of $\mathcal{T}_X$:

\begin{equation}
\label{d1gradingcrysdifop}
 \gr(\mathcal{D}_X)= \bigoplus_{m=1}^{\infty} \frac{F_m(\mathcal{D}_X)}{F_{m-1}(\mathcal{D}_X)} \cong \Sym_{\mathcal{O}_X} \mathcal{T}_X.
\end{equation}

If $q:T^*X \to X$ is the cotangent bundle of $X$ defined by the locally free sheaf $\mathcal{T}_X$, then we can also identify $\gr(\mathcal{D}_X)$ with $q_{*}\mathcal{O}_{T^*X}$.

Let $X$ be an $R$-variety and let $U=\Spec(A) \subset X$ be open affine. Further, we consider $\mathcal{M}$ a sheaf of $\mathcal{O}_X$-bimodules quasi-coherent with respect to the left action. We define a filtration on $M=\mathcal{M}(U)$ given by $F_{\bullet}M$:

\begin{itemize}
\item{$F_{-1}(M)=0,$}
\item{$F_n(M)= \{ m \in M | \ad(a_0)\ad(a_1)...\ad(a_n)(m)=0 \text{, for any } a_0,a_1, \ldots a_n \in A\}$}, for $n \geq 0$.

\end{itemize} 

We say that $M$ is differential if $M= \cup_{n \in \mathbb{N}} F_n(M)$ and we call $\mathcal{M}$ a differential $\mathcal{O}_X$-bimodule if there is an affine open cover $(U_i)_{i \in I}$ such that $\mathcal{M}(U_i)$ is a differential bimodule for all $i \in I$.

Let $\mathcal{M},\mathcal{N}$ be two quasi-coherent $\mathcal{O}_X$-modules. Then for any affine open $U$ in $X$, the set $\Hom_{R}(\mathcal{M}(U),\mathcal{N}(U))$ has the structure of a $\mathcal{O}_X(U)$-bimodule. Let $\mathcal{F} \in \Hom_{R}(\mathcal{M}, \mathcal{N}$); we say that $\mathcal{F}$ is a differential operator of degree $\leq n$ if for any affine open $U$, $\mathcal{F}(U) \in F_n(\Hom_{R}(\mathcal{M}(U),\mathcal{N}(U)).$ We denote $\Dif^n(\mathcal{M},\mathcal{N})$ the subsheaf of differential operators of degree $\leq n$ and $\Dif(\mathcal{M},\mathcal{N})=\cup_{n \in \mathbb{N}}  \Dif^n(\mathcal{M},\mathcal{N})$ the subsheaf of differential operators of finite degree. 

We may construct differential $\mathcal{O}_X$-modules using the following proposition:

\begin{proposition}
Let $\mathcal{M}$ be a coherent $\mathcal{O}_X$-module and let $\mathcal{N}$ be a $\mathcal{O}_X$-module. Then $\Dif(\mathcal{M},\mathcal{N})$ is differential $\mathcal{O}_X$-bimodule.

\end{proposition}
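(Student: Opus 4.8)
The plan is to check the two conditions bundled into the definition of a differential $\mathcal{O}_X$-bimodule: that $\Dif(\mathcal{M},\mathcal{N})$ is a sheaf of $\mathcal{O}_X$-bimodules which is quasi-coherent for the left action, and that over a suitable affine cover its left sections are exhausted by the filtration $F_\bullet$. Both conditions are local, so I would fix an affine open $U=\Spec A$ with $A$ a finitely generated $R$-algebra (these cover $X$, since $X$ is locally of finite type) and set $M=\mathcal{M}(U)$, $N=\mathcal{N}(U)$. As $\mathcal{M}$ is coherent and $A$ is finitely generated over $R$, the module $M$ is finitely presented over $A$; this finiteness is what drives the whole argument.

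The exhaustion condition is the formal half. I would first record that for affine $U$ one has $\Dif^n(\mathcal{M},\mathcal{N})(U)=F_n(\Hom_R(M,N))$, that each $\Dif^n(\mathcal{M},\mathcal{N})$ is a sub-bimodule of the hom-sheaf, and that it is stable under the operators $\ad(a)$, which lower the order by one. The last point rests on the commutative-ring identities $\ad(a)(b\cdot \mathcal{F})=b\cdot \ad(a)(\mathcal{F})$ and $\ad(a)(\mathcal{F}\cdot b)=\ad(a)(\mathcal{F})\cdot b$, so that $F_n$, being cut out by the vanishing of all $(n+1)$-fold composites of $\ad$'s, is indeed a sub-bimodule stable under $\ad$. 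Granting this, any section of $\Dif(\mathcal{M},\mathcal{N})(U)=\bigcup_n \Dif^n(\mathcal{M},\mathcal{N})(U)$ lies in some $\Dif^n$, hence is killed by every composite $\ad(a_0)\cdots\ad(a_n)$, and therefore belongs to $F_n\bigl(\Dif(\mathcal{M},\mathcal{N})(U)\bigr)$, the filtration step computed inside the bimodule $\Dif(\mathcal{M},\mathcal{N})(U)$ itself. Thus $\Dif(\mathcal{M},\mathcal{N})(U)=\bigcup_n F_n\bigl(\Dif(\mathcal{M},\mathcal{N})(U)\bigr)$, which is exactly the differential-bimodule condition over each $U$ of the cover.

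Quasi-coherence of the left action is where the coherence hypothesis is genuinely used, and I expect it to be the main obstacle. The goal is to show that forming order-$\le n$ operators commutes with localization, i.e. that for $f\in A$ the natural left-$A$-linear map $\Dif^n(\mathcal{M},\mathcal{N})(U)_f\to \Dif^n(\mathcal{M},\mathcal{N})(U_f)$ is an isomorphism. The clean route is the identification $F_n(\Hom_R(M,N))\cong \Hom_A(P^n_{A/R}\otimes_A M,\,N)$ with $P^n_{A/R}$ the module of principal parts of order $n$; since $X$ is smooth, $\Omega_{X/R}$ and hence $P^n_{A/R}$ are locally free of finite rank on a small enough $U$, so $P^n_{A/R}\otimes_A M$ is finitely presented and $\Hom_A(-,N)$ commutes with localization at $f$. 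Passing to the colimit over $n$ gives $\Dif(\mathcal{M},\mathcal{N})(U)_f\cong \Dif(\mathcal{M},\mathcal{N})(U_f)$, which is the quasi-coherence of the left module structure. One can avoid principal parts by inducting on $n$ along the short exact sequences $0\to \Dif^{n-1}\to \Dif^n\to \gr^n\to 0$, using that the symbol embeds $\gr^n$ into the manifestly quasi-coherent sheaf $\mathscr{H}om_{\mathcal{O}_X}(\mathcal{M},\mathcal{N})\otimes_{\mathcal{O}_X}\Sym^n_{\mathcal{O}_X}\mathcal{T}_X$ and that quasi-coherence is closed under extensions and filtered colimits; this trades the principal-parts identity for some bookkeeping at each stage.

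Assembling the pieces, the sheaf structure inherited from $\mathscr{H}om_R(\mathcal{M},\mathcal{N})$ together with the localization isomorphism shows $\Dif(\mathcal{M},\mathcal{N})$ is a sheaf of $\mathcal{O}_X$-bimodules, quasi-coherent on the left, and the exhaustion computation over the affine cover supplies the differential condition. The one compatibility to keep in mind throughout is that the local identification $\Dif^n(\mathcal{M},\mathcal{N})(U)=F_n(\Hom_R(M,N))$ and the localization isomorphisms are compatible with restriction maps, so that the filtration $F_\bullet$ and the quasi-coherent left structure are defined on the very same object.
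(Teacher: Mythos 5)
Your proposal cannot be compared line-by-line with the paper's argument, because the paper gives none: its entire proof is the sentence ``the proof follows by repeating the argument in [Wan, Proposition 2.1.3]''. Your write-up is thus a self-contained substitute for an external citation, and its main line is correct and is the standard one. You correctly split the statement into (i) the essentially tautological half --- $\Dif^n(\mathcal{M},\mathcal{N})$ is a sub-bimodule of the hom-sheaf, stable under the $\ad(a)$'s (this uses commutativity of $\mathcal{O}_X$), so any finite-order operator over an affine $U$ lies in the step $F_n$ of the intrinsic filtration of $\Dif(\mathcal{M},\mathcal{N})(U)$, giving exhaustion --- and (ii) the substantive half, quasi-coherence of the left structure, which is exactly where coherence of $\mathcal{M}$ enters. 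Your route for (ii), via Grothendieck's identification $F_n\bigl(\Hom_R(M,N)\bigr)\cong\Hom_A\bigl(P^n_{A/R}\otimes_A M,N\bigr)$, smoothness of $X$ (so $P^n_{A/R}$ is finite locally free), and finite presentation of $P^n_{A/R}\otimes_A M$ (so $\Hom_A(-,N)$ commutes with localization), is sound; it also quietly supplies the fact you ``record'' at the start, namely that order-$\leq n$ operators on global sections over affine $U$ extend uniquely to all principal opens, so that $\Dif^n(\mathcal{M},\mathcal{N})(U)=F_n(\Hom_R(M(U),N(U)))$.

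Two caveats. First, your fallback sketch (``one can avoid principal parts\dots'') has a genuine gap: embedding $\gr^n$ into the quasi-coherent sheaf $\mathscr{H}\!om_{\mathcal{O}_X}(\mathcal{M},\mathcal{N})\otimes_{\mathcal{O}_X}\Sym^n_{\mathcal{O}_X}\mathcal{T}_X$ does not make $\gr^n$ quasi-coherent, since a sub-$\mathcal{O}_X$-module of a quasi-coherent module need not be quasi-coherent; one would still have to show that the symbol's image commutes with localization, which is essentially the statement being proved, so the principal-parts route should be regarded as the actual proof. Second, both your localization step (you use $\mathcal{N}(U_f)=\mathcal{N}(U)_f$) and the paper's own definition of $\Dif$ require $\mathcal{N}$ to be quasi-coherent, whereas the proposition as printed only says ``$\mathcal{O}_X$-module''; this inconsistency is the paper's, not yours, but it is worth flagging rather than passing over silently.
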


\begin{proof}
The proof follows by repeating the argument in \cite[Proposition 2.1.3]{Wan}.
\end{proof}

\begin{definition}
Let $\mathcal{A}$ be a $\mathcal{O}_X$-algebra. We say that $\mathcal{A}$ is a differential algebra if $\mathcal{A}$ is a flat $R$-module and multiplication makes $\mathcal{A}$ a differential $\mathcal{O}_X$-bimodule. The filtration $F_{\bullet}(\mathcal{A})$ becomes a ring filtration and with respect to this filtration $\gr^{F}(\mathcal{A})$ is commutative.

\end{definition}


\begin{definition}
\label{d1tdodefinition}
Let $r \in R$ be a regular element. An algebra of $r$-deformed twisted differential operators(tdo) is an $\mathcal{O}_X$-differential algebra $\mathcal{D}$ such that:
\begin{enumerate}[label=\roman*)]
\item{ The natural map $\mathcal{O}_X \to F_0(\mathcal{D})$ is an isomorphism.}
\item{ The morphism $\gr_{1}^F \mathcal{D} \to \mathcal{T}_X=\Der_R(\mathcal{O}_X,\mathcal{O}_X)$ defined by $\psi \mapsto \ad_{\psi}$ for $\psi \in F_1(\mathcal{D})$ induces an isomorphism $\gr_{1}^F \mathcal{D} \cong r\mathcal{T}_X$.}
\item{The morphism of $\mathcal{O}_X$-algebras $\Sym_{\mathcal{O}_X}(\gr_1^F \mathcal{D}) \to \gr^F \mathcal{D}$ is an isomorphism.}
\end{enumerate}

A morphism of tdo's is a morphism of $\mathcal{O}_X$ algebras compatible with the $\mathcal{O}_X$-bimodule structure and the maps in $i) \to iii)$.
\end{definition}

We should make some remarks about this definition: when $r=1$ we call $\mathcal{D}$ a sheaf of twisted differential operators. Classically, working with twisted differential operators over a complex variety the condition $iii)$ is implied by $i)$ and $ii)$. This is no longer true in our case. Further, the sheaf of \emph{Grothendieck}'s differential operators does not satisfy condition $iii)$ for a general ring $R$. This is the main reason why we develop the theory of twisted differential operators using the connection with Lie algebroids rather than using the classical $\Dif$ definition. 

\begin{lemma}
\label{d1crystallinedifoplocnoether}
Assume that $X$ is locally Noetherian $R$-variety and let $\mathcal{D}$ be an $r$-deformed tdo on $X$. Then $\mathcal{D}$ is locally Noetherian. 

\end{lemma}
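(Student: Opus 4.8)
The plan is to reduce to an affine statement and then invoke the standard principle that a filtered ring whose associated graded ring is Noetherian is itself Noetherian. Since the claim is local on $X$, I would choose an affine open cover by sets $U = \Spec A$ that are small enough that $A$ is Noetherian (possible because $X$ is a locally Noetherian $R$-variety) and on which the locally free sheaf $\mathcal{T}_X$ restricts to a free module of finite rank. Write $D = \mathcal{D}(U)$, a ring carrying the exhaustive filtration $F_\bullet D$ with $F_{-1}D = 0$ inherited from the filtration on the tdo.

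The key input is the identification of $\gr^F D$. By condition $iii)$ of Definition \ref{d1tdodefinition} the natural map $\Sym_{\mathcal{O}_X}(\gr_1^F \mathcal{D}) \to \gr^F \mathcal{D}$ is an isomorphism, and by condition $ii)$ we have $\gr_1^F \mathcal{D} \cong r\mathcal{T}_X$. Because $r$ is regular and $\mathcal{T}_X$ is locally free (hence flat), multiplication by $r$ is an injective endomorphism of $\mathcal{T}_X$, so $r\mathcal{T}_X \cong \mathcal{T}_X$; thus on $U$ we obtain $\gr^F D \cong \Sym_A(\mathcal{T}_X(U))$ with $\mathcal{T}_X(U)$ a finitely generated (indeed free of finite rank) $A$-module. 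Since $A$ is Noetherian, $\Sym_A(\mathcal{T}_X(U))$ is a quotient of a polynomial ring $A[x_1,\ldots,x_n]$ in finitely many variables, so by Hilbert's basis theorem it is a Noetherian ring; being commutative, it is Noetherian on both sides.

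It then remains to transfer Noetherianity from $\gr^F D$ to $D$. Here I would apply the standard lemma for filtered rings: if the filtration is exhaustive and bounded below, and $\gr^F D$ is left (respectively right) Noetherian, then $D$ is left (respectively right) Noetherian. Both hypotheses hold: $D = \bigcup_{n \in \mathbb{N}} F_n D$ because $\mathcal{D}$ is a differential algebra, and $F_{-1}D = 0$ makes the filtration bounded below (whence also separated, $\bigcap_n F_n D = 0$). The proof of that lemma lifts a generating set of the leading-term ideal of any ideal $I \subseteq D$ back to $D$, Noetherianity of $\gr^F D$ bounding this data and the separated, exhaustive filtration forcing the lifts to generate $I$. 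Applying it on both sides shows $D$ is left and right Noetherian, and as the sets $U$ cover $X$ the sheaf $\mathcal{D}$ is locally Noetherian.

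The main obstacle is essentially bookkeeping rather than a deep difficulty: the substantive facts (the symmetric algebra of a finitely generated module over a Noetherian ring is Noetherian, and the filtered-to-graded transfer of Noetherianity) are standard, so the care required lies in choosing the affine cover so that $A$ is Noetherian and $\mathcal{T}_X|_U$ is free simultaneously, and in verifying that $F_\bullet D$ genuinely satisfies the exhaustive and bounded-below hypotheses of the filtered Noetherian lemma.
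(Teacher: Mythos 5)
Your proof is correct and takes essentially the same route as the paper's: use conditions ii) and iii) of the definition together with regularity of $r$ to identify $\gr^F \mathcal{D} \cong \Sym_{\mathcal{O}_X}(r\mathcal{T}_X) \cong \Sym_{\mathcal{O}_X}(\mathcal{T}_X)$, observe that this is locally Noetherian, and transfer Noetherianity from the associated graded to $\mathcal{D}$. You merely spell out the details the paper compresses (choosing the affine cover so that the ring is Noetherian and the tangent sheaf is free, Hilbert's basis theorem, and the exhaustive/bounded-below hypotheses of the filtered-to-graded lemma), which is a faithful expansion rather than a different argument.
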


\begin{proof}

We have by conditions $ii)$ and $iii)$ that $\gr^{F} \mathcal{D} \cong \Sym_{\mathcal{O}_X}(r\mathcal{T}_X)$ and because $r$ is regular $\Sym_{\mathcal{O}_X}(r\mathcal{T}_X) \cong \Sym_{\mathcal{O}_X}(\mathcal{T}_X)$. Since $\mathcal{T}_X$ is a free $\mathcal{O}_X$-module and $X$ is locally Noetherian, we obtain that $\Sym_{\mathcal{O}_X}(\mathcal{T}_X)$ is locally Noetherian. Therefore, we have $\gr^{F} \mathcal{D}$ is locally Noetherian, which implies the same for $\mathcal{D}$. 
\end{proof}

\section{\texorpdfstring{Connections between deformed Lie algebroids \\ and deformed  tdo's}{Connections between deformed Lie algebroids and deformed  tdo's}}
\label{d1sectionliealgtdo}

Throughout this section, we let $X$ denote an $R$-variety and $r \in R$ a regular element.

\begin{definition}
A Lie algebroid $\mathcal{L}$ on $X$ is a quasi-coherent $\mathcal{O}_X$-module equipped with a morphism of $\mathcal{O}_X$-modules $\rho:\mathcal{L} \to \mathcal{T}_X$ (the anchor map) and an $R$-linear pairing $[\bullet,\bullet]:\mathcal{L} \to \mathcal{T}_X$ such that:

\begin{itemize}
\item{$[\bullet,\bullet]$ defines the structure of a Lie algebra on $\mathcal{L}$ and $\rho$ is a morphism of Lie algebras.}
\item{$[l_1,fl_2]=f[l_1,l_2]+\rho(l_1)(f)l_2$ for $l_i \in \mathcal{L}, f \in \mathcal{O}_X$.}
\end{itemize}

A morphism of Lie algebroids is a morphism of $\mathcal{O}_X$-modules compatible with the anchor maps  and bracket.
\end{definition}

In particular, locally we obtain that for any $U \subset X$ affine open that $\mathcal{L}(U)$ is an $(R,\mathcal{O}_X(U))$-Lie Rinehart algebra, see \cite{Rinehart} for definition and basic properties of Lie Rinehart algebras. We may think of $\mathcal{L}$ as a sheaf of $(R,\mathcal{O}_X)$-Lie Rinehart algebras; we will use this local description soon.

\begin{definition}
The universal enveloping algebra of $\mathcal{L}$, denoted $U(\mathcal{L})$, is the sheaf of $R$-algebras generated by $\mathcal{O}_X$ and $\mathcal{L}$ modulo the relations:

\begin{itemize}
\item{$i:\mathcal{O}_X \to U(\mathcal{L})$ is a morphism of $R$-algebras,}
\item{$j:\mathcal{L} \to U(\mathcal{L})$ is a morphism of Lie algebras,}
\item{$j(fl)=i(f)j(l)$ and $[j(l),i(f)]=i(\rho(l)(f))$.}
\end{itemize}

\end{definition}

Locally, $U(\mathcal{L})$ is just the enveloping algebra of the corresponding $(R,\mathcal{O}_X(U))$-algebra.

We want to establish a correspondence between Lie algebroids and $r$-deformed tdo's on an $R$- variety $X$. For an $\mathcal{O}_X$-differential algebra $\mathcal{D}$ we define $\Lie(\mathcal{D}):=F_1(\mathcal{D})$; one may prove that when $\mathcal{D}$ is a tdo, $\Lie(\mathcal{D})$ is a Lie algebroid, see \cite[1.2.5]{BB2}; unfortunately not all Lie algebroids induce tdo's, so we need a more specific notion. 

\begin{definition}
We call a Lie algebroid $\mathcal{L}$ an $r$-deformed Picard algebroid if there exists a short exact sequence of Lie algebras and $\mathcal{O}_X$-modules:

$$0 \to \mathcal{O}_X \to \mathcal{L} \to r\mathcal{T}_X \to 0.$$

One should notice that the Lie algebra structure imposed on $\mathcal{O}_X$ is the trivial one. We should also denote $1_{\mathcal{L}}$ the image of $1 \in \mathcal{O}_X$ under the inclusion map.

A morphism of $r$-deformed Picard algebroids is a morphism of Lie algebroids compatible with the maps in the short exact sequence defining $r$-deformed Picard structure.
\end{definition}

\begin{proposition}
\label{d1liealgebroidtotdo}
Let $\mathcal{L}$ be an $r$-deformed Picard algebroid on $X$. Then the sheaf of rings  $\mathcal{D}:=U(\mathcal{L})/U(\mathcal{L})(i(1)-j(1))$ is an $r$-deformed tdo with $\Lie(\mathcal{D})=\mathcal{L}$.

\end{proposition}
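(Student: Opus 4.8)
The plan is to construct the filtration on $\mathcal{D}:=U(\mathcal{L})/U(\mathcal{L})(i(1)-j(1))$ from the canonical PBW-type filtration on the enveloping algebra $U(\mathcal{L})$, and then verify the three defining conditions of Definition \ref{d1tdodefinition} one by one, together with the identification $\Lie(\mathcal{D})=\mathcal{L}$. Since the universal enveloping algebra $U(\mathcal{L})$ is locally just the enveloping algebra of the $(R,\mathcal{O}_X(U))$-Lie--Rinehart algebra $\mathcal{L}(U)$, all claims are local and I would work on a fixed affine open $U=\Spec A$, where $\mathcal{L}(U)$ fits into the short exact sequence $0 \to A \to \mathcal{L}(U) \to r\mathcal{T}_X(U) \to 0$.

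First I would recall the Lie--Rinehart PBW theorem (the analogue of classical PBW, valid since $\mathcal{T}_X$ is locally free hence $\mathcal{L}$ is locally projective over $A$): the standard filtration $F_n U(\mathcal{L})$ by products of at most $n$ elements of $j(\mathcal{L})$ has associated graded isomorphic to $\Sym_{A}(\mathcal{L}(U))$. Next I would analyse the quotient by the left ideal generated by $i(1)-j(1)$. The key structural point is that $i(1)-j(1)$ is a degree-$\leq 1$ element whose symbol in $\gr_1 U(\mathcal{L}) \cong \mathcal{L}(U)$ is exactly $1_{\mathcal{L}}$, the image of $1\in\mathcal{O}_X$. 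Because $1_{\mathcal{L}}$ is central in $\gr U(\mathcal{L})=\Sym_A(\mathcal{L})$ and generates the kernel of $\Sym_A(\mathcal{L}) \to \Sym_A(r\mathcal{T}_X)$ induced by the anchor sequence, passing to the quotient $\mathcal{D}$ gives an induced filtration $F_\bullet(\mathcal{D})$ with
\begin{equation}
\gr^F(\mathcal{D}) \;\cong\; \Sym_A(\mathcal{L}(U))/(1_{\mathcal{L}}) \;\cong\; \Sym_A(r\mathcal{T}_X(U)).
\end{equation}
This single computation simultaneously yields condition iii) (the symmetric-algebra isomorphism), and the degree-one part of it yields condition ii) (with $\gr_1^F\mathcal{D}\cong r\mathcal{T}_X$, the anchor map recovering the bracket adjoint action). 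Condition i), $\mathcal{O}_X \xrightarrow{\sim} F_0(\mathcal{D})$, follows from the degree-zero part, once I check the quotient does not collapse $A$, i.e.\ that $i:A \to \mathcal{D}$ remains injective; this is exactly the statement that $A \cap U(\mathcal{L})(i(1)-j(1)) = 0$, which I would extract from the graded isomorphism above. Finally $\Lie(\mathcal{D})=F_1(\mathcal{D})$ recovers $\mathcal{L}$ because $F_1(\mathcal{D})$ sits in $0 \to \mathcal{O}_X \to F_1(\mathcal{D}) \to r\mathcal{T}_X \to 0$ with the same bracket and anchor, and this short exact sequence is canonically identified with the one defining $\mathcal{L}$.

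The main obstacle I anticipate is the control of the left ideal $U(\mathcal{L})(i(1)-j(1))$ and, relatedly, the exactness of passing the PBW filtration to the quotient: I must show that the filtration on $\mathcal{D}$ induced from $U(\mathcal{L})$ behaves well, i.e.\ that $\gr^F(\mathcal{D})$ is genuinely the quotient of $\gr^F(U(\mathcal{L}))$ by the ideal generated by the symbol of $i(1)-j(1)$, with no extra relations appearing. The clean way to secure this is to observe that $i(1)-j(1)$ is \emph{central} in $U(\mathcal{L})$ (its bracket with any $j(l)$ is $i(\rho(l)(1))=0$ since $1$ is a constant, and it commutes with $i(A)$), so the left ideal it generates is a two-sided ideal and its symbol generates a graded ideal; centrality makes the associated graded of the quotient equal to the quotient of the associated gradeds. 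I would also record that flatness of $\mathcal{D}$ over $R$ (required for it to be a differential algebra) follows because $\gr^F(\mathcal{D}) \cong \Sym_{\mathcal{O}_X}(r\mathcal{T}_X)\cong\Sym_{\mathcal{O}_X}(\mathcal{T}_X)$ is $R$-flat, using that $r$ is regular as in Lemma \ref{d1crystallinedifoplocnoether}. Gluing the local isomorphisms over an affine cover, which is routine since everything is canonical, completes the proof.
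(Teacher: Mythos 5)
Your overall route is the same as the paper's: localise to an affine open $U=\Spec A$ where $L=\mathcal{L}(U)$ is projective over $A$ (the extension $0\to A\to L\to r\mathcal{T}_X(U)\to 0$ splits because $r\mathcal{T}_X(U)\cong\mathcal{T}_X(U)$ is projective), equip $U(L)$ with the Rinehart PBW filtration, quotient by the two-sided ideal generated by $c:=i(1)-j(1)$, and read off conditions i)--iii) and $\Lie(\mathcal{D})=\mathcal{L}$ from the associated graded; the paper does exactly this, identifying $F_0(D)\cong A$ and $F_1(D)\cong L$ by hand (via the map $F_1(U(L))\to L$, $a+l\mapsto i(a)+l$, whose kernel is $F_1$ of the ideal) and invoking Rinehart's Theorem 3.1 for iii). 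However, the justification you give for the step you yourself single out as the main obstacle is wrong: centrality of $c$ does \emph{not} imply that the associated graded of the quotient is the quotient of the associated graded by the principal symbol. Counterexample: let $U=k[x,y]/(xy)$ with the total-degree filtration (commutative, so every element is central) and $c=x-1$, so $\sigma(c)=x$. Then $U/Uc\cong k$ and hence $\gr(U/Uc)=k$, whereas $\gr U/(\sigma(c))=k[x,y]/(xy,x)\cong k[y]$. The extra relations come from elements $u$ with $\sigma(u)\sigma(c)=0$: here $y=-y(x-1)\in Uc$, and its symbol $y$ does not lie in the ideal $(x)$. So ``no extra relations'' can genuinely fail for a central $c$.

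What is actually needed, besides two-sidedness of the ideal (which centrality does give), is that the symbol $\sigma(c)=\pm 1_{\mathcal{L}}$ be a non-zero-divisor in $\gr U(L)\cong\Sym_A(L)$: if $\sigma(c)$ is regular, then $\sigma(uc)=\sigma(u)\sigma(c)$ for every $u\neq 0$, so $\gr\bigl(U(L)c\bigr)=\Sym_A(L)\,\sigma(c)$ and your isomorphism $\gr^F(\mathcal{D})\cong\Sym_A(L)/(1_{\mathcal{L}})\cong\Sym_A(r\mathcal{T}_X(U))$ follows. Regularity is available here for the same reason the paper can apply Rinehart: the splitting $L\cong A\oplus r\mathcal{T}_X(U)$ gives $\Sym_A(L)\cong\bigl(\Sym_A(r\mathcal{T}_X(U))\bigr)[1_{\mathcal{L}}]$, a polynomial ring in the variable $1_{\mathcal{L}}$, in which $1_{\mathcal{L}}$ is manifestly a non-zero-divisor. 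With that substitution your argument is correct and matches the paper's proof. Two smaller points to tighten: (a) your centrality computation only treats the bracket with $i(1)$; for the bracket with $j(1_{\mathcal{L}})$ you need $[l,1_{\mathcal{L}}]=0$ in $\mathcal{L}$, i.e.\ that $1_{\mathcal{L}}$ is central in the Lie algebra $\mathcal{L}$ --- true for Picard algebroids, and also used without comment by the paper; (b) the identification $\Lie(\mathcal{D})\cong\mathcal{L}$ should be exhibited by the explicit map $a+l\mapsto i(a)+l$ (as the paper does) rather than by asserting that the two extensions of $r\mathcal{T}_X$ by $\mathcal{O}_X$ coincide, since a priori $F_1(\mathcal{D})$ is only some extension with the same outer data.
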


\begin{proof}
The question is local so we may assume that $X$ is affine; $A=\mathcal{O}_X(X)$, $T=\mathcal{T}_X(X)$, $L=\mathcal{L}(X)$ and let $i:A \to L$ denote the injection induced by the short exact sequence defining $\mathcal{L}$.

Consider the enveloping algebra $U(L)$ of the $(R,A)$-Lie algebra $L$. We can think of it as being generated by $A$ and the universal enveloping algebra of the Lie algebra $L$ subject to the relations: $fl=f \cdot l$ (the module action), $fl-lf=\rho(l)(f)$ for $f \in A$ and $l \in L$, $l_1l_2-l_2l_1=[l_1,l_2]$ for $l_1,l_2 \in L$. The natural filtration on $U(L)$ is given by:

\begin{itemize}
\item{$F_0(U(L))=A,$}
\item{For $n \geq 1$, $F_n(U(L))=A+l_1 l_2 \ldots l_m $, where $m \leq n$ and $l_1,l_2,\ldots l_m \in L$.}
\end{itemize}

Since $fl-lf=\rho(l)(f)$ for $f \in A$ and $l \in L$, it easy to see that $U(L)$ becomes a differential $A$ algebra with respect to this filtration. 

Let $I=\langle i(1)-1_L \rangle$ be the central two sided ideal and $D:=\mathcal{D}(X)=U(L)/IU(L)$. We give $D$ the quotient natural quotient filtration induced from $U(L)$. By construction, we have that $D$ is also a differential $A$-algebra since $U(L)$ is. Furthermore, we have $F_0(D)\cong F_0(U(L)) \cong A$.

Let $g:F_1(U(L)) \to L$ given by $g(a+l)=i(a)+l$ for $a \in A, l \in L$. Then it is clear that $g$ is surjective and $\ker(g)=a-i(a)=F_1(I)$. Thus, we obtain $F_1(D) \cong L$, so $\Lie(D) \cong L$.

Since $L$ is an $r$-deformed Picard algebroid, we obtain immediately that $\gr_1 D \cong L/A \cong rT$. Finally, because $X$ is an $R$-variety, $T$ is projective as an $A$-module. Therefore, $L$ is also projective as $A$-module, so we have by \cite[Theorem 3.1]{Rinehart} that $\gr D \cong \Sym_A(\gr_1 D).$ Thus, we have proven all the required properties to make $D$ an $r$-deformed tdo on $X$.
\end{proof}

Using the Lemma above we make the following definition:
\begin{definition}

Let $X$ be an $R$-variety. Define a functor $\mathscr{T}:$ category of $r$-deformed Picard algebroids on $X \to$ category of $r$-deformed twisted differential operators on $X$ by

$$\mathscr{T}(\mathcal{L}):= U(\mathcal{L})/U(\mathcal{L})(i(1)-j(1)).$$

\end{definition}

\begin{lemma}
\label{d1tdotoliealgebroid}

Let $\mathcal{D}$ be an $r$-deformed tdo on $X$. The sheaf $\Lie(\mathcal{D}):=F_1(\mathcal{D})$ is an $r$-deformed Picard algebroid and furthermore $$\mathscr{T}(\Lie \mathcal{D})=U(\Lie(\mathcal{D}))/U(\Lie(\mathcal{D}))(i(1)-j(1)) \cong \mathcal{D}.$$

\end{lemma}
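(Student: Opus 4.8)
The plan is to prove the two assertions separately, working locally on an affine open $U=\Spec A$ throughout—all the ingredients (the differential filtration, the enveloping algebra, and the functor $\mathscr{T}$) are defined locally and glue—and then to reduce the final isomorphism to a comparison of associated graded algebras. I would begin with the claim that $\Lie(\mathcal{D})=F_1(\mathcal{D})$ is a Lie algebroid: the anchor is $\rho(\psi)=\ad_\psi|_{\mathcal{O}_X}$, which lands in $\mathcal{T}_X$ because $\mathcal{D}$ is a differential algebra and is $\mathcal{O}_X$-linear since $\mathcal{O}_X$ is commutative ($\ad_{f\psi}=f\,\ad_\psi$), while the bracket is the commutator $[\psi_1,\psi_2]=\psi_1\psi_2-\psi_2\psi_1$, which lands in $F_1(\mathcal{D})$ because commutativity of $\gr^F\mathcal{D}$ forces $[F_1,F_1]\subseteq F_1$. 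The Jacobi identity and the fact that $\rho$ is a Lie morphism are immediate from associativity of $\mathcal{D}$, and the Leibniz rule follows from $[\psi_1,f]=\ad_{\psi_1}(f)$; this is essentially the content of \cite[1.2.5]{BB2}, whose verification does not depend on the base ring.

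Next I would exhibit the Picard structure. Condition i) of Definition \ref{d1tdodefinition} identifies $\mathcal{O}_X$ with $F_0(\mathcal{D})$, giving an inclusion $\mathcal{O}_X\hookrightarrow F_1(\mathcal{D})$ whose image has trivial bracket (since $\mathcal{O}_X$ is commutative), so it is a Lie-algebra map for the trivial structure on $\mathcal{O}_X$. Condition ii) identifies the quotient $F_1(\mathcal{D})/F_0(\mathcal{D})=\gr_1^F\mathcal{D}$ with $r\mathcal{T}_X$ via $\rho$, and the kernel of the resulting surjection $F_1(\mathcal{D})\twoheadrightarrow r\mathcal{T}_X$ is exactly $F_0(\mathcal{D})=\mathcal{O}_X$. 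Since all maps are $\mathcal{O}_X$-linear, the sequence
\[
0 \to \mathcal{O}_X \to \Lie(\mathcal{D}) \to r\mathcal{T}_X \to 0
\]
is the required short exact sequence, so $\Lie(\mathcal{D})$ is an $r$-deformed Picard algebroid.

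For the isomorphism I would build a natural filtered algebra map $\bar\Phi:\mathscr{T}(\Lie\mathcal{D})\to\mathcal{D}$. The inclusions $\mathcal{O}_X\hookrightarrow\mathcal{D}$ and $F_1(\mathcal{D})\hookrightarrow\mathcal{D}$ satisfy the defining relations of the enveloping algebra, so the universal property yields $\Phi:U(\Lie\mathcal{D})\to\mathcal{D}$ with $i(f)\mapsto f$ and $j(\psi)\mapsto\psi$. Both $i(1)$ and $j(1_{\mathcal{L}})$ map to $1\in\mathcal{O}_X\subset\mathcal{D}$, so $i(1)-j(1)$ dies and $\Phi$ descends to $\bar\Phi$ on $\mathscr{T}(\Lie\mathcal{D})=U(\Lie\mathcal{D})/U(\Lie\mathcal{D})(i(1)-j(1))$. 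Because $F_iF_j\subseteq F_{i+j}$, a product of $\le n$ elements of $F_1(\mathcal{D})$ lands in $F_n(\mathcal{D})$, so $\bar\Phi$ is filtered. To see it is an isomorphism I pass to $\gr$: by Proposition \ref{d1liealgebroidtotdo} the algebra $\mathscr{T}(\Lie\mathcal{D})$ is an $r$-deformed tdo with $\Lie(\mathscr{T}\Lie\mathcal{D})=\Lie\mathcal{D}$, whence $\gr^F\mathscr{T}(\Lie\mathcal{D})\cong\Sym_{\mathcal{O}_X}(r\mathcal{T}_X)$, and condition iii) gives the same for $\gr^F\mathcal{D}$. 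By construction $\bar\Phi$ is the identity on $F_0=\mathcal{O}_X$ and, under the identification $\Lie(\mathscr{T}\Lie\mathcal{D})=\Lie\mathcal{D}$, the identity on $\gr_1$; being an algebra map compatible with the multiplication maps $\Sym(\gr_1)\to\gr$ on both sides, $\gr\bar\Phi$ equals $\Sym_{\mathcal{O}_X}(\gr_1\bar\Phi)$ and is therefore an isomorphism. As both filtrations are exhaustive with $F_{-1}=0$, a filtered map that is an isomorphism on $\gr$ is an isomorphism (induct on filtration degree), giving $\mathscr{T}(\Lie\mathcal{D})\cong\mathcal{D}$.

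I expect the main obstacle to be the bookkeeping in this last step: confirming that $\bar\Phi$ genuinely preserves the filtration and, crucially, that the two identifications of $\gr_1$ with $r\mathcal{T}_X$—the one coming from $\mathcal{D}$ via condition ii) and the one coming from $\mathscr{T}(\Lie\mathcal{D})$ via Proposition \ref{d1liealgebroidtotdo}—agree, so that $\gr\bar\Phi$ really reduces to the identity on generators. Once that compatibility is pinned down, the symmetric-algebra description from condition iii) makes $\gr\bar\Phi$ an isomorphism and the filtered-to-graded principle closes the argument; everything else is formal.
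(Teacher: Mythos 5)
Your proof is correct and follows essentially the same route as the paper's: the Lie algebroid structure via \cite[1.2.5]{BB2}, the Picard short exact sequence from axioms i) and ii) of Definition \ref{d1tdodefinition}, the filtered morphism $\mathscr{T}(\Lie\mathcal{D})\to\mathcal{D}$ from the universal property, and the conclusion via axiom iii) and a graded-to-filtered argument. In fact you are somewhat more careful than the paper at the final step, which only asserts $\gr(\mathcal{A})\cong\gr(\mathcal{D})$ abstractly and concludes; your observation that one must check $\gr\bar\Phi$ itself is the identity on $\mathcal{O}_X$ and on $\gr_1$ (so that the isomorphism of graded algebras is induced by the filtered map) is exactly the detail needed to make that conclusion rigorous.
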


\begin{proof}

Let $\mathcal{L}:=\Lie(\mathcal{D})$. Since $\mathcal{D}$ is a differential algebra, $\mathcal{L}$ is a Lie algebroid by \cite[1.2.5]{BB2}, with the anchor map  $\rho:\mathcal{L} \to \mathcal{T}_X$ induced  by axiom $ii)$ of Definition \ref{d1tdodefinition}. Further by axioms $i)$ and $ii)$ of \ref{d1tdodefinition} we observe that  $\ker(\rho)=\mathcal{O}_X$ and $\im(\rho)=r\mathcal{T}_X$, so $\mathcal{L}$ is indeed an $r$-deformed Picard algebroid.

Let $\mathcal{A}:=U(\mathcal{L})/U(\mathcal{L})(i(1)-j(1))$. By Proposition \ref{d1liealgebroidtotdo}, $\mathcal{A}$ is an $r$-deformed tdo. Further by construction we have that there is morphism of filtered algebras $\mathcal{A} \to \mathcal{D}$ and $\gr_1(\mathcal{A}) \cong \gr_1(\mathcal{D})$. Since $\mathcal{A}$ and $\mathcal{D}$ are $r$-deformed tdo's, we have by axiom $iii)$ of Definition \ref{d1tdodefinition} that  $\gr(\mathcal{A}) \cong \Sym_{\mathcal{O}_X}(\gr_1 \mathcal{A})$ and $\gr(\mathcal{D}) \cong \Sym_{\mathcal{O}_X}(\gr_1 \mathcal{D} )$. Therefore, we get $\gr(\mathcal{A}) \cong \gr(\mathcal{D})$,so $\mathcal{A} \cong \mathcal{D}$ since there is a filtered morphism between them.

\end{proof}

\begin{corollary}
\label{d1Picardalgtdocorresp}
Let $X$ be an $R$-variety. The functors $\mathscr{T}$ and $\Lie$ induce quasi-inverse equivalences of categories between the category of $r$-deformed Picard algebroids on $X$ and the category of $r$-deformed tdo's on $X$.

\end{corollary}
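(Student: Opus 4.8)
The plan is to deduce this equivalence directly from the two preceding results, Proposition \ref{d1liealgebroidtotdo} and Lemma \ref{d1tdotoliealgebroid}, by checking that the functors $\mathscr{T}$ and $\Lie$ are mutually quasi-inverse. Most of the substantive content — that $\mathscr{T}$ lands in $r$-deformed tdo's, that $\Lie$ lands in $r$-deformed Picard algebroids, and that the two composites recover the original object up to natural isomorphism — has already been established. What remains is to organize this into a genuine statement about categories, namely to exhibit natural isomorphisms $\mathscr{T} \circ \Lie \xrightarrow{\sim} \id$ and $\Lie \circ \mathscr{T} \xrightarrow{\sim} \id$.

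First I would recall that Lemma \ref{d1tdotoliealgebroid} already supplies, for each $r$-deformed tdo $\mathcal{D}$, a canonical isomorphism $\mathscr{T}(\Lie(\mathcal{D})) \cong \mathcal{D}$; this is precisely the unit/counit on the tdo side. Dually, Proposition \ref{d1liealgebroidtotdo} shows that for an $r$-deformed Picard algebroid $\mathcal{L}$ we have $\Lie(\mathscr{T}(\mathcal{L})) = F_1(U(\mathcal{L})/U(\mathcal{L})(i(1)-j(1))) \cong \mathcal{L}$, since that proposition explicitly identifies $\Lie(\mathcal{D}) = \mathcal{L}$ for $\mathcal{D} = \mathscr{T}(\mathcal{L})$. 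So both composites are isomorphic to the identity on objects, and the isomorphisms are the ones already constructed in those proofs.

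The one point genuinely left to verify is \emph{naturality}: that these object-wise isomorphisms assemble into natural transformations, i.e. that they commute with morphisms of $r$-deformed tdo's and of $r$-deformed Picard algebroids respectively. I would argue this by unwinding the definitions of the two functors: $\mathscr{T}$ is defined via the universal enveloping algebra construction $U(-)$ followed by the quotient by $(i(1)-j(1))$, both of which are functorial, and $\Lie(-) = F_1(-)$ is functorial because a morphism of differential algebras preserves the order filtration $F_\bullet$. Since a morphism of $r$-deformed Picard algebroids induces a morphism on enveloping algebras respecting the relevant filtrations and the defining ideal, the identification $F_1(\mathscr{T}(\mathcal{L})) \cong \mathcal{L}$ from the proof of Proposition \ref{d1liealgebroidtotdo} (given locally by the surjection $g(a+l) = i(a)+l$) is compatible with these morphisms, and similarly on the other side. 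The verification is a diagram chase that is local in nature, so I would reduce to the affine case and check commutativity there, using that all constructions were built from the local Lie--Rinehart data.

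I expect the main obstacle, such as it is, to be purely bookkeeping: confirming that the local isomorphisms of Proposition \ref{d1liealgebroidtotdo} and Lemma \ref{d1tdotoliealgebroid} are \emph{canonical} (independent of the affine chart) so that they glue to natural isomorphisms of sheaves. Because the $F_1$-filtration piece and the enveloping-algebra quotient are both intrinsically defined, there is no real choice involved, and the gluing is automatic; hence the corollary follows formally once naturality is noted.
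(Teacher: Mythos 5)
Your proposal is correct and follows exactly the paper's route: the paper's proof of Corollary \ref{d1Picardalgtdocorresp} is simply the observation that Proposition \ref{d1liealgebroidtotdo} and Lemma \ref{d1tdotoliealgebroid} together show the two composites $\Lie \circ \mathscr{T}$ and $\mathscr{T} \circ \Lie$ recover the original object. The only difference is that you additionally spell out the naturality and gluing verification, which the paper leaves implicit; that is a reasonable piece of added care, not a different argument.
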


\begin{proof}
This follows from Proposition \ref{d1liealgebroidtotdo} and Lemma \ref{d1tdotoliealgebroid}.
\end{proof}

\section{Equivariant deformed Picard algebroids and deformed homogeneous twisted differential operators}
\label{d1eqliealgandhtdo}

Throughout this section, we fix $X$ an $R$-variety, $G$ a smooth affine algebraic group of finite type acting on $X$ and $r \in R$ a regular element. Recall from \ref{d1easyweakOmodequivariant} that for a $G$-equivariant $\mathcal{O}_X$-module $\mathcal{M}$ we denoted by abuse of notation by $\cdot$ the group action giving the equivariance.
\begin{definition}
\label{d1defofeqalg}
Let $(\mathcal{L},\rho)$ be a Lie algebroid. We say that $\mathcal{L}$ is $r$-deformed $G$-equivariant if $\mathcal{L}$ is a $G$-equivariant as a $\mathcal{O}_X$-module and it is equipped with a Lie algebra morphism $i_{\mathfrak{g}}:r\mathfrak{g} \to \mathcal{L}$ such that:

\begin{enumerate}[label=\roman*)]
\item{$g \cdot[x,y]=[g \cdot x, g \cdot y]$, for $g \in G$, $x,y \in \mathcal{L}$.}
\item{$g.\rho(l)(f)=\rho(g.l)(g.f)$, for $g \in G, l \in \mathcal{L}, f \in \mathcal{O}_X.$ This is equivalent to $\rho$ being $G$-equivariant.}
\item{$i_{\mathfrak{g}}(g.\psi)=g.i_{\mathfrak{g}}(\psi)$, for $g \in G$ and $\psi \in r\mathfrak{g}$. Here $G$ acts on $r\mathfrak{g} \subset \mathfrak{g}$ via the Adjoint action.}
\end{enumerate}
\end{definition}

Similarly, we may define the notion of equivariant differential algebra.

\begin{definition}
\label{d1defequivdiffalg}
Let $\mathcal{D}$ be a differential $\mathcal{O}_X$-algebra. We call $\mathcal{D}$ an $r$-deformed \emph{$G$-equivariant} differential algebra if it is $G$-equivariant as a left $\mathcal{O}_X$-module and it is equipped with a Lie algebra map $i_{\mathfrak{g}}:r\mathfrak{g} \to \mathcal{D}$ such that:

\begin{enumerate}

\item{$g.1=1$ and $g.(d_1d_2)=(g.d_1)(g.d_2)$, for $g \in G$ and $d_1,d_2 \in \mathcal{D}.$ }
\item{$g.(fd)=(g.f)(g.d)$, for $f \in \mathcal{O}_X$ and $d \in \mathcal{D}$.}
\item{$i_{\mathfrak{g}}(g.\psi)=g.i_{\mathfrak{g}}(\psi)$, for $g \in G$ and $\psi \in r \mathfrak{g}$.}
\end{enumerate}

\end{definition}

\begin{lemma}
\label{d1equivariantdifalg}
Let $(\mathcal{L},\rho,i_{\mathfrak{g}})$ be an $r$-deformed $G$-equivariant Lie algebroid. Then $U(\mathcal{L})$ is an $r$-deformed $G$-equivariant differential algebra.
\end{lemma}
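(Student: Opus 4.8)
The plan is to prove that the enveloping algebra $U(\mathcal{L})$ inherits the $r$-deformed $G$-equivariant differential algebra structure by transporting the equivariance data from $\mathcal{L}$ through the universal construction. The work is local, so I would fix an affine open $U = \Spec A \subset X$ and write $L = \mathcal{L}(U)$, reducing everything to the enveloping algebra $U(L)$ of the Lie--Rinehart algebra $L$. The first task is to produce the $G$-action (equivalently, a comodule structure $\rho_{U(L)}: U(L) \to \mathcal{O}(G) \uset{R} U(L)$) on $U(L)$ making it a $G$-equivariant $\mathcal{O}_X$-module. Since $U(L)$ is generated as an $R$-algebra by $A$ and $L$, and both of these already carry compatible $G$-actions (on $A = \mathcal{O}_X(U)$ from the structure sheaf equivariance of \eqref{d1easyweakOmodequivariant}, and on $L$ from the hypothesis), I would define the action on products by the rule $g.(d_1 d_2) = (g.d_1)(g.d_2)$ and check it descends to the quotient defining $U(L)$.

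The key step is therefore verifying that the $G$-action is well-defined on $U(L)$, i.e.\ that it respects the three defining relations of the enveloping algebra: $fl = f \cdot l$, $fl - lf = \rho(l)(f)$, and $l_1 l_2 - l_2 l_1 = [l_1, l_2]$. The module relation is immediate from $g.(f.l) = (g.f).(g.l)$; the bracket relation follows from axiom i) of Definition \ref{d1defofeqalg}, namely $g.[x,y] = [g.x, g.y]$; and the commutator-versus-anchor relation follows from axiom ii), the $G$-equivariance of $\rho$, since $g.(fl - lf) = g.\rho(l)(f) = \rho(g.l)(g.f)$ matches the bracket of the transformed elements. Concretely I would show that the two-sided ideal cutting out $U(L)$ from the free construction is $G$-stable, so the action passes to the quotient. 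Compatibility with base change and the identity and associativity conditions $g.1 = 1$, $e.d = d$, $(gh).d = g.(h.d)$ then follow from the corresponding properties on generators, since any algebra map is determined on generators.

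It then remains to produce the Lie algebra map $i_{\mathfrak{g}}: r\mathfrak{g} \to U(\mathcal{L})$ and check conditions $(2)$ and $(3)$ of Definition \ref{d1defequivdiffalg}. The map is the composite $r\mathfrak{g} \xrightarrow{i_{\mathfrak{g}}} \mathcal{L} = F_1(U(\mathcal{L})) \hookrightarrow U(\mathcal{L})$ of the given Lie algebra morphism with the canonical inclusion $j$; it is a Lie algebra map because both factors are. Condition $(2)$, $g.(fd) = (g.f)(g.d)$, is just the left $\mathcal{O}_X$-module compatibility of the action and reduces to the generating relation $g.(f.l) = (g.f)(g.l)$ extended multiplicatively. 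Condition $(3)$, $i_{\mathfrak{g}}(g.\psi) = g.i_{\mathfrak{g}}(\psi)$, is inherited verbatim from axiom iii) of Definition \ref{d1defofeqalg} together with the $G$-equivariance of the inclusion $j: \mathcal{L} \hookrightarrow U(\mathcal{L})$ as part of the equivariant $\mathcal{O}_X$-module structure just constructed.

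I expect the main obstacle to be the well-definedness of the $G$-action on $U(\mathcal{L})$ at the sheaf level: defining it on generators and relations is formal, but one must confirm that the locally defined actions glue to a global comodule structure and are genuinely compatible with base change, so that $U(\mathcal{L})$ is $G$-equivariant in the precise sense of the reformulation in \eqref{d1easyweakOmodequivariant}. Once the ideal of relations is shown to be $G$-stable and the filtration $F_\bullet U(\mathcal{L})$ is seen to consist of $G$-equivariant $\mathcal{O}_X$-submodules (which is automatic since each $F_n$ is generated by $G$-stable data), everything else is a routine check on generators, and the proof concludes.
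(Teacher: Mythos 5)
Your proposal is correct and follows essentially the same route as the paper's proof: define the $G$-action multiplicatively on products of generators, verify it respects the defining relations of $U(\mathcal{L})$ using axioms i) and ii) of Definition \ref{d1defofeqalg} (the bracket and anchor equivariance), note the filtration is preserved, and extend $i_{\mathfrak{g}}$ through the canonical map $\mathcal{L} \to U(\mathcal{L})$ to get axiom iii). Your additional attention to the module relation $fl = f\cdot l$, glueing, and base-change compatibility is a slightly more careful write-up of checks the paper leaves implicit, but it is not a different argument.
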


\begin{proof}


Since $\mathcal{L}$ is quasi-coherent as a $\mathcal{O}_X$-module, so is $U(\mathcal{L})$. We define $G$-action on $U(\mathcal{L})$ by defining $g.(l_1l_2 \ldots g.j)=(g.l_1)(g.l_2) \ldots (g.l_j)$ for $l_1,l_2, \ldots l_j \in \mathcal{L}$ and $g.(fl_1l_2 \ldots l_j)=(g.f)(g.l_1l_2 \ldots l_j)$ for $f \in \mathcal{O}_X$ and $l_1,l_2, \ldots l_j \in \mathcal{L}$. We have

\begin{equation}
\begin{split}
g.(l_1l_2-l_2l_1) &= (g.l_1)(g.l_2)-(g.l_2) (g.l_1) \\
                  &=[g.l_1,g.l_2] \\
                  &=g.[l_1,l_2]
\end{split}
\end{equation}
and 

\begin{equation}
\begin{split}
g.(fl-lf) &= (g.f)(g.l)-(g.l)(g.f)\\
          &=[g.f,g.l] \\
          &=\rho(g.l)(g.f)\\
          &=g.\rho(l)(f).
\end{split}
\end{equation}

Since $U(\mathcal{L})$ is generated by $\mathcal{O}_X$ and the enveloping algebra of $\mathcal{L}$, subject to the relations $(fl-lf)=\rho(l)(f)$ and $fl=f.l$, it follows from the equations and definition that $U(\mathcal{L})$ is $G$-equivariant as $\mathcal{O}_X$-module and furthermore, axioms $i)$ and $ii)$ of Definition \ref{d1defequivdiffalg} are satisfied. Further it easy to check that $G$-action preserves the filtration on $U(\mathcal{L})$.

The morphism $i_{\mathfrak{g}}:r\mathfrak{g} \to \mathcal{L}$ can be extended to a morphism $i_{\mathfrak{g}}:r\mathfrak{g} \to U(\mathcal{L})$ via the natural map $\mathcal{L} \to U(\mathcal{L})$ and it is clear by construction that under $G$-action and the map $i_{\mathfrak{g}}$ defined above that $U(\mathcal{L})$ becomes an $r$-deformed $G$-equivariant differential algebra.
\end{proof}

As we are interested in deformed Picard algebroids, we define the notion of an $r$-deformed $G$-equivariant Picard algebroid. The $G$ action on $\mathcal{L}$ induces by differentiation a $\mathfrak{g}:=\Lie(G)$ action via  a map $\beta_{\mathcal{M}}:\mathfrak{g} \to \End(\mathcal{L})$. We also let $\eta: \mathfrak{g} \to \mathcal{T}_X$ denote the infinitesimal action of $\mathfrak{g}$ on $X$.

\begin{definition}
\label{d1definitionrdeformedPicard}
Let $\mathcal{L}$ be an $r$-deformed Picard algebroid. We say that $\mathcal{L}$ is an $r$-deformed $G$-equivariant Picard algebroid if $\mathcal{L}$ is an $r$-deformed $G$-equivariant algebroid, in the short exact sequence

$$0 \to \mathcal{O}_X \to \mathcal{L} \to r\mathcal{T}_X \to 0$$

all the morphisms are $G$-equivariant and 

\begin{itemize}
\item{The derivative of the $G$-action induces a $\mathfrak{g}$ action and thus a $r \mathfrak{g}$ action on $\mathcal{L}$. This must coincide with the action $l \mapsto [i_{\mathfrak{g}}(\psi),l]$ for $\psi \in r\mathfrak{g}$ and $ l \in \mathcal{L}$.}
\item{$\eta_{|r \mathfrak{g}}=\rho \circ i_{\mathfrak{g}}$.}
\end{itemize}

A morphism of $r$-deformed $G$-equivariant Picard algebroids is a morphism of $r$-deformed Picard algebroids compatible with the equivariance structures.
\end{definition}

\begin{definition}
\label{d1definitiondeformedhtdo}
Let $\mathcal{D}$ be a $\mathcal{O}_X$-algebra. We say that $\mathcal{D}$ is a sheaf of $r$-deformed $G$-homogeneous twisted differential operators ($r$-deformed $G$-htdo) if $(\mathcal{D},i_{\mathfrak{g}})$ is an $r$-deformed $G$-equivariant differential $\mathcal{O}_X$-algebra and a sheaf of $r$-deformed twisted differential operators, and furthermore: 

\begin{itemize}
\item{The image of $i_{\mathfrak{g}}$ lies in $F_1 \mathcal{D}$.}
\item{The derivative of the $G$-action induces a $\mathfrak{g}$ action and thus a $r \mathfrak{g}$-action on $\mathcal{D}$. This must coincide with the action $d \mapsto [i_{\mathfrak{g}}(\psi),d]$ for $\psi \in r\mathfrak{g}$ and $ d \in \mathcal{D}$.}
\item{ $\eta_{|r \mathfrak{g}}=\rho \circ i_{\mathfrak{g}}$, where $\rho:F_1 \mathcal{D}=\Lie(\mathcal{D}) \to \mathcal{T}_X$ is the natural anchor map.}
\end{itemize}

A morphism of $r$-deformed $G$-htdo's is a morphism of $r$-deformed tdo's compatible with the equivariance structures.

\end{definition}

One should notice that since $g.1=1$, the morphism $\mathcal{O}_X \to F_0(\mathcal{D})$ is automatically $G$-equivariant.

\begin{lemma}
\label{d1eqPicalgtohtdo}
Let $(\mathcal{L},\rho,i_{\mathfrak{g}})$ be an $r$-deformed $G$-equivariant Picard algebroid. Then $\mathscr{T}(\mathcal{L})$ is an $r$-deformed $G$-htdo.
\end{lemma}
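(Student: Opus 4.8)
The plan is to exhibit $\mathscr{T}(\mathcal{L})$ as the quotient of the $G$-equivariant differential algebra $U(\mathcal{L})$ by a $G$-stable ideal, and then to verify the three extra axioms of Definition \ref{d1definitiondeformedhtdo} by reducing everything to the generating sheaves $\mathcal{O}_X$ and $\mathcal{L}$. First I would record what is already available: by Proposition \ref{d1liealgebroidtotdo}, $\mathscr{T}(\mathcal{L})$ is an $r$-deformed tdo with $\Lie(\mathscr{T}(\mathcal{L})) = \mathcal{L}$ and $F_1\mathscr{T}(\mathcal{L}) \cong \mathcal{L}$; and by Lemma \ref{d1equivariantdifalg}, $U(\mathcal{L})$ is an $r$-deformed $G$-equivariant differential algebra, with $G$ acting by filtered algebra automorphisms and with the map $i_{\mathfrak{g}}\colon r\mathfrak{g}\to U(\mathcal{L})$ obtained by composing $i_{\mathfrak{g}}\colon r\mathfrak{g}\to\mathcal{L}$ with $\mathcal{L}\to U(\mathcal{L})$.

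The first real step is to check that the $G$-action descends to $\mathscr{T}(\mathcal{L}) = U(\mathcal{L})/U(\mathcal{L})(i(1)-j(1))$. Since $g.1 = 1$ in $\mathcal{O}_X$ and, by Definition \ref{d1definitionrdeformedPicard}, the inclusion $\mathcal{O}_X \hookrightarrow \mathcal{L}$ is $G$-equivariant, both $i(1)$ and $j(1_{\mathcal{L}})$ are $G$-fixed, hence so is the generator $i(1) - j(1_{\mathcal{L}})$. Because $G$ acts by algebra automorphisms, $g.(u(i(1)-j(1))) = (g.u)(i(1)-j(1))$ again lies in the ideal, so the ideal is $G$-stable and the $G$-equivariant differential-algebra structure, together with $i_{\mathfrak{g}}$, passes to the quotient. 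This makes $(\mathscr{T}(\mathcal{L}), i_{\mathfrak{g}})$ an $r$-deformed $G$-equivariant differential $\mathcal{O}_X$-algebra.

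It remains to verify the three bullet points of Definition \ref{d1definitiondeformedhtdo}. That the image of $i_{\mathfrak{g}}$ lies in $F_1\mathscr{T}(\mathcal{L})$ is immediate from $F_1\mathscr{T}(\mathcal{L}) \cong \mathcal{L}$ and the fact that $i_{\mathfrak{g}}$ factors through $\mathcal{L}$. The identity $\eta_{|r\mathfrak{g}} = \rho \circ i_{\mathfrak{g}}$ is inherited verbatim from the corresponding axiom for $\mathcal{L}$, since the anchor $\rho$ of $\Lie(\mathscr{T}(\mathcal{L})) = \mathcal{L}$ and the map $i_{\mathfrak{g}}$ coincide with those of $\mathcal{L}$. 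The substantive point is the remaining axiom: that the derivative $\beta(\psi)$ of the $G$-action agrees with $\ad_{i_{\mathfrak{g}}(\psi)} = [i_{\mathfrak{g}}(\psi), -]$ for $\psi \in r\mathfrak{g}$. Both are $R$-linear derivations of $U(\mathcal{L})$, so it suffices to check agreement on the generators $\mathcal{O}_X$ and $\mathcal{L}$. On $\mathcal{L}$ they coincide by the first bullet of Definition \ref{d1definitionrdeformedPicard}. On $\mathcal{O}_X$ one computes $[i_{\mathfrak{g}}(\psi), f] = \rho(i_{\mathfrak{g}}(\psi))(f) = \eta(\psi)(f)$ using the Lie--Rinehart relation and $\eta_{|r\mathfrak{g}}=\rho\circ i_{\mathfrak{g}}$; this equals $\beta(\psi)(f)$, a compatibility already forced by the $G$-equivariance of the $\mathcal{O}_X$-module $\mathcal{L}$ combined with the first bullet of Definition \ref{d1definitionrdeformedPicard}. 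Having matched both derivations on generators, they agree on all of $U(\mathcal{L})$ and hence descend to the required equality on $\mathscr{T}(\mathcal{L})$.

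I expect the main obstacle to be precisely this last derivation argument: one must confirm that $\beta(\psi)$ and $\ad_{i_{\mathfrak{g}}(\psi)}$ are genuinely derivations and that their agreement on $\mathcal{O}_X$ and $\mathcal{L}$ propagates to all products via the Leibniz rule, using that $U(\mathcal{L})$ is generated by $F_1 = \mathcal{O}_X + \mathcal{L}$. The $G$-stability of the ideal, while essential, is routine once one observes that its generator $i(1)-j(1_{\mathcal{L}})$ is $G$-fixed.
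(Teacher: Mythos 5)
Your proof is correct and follows the same route as the paper's: quotient the $G$-equivariant differential algebra $U(\mathcal{L})$ (Lemma \ref{d1equivariantdifalg}) by the $G$-stable ideal $U(\mathcal{L})(i(1)-j(1))$, invoke Proposition \ref{d1liealgebroidtotdo} for the tdo structure, and deduce the htdo axioms of Definition \ref{d1definitiondeformedhtdo} from those of Definition \ref{d1definitionrdeformedPicard}. In fact your verification of the final axiom --- matching the derivations $\beta(\psi)$ and $\ad_{i_{\mathfrak{g}}(\psi)}$ on the generators $\mathcal{O}_X$ and $\mathcal{L}$ and propagating by the Leibniz rule --- spells out a step the paper compresses into a single sentence, so your write-up is, if anything, more complete.
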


\begin{proof}

We have by Lemma \ref{d1equivariantdifalg} that $U(\mathcal{L})$ is a $G$-equivariant differential algebra. Now, since $\mathcal{L}$ is $G$-equivariant, the action of $G$ stabilises the ideal generated by $i(1)-j(1)$, so the $G$ action descends on $U(\mathcal{L})/U(\mathcal{L})(i(1)-j(1))$. Similarly, composing the map $i_{\mathfrak{g}}: r\mathfrak{g} \to U(\mathcal{L})$ with the natural projection, we obtain a map $r\mathfrak{g} \to \mathscr{T}(\mathcal{L}).$ Further, by Proposition \ref{d1liealgebroidtotdo}, $U(\mathcal{L})/U(\mathcal{L})(i(1)-j(1))$ is an $r$-deformed tdo. Finally, the axioms of Definition \ref{d1definitionrdeformedPicard} imply that $\mathscr{T}(\mathcal{L})$ is an $r$-deformed $G$-htdo.
\end{proof} 

\begin{lemma}
\label{d1htdotoeqPicalg}
Let $\mathcal{D}$ be an $r$-deformed $G$-htdo. Then $\mathcal{L}:=F_1(\mathcal{D})$ is an $r$-deformed $G$-equivariant Picard algebroid.

\end{lemma}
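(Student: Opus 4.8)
The plan is to verify that the filtered piece $\mathcal{L}:=F_1(\mathcal{D})$ inherits each structure required by Definition \ref{d1definitionrdeformedPicard}, by restricting the data already present on the $G$-htdo $\mathcal{D}$. By Lemma \ref{d1tdotoliealgebroid}, since $\mathcal{D}$ is an $r$-deformed tdo, we already know $\mathcal{L}=F_1(\mathcal{D})$ is an $r$-deformed Picard algebroid, so the underlying short exact sequence $0 \to \mathcal{O}_X \to \mathcal{L} \to r\mathcal{T}_X \to 0$ is in place; what remains is to equip it with the equivariant refinements. First I would observe that because $\mathcal{D}$ is a $G$-equivariant differential $\mathcal{O}_X$-algebra, the $G$-action preserves the PBW filtration (the action is by filtered algebra automorphisms, as the bracket and the anchor are built from the associative structure), so $G$ stabilises $F_1(\mathcal{D})$ and hence $\mathcal{L}$ is $G$-equivariant as an $\mathcal{O}_X$-module.

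Next I would check that $\mathcal{L}$ is an $r$-deformed $G$-equivariant Lie algebroid in the sense of Definition \ref{d1defofeqalg}. The map $i_{\mathfrak{g}}:r\mathfrak{g}\to\mathcal{D}$ has image in $F_1\mathcal{D}=\mathcal{L}$ by the first bullet of Definition \ref{d1definitiondeformedhtdo}, so it restricts to a Lie algebra map $i_{\mathfrak{g}}:r\mathfrak{g}\to\mathcal{L}$; condition iii) of Definition \ref{d1defofeqalg} ($i_{\mathfrak{g}}(g.\psi)=g.i_{\mathfrak{g}}(\psi)$) is then inherited verbatim from the htdo structure. For condition i), the bracket on $\mathcal{L}$ is the commutator $[a,b]=ab-ba$ inside $\mathcal{D}$, which lands in $F_1\mathcal{D}$ since $\gr^F\mathcal{D}$ is commutative, and $G$-equivariance of the commutator follows from $g.(d_1d_2)=(g.d_1)(g.d_2)$. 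Condition ii), that the anchor $\rho:\mathcal{L}\to\mathcal{T}_X$ is $G$-equivariant, I would deduce from $\rho(\psi)=\ad_{\psi}$ together with $g.(fd)=(g.f)(g.d)$: applying $g$ to $\rho(l)(f)=[l,i(f)]$ and using multiplicativity of the action gives $g.\rho(l)(f)=\rho(g.l)(g.f)$.

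It then remains to confirm the two distinguished bullets in Definition \ref{d1definitionrdeformedPicard}. The compatibility $\eta_{|r\mathfrak{g}}=\rho\circ i_{\mathfrak{g}}$ is literally the third bullet of the $G$-htdo definition, once one identifies $\rho$ on $F_1\mathcal{D}$ with the anchor of $\mathcal{L}$. The first bullet — that the derivative of the $G$-action on $\mathcal{L}$ coincides with $l\mapsto[i_{\mathfrak{g}}(\psi),l]$ — follows by differentiating the corresponding statement for $\mathcal{D}$ (the second bullet of Definition \ref{d1definitiondeformedhtdo}) and noting that both the $\mathfrak{g}$-action $\beta$ and the inner derivation $\ad_{i_{\mathfrak{g}}(\psi)}$ preserve $F_1\mathcal{D}$, so the identity restricts from $\mathcal{D}$ to $\mathcal{L}$. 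Finally I would note that all morphisms in the short exact sequence are $G$-equivariant: the inclusion $\mathcal{O}_X\hookrightarrow\mathcal{L}$ is equivariant because $g.1=1$, and the surjection $\mathcal{L}\to r\mathcal{T}_X$ is exactly the anchor, whose equivariance was established above.

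The only genuinely delicate point, and the step I expect to require the most care, is the identification of the differentiated $G$-action with the inner bracket on $\mathcal{L}$: one must make sure that differentiating a group action and then restricting to $F_1\mathcal{D}$ commutes correctly, and that the $r\mathfrak{g}$-action obtained by differentiation really agrees on the nose with $\ad_{i_{\mathfrak{g}}(\psi)}$ rather than merely up to the scalar $r$. Everything else is a routine restriction of structures already verified for $\mathcal{D}$, so the proof is essentially a bookkeeping exercise once this compatibility is pinned down.
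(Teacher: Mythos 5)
Your proof is correct and follows essentially the same route as the paper's own (much terser) argument: invoke Lemma \ref{d1tdotoliealgebroid} for the underlying $r$-deformed Picard algebroid structure, then check that each equivariance axiom of Definition \ref{d1definitionrdeformedPicard} restricts directly from the corresponding axiom of Definition \ref{d1definitiondeformedhtdo}. Your additional care about the $G$-action preserving the filtration and the differentiated action agreeing with $\ad_{i_{\mathfrak{g}}(\psi)}$ on $F_1\mathcal{D}$ fills in details the paper leaves implicit, but introduces no new ideas beyond its approach.
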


\begin{proof}

We have by Lemma \ref{d1tdotoliealgebroid} that $\mathcal{L}$ is an $r$-deformed Picard algebroid and the axioms for $\mathcal{D}$ imply that $\mathcal{L}$ is a $G$-equivariant. Since $\mathcal{D}$ is a $G$-equivariant differential algebra the map $\mathcal{L} \to \mathcal{T}_X$ is $G$-equivariant. Lastly, the morphism $\mathcal{O}_X \to F_0(\mathcal{D})$ is $G$-equivariant and the other axioms in definition Definition \ref{d1definitionrdeformedPicard} follow from the corresponding axioms in \ref{d1definitiondeformedhtdo}.
\end{proof}

\begin{corollary}
\label{d1htdoPicalgcorresp}
Let $X$ be an $R$-variety and $r \in R$ a regular element. The maps $\mathscr{T}$ and $\Lie$ induce quasi-inverse equivalences of categories between the category of $r$-deformed $G$-equivariant Picard algebroids on $X$ and the category of $r$-deformed $G$-htdo's on $X$.

\end{corollary}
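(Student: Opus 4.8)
The plan is to deduce this corollary from the two preceding lemmas in exactly the same way that Corollary \ref{d1Picardalgtdocorresp} was deduced from Proposition \ref{d1liealgebroidtotdo} and Lemma \ref{d1tdotoliealgebroid}. The point is that we already possess, by Corollary \ref{d1Picardalgtdocorresp}, a pair of quasi-inverse equivalences $\mathscr{T}$ and $\Lie$ between $r$-deformed Picard algebroids and $r$-deformed tdo's on $X$; all that remains is to check that these functors restrict to the equivariant/homogeneous subcategories and that the natural isomorphisms witnessing the equivalence respect the extra equivariance data.

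First I would invoke Lemma \ref{d1eqPicalgtohtdo} to see that $\mathscr{T}$ sends an $r$-deformed $G$-equivariant Picard algebroid to an $r$-deformed $G$-htdo, and Lemma \ref{d1htdotoeqPicalg} to see that $\Lie=F_1(-)$ sends an $r$-deformed $G$-htdo back to an $r$-deformed $G$-equivariant Picard algebroid. Thus both functors are defined at the level of the enriched categories. Next I would check that a morphism of $r$-deformed $G$-equivariant Picard algebroids is sent by $\mathscr{T}$ to a morphism of $r$-deformed $G$-htdo's, and conversely for $\Lie$; this is immediate from the functoriality already established together with the observation that the $G$-action and the map $i_{\mathfrak{g}}$ are built from the underlying Lie-algebroid data in a natural way, so any morphism compatible with the equivariance structure on one side induces a morphism compatible with it on the other.

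The only substantive point is to verify that the natural isomorphisms $\mathscr{T}(\Lie(\mathcal{D}))\cong \mathcal{D}$ and $\Lie(\mathscr{T}(\mathcal{L}))\cong\mathcal{L}$ produced in Lemma \ref{d1tdotoliealgebroid} and Proposition \ref{d1liealgebroidtotdo} are isomorphisms in the enriched categories, i.e.\ that they are compatible with the $G$-action and with $i_{\mathfrak{g}}$. For the second isomorphism this is essentially definitional: $\Lie(\mathscr{T}(\mathcal{L}))=F_1(\mathscr{T}(\mathcal{L}))$ and in the proof of Proposition \ref{d1liealgebroidtotdo} the identification $F_1(D)\cong L$ is given by the canonical map $g(a+l)=i(a)+l$, which by construction intertwines the $G$-action defined in Lemma \ref{d1equivariantdifalg} and carries $i_{\mathfrak{g}}$ to $i_{\mathfrak{g}}$. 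For the first isomorphism one observes that the filtered map $\mathscr{T}(\Lie(\mathcal{D}))\to\mathcal{D}$ of Lemma \ref{d1tdotoliealgebroid} is induced from the inclusion $\Lie(\mathcal{D})=F_1(\mathcal{D})\hookrightarrow\mathcal{D}$, which is manifestly $G$-equivariant and compatible with $i_{\mathfrak{g}}$ since both structures on $\mathscr{T}(\Lie(\mathcal{D}))$ are defined so as to extend those on $F_1(\mathcal{D})$; hence the induced isomorphism is an isomorphism of $r$-deformed $G$-htdo's.

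The main obstacle, such as it is, is precisely this last bookkeeping: one must be careful that the $G$-action on $\mathscr{T}(\mathcal{L})$ constructed in Lemma \ref{d1equivariantdifalg} (by acting diagonally on products of generators) is the \emph{same} action as the one that the tdo $\mathscr{T}(\Lie(\mathcal{D}))$ inherits when $\mathcal{L}=\Lie(\mathcal{D})$, and that the two recipes for $i_{\mathfrak{g}}$ agree. Both facts follow from the uniqueness of the extension of a $G$-action and a Lie-algebra map from the generating module $F_1$ to the whole enveloping algebra, so no genuinely new computation is required. With these compatibilities in hand the proof concludes exactly as before: the statement follows from Corollary \ref{d1Picardalgtdocorresp} together with Lemmas \ref{d1eqPicalgtohtdo} and \ref{d1htdotoeqPicalg}.
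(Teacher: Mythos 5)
Your proposal is correct and follows essentially the same route as the paper, which simply combines Lemmas \ref{d1eqPicalgtohtdo} and \ref{d1htdotoeqPicalg} with Corollary \ref{d1Picardalgtdocorresp}. The extra bookkeeping you carry out (checking that the natural isomorphisms respect the $G$-action and $i_{\mathfrak{g}}$) is exactly what the paper leaves implicit, so you have simply written out the same argument in more detail.
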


\begin{proof}
This follows by combining Lemmas  \ref{d1eqPicalgtohtdo} and \ref{d1htdotoeqPicalg} and Corollary \ref{d1Picardalgtdocorresp}.
\end{proof}

\section{Pullback of deformed Picard algebroids}
\label{d1sectionpullbackofLiealg}

Throughout this section, we fix $f:Y \to X$ a morphism of $R$-varieties and $r \in R$ a regular element. The map $f$ induces a morphism $f^* \Omega_{X}^{1} \to \Omega_{Y}^{1}$ and by dualising we obtain $\alpha:\mathcal{T}_Y \to f^*{\mathcal{T}_X}$. Here $\Omega_{X}$ and $\Omega_{Y}$ denote the sheaf of differential $1$-forms.

\begin{definition}
Let $(\mathcal{L},\rho_X)$ be an $r$-deformed Picard algebroid on $X$ and let $\beta=f^*(\rho_X)$. Then we let $$f^{\#} \mathcal{L}:=r\mathcal{T}_Y \times_{rf^* \mathcal{T}_X} f^*\mathcal{L}=\{(d,l)| d \in r\mathcal{T}_Y, l \in f^*\mathcal{L},\alpha(d)=\beta(l)\}.$$

We give $f^{\#} \mathcal{L}$ the structure of a Lie algebroid by setting $\rho_Y(d,l)=d$ and the Lie bracket be induced by

$$[(\psi,f \otimes P),(\eta, g\otimes Q)]:=([\psi,\eta], fg \otimes [P,Q]+\psi(g) \otimes P-\eta(f) \otimes Q),$$

 for  $\psi,\eta \in r\mathcal{T}_Y,f,g \in \mathcal{O}_Y,P,Q \in f^{-1} \mathcal{L}.$ We call $f^{\#}\mathcal{L}$ the pullback of $\mathcal{L}$.
\end{definition}


\begin{lemma}
\label{d1algebroidnoneqpullback}
Let $(\mathcal{L},\rho_X)$ be an $r$-deformed Picard algebroid. Then $(f^{\#} \mathcal{L},\rho_Y)$ is an $r$-deformed Picard algebroid. 

\end{lemma}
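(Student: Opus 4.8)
The plan is to produce, for $f^{\#}\mathcal{L}$, the short exact sequence
$$0 \to \mathcal{O}_Y \to f^{\#}\mathcal{L} \xrightarrow{\rho_Y} r\mathcal{T}_Y \to 0$$
of Lie algebras and $\mathcal{O}_Y$-modules required by the definition of an $r$-deformed Picard algebroid, with $\mathcal{O}_Y$ carrying the trivial bracket. The underlying Lie algebroid structure on $f^{\#}\mathcal{L}$ (well-definedness of the bracket, Jacobi and Leibniz) is supplied by the preceding Definition, so I will take it as given and concentrate on the Picard structure, verifying only that $\rho_Y$ and the inclusion are the relevant morphisms and that the bracket restricts compatibly to the fiber product.

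The crucial preliminary step is to apply $f^*$ to the defining sequence of $\mathcal{L}$. Since $X$ is an $R$-variety, $\mathcal{T}_X$ is locally free, and because $r$ is regular the multiplication-by-$r$ map is an isomorphism $\mathcal{T}_X \cong r\mathcal{T}_X$, so $r\mathcal{T}_X$ is locally free and in particular flat. Hence applying the right-exact functor $f^*$ to
$$0 \to \mathcal{O}_X \to \mathcal{L} \xrightarrow{\rho_X} r\mathcal{T}_X \to 0$$
in fact preserves exactness on the left as well (the obstruction $\operatorname{Tor}_1$ against $f^{-1}(r\mathcal{T}_X)$ vanishes by flatness), yielding an exact sequence
$$0 \to \mathcal{O}_Y \to f^*\mathcal{L} \xrightarrow{\beta} rf^*\mathcal{T}_X \to 0,$$
using $f^*\mathcal{O}_X = \mathcal{O}_Y$ and $\beta = f^*(\rho_X)$. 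This is the decisive point: without flatness the kernel of $\beta$ could be strictly larger than $\mathcal{O}_Y$, and the kernel computation below would break down.

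With this sequence in hand I read off the two maps of the Picard structure. The anchor $\rho_Y(d,l) = d$ is surjective onto $r\mathcal{T}_Y$: given $d \in r\mathcal{T}_Y$, surjectivity of $\beta$ furnishes $l \in f^*\mathcal{L}$ with $\beta(l) = \alpha(d)$, so $(d,l) \in f^{\#}\mathcal{L}$ maps to $d$ (note this uses surjectivity of $\beta$, not of $\alpha$, which need not hold). Its kernel is $\{(0,l) : \beta(l)=0\} = \{0\}\times\ker\beta$, which by the displayed exact sequence equals $\{0\}\times\mathcal{O}_Y \cong \mathcal{O}_Y$. This produces the desired short exact sequence of $\mathcal{O}_Y$-modules, with $\rho_Y$ an $\mathcal{O}_Y$-linear surjection and $\mathcal{O}_Y$ its kernel.

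Finally I will check the Lie-theoretic compatibilities. From the explicit bracket the first component of $[(\psi,\cdot),(\eta,\cdot)]$ is $[\psi,\eta]$, so $\rho_Y$ is a morphism of Lie algebras. An element of $\ker\rho_Y$ has the form $(0,\, h\otimes 1_{\mathcal{L}})$ with $h \in \mathcal{O}_Y$, where $1_{\mathcal{L}}$ is the image of $1 \in \mathcal{O}_X$; feeding two such elements into the bracket formula (with $\psi=\eta=0$) gives $(0,\, h_1 h_2 \otimes [1_{\mathcal{L}},1_{\mathcal{L}}]) = 0$, since $1_{\mathcal{L}}$ lies in the abelian part $\mathcal{O}_X \subset \mathcal{L}$. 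Thus $\mathcal{O}_Y$ embeds as an abelian Lie subalgebra carrying the trivial bracket, and the sequence is one of Lie algebras. Together these show $(f^{\#}\mathcal{L},\rho_Y)$ is an $r$-deformed Picard algebroid. The only genuinely delicate ingredient is the flatness–exactness argument identifying $\ker\beta$ with $\mathcal{O}_Y$; everything else is bookkeeping with the fiber product and the stated bracket.
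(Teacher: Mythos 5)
Your proof is correct and takes essentially the same approach as the paper: both pull back the defining short exact sequence of $\mathcal{L}$ along $f$, using that $\mathcal{T}_X$ is locally free and $r\mathcal{T}_X \cong \mathcal{T}_X$ (so flatness preserves left-exactness), and then read off $\ker\rho_Y \cong \ker\beta \cong \mathcal{O}_Y$ and $\im\rho_Y = r\mathcal{T}_Y$ from the fiber product diagram. The additional verifications you carry out (that $\rho_Y$ is a Lie algebra morphism and that $\mathcal{O}_Y$ sits inside as an abelian subalgebra) are details the paper leaves implicit in the construction.
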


\begin{proof}

Since $\mathcal{L}$ is an $r$-deformed Picard algebroid, it fits into a short exact sequence:
  $$ 0 \to \mathcal{O}_X \to \mathcal{L} \to r\mathcal{T}_X \to 0.$$

By assumption, $X$ is a smooth variety, thus $\mathcal{T}_X$ is a free $\mathcal{O}_X$-module, so in particular is flat. Since $r \mathcal{T}_X \cong \mathcal{T}_X$ as $\mathcal{O}_X$-modules, by pulling back along $f$ we obtain a short exact sequence:

\begin{equation}
\label{d1equationforprovingpullbackofalg}
  0 \to \mathcal{O}_Y \to f^*\mathcal{L} \to rf^* \mathcal{T}_X \to 0.  
\end{equation}

Considering the pullback  diagram

\begin{center}
\begin{tikzcd}
  &f^{\#} \mathcal{L} \arrow[d] \arrow[r]  & f^* \mathcal{L} \arrow[d]\\
  & r \mathcal{T}_Y \arrow[r]              &r f^* \mathcal{T}_X,
\end{tikzcd}    
\end{center}

 we obtain $\ker (\rho_Y)= \ker( f^{\#} \mathcal{L} \to \mathcal{T}_Y) \cong \ker(f^* \mathcal{L} \to rf^* \mathcal{T}_X) \cong \mathcal{O}_Y.$ Finally, since $\im(f^*\mathcal{L} \to rf^* \mathcal{T}_X)=rf^* \mathcal{T}_X$, we obtain that $\im(\rho_Y)=r\mathcal{T}_Y$.
\end{proof}

We would like to describe how the pullback interacts with composition of morphisms. In general, one would like to prove that for $u:Z\to Y, f:Y \to X$ maps of $R$-varieties and $\mathcal{L}$ an $r$-deformed Picard algebroid on $X$,  we have $u^{\#} f^{\#} \mathcal{L} \cong (f \circ u)^{\#} \mathcal{L}$. Unfortunately, this is not always true. In the following, we give sufficient conditions.

\begin{lemma}
\label{d1Liealgcompositionofpullback}
Let $u:Z \to Y, f:Y \to X$ be maps of $R$-varieties and $\mathcal{L}$ be an $r$-deformed Picard algebroid on $X$. Assume that $u$ is flat or $f$ is etale. Then

                $$u^{\#}f^{\#} \mathcal{L} \cong (f \circ u)^{\#} \mathcal{L}.$$
\end{lemma}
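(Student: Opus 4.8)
The plan is to unwind both sides as iterated fibre products and reduce the statement to the single claim that, under either hypothesis, the pullback functor $u^*$ is compatible with the fibre product that defines $f^{\#}\mathcal{L}$. Throughout I would write $g=f\circ u$ and keep track of the three tangent maps $\alpha_u:\mathcal{T}_Z\to u^*\mathcal{T}_Y$, $\alpha_f:\mathcal{T}_Y\to f^*\mathcal{T}_X$ and $\alpha_g:\mathcal{T}_Z\to g^*\mathcal{T}_X$. The first thing to record is the chain rule $\alpha_g=(u^*\alpha_f)\circ\alpha_u$, which follows by dualising the factorisation $g^*\Omega_X^1=u^*f^*\Omega_X^1\to u^*\Omega_Y^1\to\Omega_Z^1$ of the cotangent map and using that $\Omega_X^1$ is locally free, so that $u^*$ commutes with dualisation. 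Combined with the canonical identification $u^*f^*=g^*$ of $\mathcal{O}$-module pullbacks, this supplies all the comparison maps needed to match the two defining fibre products.

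By definition $u^{\#}f^{\#}\mathcal{L}=r\mathcal{T}_Z\times_{ru^*\mathcal{T}_Y}u^*(f^{\#}\mathcal{L})$, whose inner term is $u^*$ applied to $f^{\#}\mathcal{L}=r\mathcal{T}_Y\times_{rf^*\mathcal{T}_X}f^*\mathcal{L}$. If $u^*$ carries this fibre product to $ru^*\mathcal{T}_Y\times_{ru^*f^*\mathcal{T}_X}u^*f^*\mathcal{L}$, compatibly with the projection to $ru^*\mathcal{T}_Y$ (which is the image under $u^*$ of the anchor $\rho_Y$), then associativity of fibre products collapses the iterated product: the middle factor $ru^*\mathcal{T}_Y$ is eliminated, the composite $r\mathcal{T}_Z\to ru^*\mathcal{T}_Y\to ru^*f^*\mathcal{T}_X$ becomes $r\alpha_g$ by the chain rule, and the map $u^*f^*\mathcal{L}\to ru^*f^*\mathcal{T}_X$ becomes $u^*f^*\rho_X=g^*\rho_X$. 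This is exactly the fibre product $r\mathcal{T}_Z\times_{rg^*\mathcal{T}_X}g^*\mathcal{L}=(f\circ u)^{\#}\mathcal{L}$, so the whole statement reduces to the compatibility of $u^*$ with the fibre product defining $f^{\#}\mathcal{L}$.

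It then remains to verify this compatibility in each case. When $u$ is flat, $u^*$ is exact, hence commutes with kernels and finite limits; applying it to the short exact sequence $0\to f^{\#}\mathcal{L}\to r\mathcal{T}_Y\oplus f^*\mathcal{L}\to rf^*\mathcal{T}_X\to 0$, whose last map is surjective because $\beta=f^*\rho_X$ is, exhibits $u^*(f^{\#}\mathcal{L})$ as the desired fibre product. When $f$ is étale, the map $\alpha_f:\mathcal{T}_Y\to f^*\mathcal{T}_X$ is an isomorphism; hence $r\alpha_f$ is an isomorphism and the projection $f^{\#}\mathcal{L}\to f^*\mathcal{L}$ is an isomorphism, so $u^*(f^{\#}\mathcal{L})\cong u^*f^*\mathcal{L}$. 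Since $u^*\alpha_f$ is then also an isomorphism $ru^*\mathcal{T}_Y\xrightarrow{\sim}ru^*f^*\mathcal{T}_X$, transporting the anchor along it identifies $r\mathcal{T}_Z\times_{ru^*\mathcal{T}_Y}u^*f^*\mathcal{L}$ with $r\mathcal{T}_Z\times_{rg^*\mathcal{T}_X}g^*\mathcal{L}$, again via the chain rule. In both cases I would finish by checking that the resulting $\mathcal{O}_Z$-module isomorphism intertwines the anchors (immediate, as it preserves the first coordinate) and the Lie brackets (a direct check against the bracket formula in the definition of $f^{\#}$, since both brackets are pulled back from $\mathcal{L}$).

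I expect the main obstacle to be precisely the interaction of $u^*$ with this fibre product: in general $u^*$ is only right exact, so there is no reason for it to commute with the kernel defining $f^{\#}\mathcal{L}$, and this is the step at which one of the two hypotheses must be invoked. The two assumptions circumvent the problem in orthogonal ways — flatness makes $u^*$ exact, whereas étaleness degenerates the fibre product so that no exactness is needed — and the real care lies in ensuring that the collapse of the iterated fibre product is compatible with all of the structure maps (the anchor, the embedded $\mathcal{O}_Z$, and the bracket), rather than being merely an abstract $\mathcal{O}_Z$-module isomorphism.
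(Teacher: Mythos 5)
Your proposal is correct and follows essentially the same route as the paper: in both cases one collapses the iterated fibre product by associativity, using flatness of $u$ to commute $u^*$ past the fibre product defining $f^{\#}\mathcal{L}$ in the first case, and using that étaleness makes $\alpha_f$ (hence the projection $f^{\#}\mathcal{L}\to f^*\mathcal{L}$) an isomorphism in the second. The paper simply asserts that flat pullback commutes with limits and leaves the chain rule and the bracket/anchor compatibilities implicit, whereas you spell these out (via the kernel presentation of the fibre product), but this is elaboration of the same argument rather than a different one.
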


\begin{proof}
First assume $u$ is flat. Then:

\begin{equation}
\begin{split}
u^{\#} f^{\#} \mathcal{L}&=u^{\#}(r\mathcal{T}_Y \times_{rf^*\mathcal{T}_X} f^*\mathcal{L}) \\ 
                         &=r\mathcal{T}_Z \times_{ru^*\mathcal{T}_Y} u^*(r\mathcal{T}_Y \times_{rf^*\mathcal{T}_X} f^*\mathcal{L}) \\
                         &=r\mathcal{T}_Z \times_{ru^*\mathcal{T}_Y} {ru^*\mathcal{T}_Y} \times_{ru^*f^* \mathcal{T}_X} u^*f^* \mathcal{L} \text{ ($u$ is flat, so commutes with limits)} \\
                         &\cong r\mathcal{T}_Z  \times_{ru^*f^* \mathcal{T}_X} u^*f^* \mathcal{L}     \\
                         & \cong (f \circ u)^{\#} \mathcal{L}.
\end{split}
\end{equation}

Now suppose $f$ is etale, thus $f^*\mathcal{T}_X \cong \mathcal{T}_Y$. Then:

\begin{equation}
\begin{split}
u^{\#} f^{\#} \mathcal{L}&=u^{\#}(r\mathcal{T}_Y \times_{rf^*\mathcal{T}_X} f^*\mathcal{L}) \\
                         &\cong u^{\#} f^* \mathcal{L} \\
                         &\cong r\mathcal{T}_Z \times_{ru^* \mathcal{T}_Y} u^*f^* \mathcal{L} \\
                         &\cong r\mathcal{T}_Z \times_{ru^* f^* \mathcal{T}_X} u^*f^* \mathcal{L} \\
                         &\cong (f \circ u)^{\#} \mathcal{L}. \qedhere
\end{split}
\end{equation}
\end{proof}

\begin{lemma}
\label{d1algrebroideqpullback}
Let $(\mathcal{L}, \rho_X,i_{\mathfrak{g}})$  be an $r$-deformed $G$-equivariant Picard algebroid on $X$ and assume further $f:Y \to X$ is a $G$-equivariant. Then $ (f^{\#} \mathcal{L}, \rho_Y)$ is an $r$-deformed $G$-equivariant Picard algebroid.

\end{lemma}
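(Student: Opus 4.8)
=== PROOF PROPOSAL ===

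\textbf{Goal and setup.} The plan is to show that the pullback $f^{\#}\mathcal{L}$, already known to be an $r$-deformed Picard algebroid by Lemma \ref{d1algebroidnoneqpullback}, inherits a full $r$-deformed $G$-equivariant Picard algebroid structure when $f:Y\to X$ is $G$-equivariant. By Definition \ref{d1definitionrdeformedPicard} there are four things to supply and check: a $G$-equivariant $\mathcal{O}_Y$-module structure on $f^{\#}\mathcal{L}$; the Lie-algebroid equivariance axioms i)--iii) of Definition \ref{d1defofeqalg} together with a map $i_{\mathfrak{g}}^Y:r\mathfrak{g}\to f^{\#}\mathcal{L}$; equivariance of every arrow in the short exact sequence $0\to\mathcal{O}_Y\to f^{\#}\mathcal{L}\to r\mathcal{T}_Y\to 0$; and finally the two compatibility conditions relating the derived $\mathfrak{g}$-action to $\mathrm{ad}\circ i_{\mathfrak{g}}^Y$ and relating the infinitesimal action $\eta_Y$ to $\rho_Y\circ i_{\mathfrak{g}}^Y$.

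\textbf{Equivariant module structure and the fibre product.} First I would produce the $G$-equivariant $\mathcal{O}_Y$-module structure. Since $\mathcal{L}$ is $G$-equivariant on $X$ and $f$ is $G$-equivariant, Lemma \ref{d1Oequivpreservefunctor} gives that $f^*\mathcal{L}$ is a $G$-equivariant $\mathcal{O}_Y$-module; likewise $\mathcal{T}_Y$ and $f^*\mathcal{T}_X$ are naturally $G$-equivariant (the latter again by Lemma \ref{d1Oequivpreservefunctor}), and the anchor $\rho_X$ being $G$-equivariant forces $\beta=f^*(\rho_X)$ to be a morphism of $G$-equivariant modules; the map $\alpha:\mathcal{T}_Y\to f^*\mathcal{T}_X$ is $G$-equivariant because it is the differential of the $G$-equivariant map $f$. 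The fibre product $f^{\#}\mathcal{L}=r\mathcal{T}_Y\times_{rf^*\mathcal{T}_X}f^*\mathcal{L}$ is then a kernel/equalizer of $G$-equivariant maps, so by Proposition \ref{d1GequivariantAbeliancat} (the category $\QCoh(\mathcal{O}_Y,G)$ is abelian, hence closed under such limits) it is canonically $G$-equivariant, and the projection $\rho_Y$ and the inclusion $\mathcal{O}_Y\hookrightarrow f^{\#}\mathcal{L}$ are equivariant; this also settles the equivariance of the short exact sequence.

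\textbf{The bracket, $i_{\mathfrak{g}}$, and the two compatibilities.} Next I would verify axiom i) of Definition \ref{d1defofeqalg}, that $g\cdot[-,-]=[g\cdot-,g\cdot-]$ for the explicit bracket on $f^{\#}\mathcal{L}$. Using the concrete description on elements $(\psi,f\otimes P)$ this reduces to the equivariance already known termwise: the bracket on $r\mathcal{T}_Y$ is $G$-invariant, $[P,Q]$ transforms correctly because $\mathcal{L}$ is equivariant (axiom i) for $\mathcal{L}$), and the derivation terms $\psi(g)\otimes P$ transform correctly by axiom ii) for $\mathcal{L}$ applied after pullback; axiom ii) for $f^{\#}\mathcal{L}$ is just equivariance of $\rho_Y$, already in hand. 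For the map $i_{\mathfrak{g}}^Y:r\mathfrak{g}\to f^{\#}\mathcal{L}$ I would build it into the fibre product by pairing the infinitesimal action on $Y$ with the pullback of $i_{\mathfrak{g}}$: concretely send $\psi\in r\mathfrak{g}$ to $(\eta_Y(\psi),\,f^*(i_{\mathfrak{g}}(\psi)))$, which lands in $f^{\#}\mathcal{L}$ precisely because the second compatibility $\eta_X|_{r\mathfrak{g}}=\rho_X\circ i_{\mathfrak{g}}$ for $\mathcal{L}$ guarantees $\alpha(\eta_Y(\psi))=\beta(f^*i_{\mathfrak{g}}(\psi))$ (the infinitesimal actions on $X$ and $Y$ being intertwined by $\alpha$ because $f$ is $G$-equivariant). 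With this definition the condition $\eta_Y|_{r\mathfrak{g}}=\rho_Y\circ i_{\mathfrak{g}}^Y$ holds by construction, and axiom iii) ($i_{\mathfrak{g}}^Y$ intertwines the adjoint action) follows from axiom iii) for $\mathcal{L}$ together with $\mathrm{Ad}$-equivariance of $\eta_Y$.

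\textbf{Main obstacle.} I expect the genuinely delicate step to be the first compatibility condition: that the derivative of the $G$-action on $f^{\#}\mathcal{L}$ coincides with $l\mapsto[i_{\mathfrak{g}}^Y(\psi),l]$. The subtlety is that the derived action on the fibre product is the pair of derived actions on the two factors, and one must check these are computed by the bracket with $(\eta_Y(\psi),f^*i_{\mathfrak{g}}(\psi))$ using the explicit formula for $[-,-]$ on $f^{\#}\mathcal{L}$; the first slot reduces to the statement that the infinitesimal $\mathfrak{g}$-action on $\mathcal{T}_Y$ is the Lie bracket with $\eta_Y(\psi)$, a standard fact, while the second slot requires transporting the corresponding property of $\mathcal{L}$ (the first bullet of Definition \ref{d1definitionrdeformedPicard} for $\mathcal{L}$) through the pullback, where the cross terms $\psi(g)\otimes P$ in the bracket must be matched against the Leibniz part of the derived action. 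Carrying this out cleanly likely benefits from working in the local Lie--Rinehart picture over an affine $U\subset Y$ trivialising the relevant structure and comparing both actions on generators $\psi\in r\mathcal{T}_Y$ and $f\otimes P\in f^*\mathcal{L}$.
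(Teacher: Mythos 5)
Your proposal is correct and follows essentially the same route as the paper: the componentwise $G$-action on the fibre product $r\mathcal{T}_Y \times_{rf^*\mathcal{T}_X} f^*\mathcal{L}$ (which the paper defines explicitly and checks is well defined, where you instead invoke Proposition \ref{d1GequivariantAbeliancat}), the explicit bracket-equivariance computation, and the map $i_{\mathfrak{g}}^Y(\psi)=(\eta_Y(\psi), f^*i_{\mathfrak{g}}(\psi))$ are exactly the paper's constructions. The compatibility of the derived $G$-action with $l \mapsto [i_{\mathfrak{g}}^Y(\psi),l]$, which you flag as the main obstacle and sketch locally, is simply asserted to hold ``by construction'' in the paper, so your treatment is if anything more careful on that point.
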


\begin{proof}
By the Lemma above, $f^{\#} \mathcal{L} $ is an $r$-deformed Picard algebroid. Since $f:Y \to X$ is $G$-equivariant, we obtain by Lemma \ref{d1Oequivpreservefunctor} that  $f^*\mathcal{L}$ is a $G$-equivariant $\mathcal{O}_Y$-module and that the maps $\alpha$ and $\beta$ are $G$-equivariant. We define a $G$ action on $f^{\#} \mathcal{L}$ via

   $$g.(d,l)=(g.d, g.l) \text{ for } d \in r\mathcal{T}_Y, l \in f^* \mathcal{L}. $$

We need to check that this action is well-defined and $\rho_Y$ is $G$-equivariant. The second statement is easy, we have $g.\rho_Y(a,b)=\rho_Y(g.(a,b))$. For the other statement, let $d \in r\mathcal{T}_Y, l \in f^*\mathcal{L}$ such that $\alpha(d)=\beta(l)$. Then, we have

$$\alpha(g.d)=g.\alpha(d)=g.\beta(l)=\beta(g.l).$$

So, we are left to prove that $G$-action interacts correctly with the Lie bracket. Let  $\psi,\eta \in r\mathcal{T}_Y,f,h \in \mathcal{O}_Y,P,Q \in f^{-1} \mathcal{L}.$  Then

\begin{equation}
\begin{split}
g.[(\psi,f \otimes P),(\eta, h \otimes Q)] &=g.([\psi,\eta], fh \otimes [P,Q]+\psi(h) \otimes P-\eta(f) \otimes Q)\\
                                                           &=(g.[\psi,\eta],g.fh \otimes g.[P,Q]\\
                                                           &+g.\psi(h) \otimes g.P -g.\eta(f) \otimes g.Q) \\
                                                           &=([g.\psi,g.\eta],(g.f)(g.h) \otimes  [g.P,g.Q]\\
                                                           &+ \psi(g.h) \otimes g.P-\eta(g.f) \otimes g.Q) \\
                                                           &=[g.(\psi, f \otimes P),g.(\eta, h \otimes Q)].
\end{split}
\end{equation}

Since $G$ acts on $Y$ we obtain the infinitesimal map $\eta:\mathfrak{g} \to \mathcal{T}_Y$. The map $i_{\mathfrak{g}}: r \mathfrak{g} \to \mathcal{L}$ can be extended to a map $i_{\mathfrak{g}}: \mathcal{O}_X \otimes r \mathfrak{g} \to \mathcal{L}$ and by pulling back we obtain a map $i_{\mathfrak{g}}^* :\mathcal{O}_Y \otimes r \mathfrak{g} \to f^*\mathcal{L}$. We let $i: r\mathfrak{g} \to f^*\mathcal{L}$ be the restriction of $i_{\mathfrak{g}}^*$ to $r \mathfrak{g}$; by construction, we have that the $r \mathfrak{g}$ action induced by $i$ coincides with the one induced by the $G$ action. Therefore we obtain a map $(\eta_{|r \mathfrak{g}},i):r \mathfrak{g} \to f^{\#} \mathcal{L}$ and it follows by the construction that the $r\mathfrak{g}$-action induced by the derivative of the $G$-action coincides with the $r\mathfrak{g}$-action induced by $(\eta_{| r \mathfrak{g}},i)$ and further that $ \eta_{| r \mathfrak{g}} =\rho_Y \circ (\eta_{| r \mathfrak{g}},i)$. Thus $(f^{\#} \mathcal{L}, \rho_Y,(\eta_{| r \mathfrak{g}},i))$ is an $r$-deformed $G$-equivariant Picard algebroid. 
\end{proof}

\begin{definition}
Let $\mathcal{D}$ be an $r$-deformed tdo/$G$-htdo on $X$ and let $f:Y \to X$ be a morphism/$G$-equivariant morphism of $R$-varieties. We call

                                $$f^{\#} \mathcal{D} := \mathscr{T}(f^{\#} \Lie(\mathcal{D}))$$

the pullback of $\mathcal{D}$ along $f$.

\end{definition}

\begin{corollary}
\label{d1cc}
Let the notation as above. Then $f^{\#} \mathcal{D}$ is well defined and furthermore it is an $r$-deformed tdo/htdo.

\end{corollary}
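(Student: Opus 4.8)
The plan is to reduce everything to the two correspondences already established and the two pullback lemmas, so that essentially no new computation is required. Recall the definition $f^{\#}\mathcal{D}:=\mathscr{T}(f^{\#}\Lie(\mathcal{D}))$. First I would treat the plain tdo case. Starting from an $r$-deformed tdo $\mathcal{D}$ on $X$, Lemma \ref{d1tdotoliealgebroid} tells us that $\Lie(\mathcal{D})=F_1(\mathcal{D})$ is an $r$-deformed Picard algebroid on $X$. Then Lemma \ref{d1algebroidnoneqpullback} guarantees that its pullback $f^{\#}\Lie(\mathcal{D})$ is again an $r$-deformed Picard algebroid, this time on $Y$. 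Finally, applying the functor $\mathscr{T}$, Proposition \ref{d1liealgebroidtotdo} shows that $\mathscr{T}(f^{\#}\Lie(\mathcal{D}))$ is an $r$-deformed tdo on $Y$. In particular $f^{\#}\mathcal{D}$ is well-defined (it depends only on the isomorphism class of $\Lie(\mathcal{D})$, which by Corollary \ref{d1Picardalgtdocorresp} is functorially recovered from $\mathcal{D}$) and is a genuine tdo.

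For the $G$-htdo case the argument is structurally identical, replacing each citation by its equivariant upgrade. Given an $r$-deformed $G$-htdo $\mathcal{D}$ on $X$ with $f:Y\to X$ a $G$-equivariant morphism of $R$-varieties, Lemma \ref{d1htdotoeqPicalg} produces from $\mathcal{D}$ an $r$-deformed $G$-equivariant Picard algebroid $\Lie(\mathcal{D})=F_1(\mathcal{D})$. Since $f$ is $G$-equivariant, Lemma \ref{d1algrebroideqpullback} yields that $f^{\#}\Lie(\mathcal{D})$ is an $r$-deformed $G$-equivariant Picard algebroid on $Y$. Then Lemma \ref{d1eqPicalgtohtdo} converts it back: $\mathscr{T}(f^{\#}\Lie(\mathcal{D}))$ is an $r$-deformed $G$-htdo on $Y$. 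Well-definedness again follows from Corollary \ref{d1htdoPicalgcorresp}, which identifies $G$-htdo's with $G$-equivariant Picard algebroids and shows that passing to $\Lie$ and back via $\mathscr{T}$ is canonical.

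The only genuine content to check is that the two cases are compatible, i.e. that the $G$-equivariant pullback refines the underlying plain pullback: the Lie algebroid $f^{\#}\Lie(\mathcal{D})$ constructed in Lemma \ref{d1algebroidnoneqpullback} is literally the same object as the one given the extra equivariance structure in Lemma \ref{d1algrebroideqpullback}, since the latter only adds a $G$-action and a map $i_{\mathfrak{g}}$ to the same pullback fibre product $r\mathcal{T}_Y\times_{rf^*\mathcal{T}_X}f^*\mathcal{L}$. Hence applying $\mathscr{T}$ gives the same tdo, now equipped with its htdo structure, and the two assertions of the corollary are consistent. I do not expect any serious obstacle here, since all the hard work has been front-loaded into the correspondence results of Sections \ref{d1sectionliealgtdo} and \ref{d1eqliealgandhtdo} and the pullback lemmas of this section; the proof is a formal chaining of functors. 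The one point warranting a sentence of care is that $f^{\#}$ on Picard algebroids genuinely lands in the \emph{deformed} category with the correct anchor image $r\mathcal{T}_Y$, but this is exactly what Lemma \ref{d1algebroidnoneqpullback} establishes, so it may simply be cited.
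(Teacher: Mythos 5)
Your proof is correct and follows essentially the same route as the paper, which simply cites Lemmas \ref{d1algebroidnoneqpullback} and \ref{d1algrebroideqpullback} together with Corollaries \ref{d1Picardalgtdocorresp} and \ref{d1htdoPicalgcorresp}; your version merely unpacks those corollaries into their constituent lemmas (\ref{d1tdotoliealgebroid}, \ref{d1liealgebroidtotdo}, \ref{d1htdotoeqPicalg}, \ref{d1eqPicalgtohtdo}) and adds the (correct) observation that the equivariant pullback refines the plain one. No gap to report.
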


\begin{proof}
This follows from Lemmas \ref{d1algebroidnoneqpullback}, \ref{d1algrebroideqpullback} and Corollaries \ref{d1Picardalgtdocorresp}, \ref{d1htdoPicalgcorresp}.
\end{proof}

\begin{corollary}
\label{d1tdocompositionofpullback}
Let $u:Z \to Y, f:Y \to X$ be maps of $R$-varieties and $\mathcal{D}$ be an $r$-deformed tdo on $X$. Assume that $u$ is flat or $f$ is etale. Then

                $$u^{\#}f^{\#} \mathcal{D} \cong (f \circ u)^{\#} \mathcal{D}.$$
\end{corollary}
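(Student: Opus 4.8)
The plan is to deduce this purely formally from Lemma \ref{d1Liealgcompositionofpullback}, which establishes the analogous composition identity for $r$-deformed Picard algebroids, together with the equivalence of categories recorded in Corollary \ref{d1Picardalgtdocorresp}. The pullback of a tdo was defined to be $f^{\#}\mathcal{D} := \mathscr{T}(f^{\#}\Lie(\mathcal{D}))$ precisely so that all the nontrivial content is carried by the Lie algebroid side, where Lemma \ref{d1Liealgcompositionofpullback} has already done the work; the remaining task is just to transport that isomorphism across the equivalence $\mathscr{T}, \Lie$.

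Concretely, I would set $\mathcal{L} := \Lie(\mathcal{D})$, which is an $r$-deformed Picard algebroid by Lemma \ref{d1tdotoliealgebroid}, and then unwind the definitions. By definition $f^{\#}\mathcal{D} = \mathscr{T}(f^{\#}\mathcal{L})$, and applying $u^{\#}$ together with the definition once more gives $u^{\#}f^{\#}\mathcal{D} = \mathscr{T}\bigl(u^{\#}\Lie(\mathscr{T}(f^{\#}\mathcal{L}))\bigr)$. The key simplification is that Corollary \ref{d1Picardalgtdocorresp} supplies a natural isomorphism $\Lie\circ\mathscr{T}\cong\id$, so $\Lie(\mathscr{T}(f^{\#}\mathcal{L}))\cong f^{\#}\mathcal{L}$; hence $u^{\#}f^{\#}\mathcal{D}\cong\mathscr{T}(u^{\#}f^{\#}\mathcal{L})$. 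Now I invoke Lemma \ref{d1Liealgcompositionofpullback}, whose hypotheses ($u$ flat or $f$ etale) are exactly those assumed here, to obtain $u^{\#}f^{\#}\mathcal{L}\cong (f\circ u)^{\#}\mathcal{L}$. Applying the functor $\mathscr{T}$ to this isomorphism and reading the right-hand side as $(f\circ u)^{\#}\mathcal{D}$ completes the chain.

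The only point that requires a little care — though I expect it to be routine rather than a genuine obstacle — is bookkeeping with natural isomorphisms rather than strict equalities: one should confirm that applying $\mathscr{T}$ to the Picard-algebroid isomorphism of Lemma \ref{d1Liealgcompositionofpullback} yields an isomorphism of $r$-deformed tdo's, respecting the filtration and the symmetric-algebra identification, and not merely of the underlying sheaves of algebras. Since $\mathscr{T}$ and $\Lie$ are quasi-inverse equivalences by Corollary \ref{d1Picardalgtdocorresp}, they carry isomorphisms to isomorphisms and preserve the full tdo structure, so this is automatic, but it is worth recording explicitly. No further computation is needed: the entire argument is a diagram chase through the equivalence of Corollary \ref{d1Picardalgtdocorresp} and the already-established Lemma \ref{d1Liealgcompositionofpullback}.
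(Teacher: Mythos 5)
Your proposal is correct and is essentially the paper's own proof: the paper simply cites Lemma \ref{d1Liealgcompositionofpullback} and Corollary \ref{d1Picardalgtdocorresp}, and your argument is exactly the careful unwinding of that citation, using $\Lie\circ\mathscr{T}\cong\id$ to reduce to the Picard-algebroid case and then transporting the isomorphism back through $\mathscr{T}$. The bookkeeping point you flag at the end is, as you say, automatic from the equivalence of categories.
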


\begin{proof}
This follows from Lemma \ref{d1Liealgcompositionofpullback} and Corollary \ref{d1Picardalgtdocorresp}.
\end{proof}



                        $$(e \times 1_X)^{\#} \alpha: \mathcal{D} \to \mathcal{D}$$

Let us explain how our condition for an $r$-deformed $G$-htdo fits into a diagram satisfying the cocycle condition. Denote the $G$-action by $\sigma_X: G \times X \to X$. Furthermore, we denote $p_X:G \times X \to X$ and $p_{2X}:G \times G \times X \to X$ the projections on the $X$ factor, $p_{23X}:G \times G \times X \to G \times X$ the projection onto the second and third factor and $m:G \times G \to G$ the multiplication of the group $G$.

\begin{lemma}

Let $\mathcal{D}$ be an $r$-deformed $G$-htdo on $X$. Then there exists $\alpha:\sigma_X^{\#} \mathcal{D} \to p_X^{\#} \mathcal{D}$ an isomorphism of $\mathcal{O}_{G \times X}$-algebras such that the diagram:

\begin{equation}
\label{Ghtdocommdia}
\begin{tikzcd}
&(1_G \times \sigma_X)^{\#}p_X^{\#}\mathcal{D} \arrow[r," p_{23X}^{\#} \alpha"]   &p_{2X}^{\#}\mathcal{D} \\
&(1_G \times \sigma_X)^{\#} \sigma_X^{\#} \mathcal{D} \arrow [u,"(1_G \times \sigma_X)^{\#} \alpha "] \arrow[r,leftrightarrow,"id "] &(m \times 1_X)^{\#}\sigma_X^{\#} \mathcal{D} \arrow[u, " (m \times 1_X)^{\#} \alpha"]
\end{tikzcd}
\end{equation}

of $\mathcal{O}_{G \times G \times X}$-algebras commutes ( the cocycle condition) and the pullback 
                        $$(e \times 1_X)^{\#} \alpha: \mathcal{D} \to \mathcal{D}$$
is the identity map. We note that this is the same condition as in \cite[Section 5.2]{Wan}. 
\end{lemma}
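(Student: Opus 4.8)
The plan is to reduce everything to Picard algebroids via the equivalence of Corollary~\ref{d1htdoPicalgcorresp} and then build the descent datum by hand from the homogeneity data. Put $\mathcal{L}:=\Lie(\mathcal{D})$, an $r$-deformed $G$-equivariant Picard algebroid by Lemma~\ref{d1htdotoeqPicalg}. Since $f^{\#}\mathcal{D}=\mathscr{T}(f^{\#}\mathcal{L})$ by definition and $\Lie\circ\mathscr{T}\cong\id$ (Lemma~\ref{d1tdotoliealgebroid}), every vertex of the target diagram is $\mathscr{T}$ applied to the corresponding $\#$-pullback of $\mathcal{L}$; as $\mathscr{T}$ is a functor into $\mathcal{O}$-algebras, it is enough to produce an isomorphism $\alpha_{\mathcal{L}}\colon\sigma_X^{\#}\mathcal{L}\to p_X^{\#}\mathcal{L}$ of $r$-deformed Picard algebroids on $G\times X$ satisfying the Picard-algebroid analogue of the cocycle diagram~\eqref{Ghtdocommdia} together with $(e\times 1_X)^{\#}\alpha_{\mathcal{L}}=\id$, and then set $\alpha:=\mathscr{T}(\alpha_{\mathcal{L}})$.

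The construction of $\alpha_{\mathcal{L}}$ is the heart of the matter, and the naive guess fails: writing $\sigma_X^{\#}\mathcal{L}=r\mathcal{T}_{G\times X}\times_{r\sigma_X^*\mathcal{T}_X}\sigma_X^*\mathcal{L}$ and likewise for $p_X$, the $\mathcal{O}$-module equivariance isomorphism $\tilde{\alpha}\colon\sigma_X^*\mathcal{L}\to p_X^*\mathcal{L}$ does \emph{not} by itself descend to the fibre products, because the comparison maps $d\sigma_X\colon\mathcal{T}_{G\times X}\to\sigma_X^*\mathcal{T}_X$ and $dp_X\colon\mathcal{T}_{G\times X}\to p_X^*\mathcal{T}_X$ of Section~\ref{d1sectionpullbackofLiealg} disagree along the $G$-directions. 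Concretely, after left-trivialising $\mathcal{T}_G\cong\mathcal{O}_G\otimes\mathfrak{g}$, one computes that the canonical equivariant structure $\tilde{\alpha}_{\mathcal{T}}$ on $\mathcal{T}_X$ satisfies $\tilde{\alpha}_{\mathcal{T}}\circ d\sigma_X-dp_X=\eta\circ\xi$, where $\xi$ extracts the $\mathfrak{g}$-component of a tangent vector and $\eta\colon\mathfrak{g}\to\mathcal{T}_X$ is the infinitesimal action. I would therefore define, on an element $(d,\ell)$ of the fibre product,
\begin{equation*}
\alpha_{\mathcal{L}}(d,\ell):=\bigl(d,\ \tilde{\alpha}(\ell)-i_{\mathfrak{g}}(\xi(d))\bigr),
\end{equation*}
where $i_{\mathfrak{g}}\colon\mathcal{O}_{G\times X}\otimes r\mathfrak{g}\to p_X^*\mathcal{L}$ is the pullback of the structure map extended $\mathcal{O}_X$-linearly, exactly as in the proof of Lemma~\ref{d1algrebroideqpullback}. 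The correction term is precisely what is needed for $p_X^*\rho(\alpha_{\mathcal{L}}(d,\ell))=dp_X(d)$, by the homogeneity identity $\eta|_{r\mathfrak{g}}=\rho\circ i_{\mathfrak{g}}$ from Definition~\ref{d1definitionrdeformedPicard}; thus $\alpha_{\mathcal{L}}$ lands in $p_X^{\#}\mathcal{L}$, and it is an isomorphism with an explicit inverse of the same shape.

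It then remains to check that $\alpha_{\mathcal{L}}$ is a morphism of Lie algebroids, the cocycle condition, and the normalisation. Bracket-compatibility is a direct computation on the explicit bracket of $f^{\#}\mathcal{L}$, in the style of the verification in Lemma~\ref{d1algrebroideqpullback}; it uses that $i_{\mathfrak{g}}$ is a map of Lie algebras and that the derivative of the $G$-action coincides with $l\mapsto[i_{\mathfrak{g}}(\psi),l]$ (the first bullet of Definition~\ref{d1definitionrdeformedPicard}), together with the Maurer--Cartan relation governing $\xi$. For the cocycle diagram I would first identify the vertices using Lemma~\ref{d1Liealgcompositionofpullback}: each structural map ($1_G\times\sigma_X$, $m\times 1_X$, $p_{23X}$ and the projections) is flat, since $G$ is smooth over $R$ and $\sigma_X$ differs from the flat projection $p_X$ by the automorphism $(g,x)\mapsto(g,gx)$, so every iterated $\#$-pullback collapses to a single one. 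The $\tilde{\alpha}$-part then commutes by the cocycle condition defining the $G$-equivariant $\mathcal{O}_X$-module $\mathcal{L}$, while compatibility of the $i_{\mathfrak{g}}$-correction across the two composites is where the $\Ad$-equivariance of $i_{\mathfrak{g}}$ (condition~iii) enters, the multiplication $m$ producing the adjoint twist. The normalisation is checked directly from the fibre-product definition: the section $e\times 1_X$ is not flat, so Lemma~\ref{d1Liealgcompositionofpullback} does not apply, but $\sigma_X\circ(e\times 1_X)=p_X\circ(e\times 1_X)=\id_X$ and the constraint $d(e\times 1_X)(v)=(0,v)$ forces the $\mathfrak{g}$-component to vanish, so the correction term dies, $(e\times 1_X)^*\tilde{\alpha}=\id$, and $(e\times 1_X)^{\#}\alpha_{\mathcal{L}}=\id$. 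Applying $\mathscr{T}$ yields $\alpha$ with all the required properties.

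The main obstacle, as indicated, is the second paragraph: recognising that the $\mathcal{O}$-module equivariance alone is insufficient and pinning down the $i_{\mathfrak{g}}$-correction so that $\alpha_{\mathcal{L}}$ is simultaneously well defined over the fibre product, bracket-preserving, and compatible with the cocycle. The bracket and cocycle checks are the genuine computations, and both force one to use all three homogeneity axioms of Definition~\ref{d1definitionrdeformedPicard} (equivalently Definition~\ref{d1definitiondeformedhtdo}) at once; by contrast the flatness bookkeeping for Lemma~\ref{d1Liealgcompositionofpullback} and the transport through $\mathscr{T}$ are formal.
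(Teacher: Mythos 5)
Your proof is correct and follows essentially the same route as the paper: pass to $\mathcal{L}=\Lie(\mathcal{D})$ via Lemma~\ref{d1htdotoeqPicalg}, produce an isomorphism of $r$-deformed Picard algebroids $\sigma_X^{\#}\mathcal{L}\to p_X^{\#}\mathcal{L}$ from the equivariance structure, apply $\mathscr{T}$ to obtain $\alpha$, and inherit the cocycle condition and normalisation from the $\mathcal{O}$-module level. The paper's own proof is in fact much terser than yours---it merely asserts that the $\mathcal{O}$-module isomorphism $\beta:\sigma_X^*\mathcal{L}\to p_X^*\mathcal{L}$ ``can be extended'' to the $\#$-pullbacks---so your identification of the obstruction $\tilde{\alpha}_{\mathcal{T}}\circ d\sigma_X-dp_X=\eta\circ\xi$ and the explicit $i_{\mathfrak{g}}$-correction making the map land in the fibre product supply precisely the content that this one-line assertion hides.
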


\begin{proof}

Let $\mathcal{L}=\Lie(D)$; we have by Lemma \ref{d1htdotoeqPicalg} that $\mathcal{L}$ is a $G$-equivariant $r$-deformed Picard algebroid. In particular, we obtain an isomorphism $\beta: \sigma_X^* \mathcal{L} \to p_X^* \mathcal{L}$, which can be extended to an isomorphism of $r$-deformed Picard algebroids $\sigma_X^{\#} \mathcal{L} \to p_X^{\#} \mathcal{L} $ and thus to an isomorphsim of $\mathcal{O}_{G \times X}$-algebras $\alpha: \sigma_X^{\#} \mathcal{D}=\mathscr{T}(\sigma_X^{\#} \mathcal{L}) \to \mathscr{T}(p_X^{\#} \mathcal{L})=p_X^{\#} \mathcal{D}$. Further, since the map $\beta$ satisfies the cocycle condition, so does $\alpha$.
\end{proof}

\section{Representations of Lie algebroids}
\label{d1Liealgrep}

Throughout this section we fix $X$ an $R$-variety, $r \in R$ a regular element and $(\mathcal{L},\rho_X)$ a Lie algebroid on $X$.

\begin{definition}
Let $\mathcal{M}$ be a quasi-coherent $\mathcal{O}_X$-module. We say that $\mathcal{M}$ is a \emph{$\mathcal{L}$-module} if $\mathcal{M}$ is a sheaf of modules over the sheaf of Lie algebras $\mathcal{L}$ and for all $f \in \mathcal{O}_X$, $l \in \mathcal{L}$, $m \in \mathcal{M}$, we have

\begin{equation}
\begin{split}
f.(l.m) &=l.(f.m)-\rho_X(l)(f).m, \\
(f.l).m &=f.(l.m) .
\end{split}
\end{equation}
\end{definition}

We define a morphism of $\mathcal{L}$-modules to be a morphism of $\mathcal{O}_X$-modules compatible with the $\mathcal{L}$-action.

\begin{definition}

Assume that $(\mathcal{L},\rho_X)$ is an $r$-deformed Picard algebroid and let $f:Y \to X$ be a  map of $R$-varieties and $\mathcal{M}$ a $\mathcal{L}$-module. Then we define the pullback of $\mathcal{M}$ along $f$, via $f^{\#} \mathcal{M}=f^*\mathcal{M}$ as an $\mathcal{O}_Y$-module and 

 $$(\psi, P \otimes l).( Q \otimes m):=\psi(Q) \otimes m +PQ \otimes l.m,$$ 

for $\psi \in r\mathcal{T}_Y$, $P,Q \in \mathcal{O}_Y, l \in f^{-1} \mathcal{L}, m \in f^{-1} \mathcal{M}$.
\end{definition}

\begin{lemma}
\label{d1repliealgpullbackwelldef}
The action defined above makes $f^{\#} \mathcal{M}$ a $f^{\#} \mathcal{L}$-module.

\end{lemma}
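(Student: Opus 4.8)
The plan is to verify directly that the proposed formula defines a genuine action of the Lie algebroid $f^{\#}\mathcal{L}$ on $f^{*}\mathcal{M}$, checking the two module axioms from the definition of an $\mathcal{L}$-module together with $\mathcal{O}_Y$-linearity and compatibility with the bracket. Since $f^{\#}\mathcal{L}$ is defined as the fibre product $r\mathcal{T}_Y \times_{rf^{*}\mathcal{T}_X} f^{*}\mathcal{L}$, an element is a pair $(\psi, l)$ with $\alpha(\psi) = \beta(l)$; writing $l$ locally as $P \otimes l_0$ with $l_0 \in f^{-1}\mathcal{L}$ and similarly $Q \otimes m$ for elements of $f^{*}\mathcal{M} = \mathcal{O}_Y \otimes_{f^{-1}\mathcal{O}_X} f^{-1}\mathcal{M}$, the work is entirely local on $Y$, so I would fix an affine open and argue on sections. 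The first thing to confirm is that the formula
\[
(\psi, P \otimes l).(Q \otimes m) = \psi(Q) \otimes m + PQ \otimes l.m
\]
is well-defined, i.e.\ independent of the chosen representatives in the tensor product $f^{*}\mathcal{M}$; this amounts to checking that the two terms transform compatibly when one moves a factor of $f^{-1}\mathcal{O}_X$ across the tensor symbol, and here the constraint $\alpha(\psi) = \beta(l)$ is exactly what makes the derivation term $\psi(Q)$ match the anchor of $l$ acting through $\rho_X$.

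The main computational steps are then to verify the two defining identities of an $f^{\#}\mathcal{L}$-module. For the first, $h.((\psi,P\otimes l).(Q\otimes m)) = (\psi,P\otimes l).(h.(Q\otimes m)) - \rho_Y(\psi,P\otimes l)(h).(Q\otimes m)$ for $h \in \mathcal{O}_Y$, I would expand both sides using $\rho_Y(\psi, P \otimes l) = \psi$ (the anchor of the pullback algebroid is projection to $r\mathcal{T}_Y$) and the Leibniz rule for $\psi$ as a derivation on $\mathcal{O}_Y$; the vector-field part contributes a $\psi(h)$ correction that cancels against the anchor term, while the $f^{-1}\mathcal{L}$-part is multiplicative in $\mathcal{O}_Y$ and passes through cleanly. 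For the second identity $(h.(\psi,P\otimes l)).(Q\otimes m) = h.((\psi,P\otimes l).(Q\otimes m))$, I would use that the $\mathcal{O}_Y$-module structure on $f^{\#}\mathcal{L}$ scales both coordinates, $h.(\psi, P\otimes l) = (h\psi, hP \otimes l)$, and check both sides reduce to the same expression.

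The most delicate step is verifying compatibility with the Lie bracket: that $(\psi,f\otimes P).((\eta, g\otimes Q).(S \otimes m)) - (\eta, g\otimes Q).((\psi, f\otimes P).(S\otimes m))$ equals the action of $[(\psi,f\otimes P),(\eta,g\otimes Q)]$, where the bracket is the one defined for $f^{\#}\mathcal{L}$ in the pullback construction. This is where the cross terms $\psi(g)\otimes P - \eta(f)\otimes Q$ appearing in the algebroid bracket must be reproduced by the two derivations $\psi,\eta$ acting on the $\mathcal{O}_Y$-coefficients of the neighbouring factors; I expect this to be the main obstacle, since it requires carefully tracking how each of $\psi$ and $\eta$ differentiates the scalar coefficient $g$ or $f$ of the other term, and matching this against the known bracket identity $[l,l'].m = l.(l'.m) - l'.(l.m)$ in $f^{-1}\mathcal{L}$ together with the Leibniz interaction between the derivation and the module structure. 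Once the bracket identity checks out locally, the construction is manifestly compatible with restriction to smaller opens, so the local actions glue to a global $f^{\#}\mathcal{L}$-module structure, completing the proof.
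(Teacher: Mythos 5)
Your proposal is correct and follows essentially the same route as the paper: a direct local verification on simple tensors of the two $\mathcal{L}$-module axioms (using $\rho_Y(\psi,P\otimes l)=\psi$ and the Leibniz rule) together with the compatibility of the action with the pullback bracket, including its cross terms $\psi(g)\otimes P-\eta(f)\otimes Q$. Your additional check that the formula is independent of representatives in $\mathcal{O}_Y\otimes_{f^{-1}\mathcal{O}_X}f^{-1}\mathcal{M}$ — which hinges precisely on the fibre-product constraint $\alpha(\psi)=\beta(l)$ — is a point the paper's proof passes over silently, so it is a welcome refinement rather than a divergence.
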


\begin{proof}
 
First, we check the bracket action. We have for $\psi,\eta \in r \mathcal{T}_Y$, $P,Q,R \in \mathcal{O}_Y$, $a,b \in f^{-1} \mathcal{L}$ and $m \in f^{-1} \mathcal{M}$ that

\begin{equation}
\begin{split}
(\psi, P \otimes a).((\eta, R \otimes b). Q \otimes m) &= (\psi, P \otimes a).(\eta(Q) \otimes m + RQ \otimes b.m) \\
                                                                             &=\psi(\eta(Q)) \otimes m+P \eta(Q) \otimes a.m \\
                                                                             &+ \psi(RQ) \otimes b.m+ PQR \otimes a.(b.m)
\end{split}
\end{equation}

and 

\begin{equation}
\begin{split}
(\eta, R \otimes b).((\psi, P \otimes a). Q \otimes m)&=\eta(\psi(Q)) \otimes m + R \psi(Q) \otimes b.m \\
                   &+\eta(PQ) \otimes a.m + PQR \otimes b.(a.m).
\end{split}
\end{equation}

Thus, combining the equations above, we obtain

\begin{equation}
\begin{split}
(\psi, P \otimes a).((\eta, R \otimes b). Q \otimes m) &-(\eta, R \otimes b).((\psi, P \otimes a). Q \otimes m) \\
                                                                             &=\psi(\eta(Q))-\eta(\psi(Q)) \otimes m\\
                                                                             &+ PQR \otimes a.(b.m)-b.(a.m) \\
                                                                             &+ P\eta(Q) -\eta(PQ) \otimes a.m+ \psi(RQ)-R \psi(Q) \otimes b.m \\
                                                                             &=[\psi,\eta](Q) \otimes m + PQR \otimes [a,b].m \\
                                                                             &-Q \eta(P) \otimes a.m + Q \psi(R) \otimes b.m\\
                                                                             &=[(\psi, P \otimes a),(\eta, R \otimes b)]. (Q \otimes m).
\end{split}
\end{equation}

To check the first axiom, we have

\begin{equation}
\begin{split}
R.((\psi, P \otimes l). Q \otimes m) &=R.(\psi(Q) \otimes m + PQ \otimes l.m) \\
                                                   &=R \psi(Q) \otimes m + PQR \otimes l.m \\
                                                   &= \psi(RQ) \otimes m - Q \psi(R) \otimes m + PQR \otimes l.m \\
                                                   &= (\psi, P \otimes l). R.( Q \otimes m) - \rho_Y(\psi, P \otimes l)(R).(Q \otimes m).
\end{split}
\end{equation}

Finally, we have

\begin{equation}
\begin{split}
R.((\psi, P \otimes l). Q \otimes m) &=R.(\psi(Q) \otimes m + PQ \otimes l.m) \\
                                                   &=R \psi(Q) \otimes m + PQR \otimes l.m \\
                                                   &=(R.(\psi, P \otimes l)). Q \otimes m. \qedhere
\end{split}
\end{equation}
\end{proof}

We now define equivariant representations. Let $G$ be a smooth affine algebraic group of finite type acting on an $R$-variety $X$.

\begin{definition}
\label{d1defliealgebroidequivrep}
Let $(\mathcal{L}, \rho,i_{\mathfrak{g}})$ be an $r$-deformed $G$-equivariant Lie algebroid on $X$. We say that $\mathcal{M}$ is a $G$-equivariant $\mathcal{L}$-module if:

\begin{enumerate}[label=\roman*)]
\item{$\mathcal{M}$ is a $G$-equivariant $\mathcal{O}_X$-module and $\mathcal{M}$ is a $\mathcal{L}$-module.}
\item{$g.(l.m)=(g.l).(g.m)$ for any  $g \in G$, $l \in \mathcal{L}$ and $m \in \mathcal{M}$.}
\item{ The $r\mathfrak{g}$ action induced by restricting the $\mathfrak{g}$ action induced from the  derivative of the $G$-action on $\mathcal{M}$ coincides with the $r\mathfrak{g}$-action induced from $i_{\mathfrak{g}}$.}
\end{enumerate}

A morphism of $G$-equivariant $\mathcal{L}$-modules is a morphism of $G$-equivariant $\mathcal{O}_X$-modules compatible with the $\mathcal{L}$-action.

\end{definition}

\begin{lemma}
\label{d1pullbackofequivLiealgrep}
Let $\mathcal{L}$ be an $r$-deformed $G$-equivariant Lie algebroid on $X$ and $\mathcal{M}$ a $G$-equivariant $\mathcal{L}$-module.  Further, let $Y$ be a variety and $f:Y \to X$ a  $G$-equivariant morphism. Then $f^{\#} \mathcal{M}$ is a $G$-equivariant $f^{\#} \mathcal{L}$-module.

\end{lemma}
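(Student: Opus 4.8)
The plan is to verify the three axioms of Definition \ref{d1defliealgebroidequivrep} for $f^{\#}\mathcal{M}$ as a module over $f^{\#}\mathcal{L}$, relying on the module structure already established in Lemma \ref{d1repliealgpullbackwelldef} and on the $G$-equivariant structure of the pullback algebroid from Lemma \ref{d1algrebroideqpullback}. Throughout I will use the reformulation of equivariance in \eqref{d1easyweakOmodequivariant}, so that the $G$-action on $f^{\#}\mathcal{M}=f^*\mathcal{M}$ is $g.(Q \otimes m)=(g.Q)\otimes(g.m)$ and the $G$-action on $f^{\#}\mathcal{L}$ is $g.(d,l)=(g.d,g.l)$.

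First I would dispatch axiom i): since $f$ is $G$-equivariant and $\mathcal{M}$ is a $G$-equivariant $\mathcal{O}_X$-module, Lemma \ref{d1Oequivpreservefunctor} shows $f^{\#}\mathcal{M}=f^*\mathcal{M}$ is a $G$-equivariant $\mathcal{O}_Y$-module, while Lemma \ref{d1repliealgpullbackwelldef} shows it is an $f^{\#}\mathcal{L}$-module. For axiom ii) I would compute both sides of $g.\big((\psi,P\otimes l).(Q\otimes m)\big)=\big(g.(\psi,P\otimes l)\big).\big(g.(Q\otimes m)\big)$ directly from the action formula. The left-hand side expands to $g.(\psi(Q)\otimes m+PQ\otimes l.m)=\big(g.(\psi(Q))\big)\otimes(g.m)+(g.P)(g.Q)\otimes\big(g.(l.m)\big)$; the $G$-equivariance of the action of $\mathcal{T}_Y$ on $\mathcal{O}_Y$ gives $g.(\psi(Q))=(g.\psi)(g.Q)$, and the hypothesis that $\mathcal{M}$ is a $G$-equivariant $\mathcal{L}$-module gives $g.(l.m)=(g.l).(g.m)$ (pulled back along $f^{-1}$). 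Substituting these shows the left-hand side equals $(g.\psi)(g.Q)\otimes(g.m)+(g.P)(g.Q)\otimes(g.l).(g.m)$, which is exactly the expansion of the right-hand side.

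The hard part will be axiom iii), matching the two $r\mathfrak{g}$-actions. Recall from the proof of Lemma \ref{d1algrebroideqpullback} that the structure map of $f^{\#}\mathcal{L}$ is $\psi\mapsto(\eta_{|r\mathfrak{g}}(\psi),i(\psi))$, where $i\colon r\mathfrak{g}\to f^*\mathcal{L}$ is the restriction of the pullback of $i_{\mathfrak{g}}$, so that $i(\psi)=1\otimes i_{\mathfrak{g}}(\psi)$. I would compute the $r\mathfrak{g}$-action coming from the derivative of the $G$-action on $f^*\mathcal{M}$: differentiating $g.(Q\otimes m)=(g.Q)\otimes(g.m)$ in a direction $\psi\in r\mathfrak{g}$ yields $\eta(\psi)(Q)\otimes m+Q\otimes(\psi\cdot m)$, where $\eta(\psi)\in\mathcal{T}_Y$ is the infinitesimal action on $\mathcal{O}_Y$ and $\psi\cdot m$ is the derivative of the $G$-action on $\mathcal{M}$. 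By axiom iii) for $\mathcal{M}$ this last term equals $i_{\mathfrak{g}}(\psi).m$. On the other side, applying the formula of Lemma \ref{d1repliealgpullbackwelldef} to the element $(\eta(\psi),1\otimes i_{\mathfrak{g}}(\psi))$ gives $\eta(\psi)(Q)\otimes m+Q\otimes i_{\mathfrak{g}}(\psi).m$, so the two actions agree.

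The only genuine subtlety is to justify interchanging the derivative with the tensor decomposition of $f^*\mathcal{M}$ and to identify the vector field $\eta(\psi)$ appearing in the $f^{\#}\mathcal{L}$-structure with the infinitesimal generator of the $G$-action on $\mathcal{O}_Y$; both hold because the equivariance datum on $f^*\mathcal{M}$ is by definition the pullback of that on $\mathcal{M}$, and the two are compatible by the very construction of $f^{\#}\mathcal{L}$ in Lemma \ref{d1algrebroideqpullback}. Verifying these three axioms establishes that $f^{\#}\mathcal{M}$ is a $G$-equivariant $f^{\#}\mathcal{L}$-module and completes the proof.
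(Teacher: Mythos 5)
Your proposal is correct and follows essentially the same route as the paper: axiom i) via Lemmas \ref{d1Oequivpreservefunctor} and \ref{d1repliealgpullbackwelldef}, axiom ii) by the identical direct expansion of both sides of $g.((\psi,P\otimes l).(Q\otimes m))$, and axiom iii) by matching the two $r\mathfrak{g}$-actions. The only difference is that you actually carry out the axiom iii) comparison (derivative of the action on simple tensors versus the action of $(\eta(\psi),1\otimes i_{\mathfrak{g}}(\psi))$ under the formula of Lemma \ref{d1repliealgpullbackwelldef}), whereas the paper merely asserts that this step ``follows easily from the definitions''; your version is the more complete one.
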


\begin{proof}
We have the $G$ action on $f^{\#} \mathcal{M}$ induced by the action on the simple tensors $g.(Q \otimes m)=g.Q \otimes g.m$ for $g \in G$, $Q \in \mathcal{O}_Y$, $m \in f^{-1}\mathcal{M}$.  Since $f^{\#} \mathcal{M}= f^* \mathcal{M}$ as a $\mathcal{O}_Y$-module, the first axiom of Definition \ref{d1defliealgebroidequivrep} follows from Lemma \ref{d1Oequivpreservefunctor}.

Next, we have for $g \in G$, $\psi \in \mathcal{T}_Y, Q \in \mathcal{O}_Y, l \in f^{-1} \mathcal{L}, m \in f^{-1} \mathcal{M}$:

\begin{equation}
\begin{split}
g.((\psi, P \otimes l). Q \otimes m)&=g.(\psi(Q) \otimes m + PQ \otimes l.m) \\
                                                  &=g. \psi(Q) \otimes g.m +g.(PQ) \otimes g.(l.m) \\
                                                  &=(g. \psi) (g.Q) \otimes g.m + g.P g.Q \otimes (g.l).(g.m) \\
                                                  &=(g.\psi, g.(P \otimes l) . (g.Q \otimes g.m) \\
                                                  &=(g.(\psi, P \otimes l)).(g.(Q \otimes m)).
\end{split}
\end{equation}

Finally, the third axiom follows easily from the definition of $G$-action on $f^{\#} \mathcal{L}$ and on $f^{\#} \mathcal{M}$.
\end{proof}

We should remark that one could define a more general notion of equivariance over an $r$-deformed $G$-equivariant Lie algebroid $(\mathcal{L},\rho,i_g)$. Let $L$ be a closed subgroup of $G$, then one may relax condition $i)$ to $L$-equivariance, impose condition $ii)$ to hold just for $g \in L$ and change condition $iii)$ to: the $r\mathfrak{l}=\Lie(L)$ action induced by derivative of the $L$-action on $\mathcal{M}$ coincides with the $r\mathfrak{l}$-action induced by the restriction of $i_{\mathfrak{g}}$ to $r\mathfrak{l}$.

\section{Modules over twisted differential operators}
\label{d1sectiontdomod}

We keep the notation from the previous section. As our main interest is in modules over deformed htdo's, we need a definition of a representation of a deformed Picard algebroid. Recall that $(\mathcal{L},\rho)$ is an $r$-deformed Picard algebroid if  $\mathcal{L}$ fits into the following short exact sequence $0 \to \mathcal{O}_X \to \mathcal{L} \to r\mathcal{T}_X \to 0$, Thus, for a $\mathcal{L}$-module, we get two actions of the structure sheaf: one since $\mathcal{M}$ is an $\mathcal{O}_X$-module by assumption and one induced by the short exact sequence. 

We say that $\mathcal{M}$ is a \emph{Picard module} if the two actions defined above coincide.

\begin{lemma}

Let $(\mathcal{L},\rho)$ be an $r$-deformed Picard algebroid and let $\mathcal{M}$ be a Picard $\mathcal{L}$-module. Then $\mathcal{M}$ is a module over $\mathcal{D}:=\mathscr{T}(\mathcal{L})$.
\end{lemma}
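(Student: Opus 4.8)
The plan is to reduce the statement to the universal enveloping algebra $U(\mathcal{L})$ and to recognise the Picard condition as exactly the relation allowing a $U(\mathcal{L})$-module to descend to the quotient $\mathcal{D}=U(\mathcal{L})/U(\mathcal{L})(i(1)-j(1))$. Since $U(\mathcal{L})$ is built locally as the enveloping algebra of the Lie--Rinehart algebra $\mathcal{L}(U)$ and all the constructions are compatible with restriction, I would work on an affine cover and glue, so it suffices to treat an affine open $U$.

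First I would upgrade the $\mathcal{L}$-module structure on $\mathcal{M}$ to a $U(\mathcal{L})$-module structure, letting $i(f)$ act by $f\cdot(-)$ through the $\mathcal{O}_X$-module structure and $j(l)$ act by $l.(-)$ through the Lie action. The defining relations of $U(\mathcal{L})$ then translate precisely into the module axioms: that $i$ is an algebra map is the $\mathcal{O}_X$-linearity of $\mathcal{M}$; that $j$ is a morphism of Lie algebras is the assumption that $\mathcal{M}$ is a module over the sheaf of Lie algebras $\mathcal{L}$; the relation $j(fl)=i(f)j(l)$ is the axiom $(f.l).m=f.(l.m)$; and $[j(l),i(f)]=i(\rho(l)(f))$ rearranges to $f.(l.m)=l.(f.m)-\rho_X(l)(f).m$. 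Hence $\mathcal{M}$ becomes canonically a $U(\mathcal{L})$-module whose restriction along $i$ recovers the original $\mathcal{O}_X$-structure.

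Next I would pass to the quotient. Writing $j(1)=j(1_{\mathcal{L}})$ for the image of the distinguished section $1_{\mathcal{L}}\in\mathcal{L}$, the central element $i(1)-j(1)$ generates the two-sided ideal defining $\mathcal{D}=\mathscr{T}(\mathcal{L})$, which yields a well-defined quotient algebra exactly as in the construction of Proposition \ref{d1liealgebroidtotdo}. A $U(\mathcal{L})$-module therefore factors through $\mathcal{D}$ if and only if the generator $i(1)-j(1)$ acts by zero, i.e. $j(1_{\mathcal{L}}).m=i(1).m=m$ for every local section $m$. The Picard hypothesis supplies exactly this: on a Picard module the $\mathcal{O}_X$-action induced by the inclusion $\mathcal{O}_X\hookrightarrow\mathcal{L}$ coincides with the given $\mathcal{O}_X$-module structure, and evaluating this identification at $1\in\mathcal{O}_X$ gives $1_{\mathcal{L}}.m=1\cdot m=m$. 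Consequently $(i(1)-j(1))$ annihilates $\mathcal{M}$, the $U(\mathcal{L})$-action descends to $\mathcal{D}$, and $\mathcal{M}$ is a $\mathcal{D}$-module.

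I expect the only genuine subtlety to be the first step: making the identification of a sheaf-theoretic $\mathcal{L}$-module with a $U(\mathcal{L})$-module fully rigorous via the local universal property and verifying compatibility with restriction and gluing, rather than any conceptual difficulty. Once that dictionary is in place, the centrality of $i(1)-j(1)$ together with the definition of a Picard module make the descent immediate.
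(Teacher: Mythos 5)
Your proof is correct and takes essentially the same approach as the paper's (much terser) two-sentence argument: first promote the $\mathcal{L}$-action to a $U(\mathcal{L})$-action, in the same way that Lie algebra representations correspond to enveloping algebra modules, then observe that the Picard condition is exactly what makes the central element $i(1)-j(1)$ act by zero, so the action descends to $\mathcal{D}=U(\mathcal{L})/U(\mathcal{L})(i(1)-j(1))$. Your version simply spells out the dictionary between the module axioms and the defining relations of $U(\mathcal{L})$, which the paper leaves implicit.
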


\begin{proof}
Since $\mathcal{M}$ is a $\mathcal{L}$-module, we obtain that $\mathcal{M}$ is also a $U(\mathcal{L})$-module in a similar fashion as Lie algebra representations correspond to enveloping algebra modules. Further, the condition that $\mathcal{M}$ is a Picard is exactly the condition that allows us to descend to a $\mathcal{D}$-module.
\end{proof}

As a corollary of the proof, we obtain immediately:

\begin{corollary}
\label{d1repequivalencenonequivliealgtdo}
Let $(\mathcal{L},\rho)$ be an $r$-deformed Picard algebroid on $X$ and $\mathcal{D}=\mathscr{T}(\mathcal{L})$. Then the identity map provides a one-to-one correspondence between Picard $\mathcal{L}$-modules and $\mathcal{D}$-modules.

\end{corollary}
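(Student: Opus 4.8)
The plan is to read the statement as an upgrade of the preceding lemma: that lemma already produces, from a Picard $\mathcal{L}$-module, a $\mathcal{D}$-module with the \emph{same} underlying $\mathcal{O}_X$-module, so what remains is to supply the reverse assignment and verify that the two constructions are mutually inverse and compatible with morphisms. First I would recall the mechanism behind the lemma. Working locally on an affine open $U$, where $\mathcal{L}(U)$ is an $(R,\mathcal{O}_X(U))$-Lie--Rinehart algebra and $U(\mathcal{L})(U)$ is its enveloping algebra, the defining relations of an $\mathcal{L}$-module (the Lie action together with $f.(l.m)=l.(f.m)-\rho_X(l)(f).m$ and $(f.l).m=f.(l.m)$) are precisely the relations imposed in $U(\mathcal{L})$. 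By the universal property of the enveloping algebra, giving an $\mathcal{L}$-module is the same as giving a $U(\mathcal{L})$-module, and quasi-coherence lets these local identifications glue to an equivalence between $\mathcal{L}$-modules and $U(\mathcal{L})$-modules.

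Next I would pin down the Picard condition algebraically. Since $\mathcal{O}_X$ sits inside $\mathcal{L}$ with trivial bracket and $1_{\mathcal{L}}=j(1)$ is the image of $1$, the two $\mathcal{O}_X$-actions on an $\mathcal{L}$-module differ exactly by the action of $1_{\mathcal{L}}$: using $(f.l).m=f.(l.m)$ one sees that the inclusion-action of $f$ is $f.(1_{\mathcal{L}}.m)$, so the module is Picard if and only if $j(1)$ acts as the identity, i.e.\ if and only if $i(1)-j(1)$ annihilates the module. Because $\mathcal{D}=U(\mathcal{L})/U(\mathcal{L})(i(1)-j(1))$, a $U(\mathcal{L})$-module descends to a $\mathcal{D}$-module precisely when $i(1)-j(1)$ acts as zero. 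This identifies Picard $\mathcal{L}$-modules with $\mathcal{D}$-modules on the nose and simultaneously gives the reverse assignment: a $\mathcal{D}$-module restricts along the quotient $\pi:U(\mathcal{L})\to\mathcal{D}$ to a $U(\mathcal{L})$-module, hence to an $\mathcal{L}$-module, and since $\pi(i(1)-j(1))=0$ this $\mathcal{L}$-module is automatically Picard.

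Finally I would check compatibility with morphisms, so that the correspondence is genuine (indeed an isomorphism of categories). A morphism of $\mathcal{L}$-modules is by definition $\mathcal{O}_X$-linear and compatible with the $\mathcal{L}$-action; under the equivalence with $U(\mathcal{L})$-modules this is exactly $U(\mathcal{L})$-linearity, and since source and target both factor the action through $\mathcal{D}$, it coincides with $\mathcal{D}$-linearity. As every assignment above is the identity on underlying $\mathcal{O}_X$-modules, the two functors are mutually inverse. I do not expect a serious obstacle: the only point requiring care is the sheaf-theoretic passage from $\mathcal{L}$-modules to $U(\mathcal{L})$-modules, which is handled locally on affines via the Lie--Rinehart enveloping algebra and then glued using quasi-coherence. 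This is the content already invoked in the preceding lemma, so the corollary follows immediately once that identification and the dictionary ``Picard $\Leftrightarrow$ $i(1)=j(1)$ acts trivially'' are in place.
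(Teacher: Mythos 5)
Your proposal is correct and takes essentially the same route as the paper: the paper's proof of this corollary is exactly the observation, extracted from the preceding lemma's proof, that an $\mathcal{L}$-module is the same as a $U(\mathcal{L})$-module (via the Lie--Rinehart enveloping-algebra universal property, locally on affines) and that the Picard condition is precisely the condition that $i(1)-j(1)$ acts as zero, i.e.\ that the action descends to $\mathcal{D}=U(\mathcal{L})/U(\mathcal{L})(i(1)-j(1))$. Your write-up simply makes explicit the details the paper leaves implicit --- the characterization of ``Picard'' as $j(1)$ acting by the identity, the inverse functor by restriction along the quotient map, and the compatibility with morphisms.
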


For an $r$-deformed Picard algebroid $\mathcal{L}$ we will denote $\Mod(\mathcal{L})$ the category of Picard $\mathcal{L}$-modules. Similarly, for an $r$-deformed tdo $\mathcal{D}$ we denote $\Mod(\mathcal{D})$ the category of quasi-coherent $\mathcal{D}$-modules and $\Coh(\mathcal{D})$ its full subcategory consisting of coherent modules.

\textbf{Equivariant representations}

We now move to the equivariant setting. Recall that $G$ is an algebraic group acting on $X$ with Lie algebra $\mathfrak{g}$.

\begin{definition}
\label{d1definitionofGequivariantmodule}
Let $(\mathcal{D},i_{\mathfrak{g}})$ be an $r$-deformed $G$-htdo. We say that a $\mathcal{D}$-module $\mathcal{M}$ is a $G$-equivariant module over $\mathcal{D}$ if:

\begin{enumerate}[label=\roman*)]
\item{$\mathcal{M}$ is a $G$-equivariant  $\mathcal{O}_X$-module.}
\item{$g.(d.m)=(g.d).(g.m)$, for all $g \in G, d \in \mathcal{D}, m \in \mathcal{M}$.}
\item{ The $r\mathfrak{g}$-action induced by the derivative of the $G$-action on $\mathcal{M}$ coincides with the $r\mathfrak{g}$-action induced by $i_{\mathfrak{g}}$.}
\end{enumerate}

\end{definition}

\begin{lemma}

Let $(\mathcal{L},\rho,i_{\mathfrak{g}})$ be an $r$-deformed $G$-equivariant Picard algebroid and $\mathcal{M}$ be a $G$-equivariant Picard $\mathcal{L}$-module. Further, let $\mathcal{D}=\mathscr{T}(\mathcal{L})$ the $r$-deformed $G$-htdo corresponding to $\mathcal{L}$. Then $\mathcal{M}$ is a $G$-equivariant $\mathcal{D}$-module.

\end{lemma}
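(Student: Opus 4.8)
The plan is to verify directly the three axioms of Definition \ref{d1definitionofGequivariantmodule}, relying on the underlying $\mathcal{D}$-module structure already produced by the non-equivariant correspondence. First I would invoke Corollary \ref{d1repequivalencenonequivliealgtdo}: since $\mathcal{M}$ is a Picard $\mathcal{L}$-module it is automatically a $\mathcal{D}$-module, with the $\mathcal{D}$-action restricting on $F_1(\mathcal{D})=\mathcal{L}$ to the given $\mathcal{L}$-action (this is precisely how the correspondence descends the $U(\mathcal{L})$-action). Axiom i) is then immediate, as $\mathcal{M}$ is a $G$-equivariant $\mathcal{O}_X$-module by axiom i) of Definition \ref{d1defliealgebroidequivrep}.

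The core of the argument is axiom ii), the identity $g.(d.m)=(g.d).(g.m)$, which I would first check on the two generating families of $\mathcal{D}$. For $d=f\in\mathcal{O}_X$ it is exactly the $\mathcal{O}_X$-equivariance relation \eqref{d1easyweakOmodequivariant}, and for $d=l\in\mathcal{L}=F_1(\mathcal{D})$ it is axiom ii) of Definition \ref{d1defliealgebroidequivrep}. Since $\mathcal{D}$ is generated as an $R$-algebra by $\mathcal{O}_X$ and $\mathcal{L}$ and, by Lemma \ref{d1eqPicalgtohtdo}, the $G$-action on $\mathcal{D}$ is multiplicative, I would propagate the identity to an arbitrary $d$ by induction on filtration degree: writing $d=d_1 d_2$ with $d_1\in F_1(\mathcal{D})$,
$$g.(d_1 d_2.m)=(g.d_1).\bigl(g.(d_2.m)\bigr)=(g.d_1).\bigl((g.d_2).(g.m)\bigr)=\bigl((g.d_1)(g.d_2)\bigr).(g.m)=(g.d).(g.m),$$
using the generator case, the inductive hypothesis, and multiplicativity of the action in turn.

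For axiom iii), I would observe that by the construction in Lemma \ref{d1eqPicalgtohtdo} the map $i_{\mathfrak{g}}:r\mathfrak{g}\to\mathcal{D}$ factors as $i_{\mathfrak{g}}:r\mathfrak{g}\to\mathcal{L}$ followed by $\mathcal{L}\hookrightarrow\mathcal{D}$, so the $r\mathfrak{g}$-action on $\mathcal{M}$ defined through $\mathcal{D}$ agrees with the one defined through $\mathcal{L}$; by axiom iii) of Definition \ref{d1defliealgebroidequivrep} the latter equals the action obtained by differentiating the $G$-action, which is what axiom iii) of Definition \ref{d1definitionofGequivariantmodule} demands. I expect the only delicate point to be the inductive extension in axiom ii): one must be sure that verifying multiplicativity of the action on the generating set $\mathcal{O}_X\cup\mathcal{L}$ suffices, which rests on the $\mathcal{D}$-action being the descent of a genuinely well-defined $U(\mathcal{L})$-action, so that the defining relations introduce no inconsistency. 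The remaining verifications are direct translations of the equivariant $\mathcal{L}$-module axioms through the established $\Lie$--$\mathscr{T}$ correspondence.
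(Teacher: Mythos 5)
Your proposal is correct and follows essentially the same route as the paper: invoke Corollary \ref{d1repequivalencenonequivliealgtdo} for the underlying $\mathcal{D}$-module structure, read off axioms i) and iii) from Definition \ref{d1defliealgebroidequivrep}, and establish axiom ii) by induction on products of elements of $\mathcal{O}_X$ and $\mathcal{L}$ using the multiplicativity of the $G$-action on $\mathcal{D}$ (your displayed inductive step is exactly the computation the paper writes out). The paper's proof is simply a terser version of yours, leaving the generator cases and the well-definedness remark implicit.
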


\begin{proof}

We have by Corollary \ref{d1repequivalencenonequivliealgtdo} that $\mathcal{M}$ is a $\mathcal{D}$-module, so we only have to prove equivariance.  Axioms $i)$ and $iii)$ follow from the corresponding axioms in Definition \ref{d1defliealgebroidequivrep}, while axiom $ii)$ follows by an easy induction argument by using the definition of the $G$-action on $U(\mathcal{L})$. We only check the first step: we have that for $g \in G$, $l_1,l_2 \in \mathcal{L}$ and $m \in \mathcal{M}$ that

\begin{equation}
\begin{split}
g.(l_1l_2 .m)&=g.(l_1.(l_2.m) \\
                  &=(g.l_1).(g.(l_2.m) \\
                  &=[(g.l_1)(g.l_2)].(g.m)\\
                  &=(g.l_1l_2).(g.m). \qedhere
\end{split}
\end{equation}
\end{proof}

Similarly to the non-equivariant case, we obtain

\begin{corollary}
\label{d1repequivalentequivliealgtdo}

Let $X$ be an $R$-variety and $(\mathcal{L},\rho,i_{\mathfrak{g}})$ an $r$-deformed $G$-equivariant Picard algebroid and let $\mathcal{D}=\mathscr{T}(\mathcal{L})$ be the corresponding $r$-deformed $G$-htdo. The identity map provides a one-to-one correspondence between $G$-equivariant Picard $\mathcal{L}$-modules and $G$-equivariant $\mathcal{D}$-modules.

\end{corollary}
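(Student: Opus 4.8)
The plan is to promote the non-equivariant bijection of Corollary \ref{d1repequivalencenonequivliealgtdo} to the equivariant setting by checking that the identity map transports the equivariance data in both directions. By Corollary \ref{d1repequivalencenonequivliealgtdo}, the identity map on underlying $\mathcal{O}_X$-modules already gives a one-to-one correspondence between Picard $\mathcal{L}$-modules and $\mathcal{D}$-modules, so it suffices to prove that, under this correspondence, a module is $G$-equivariant as a Picard $\mathcal{L}$-module precisely when it is $G$-equivariant as a $\mathcal{D}$-module.

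The direction sending a $G$-equivariant Picard $\mathcal{L}$-module to a $G$-equivariant $\mathcal{D}$-module is exactly the content of the Lemma immediately preceding this corollary, so I would simply invoke it. For the reverse direction, starting from a $G$-equivariant $\mathcal{D}$-module $\mathcal{M}$, I would restrict the $\mathcal{D}$-action to $\mathcal{L}=F_1(\mathcal{D}) \subseteq \mathcal{D}$ and verify the three axioms of Definition \ref{d1defliealgebroidequivrep}. Axiom i) is immediate, since the underlying $G$-equivariant $\mathcal{O}_X$-module is unchanged and $\mathcal{M}$ is already a Picard $\mathcal{L}$-module by Corollary \ref{d1repequivalencenonequivliealgtdo}. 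Axiom ii), namely $g.(l.m)=(g.l).(g.m)$ for $l\in\mathcal{L}$, is the special case $d=l$ of axiom ii) of Definition \ref{d1definitionofGequivariantmodule}. Axiom iii) is verbatim the same in both definitions, because in each case one asks that the infinitesimal $r\mathfrak{g}$-action obtained from the derivative of the $G$-action coincide with the action induced by $i_{\mathfrak{g}}$, whose image lies in $F_1\mathcal{D}=\mathcal{L}$.

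Since both passages are realised by the identity map on the underlying $\mathcal{O}_X$-module, they are mutually inverse, exactly as in the non-equivariant case. I do not anticipate a genuine obstacle: the correspondence is essentially a repackaging, because the equivariance axioms of Definitions \ref{d1defliealgebroidequivrep} and \ref{d1definitionofGequivariantmodule} are phrased in parallel and $i_{\mathfrak{g}}$ factors through $\mathcal{L}$. The only point deserving a word of care is that axiom ii) for the $\mathcal{D}$-action ranges over all $d\in\mathcal{D}$, whereas the $\mathcal{L}$-version concerns only $l\in\mathcal{L}$; the reverse direction needs only the weaker statement, while the forward direction recovers the stronger one via the induction already performed in the preceding Lemma, using that $\mathcal{D}$ is generated by $\mathcal{O}_X$ and $\mathcal{L}$ and that the $G$-action is multiplicative by axiom 1 of Definition \ref{d1defequivdiffalg}.
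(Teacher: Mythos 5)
Your proposal is correct and follows essentially the same route as the paper: the paper derives this corollary from the preceding lemma (which handles the direction from $G$-equivariant Picard $\mathcal{L}$-modules to $G$-equivariant $\mathcal{D}$-modules) together with the non-equivariant correspondence of Corollary \ref{d1repequivalencenonequivliealgtdo}, leaving the reverse restriction argument implicit. Your write-up simply makes explicit the axiom-by-axiom check for the reverse direction (including the correct observation that axiom ii) for $\mathcal{L}$ is just the special case $d=l$, while the stronger statement for all $d\in\mathcal{D}$ is recovered by the induction in the preceding lemma), which is exactly what the paper's terse ``similarly to the non-equivariant case'' is meant to convey.
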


For an $r$-deformed $G$-equivariant Picard algebroid $\mathcal{L}$ we will denote $\Mod(\mathcal{L},G)$ the category of $G$-equivariant Picard $\mathcal{L}$-modules. Similarly, for a $G$-htdo $\mathcal{D}$ we denote $\Mod(\mathcal{D},G)$ the category of quasi-coherent $G$-equivariant $\mathcal{D}$-modules and $\Coh(\mathcal{D},G)$ its full subcategory consisting of coherent modules. A similar argument to the one in Proposition \ref{d1GequivariantAbeliancat} proves that these categories are Abelian.

\textbf{Pullback of modules over twisted differential operators}

We conclude the section by defining of the pullback of a module over a sheaf of twisted differential operators. 

\begin{definition}
 
Let $f:Y \to X$ be a morphism of $R$-varieties, $\mathcal{D}$ an $r$-deformed tdo on $X$ and $\mathcal{M}$ a $\mathcal{D}$-module. We define the pullback  of $\mathcal{M}$ under the map $f$ to be $f^{\#} \mathcal{M}$.

\end{definition}

We should remark that the sheaf $f^{\#} \mathcal{M}$ has the structure of a $f^{\#} \mathcal{D}$-module by Corollary \ref{d1Picardalgtdocorresp}, Lemma \ref{d1repliealgpullbackwelldef} and Corollary \ref{d1repequivalencenonequivliealgtdo}.

\begin{lemma}

Let $\mathcal{D}$ be an $r$-deformed $G$-htdo, $\mathcal{M}$ a $G$-equivariant $\mathcal{D}$-module and $f:Y \to X$ a $G$-equivariant morphism of $R$-varieties. Then $f^{\#} \mathcal{M}$ is a $G$-equivariant $f^{\#} \mathcal{D}$-module.

\end{lemma}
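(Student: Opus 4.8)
The plan is to transport the statement to the side of Picard algebroids, where the pullback operations are defined and already shown to preserve equivariance, and then carry the conclusion back through the equivalence $\mathscr{T} \dashv \Lie$. Write $\mathcal{L} := \Lie(\mathcal{D}) = F_1(\mathcal{D})$. By Lemma \ref{d1htdotoeqPicalg} this is an $r$-deformed $G$-equivariant Picard algebroid, and by Lemma \ref{d1tdotoliealgebroid} we have $\mathcal{D} \cong \mathscr{T}(\mathcal{L})$. Applying Corollary \ref{d1repequivalentequivliealgtdo} to the $G$-equivariant $\mathcal{D}$-module $\mathcal{M}$, I may regard $\mathcal{M}$ as a $G$-equivariant Picard $\mathcal{L}$-module. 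I would then feed this into the algebroid pullback machinery: by Lemma \ref{d1pullbackofequivLiealgrep}, since $f$ is $G$-equivariant, $f^{\#}\mathcal{M}$ is a $G$-equivariant $f^{\#}\mathcal{L}$-module, and by Lemma \ref{d1algrebroideqpullback}, $f^{\#}\mathcal{L}$ is itself an $r$-deformed $G$-equivariant Picard algebroid. By the very definition of the pullback of a $G$-htdo, $f^{\#}\mathcal{D} = \mathscr{T}(f^{\#}\Lie(\mathcal{D})) = \mathscr{T}(f^{\#}\mathcal{L})$, so the operator sheaf whose modules I want to produce is exactly the one associated with $f^{\#}\mathcal{L}$ under $\mathscr{T}$.

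The one step that is not purely formal is checking that $f^{\#}\mathcal{M}$ is a \emph{Picard} $f^{\#}\mathcal{L}$-module, since only then does Corollary \ref{d1repequivalentequivliealgtdo} apply on $Y$. For this I would unwind the identification $\ker(\rho_Y) \cong \mathcal{O}_Y$ coming from the pullback square in Lemma \ref{d1algebroidnoneqpullback}: the unit $1_{f^{\#}\mathcal{L}}$ corresponds to the pair $(0, 1 \otimes 1_{\mathcal{L}}) \in r\mathcal{T}_Y \times_{rf^*\mathcal{T}_X} f^*\mathcal{L}$, and more generally $P \cdot 1_{f^{\#}\mathcal{L}} = (0, P \otimes 1_{\mathcal{L}})$ for $P \in \mathcal{O}_Y$. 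Applying the module formula of Lemma \ref{d1repliealgpullbackwelldef} with $\psi = 0$ gives $(0, P \otimes 1_{\mathcal{L}}).(Q \otimes m) = PQ \otimes (1_{\mathcal{L}}.m)$, and since $\mathcal{M}$ is Picard we have $1_{\mathcal{L}}.m = m$, so the right-hand side is $PQ \otimes m$. This matches the underlying $\mathcal{O}_Y$-action of $P$ on $Q \otimes m$ in $f^{\#}\mathcal{M} = f^*\mathcal{M}$, so the two $\mathcal{O}_Y$-actions agree and $f^{\#}\mathcal{M}$ is Picard.

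With $f^{\#}\mathcal{L}$ a $G$-equivariant Picard algebroid, $f^{\#}\mathcal{M}$ a $G$-equivariant Picard $f^{\#}\mathcal{L}$-module, and $f^{\#}\mathcal{D} = \mathscr{T}(f^{\#}\mathcal{L})$, I would invoke Corollary \ref{d1repequivalentequivliealgtdo} (applied on $Y$) to identify $f^{\#}\mathcal{M}$ with a $G$-equivariant $f^{\#}\mathcal{D}$-module, which finishes the proof. The main obstacle is genuinely only the Picard verification above; everything else is a chain of already-established equivalences, and I expect no difficulty once the identification of the unit $1_{f^{\#}\mathcal{L}}$ under the pullback square is made explicit.
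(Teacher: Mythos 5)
Your proof is correct and takes essentially the same route as the paper: both transport the statement to the Picard-algebroid side and conclude by combining Lemma \ref{d1pullbackofequivLiealgrep} with Corollary \ref{d1repequivalentequivliealgtdo}. Your explicit check that $f^{\#}\mathcal{M}$ is a \emph{Picard} $f^{\#}\mathcal{L}$-module, via the identification of $P \cdot 1_{f^{\#}\mathcal{L}}$ with $(0, P \otimes 1_{\mathcal{L}})$ in the fibre product, is a detail the paper leaves implicit (it is buried in the remark preceding the lemma), and it is verified correctly.
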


\begin{proof}

We have by the definition above that $f^{\#} \mathcal{M}$ is a $f^{\#} \mathcal{D}$ module and by Corollary \ref{d1repequivalencenonequivliealgtdo}, $f^{\#} \mathcal{D}$ is an $r$-deformed $G$-htdo, so the statement makes sense. The claim now follows by combining Lemma \ref{d1pullbackofequivLiealgrep} and Corollary \ref{d1repequivalentequivliealgtdo}.
\end{proof}


As for the $r$-deformed $G$-htdo's, we may prove that a $G$-equivariant module over an $r$-deformed $G$-htdo module satisfies a cocycle condition. Denote the $G$-action on $X$ by $\sigma_X: G \times X \to X$. Further, we denote $p_X:G \times X \to X$ and $p_{2X}:G \times G \times X \to X$ the projections on the $X$ factor, $p_{23X}:G \times G \times X \to G \times X$ the projection onto the second and third factor and $m:G \times G \to G$ the multiplication of the group $G$. Let $\mathcal{D}$ be an $r$-deformed $G$-htdo and $\mathcal{M}$ a $G$-equivariant $\mathcal{D}$-module. Then there exists $\alpha:\sigma_X^{\#} \mathcal{M} \to p_X^{\#} \mathcal{M}$  an isomorphism of $p_X^{\#} \mathcal{D}$-modules such that the diagram \ref{Ghtdocommdia} commutes, where we replace $\mathcal{D}$ by $\mathcal{M}$. Again, we note that the condition is similar to the one in \cite[Section 5.2.9]{Wan}.

\section{Equivariant descent for Lie algebroids and homogeneous twisted differential operators}
\label{d1sectiondescentliealg}

Throughout this section, we let $X,Y$ be $R$-varieties, $G$ a smooth affine algebraic group of finite type acting \emph{freely} on $Y$. Further, we fix $f:Y \to X$ a locally trivial $G$-torsor and $(\mathcal{L},\rho,i_{\mathfrak{g}})$ an $r$-deformed $G$-equivariant Picard algebroid on $Y$. Let $\alpha: \mathfrak{g} \to \mathcal{T}_Y$ be the derivative of the $G$-action on $Y$. Recall that by Definition \ref{d1definitionrdeformedPicard}, $\alpha= \rho \circ i_{\mathfrak{g}}$ as maps from $r \mathfrak{g}$ to $\mathcal{T}_X$.  Throughout this section, we will use without further specifying that all the $R$-modules appearing have no $r$-torsion.

Let $\widetilde{\mathcal{T}}_X:=(f_* \mathcal{T}_Y)^G$ and let $\sigma: \widetilde{\mathcal{T}}_X \to \mathcal{T}_X$ denote the anchor map. Further, we denote $\widetilde{\mathfrak{g}_X}:=(f_* \mathcal{O}_Y \otimes \mathfrak{g})^G$, where $G$ acts on $\mathfrak{g}$ via the Adjoint action. We let $\tilde{\alpha}$ the induced map $\widetilde{\mathfrak{g}_X} \to \widetilde{\mathcal{T}}_X$. Since $f_* \mathcal{O}_Y \otimes \mathfrak{g}$ has no $r$-torsion, it is easy to see that $r \widetilde{\mathfrak{g}_X} \cong (f_* \mathcal{O}_Y \otimes r \mathfrak{g})^G$.

\begin{lemma}
\label{d1sesliealgeqdes}

The maps $\tilde{\alpha}$ and $\sigma$ induce a short exact sequence

$$ 0  \to \widetilde{\mathfrak{g}_X} \xrightarrow{\tilde{\alpha}}  \widetilde{\mathcal{T}}_X \xrightarrow{\sigma} \mathcal{T}_X \to 0.$$

\end{lemma}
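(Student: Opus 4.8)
The plan is to exhibit the claimed sequence as the descent of the Atiyah-type sequence living on $Y$, so that exactness becomes a formal consequence of equivariant descent rather than a local computation.

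First I would produce on $Y$ the short exact sequence of $\mathcal{O}_Y$-modules
$$0 \to \mathcal{O}_Y \otimes \mathfrak{g} \xrightarrow{\alpha} \mathcal{T}_Y \xrightarrow{df} f^*\mathcal{T}_X \to 0,$$
where $\alpha$ is the $\mathcal{O}_Y$-linear extension of the infinitesimal action $\mathfrak{g} \to \mathcal{T}_Y$ and $df$ is the differential of $f$. Since $G$ is smooth, the torsor $f$ is a smooth morphism (locally it is the projection $G \times U \to U$), so the relative cotangent sequence is exact with locally free terms and dualizes to show that $df$ is surjective with kernel the relative tangent sheaf $\mathcal{T}_{Y/X}$. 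Because $G$ acts freely, the fundamental vector fields are everywhere linearly independent and span the vertical directions, so $\alpha$ is injective with image exactly $\mathcal{T}_{Y/X} = \ker(df)$; this gives exactness.

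Next I would verify that this is a short exact sequence inside the abelian category $\QCoh(\mathcal{O}_Y,G)$ (Proposition \ref{d1GequivariantAbeliancat}). The term $\mathcal{O}_Y \otimes \mathfrak{g}$ carries the equivariant structure coming from the action on $\mathcal{O}_Y$ together with the Adjoint action on $\mathfrak{g}$; $\mathcal{T}_Y$ is equivariant because $G$ acts on $Y$; and $f^*\mathcal{T}_X$ carries the pullback equivariance of Lemma \ref{d1Oequivpreservefunctor}. The map $df$ is equivariant since $f$ is $G$-invariant, and $\alpha$ is equivariant by the standard identity $g.\alpha(\xi) = \alpha(\Ad(g)\xi)$ relating fundamental vector fields to the Adjoint action. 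As kernels and cokernels in $\QCoh(\mathcal{O}_Y,G)$ are computed on underlying $\mathcal{O}_Y$-modules, the sequence is exact in this category.

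Finally I would apply the functor $(f_*(-))^G$. By Proposition \ref{d1Omodulesequivariantdescent} it is one half of an equivalence of abelian categories (quasi-inverse to $f^*$), hence exact, so it carries the above sequence to a short exact sequence. I then identify the terms: $(f_*(\mathcal{O}_Y \otimes \mathfrak{g}))^G = (f_*\mathcal{O}_Y \otimes \mathfrak{g})^G = \widetilde{\mathfrak{g}_X}$ because $\mathfrak{g}$ is a finite free $R$-module; $(f_*\mathcal{T}_Y)^G = \widetilde{\mathcal{T}}_X$ by definition; and $(f_* f^* \mathcal{T}_X)^G \cong \mathcal{T}_X$ by the descent isomorphism of Proposition \ref{d1Omodulesequivariantdescent}. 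Under these identifications the descended maps are precisely $\tilde{\alpha}$ and the anchor $\sigma$, which yields the asserted short exact sequence. The one point requiring care is the exactness of the descent functor: it is not that $f_*$ alone is exact, but that $(f_*(-))^G$, being part of an equivalence of abelian categories, automatically preserves short exact sequences; verifying the $G$-equivariance of $\alpha$ via the fundamental-vector-field identity is the only other genuine check.
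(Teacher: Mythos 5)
Your proof is correct, but it takes a genuinely different route from the paper's. The paper argues by a local computation: it trivialises the torsor (so $Y = G \times X$ with $f$ the projection), computes directly that $\widetilde{\mathfrak{g}_X}(X) \cong \mathcal{O}_X(X) \otimes \mathcal{T}(G)^G$, and cites \cite{Annals} for the decomposition $\widetilde{\mathcal{T}}_X(X) \cong \mathcal{O}_X(X) \otimes \mathcal{T}(G)^G \oplus \mathcal{T}_X(X)$, after which the sequence is visibly exact because it splits on a trivialising chart. You instead build the relative tangent (Atiyah-type) sequence upstairs on $Y$, check that its terms and maps live in $\QCoh(\mathcal{O}_Y,G)$ (where kernels and cokernels are computed on underlying modules, per Proposition \ref{d1GequivariantAbeliancat}), and push it down through the descent equivalence of Proposition \ref{d1Omodulesequivariantdescent}, using that an additive equivalence of abelian categories is automatically exact. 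Your approach is more functorial, avoids the external citation, and is in fact the very technique the paper uses in later arguments: Lemma \ref{d1descentinverseseqdesliealg} recovers the upstairs sequence \eqref{d1usefulrandomses} by pulling back the present lemma's sequence, so your argument essentially runs that step in reverse, and Lemma \ref{d1descenteqpicalg} descends the Picard short exact sequence in exactly the same way. What the paper's computation buys is the explicit local splitting and fewer verification obligations; what your version requires (and you correctly flagged) is the identification $\ker(df) = \alpha(\mathcal{O}_Y \otimes \mathfrak{g})$ — note that "the fundamental vector fields are linearly independent" is too pointwise a statement over a general base ring $R$, and the clean justification is the torsor isomorphism $G \times Y \cong Y \times_X Y$ or a trivialising chart, so a small local computation is not entirely avoided — together with the equivariance identity $g \cdot \alpha(\xi) = \alpha(\Ad(g)\,\xi)$ and the check that the descended maps agree with $\tilde{\alpha}$ and the anchor $\sigma$.
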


\begin{proof}

The question is local, so we may assume that $X$ is affine $Y=G \times X$, $G$ acts on $Y$ via left multiplication on the first factor and $f$ is the projection on the second factor. In that case, we have 

\begin{equation}
\begin{split}
\widetilde{\mathfrak{g}_X}(X):&=(f_* \mathcal{O}_Y \otimes \mathfrak{g})^G(X) \\
                              &= (\mathcal{O}_Y(Y) \otimes \mathfrak{g})^G \\
                              &\cong (\mathcal{O}_X(X) \otimes \mathcal{O}_G(G) \otimes \mathfrak{g})^G \\
                              & \cong \mathcal{O}_X(X) \otimes ( \mathcal{O}_G(G) \otimes \mathfrak{g})^G \\
                              & \cong \mathcal{O}_X(X) \otimes (\mathcal{T}(G))^G. 
\end{split}             
\end{equation}

Further, we have by the proof of \cite[Lemma 4.4]{Annals} that $\widetilde{\mathcal{T}}_X(X) \cong \mathcal{O}_X(X) \otimes \mathcal{T}(G)^G  \oplus \mathcal{T}_X(X)$, so the conclusion follows.
\end{proof}

By abuse of notation we denote $i_{\mathfrak{g}}$ the map $i_{\mathfrak{g}}:\mathcal{O}_Y \otimes r\mathfrak{g} \to \mathcal{L}$ and $\tig:r\widetilde{\mathfrak{g}_X} \to (f_*\mathcal{L})^G$ the induced map. 

\begin{definition}

Let $f_{\#} \mathcal{L}^G$ be the $\mathcal{O}_X$-module $(f_* \mathcal{L})^G / \tig (r\widetilde{\mathfrak{g}_X})$. We call this the descent of $\mathcal{L}$.

\end{definition}

\begin{lemma}
\label{d1descenteqpicalg}
The $\mathcal{O}_X$-module $f_{\#} \mathcal{L}^G$ has the structure of an $r$-deformed Picard  algebroid.

\end{lemma}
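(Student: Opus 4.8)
The plan is to build the structure in two stages: first the short exact sequence of $\mathcal{O}_X$-modules exhibiting $f_{\#}\mathcal{L}^G$ as an extension of $r\mathcal{T}_X$ by $\mathcal{O}_X$, and then the Lie algebroid structure, obtained by descending the bracket of $(f_*\mathcal{L})^G$ through the quotient.

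For the module-level statement I would start from the defining sequence $0 \to \mathcal{O}_Y \to \mathcal{L} \xrightarrow{\rho} r\mathcal{T}_Y \to 0$ of $\mathcal{L}$ as an $r$-deformed Picard algebroid. By Definition \ref{d1definitionrdeformedPicard} all three terms are $G$-equivariant quasi-coherent $\mathcal{O}_Y$-modules and all maps are $G$-equivariant, so this is a short exact sequence in $\QCoh(\mathcal{O}_Y,G)$. Since $(f_*(-))^G$ is one half of the equivalence of Proposition \ref{d1Omodulesequivariantdescent} it is exact; applying it and using $(f_*\mathcal{O}_Y)^G \cong \mathcal{O}_X$ together with $(f_* r\mathcal{T}_Y)^G \cong r\widetilde{\mathcal{T}}_X$ (no $r$-torsion) I obtain
$$0 \to \mathcal{O}_X \to (f_*\mathcal{L})^G \xrightarrow{\rho^G} r\widetilde{\mathcal{T}}_X \to 0.$$
Multiplying the sequence of Lemma \ref{d1sesliealgeqdes} by $r$ gives $0 \to r\widetilde{\mathfrak{g}_X} \xrightarrow{\tilde\alpha} r\widetilde{\mathcal{T}}_X \xrightarrow{\sigma} r\mathcal{T}_X \to 0$. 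Because $\rho \circ i_{\mathfrak{g}} = \eta_{|r\mathfrak{g}}$ descends to the relation $\rho^G \circ \tig = \tilde\alpha$, and $\tilde\alpha$ is injective, the map $\tig$ is injective and $\rho^G(\tig(r\widetilde{\mathfrak{g}_X})) = \ker\sigma$. Assembling these into the evident commutative $3\times 3$ diagram, whose columns are exact and whose top and middle rows are the two sequences above, the nine lemma yields exactness of the bottom row
$$0 \to \mathcal{O}_X \to f_{\#}\mathcal{L}^G \xrightarrow{\bar\rho} r\mathcal{T}_X \to 0,$$
which is precisely the extension required of an $r$-deformed Picard algebroid.

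For the Lie algebroid structure I would first check that $(f_*\mathcal{L})^G$ is itself a Lie algebroid over $\mathcal{O}_X$: the bracket on $\mathcal{L}$ is $G$-equivariant by Definition \ref{d1defofeqalg}, hence restricts to $G$-invariant sections, and the anchor $\sigma\circ\rho^G$ satisfies the Leibniz rule on $\mathcal{O}_X \subseteq f_*\mathcal{O}_Y$ since an invariant derivation sends invariant functions to invariant functions. It then remains to see that $\mathcal{I} := \tig(r\widetilde{\mathfrak{g}_X})$ is a Lie ideal contained in $\ker(\sigma\circ\rho^G)$, so that both bracket and anchor descend to $f_{\#}\mathcal{L}^G$; the inclusion $\mathcal{I}\subseteq\ker(\sigma\circ\rho^G)$ is immediate from $\rho^G(\mathcal{I})=\ker\sigma$. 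Finally $\mathcal{O}_X$ inherits the trivial bracket from the abelian ideal $\mathcal{O}_Y\subseteq\mathcal{L}$, so the extension above is one of Lie algebras, completing the verification.

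The main obstacle is proving that $\mathcal{I}$ is a Lie ideal. Writing an invariant generator locally as $\omega=\sum_i \phi_i\otimes\psi_i$, so that $\tig(\omega)=\sum_i \phi_i\, i_{\mathfrak{g}}(\psi_i)$, and taking $l\in(f_*\mathcal{L})^G$, the Leibniz rule gives $[\phi_i\, i_{\mathfrak{g}}(\psi_i),l]=\phi_i\,[i_{\mathfrak{g}}(\psi_i),l]-\rho(l)(\phi_i)\, i_{\mathfrak{g}}(\psi_i)$. Two facts then do the work: by Definition \ref{d1definitionrdeformedPicard} the operator $[i_{\mathfrak{g}}(\psi_i),-]$ is the infinitesimal $r\mathfrak{g}$-action, which annihilates the $G$-invariant section $l$, killing the first term; and, $l$ being invariant, the derivation $\rho(l)$ is invariant, so $\rho(l)\otimes\mathrm{id}$ commutes with the diagonal $G$-action and preserves invariants, giving $\sum_i \rho(l)(\phi_i)\otimes\psi_i \in r\widetilde{\mathfrak{g}_X}$. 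Hence $[\tig(\omega),l]=-\tig\big(\sum_i \rho(l)(\phi_i)\otimes\psi_i\big)\in\mathcal{I}$. The delicate point—most cleanly handled after reducing to the trivial torsor $Y=G\times X$, where each ingredient is explicit—is the vanishing of the infinitesimal action on invariant sections over a general base ring; this is exactly where the freeness and equivariance hypotheses enter.
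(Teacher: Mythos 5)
Your proof is correct and follows the same overall skeleton as the paper's: descend the defining extension of $\mathcal{L}$ through the equivalence of Proposition \ref{d1Omodulesequivariantdescent}, combine it with Lemma \ref{d1sesliealgeqdes} via the relation $\tilde{\rho} \circ \tig = \tilde{\alpha}$, and quotient by $\tig(r\widetilde{\mathfrak{g}_X})$. Two differences are worth recording. First, where you obtain exactness of $0 \to \mathcal{O}_X \to f_{\#}\mathcal{L}^G \to r\mathcal{T}_X \to 0$ by the nine lemma, the paper computes $\ker \rho_X$ directly: by the second isomorphism theorem this reduces to $\ker\tilde{\rho} \cap \tig(r\widetilde{\mathfrak{g}_X}) = 0$, which follows from injectivity of $\tilde{\alpha}$ (deduced there from freeness of the $G$-action and faithful flatness of $\mathcal{O}_Y$). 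The content is the same --- your argument uses injectivity of $\tig$ as an exact column of the $3\times 3$ diagram, the paper uses it at the end --- so this is mere bookkeeping. Second, and more substantively, your detailed verification that $\tig(r\widetilde{\mathfrak{g}_X})$ is a Lie ideal of $(f_*\mathcal{L})^G$ --- via the Leibniz rule, the vanishing of the infinitesimal $r\mathfrak{g}$-action on $G$-invariant sections, and the invariance of the derivation $\rho(l)$ for invariant $l$ --- fills in a step the paper dispatches in one line by asserting that the image of $i_{\mathfrak{g}}$ is an ideal in $\mathcal{L}$ itself. As stated that assertion is too strong: for a non-invariant section $l$, the bracket $[i_{\mathfrak{g}}(\psi), l]$ equals the infinitesimal action of $\psi$ on $l$, which need not lie in $\im i_{\mathfrak{g}}$; the ideal property genuinely holds only after passing to $G$-invariants, which is precisely what your computation establishes. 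So on this point your write-up is more careful than the paper's, and it correctly identifies the freeness/equivariance hypotheses as the place where the argument could fail over a general base.
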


\begin{proof}

The bracket structure on $\widetilde{\mathcal{L}}:=(f_* \mathcal{L})^G$ is induced from the bracket structure on $\mathcal{L}$;  furthermore this descends to a bracket structure on $f_{\#} \mathcal{L}^G$ by setting $[a+\tig (r\widetilde{\mathfrak{g}_X}),b+\tig (r\widetilde{\mathfrak{g}_X})]=[a,b]+\tig (r\widetilde{\mathfrak{g}_X})$ for $a,b \in (f_* \mathcal{L})^G$. Since the image of $i_{\mathfrak{g}}$ is an ideal in $\mathcal{L}$, we obtain that the image of $\tig$ is an ideal in $(f_* \mathcal{L})^G$, thus the bracket is well defined. Therefore, we are left to construct an anchor map.

Consider the short exact sequence $0 \to \mathcal{O}_Y \to \mathcal{L} \xrightarrow{\rho} r\mathcal{T}_Y \to 0$. Applying Proposition \ref{d1Omodulesequivariantdescent}, we obtain a short exact sequence

         $$ 0 \to \mathcal{O}_X \to \widetilde {\mathcal{L}} \xrightarrow{\tilde{\rho}} r\widetilde{\mathcal{T}}_X \to 0.$$

By construction, we have $\tilde{\alpha}= \tilde{\rho} \circ \tig$, so the map $$\bar{\widetilde{\rho}}: \widetilde{\mathcal{L}}/  \tig(r\widetilde{\mathfrak{g}_X}) \to \widetilde{\mathcal{T}}_X/ \tilde{\alpha} (r\widetilde{\mathfrak{g}_X})$$

is well defined. Furthermore, since there is no $r$-torsion, we have by Lemma \ref{d1sesliealgeqdes} an induced isomorphism $\bar{\sigma}: r\widetilde{\mathcal{T}}_X/ r\tilde{\alpha}(\widetilde{\mathfrak{g}_X}) \to r\mathcal{T}_X$. We define the anchor map on $\widetilde{\mathcal{L}}/ \tig (r\widetilde{\mathfrak{g}_X})$ to be $\rho_X:= \bar{\sigma} \circ \bar{\widetilde{\rho}}$ and it is clear that together with the bracket $\widetilde{\mathcal{L}}/ \tig (r\widetilde{\mathfrak{g}_X})$ becomes a Lie algebroid, so it remains to show that it is an $r$-deformed Picard algebroid. It is easy to see that $\im \rho_X=r \mathcal{T}_X$, so it is enough to prove that $\ker (\rho_X) \cong \ker \widetilde{\rho} \cong \mathcal{O}_X$.

Since $\bar{\sigma}$ is injective we have that $$\ker{\rho_X}= \ker(\bar{\tilde{\rho}})=(\ker \tilde{\rho}+ \tig(r\widetilde{\mathfrak{g}_X}))/\tig(r\widetilde{\mathfrak{g}_X}) \cong \ker \tilde{\rho}/ \ker \tilde{\rho} \cap \tig(r\widetilde{\mathfrak{g}_X})$$ 

by the second isomorphism theorem. Thus, it is enough to prove $\ker \tilde{\rho} \cap \tig(r\widetilde{\mathfrak{g}_X})= 0$ and since $\tilde{\alpha}=\tilde{\rho} \circ \tig$, this reduces to proving $\ker \tilde{\alpha}=0$. By assumptions, the action of $G$ on $Y$ is free; thus the map $\alpha: \mathfrak{g} \to \mathcal{T}_Y$ is injective and since $\mathcal{O}_Y$ is a faithfully flat $R$-module, the induced map $\alpha: \mathcal{O}_Y \otimes r\mathfrak{g} \to r \mathcal{T}_Y$ injective and thus so is $\tilde{\alpha}$.
\end{proof}
%

\begin{lemma}
\label{d1descentinverseseqdesliealg}

Let $\mathcal{L}$ as before. Then $f^{\#} (f_{\#} \mathcal{L}^G) \cong \mathcal{L}$.
\end{lemma}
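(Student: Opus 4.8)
The plan is to exhibit a natural morphism $\Psi\colon\mathcal{L}\to f^{\#}(f_{\#}\mathcal{L}^{G})$ of $r$-deformed Picard algebroids and to verify it is an isomorphism by the five lemma applied to the two short exact sequences encoding the Picard structures. Write $\widetilde{\mathcal{L}}:=(f_{*}\mathcal{L})^{G}$ and $\mathcal{N}:=f_{\#}\mathcal{L}^{G}=\widetilde{\mathcal{L}}/\tig(r\widetilde{\mathfrak{g}_X})$; since $f$ is a torsor it is faithfully flat, so $f^{*}$ is exact. First I would apply $f^{*}$ to the defining sequence $0\to r\widetilde{\mathfrak{g}_X}\xrightarrow{\tig}\widetilde{\mathcal{L}}\to\mathcal{N}\to 0$ and invoke the natural counit isomorphisms $f^{*}\widetilde{\mathcal{L}}\cong\mathcal{L}$ and $f^{*}(r\widetilde{\mathfrak{g}_X})\cong\mathcal{O}_Y\otimes r\mathfrak{g}$ of Proposition \ref{d1Omodulesequivariantdescent}. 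Because $\tig$ is by construction the descent of the $G$-equivariant map $i_{\mathfrak{g}}$, naturality of the counit identifies $f^{*}\tig$ with $i_{\mathfrak{g}}$, yielding a natural exact sequence $0\to\mathcal{O}_Y\otimes r\mathfrak{g}\xrightarrow{i_{\mathfrak{g}}}\mathcal{L}\xrightarrow{q}f^{*}\mathcal{N}\to 0$; thus $f^{*}\mathcal{N}\cong\mathcal{L}/i_{\mathfrak{g}}(\mathcal{O}_Y\otimes r\mathfrak{g})$ with $q$ the quotient map.

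The heart of the argument is to pin down the anchor $\beta=f^{*}(\rho_X)$ of the pullback. Writing $\tau\colon\mathcal{T}_Y\to f^{*}\mathcal{T}_X$ for the canonical map (the one denoted $\alpha$ in the definition of $f^{\#}$), I would first apply the exact functor $f^{*}$ to the sequence of Lemma \ref{d1sesliealgeqdes} and use the same counit isomorphisms to recover the relative tangent sequence of the torsor, $0\to\mathcal{O}_Y\otimes r\mathfrak{g}\xrightarrow{\alpha}r\mathcal{T}_Y\xrightarrow{\tau}rf^{*}\mathcal{T}_X\to 0$ (here $\alpha$ is the infinitesimal action, whose injectivity is the freeness built into Lemma \ref{d1sesliealgeqdes}); in particular $f^{*}\sigma$ is identified with $\tau$. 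Recalling from Lemma \ref{d1descenteqpicalg} that $\rho_X=\bar{\sigma}\circ\bar{\widetilde{\rho}}$ with $\widetilde{\rho}=(f_{*}\rho)^{G}$, and using that naturality of the counit identifies $f^{*}\widetilde{\rho}$ with $\rho$, I would conclude that $\beta$ is the composite $f^{*}\mathcal{N}\xrightarrow{\bar{\rho}}r\mathcal{T}_Y/\alpha(\mathcal{O}_Y\otimes r\mathfrak{g})\xrightarrow{\sim}rf^{*}\mathcal{T}_X$ induced by $\rho$ and $\tau$. This gives the key compatibility $\beta\circ q=\tau\circ\rho$.

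With this in hand I would set $\Psi(s)=(\rho(s),q(s))$; the compatibility $\tau(\rho(s))=\beta(q(s))$ places $\Psi$ in the fibre product $f^{\#}\mathcal{N}=r\mathcal{T}_Y\times_{rf^{*}\mathcal{T}_X}f^{*}\mathcal{N}$. Since the anchor of $f^{\#}\mathcal{N}$ is the first projection (Lemma \ref{d1algebroidnoneqpullback}) we have $\rho_Y\circ\Psi=\rho$, and because $\alpha=\rho\circ i_{\mathfrak{g}}$ is injective one has $i_{\mathfrak{g}}(\mathcal{O}_Y\otimes r\mathfrak{g})\cap\mathcal{O}_Y=0$, so $q$ carries the central $\mathcal{O}_Y\subset\mathcal{L}$ isomorphically onto $\ker\beta$. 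Hence $\Psi$ is a map between the short exact sequences $0\to\mathcal{O}_Y\to\mathcal{L}\xrightarrow{\rho}r\mathcal{T}_Y\to 0$ and $0\to\mathcal{O}_Y\to f^{\#}\mathcal{N}\xrightarrow{\rho_Y}r\mathcal{T}_Y\to 0$ that is the identity on both outer terms, and the five lemma shows $\Psi$ is an isomorphism of $\mathcal{O}_Y$-modules. That $\Psi$ also respects the bracket, so that it is an isomorphism of $r$-deformed Picard algebroids, I would check directly from the explicit bracket formula defining $f^{\#}$, reducing if convenient to a trivialising open where the above sequences split. The main obstacle is the middle paragraph: correctly matching the pulled-back composite anchor $f^{*}(\bar{\sigma}\circ\bar{\widetilde{\rho}})$ with the quotient anchor, which rests entirely on naturality of the descent counit and on identifying $f^{*}\sigma$ with the canonical map $\tau$ via the relative tangent sequence.
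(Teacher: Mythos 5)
Your proposal is correct and follows essentially the same route as the paper: both pull back the defining sequence of $f_{\#}\mathcal{L}^G$ and the sequence of Lemma \ref{d1sesliealgeqdes} through the descent equivalence (Proposition \ref{d1Omodulesequivariantdescent}) to identify $f^*(f_{\#}\mathcal{L}^G) \cong \mathcal{L}/i_{\mathfrak{g}}(\mathcal{O}_Y \otimes r\mathfrak{g})$ and $rf^*\mathcal{T}_X \cong r\mathcal{T}_Y/\alpha(\mathcal{O}_Y \otimes r\mathfrak{g})$, and both then exhibit the same natural map $l \mapsto (\rho(l),\, l + i_{\mathfrak{g}}(\mathcal{O}_Y \otimes r\mathfrak{g}))$ into the fibre product. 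The only difference is cosmetic: you conclude via the short five lemma applied to the two Picard extensions (being somewhat more explicit than the paper about the anchor-map compatibility $\beta \circ q = \tau \circ \rho$), whereas the paper verifies injectivity and surjectivity of the map directly, both arguments resting on the same injectivity of $\alpha$.
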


\begin{proof}
Recall that by abuse of notation, we denote $i_{\mathfrak{g}}: \mathcal{O}_Y \otimes r\mathfrak{g} \to \mathcal{L}$ and $\alpha: \mathcal{O}_Y \otimes r\mathfrak{g} \to \mathcal{T}_Y$ the induced maps; we still have $\alpha=\rho \circ i_{\mathfrak{g}}$. Let $\widetilde{\mathcal{L}}:=(f_* \mathcal{L})^G$ so that there is a short exact sequence 

$$ 0 \to (f_* \mathcal{O}_Y \otimes r\mathfrak{g})^G \to \widetilde{\mathcal{L}} \to f_{\#} \mathcal{L}^G \to 0$$

Pulling back under the torsor $f$ and applying Proposition \ref{d1Omodulesequivariantdescent}, we obtain a short exact sequence

$$ 0 \to \mathcal{O}_Y \otimes r\mathfrak{g} \xrightarrow{i_{\mathfrak{g}}} \mathcal{L} \to f^* (f_{\#} \mathcal{L}^G) \to 0,$$

so, $f^* (f_{\#} \mathcal{L}^G) \cong \mathcal{L} /i_{\mathfrak{g}}(\mathcal{O}_Y \otimes r\mathfrak{g}).$

Similarly, by pulling back under $f$ the short exact sequence in Lemma \ref{d1sesliealgeqdes} and taking into account there is no $r$-torsion we obtain a short exact sequence

\begin{equation}
\label{d1usefulrandomses}
 0 \to \mathcal{O}_Y \otimes r\mathfrak{g} \xrightarrow{\alpha} r\mathcal{T}_Y \to rf^* \mathcal{T}_X \to 0,
\end{equation}
so $rf^* \mathcal{T}_X \cong r\mathcal{T}_Y/ \alpha(\mathcal{O}_Y \otimes r\mathfrak{g}).$

Therefore, we obtain

$$f^{\#} (f_{\#} \mathcal{L}^G) \cong f^* (f_{\#} \mathcal{L}^G) \times_{rf^* \mathcal{T}_X} r\mathcal{T}_Y \cong \mathcal{L}/i_{\mathfrak{g}}(\mathcal{O}_Y \otimes r\mathfrak{g})\times_{r \mathcal{T}_Y/\alpha(\mathcal{O}_Y \otimes r\mathfrak{g})} r\mathcal{T}_Y.$$

Define $\varphi:\mathcal{L} \to f^{\#} (f_{\#} \mathcal{L}^G)$ by $\varphi(l)=(l+i_{\mathfrak{g}}(\mathcal{O}_Y \otimes r\mathfrak{g}), \rho(l))$ for any $l \in \mathcal{L}$.

To see that $\varphi$ is injective, we use that $\alpha$ is an injective map, thus so is $i_{\mathfrak{g}}$, therefore $i_{\mathfrak{g}}(\mathcal{O}_Y \otimes r \mathfrak{g}) \cap \ker(\rho)= 0 $.

Finally, let $(l+i_{\mathfrak{g}}(\mathcal{O}_Y \otimes r\mathfrak{g}), x) \in \mathcal{L}/i_{\mathfrak{g}}(\mathcal{O}_Y \otimes r\mathfrak{g})\times_{ r\mathcal{T}_Y/\alpha(\mathcal{O}_Y \otimes r\mathfrak{g})} r\mathcal{T}_Y,$ so that $\rho(l)+\alpha(\mathcal{O}_Y \otimes r\mathfrak{g})= x+ \alpha( \mathcal{O}_Y \otimes r\mathfrak{g})$, thus $x=\rho(l)+i$ for some $i \in \alpha( \mathcal{O}_Y \otimes r\mathfrak{g})$. Since $\alpha$ is injective there exist a unique $j \in \mathcal{O}_Y \otimes r\mathfrak{g}$, so that $\alpha(j)=i$. Thus $\varphi(l+i_{\mathfrak{g}}(j))=(l+ i_{\mathfrak{g}}(\mathcal{O}_Y \otimes r \mathfrak{g}), x),$ so $\varphi$ is surjective.
\end{proof}

\begin{lemma}
\label{d1eqdescentPicardrightinverse}
Let $\mathcal{P}$ be an $r$-deformed Picard algebroid on $X$. Then $f_{\#}(f^{\#} \mathcal{P})^G \cong \mathcal{P}$.

\end{lemma}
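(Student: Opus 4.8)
The plan is to descend the short exact sequence that defines the pullback $f^{\#}\mathcal{P}$. Since $f$ is a $G$-torsor, $G$ acts trivially on $X$, so I would first equip $\mathcal{P}$ with the trivial $G$-equivariant structure together with the zero map $i_{\mathfrak{g}}=0\colon r\mathfrak{g}\to\mathcal{P}$; this is forced, since the infinitesimal action $\eta_X$ vanishes and the axiom $\eta_{X\,|r\mathfrak{g}}=\rho_X\circ i_{\mathfrak{g}}$ puts the image of $i_{\mathfrak{g}}$ inside $\ker\rho_X=\mathcal{O}_X$. With this structure, Lemma \ref{d1algrebroideqpullback} guarantees that $f^{\#}\mathcal{P}$ is an $r$-deformed $G$-equivariant Picard algebroid, and its structure map is $i_{\mathfrak{g}}(\psi)=(\alpha(\psi),0)$, where $\alpha\colon\mathfrak{g}\to\mathcal{T}_Y$ is the infinitesimal action used throughout this section.

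Next I would record the defining short exact sequence. The second projection of the fibre product $f^{\#}\mathcal{P}=r\mathcal{T}_Y\times_{rf^*\mathcal{T}_X}f^*\mathcal{P}$ is surjective because the canonical map $r\mathcal{T}_Y\to rf^*\mathcal{T}_X$ is, and its kernel is $\alpha(\mathcal{O}_Y\otimes r\mathfrak{g})$ by the sequence \eqref{d1usefulrandomses}. Hence there is a short exact sequence of $G$-equivariant $\mathcal{O}_Y$-modules
$$0\to\mathcal{O}_Y\otimes r\mathfrak{g}\xrightarrow{i_{\mathfrak{g}}}f^{\#}\mathcal{P}\to f^*\mathcal{P}\to 0,$$
whose injection is exactly $i_{\mathfrak{g}}$; all three maps are $G$-equivariant since $\alpha$ intertwines the Adjoint action and the $G$-action on $\mathcal{T}_Y$.

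I would then apply the descent functor $(f_*(-))^G$, which is exact because it is an equivalence of abelian categories by Proposition \ref{d1Omodulesequivariantdescent}. Using $(f_*(\mathcal{O}_Y\otimes r\mathfrak{g}))^G\cong r\widetilde{\mathfrak{g}_X}$ (the no $r$-torsion remark together with the definition of $\widetilde{\mathfrak{g}_X}$) and the counit isomorphism $(f_*f^*\mathcal{P})^G\cong\mathcal{P}$, this yields a short exact sequence
$$0\to r\widetilde{\mathfrak{g}_X}\xrightarrow{\tig}(f_*f^{\#}\mathcal{P})^G\to\mathcal{P}\to 0,$$
in which the injection is the descent of $i_{\mathfrak{g}}$, that is $\tig$, by the very definition of $\tig$. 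Since $f_{\#}(f^{\#}\mathcal{P})^G=(f_*f^{\#}\mathcal{P})^G/\tig(r\widetilde{\mathfrak{g}_X})$, the sequence identifies $f_{\#}(f^{\#}\mathcal{P})^G$ with $\mathcal{P}$ as $\mathcal{O}_X$-modules.

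It remains to promote this to an isomorphism of $r$-deformed Picard algebroids. I would descend the pullback square defining $f^{\#}\mathcal{P}$ and compare the resulting map with the construction of the descent anchor in Lemma \ref{d1descenteqpicalg}: a short chase through the isomorphisms $\bar{\sigma}$ and $\bar{\widetilde{\rho}}$ shows that the above identification carries the descent anchor to $\rho_X$, while the bracket is preserved because the projection is a morphism of Lie algebras modulo the ideal $i_{\mathfrak{g}}(\mathcal{O}_Y\otimes r\mathfrak{g})$. I expect the main obstacle to be precisely this last bookkeeping — confirming that the module isomorphism respects both anchor and bracket and that the descended injection is genuinely $\tig$ — rather than the production of the underlying isomorphism, which follows formally from exactness of descent applied to the defining sequence of $f^{\#}\mathcal{P}$.
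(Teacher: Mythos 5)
Your proof is correct, but it reaches the isomorphism by a different mechanism than the paper. The paper computes the descent of the fibre product directly: using that $(f_*(-))^G$ commutes with limits, it identifies $(f_*f^{\#}\mathcal{P})^G \cong \mathcal{P} \times_{r\mathcal{T}_X} r\widetilde{\mathcal{T}}_X$, then quotients by $i_{\mathfrak{g}}(r\widetilde{\mathfrak{g}_X})$ and invokes Lemma \ref{d1sesliealgeqdes} to collapse $r\widetilde{\mathcal{T}}_X/\tilde{\alpha}(r\widetilde{\mathfrak{g}_X}) \cong r\mathcal{T}_X$, so that the quotient becomes $\mathcal{P} \times_{r\mathcal{T}_X} r\mathcal{T}_X \cong \mathcal{P}$. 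You instead present $f^{\#}\mathcal{P}$ by the short exact sequence $0 \to \mathcal{O}_Y \otimes r\mathfrak{g} \xrightarrow{i_{\mathfrak{g}}} f^{\#}\mathcal{P} \to f^*\mathcal{P} \to 0$, whose kernel identification rests on the sequence \eqref{d1usefulrandomses} (Lemma \ref{d1sesliealgeqdes} pulled back to $Y$), and then descend this sequence using exactness of the equivalence of Proposition \ref{d1Omodulesequivariantdescent}; the cokernel description of $f_{\#}(f^{\#}\mathcal{P})^G$ then gives the result. This is precisely the mirror image of the paper's proof of the companion Lemma \ref{d1descentinverseseqdesliealg}, where short exact sequences are pulled back rather than pushed down; it buys a cleaner identification of the quotient as a cokernel and avoids the limit-commutation step, at the cost of verifying surjectivity, the kernel computation, and $G$-equivariance of the sequence upstairs on $Y$. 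Your closing concern about anchor and bracket compatibility is not a deficit relative to the paper: its own proof is likewise a chain of $\mathcal{O}_X$-module isomorphisms with the Lie algebroid structure left implicit.

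One small overstatement: the trivial equivariant structure $(g.p = p,\ i_{\mathfrak{g}} = 0)$ on $\mathcal{P}$ is a choice, not forced. The axiom $\eta|_{r\mathfrak{g}} = \rho_X \circ i_{\mathfrak{g}} = 0$ only places the image of $i_{\mathfrak{g}}$ inside $\ker\rho_X = \mathcal{O}_X$, and the bracket axiom further requires that image to be annihilated by $\rho_X(\mathcal{P})$ — constants survive these constraints. Since the zero map is the choice the paper also makes, and any fixed choice enters the definition of the descent in the same way, nothing downstream in your argument is affected.
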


\begin{proof}

We may view $\mathcal{P}$ as an $r$-deformed $G$-equivariant Picard algebroid by letting $g.p=p$ for all $g \in G, p \in P$ and the map $r\mathfrak{g} \to \mathcal{P}$ to be the $0$ map. Then by Lemma \ref{d1algrebroideqpullback}, $f^{\#} \mathcal{P}$ is an $r$-deformed $G$-equivariant Picard algebroid. The induced map $i_{\mathfrak{g}}: r\mathfrak{g} \to f^* \mathcal{P} \times_{rf^{*} \mathcal{T}_X} r\mathcal{T}_Y$ is defined as $i_{\mathfrak{g}}(\psi)=(0, \alpha(\psi))$.  

Using Proposition \ref{d1Omodulesequivariantdescent} for  $\mathcal{O}$-modules together with the fact that $(f_*)^G$ commutes with limits we get:

\begin{equation}
\begin{split}
(f_* f^{\#} \mathcal{P})^G &= (f_*(f^* \mathcal{P} \times_{rf^*\mathcal{T}_X} r\mathcal{T}_Y))^G \\
                        & \cong (f_*f^*\mathcal{P})^G \times_{r(f_*f^*\mathcal{T}_X)^G} (rf_* \mathcal{T}_Y)^G \\
                        & \cong \mathcal{P} \times_{r\mathcal{T}_X} r\widetilde{\mathcal{T}}_X.
\end{split}
\end{equation}

Therefore, we obtain:

\begin{equation}
\begin{split}
f_{\#} (f^{\#} \mathcal{P})^G &\cong (f_* f^{\#} \mathcal{P})^G/i_{\mathfrak{g}}(r\widetilde{\mathfrak{g}_X}) \\
                              & \cong (\mathcal{P} \times_{r\mathcal{T}_X} r\widetilde{\mathcal{T}}_X)/ i_{\mathfrak{g}}(r\widetilde{\mathfrak{g}_X}) \\
                             & \cong \mathcal{P} \times_{r\mathcal{T}_X} r\mathcal{T}_X \text{ (by Lemma \ref{d1sesliealgeqdes})}. \\
                            & \cong \mathcal{P}. \qedhere
\end{split}
\end{equation}
\end{proof}

\begin{corollary}
\label{d1noneqdescentpicalgcorr}
The functors $f_{\#}(-)^G$ and $f^{\#}(-)$ induce quasi-inverse equivalences of categories between the category of $r$-deformed $G$-equivariant Picard algebroids on $Y$ and the category of $r$-deformed Picard algebroids on $X$.

\end{corollary}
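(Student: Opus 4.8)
The plan is to deduce the corollary formally from the three preceding lemmas, which already supply the two functors together with isomorphisms between their composites and the respective identities. First I would confirm that both constructions are genuine functors between the stated categories. The descent functor $f_{\#}(-)^G$ sends a $G$-equivariant Picard algebroid $\mathcal{L}$ on $Y$ to $(f_*\mathcal{L})^G/\tig(r\widetilde{\mathfrak{g}_X})$, which Lemma \ref{d1descenteqpicalg} already identifies as an $r$-deformed Picard algebroid on $X$; on morphisms it is functorial because $(f_*)^G$ is functorial by \ref{d1pushforwardwelldefined} and because a morphism of $G$-equivariant Picard algebroids is compatible with the maps $i_{\mathfrak{g}}$, hence descends to the quotient by $\tig(r\widetilde{\mathfrak{g}_X})$. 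In the other direction, I would regard a Picard algebroid $\mathcal{P}$ on $X$ as $G$-equivariant via the trivial action $g.p=p$ and the zero map $r\mathfrak{g}\to\mathcal{P}$, exactly as in the proof of Lemma \ref{d1eqdescentPicardrightinverse}; Lemma \ref{d1algrebroideqpullback} then guarantees that $f^{\#}\mathcal{P}$ is a $G$-equivariant Picard algebroid on $Y$, and $f^{\#}$ is functorial since it is the fibre product $r\mathcal{T}_Y\times_{rf^*\mathcal{T}_X}f^*\mathcal{P}$, built from the functorial operations $f^*$ and fibre product.

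Once functoriality is in place, the two quasi-inverse relations are precisely the content of Lemmas \ref{d1descentinverseseqdesliealg} and \ref{d1eqdescentPicardrightinverse}, which furnish isomorphisms $f^{\#}(f_{\#}\mathcal{L}^G)\cong\mathcal{L}$ and $f_{\#}(f^{\#}\mathcal{P})^G\cong\mathcal{P}$. It then remains only to verify that these isomorphisms are natural in $\mathcal{L}$ and $\mathcal{P}$, so that they assemble into natural isomorphisms $f^{\#}\circ f_{\#}(-)^G\cong\id$ and $f_{\#}(-)^G\circ f^{\#}\cong\id$, which is exactly the data of an equivalence of categories.

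The hard part will be naturality, but I expect it to be light: every isomorphism in sight is manufactured from the underlying $\mathcal{O}$-module equivalence of Proposition \ref{d1Omodulesequivariantdescent}, which is already a natural equivalence. Concretely, for Lemma \ref{d1eqdescentPicardrightinverse} the isomorphism is assembled from the natural isomorphisms of Proposition \ref{d1Omodulesequivariantdescent} together with the identification of Lemma \ref{d1sesliealgeqdes}, and it commutes with morphisms because $(f_*)^G$ commutes with the fibre products involved; for Lemma \ref{d1descentinverseseqdesliealg} the explicit map $\varphi(l)=(l+i_{\mathfrak{g}}(\mathcal{O}_Y\otimes r\mathfrak{g}),\rho(l))$ is visibly compatible with any morphism of Picard algebroids, since such a morphism intertwines the anchor maps and the inclusions $i_{\mathfrak{g}}$. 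A routine diagram chase then confirms naturality, and combining this with functoriality yields the asserted equivalence of categories.
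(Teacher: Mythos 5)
Your proposal is correct and follows essentially the same route as the paper: the paper's proof of this corollary is exactly to invoke Lemmas \ref{d1descentinverseseqdesliealg} and \ref{d1eqdescentPicardrightinverse} (with the trivial-action trick for $f^{\#}$ already built into the proof of Lemma \ref{d1eqdescentPicardrightinverse}). The functoriality and naturality checks you spell out are left implicit in the paper, so your write-up is simply a more careful version of the same argument.
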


\begin{proof}
The proof follows from Lemmas \ref{d1descentinverseseqdesliealg} and \ref{d1eqdescentPicardrightinverse}.
\end{proof}

We consider again a more general setting: let $B$ another smooth affine algebraic group acting on $Y$ and $X$ such that $f:Y \to X$ is $B$-equivariant. Recall that we are assuming that $f$ is a locally trivial $G$-torsor.

%
%
%

\begin{lemma}
\label{d1Bequivariantdescentpicalg}
With the assumption as above we have:
\begin{itemize}
\item{let $(\mathcal{L},\rho,i_{\mathfrak{g} \times \mathfrak{b}})$ be an $r$-deformed $G \times B$-equivariant Picard algebroid on $Y$. Then $f_{\#}\mathcal{L}^G$ may be given the structure of an $r$-deformed $B$-equivariant Picard algebroid on X.}
\item{let $(\mathcal{K},\rho_X,j_{\mathfrak{b}})$ be an $r$-deformed $B$-equivariant Picard algebroid on $X$. Then $f^{\#} \mathcal{K}$ may be given the structure of an $r$-deformed $G \times  B$-equivariant Picard algebroid on $Y$.} 
\end{itemize}
\end{lemma}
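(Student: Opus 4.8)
The plan is to bootstrap both statements from the non-$B$-equivariant results already established: Lemmas \ref{d1descenteqpicalg} and \ref{d1algebroidnoneqpullback} supply the underlying $r$-deformed Picard algebroid structure, while Lemma \ref{d1algrebroideqpullback} together with the $\mathcal{O}$-module descent results (Observation \ref{d1easyobservation} and Lemma \ref{d1Omoduledescentpreserveequivariance}) supply the equivariance. The guiding principle is exactly the one used for $\mathcal{O}$-modules in Corollary \ref{d1BequivarianteqdescentOmodules}: because the $G$- and $B$-actions on $Y$ commute, the $G$-descent (respectively the $G$-pullback) of an object carrying a commuting $B$-action inherits a $B$-action, and the structure map $i_{\mathfrak{g}\times\mathfrak{b}}$ splits as $i_{\mathfrak{g}}$ plus $i_{\mathfrak{b}}$ along the decomposition $\Lie(G\times B)=\mathfrak{g}\oplus\mathfrak{b}$, on which $B$ acts trivially on the $\mathfrak{g}$-factor and $G$ acts trivially on the $\mathfrak{b}$-factor (the adjoint action on a product group being componentwise).

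For the first bullet I would start from Lemma \ref{d1descenteqpicalg}, which already exhibits $f_{\#}\mathcal{L}^G=(f_*\mathcal{L})^G/\tig(r\widetilde{\mathfrak{g}_X})$ as an $r$-deformed Picard algebroid on $X$. Since $f$ is $B$-equivariant and the $B$-action commutes with $G$, Lemma \ref{d1Omoduledescentpreserveequivariance} gives that $(f_*\mathcal{L})^G$ is a $B$-equivariant $\mathcal{O}_X$-module, and Observation \ref{d1easyobservation} shows the descended bracket and anchor are $B$-equivariant. To pass to the quotient I must verify that $\tig(r\widetilde{\mathfrak{g}_X})$ is $B$-stable: the $\mathcal{O}_Y$-linear map $i_{\mathfrak{g}}=i_{\mathfrak{g}\times\mathfrak{b}}|_{r\mathfrak{g}}$ is $B$-equivariant for the trivial $B$-action on $\mathfrak{g}$ (its image is pointwise $B$-fixed by axiom iii) of Definition \ref{d1definitionrdeformedPicard}, and $i_{\mathfrak{g}}(f\otimes\psi)=f\cdot i_{\mathfrak{g}}(\psi)$ then forces $B$-equivariance), hence $\tig$ is $B$-equivariant and its image $B$-stable, so the $B$-action descends to $f_{\#}\mathcal{L}^G$. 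The map $j_{\mathfrak{b}}:r\mathfrak{b}\to f_{\#}\mathcal{L}^G$ is obtained by composing $i_{\mathfrak{g}\times\mathfrak{b}}|_{r\mathfrak{b}}$ with the projection, using that $i_{\mathfrak{b}}(r\mathfrak{b})$ already lands in $(f_*\mathcal{L})^G$ because its image is $G$-fixed. The remaining axioms of Definition \ref{d1definitionrdeformedPicard} for the $B$-structure follow from the corresponding axioms for the $G\times B$-structure on $\mathcal{L}$ restricted to the $\mathfrak{b}$-factor.

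For the second bullet I would view $\mathcal{K}$ as a $G\times B$-equivariant Picard algebroid on $X$ by letting $G$ act trivially, exactly as in the proof of Lemma \ref{d1eqdescentPicardrightinverse}; the torsor $f$ is then $G\times B$-equivariant. Lemma \ref{d1algebroidnoneqpullback} makes $f^{\#}\mathcal{K}$ an $r$-deformed Picard algebroid, and the construction of Lemma \ref{d1algrebroideqpullback}, applied to the product group $G\times B$ (its proof is identical, the two factors being treated independently), endows $f^{\#}\mathcal{K}$ with a $G\times B$-equivariant structure. Concretely, the $\mathfrak{b}$-part of $i_{\mathfrak{g}\times\mathfrak{b}}$ is the pullback $(\eta_{|r\mathfrak{b}},i)$ of $j_{\mathfrak{b}}$ furnished by that lemma, while the $\mathfrak{g}$-part is $\psi\mapsto(\alpha(\psi),0)$ coming from the infinitesimal $G$-action $\alpha:\mathfrak{g}\to\mathcal{T}_Y$ with $0$ in the $f^*\mathcal{K}$-factor, reflecting the trivial $G$-action on $\mathcal{K}$, exactly as $i_{\mathfrak{g}}$ was defined in Lemma \ref{d1eqdescentPicardrightinverse}.

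The main obstacle I anticipate lies not in the underlying Picard-algebroid structure, which is handed to us by the earlier lemmas, but in verifying that the two commuting group structures genuinely assemble into a single $G\times B$-equivariant structure rather than two unrelated ones. Concretely, one must check that the $\mathfrak{g}$- and $\mathfrak{b}$-actions commute on $f^{\#}\mathcal{K}$, respectively on $f_{\#}\mathcal{L}^G$, reflecting $[\mathfrak{g},\mathfrak{b}]=0$ inside $\mathfrak{g}\oplus\mathfrak{b}$, and that $i_{\mathfrak{g}\times\mathfrak{b}}$ is a Lie-algebra homomorphism on all of $r(\mathfrak{g}\oplus\mathfrak{b})$ compatible with both anchor maps. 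Each of these reduces to the commutativity of the $G$- and $B$-actions on $Y$ and to axiom iii) of Definition \ref{d1definitionrdeformedPicard}, but the bookkeeping of the cross-terms is where care is required.
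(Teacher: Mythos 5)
Your proposal is correct and follows essentially the same route as the paper: for the descent you use the $\mathcal{O}$-module results (Lemma \ref{d1Omoduledescentpreserveequivariance}, Observation \ref{d1easyobservation}) to put a $B$-structure on $(f_*\mathcal{L})^G$, then check $B$-stability of $\tig(r\widetilde{\mathfrak{g}_X})$ via the $B$-equivariance of $i_{\mathfrak{g}\times\mathfrak{b}}$ and extract $j_{\mathfrak{b}}$ from the $\mathfrak{b}$-component, which is exactly the paper's argument; for the pullback you equip $\mathcal{K}$ with the trivial $G$-action and zero map and invoke Lemma \ref{d1algrebroideqpullback} for the product group, again precisely as in the paper.
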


\begin{proof}

For the first claim, we start by proving that $\widetilde{\mathcal{L}}=(f_* \mathcal{L})^G$ is a $B$-equivariant Lie algebroid. First, we get a $B$-action on $\widetilde{\mathcal{L}}$ from Lemma \ref{d1Omoduledescentpreserveequivariance}, so that $\widetilde{\mathcal{L}}$ becomes a $B$-equivariant $\mathcal{O}_X$-module. Further, since $\mathcal{L}$ is $B$-equivariant, axiom $i)$ of definition \ref{d1defofeqalg} is also satisfied.

Next, since the actions of $G$ and $B$ on $Y$ commute the anchor map $\sigma: \widetilde{\mathcal{T}}_X \to \mathcal{T}_X$ is $B$-equivariant. Composing with the $B$-equivariant map $\tilde{\rho}:\widetilde{\mathcal{L}} \to \widetilde{\mathcal{T}}_X$ we obtain a $B$-equivariant map $\sigma \circ \tilde{\rho}:\widetilde{\mathcal{L}} \to \mathcal{T}_X$, so axiom $ii)$ of \ref{d1defofeqalg} is satisfied.

We let $i_{\mathfrak{g}}$ and $i_{\mathfrak{b}}$ denote the restriction of $i_{\mathfrak{g} \times \mathfrak{b}}$ to $r\mathfrak{g} \cong r\mathfrak{g} \times 0 \subset r(\mathfrak{g} \times \mathfrak{b})$ and $r\mathfrak{b} \cong 0 \times r\mathfrak{b} \subset r(\mathfrak{g} \times \mathfrak{b})$, respectively. Let $\beta:\mathfrak{b}: \to \mathcal{T}_Y$, $\gamma: \mathfrak{b} \to \mathcal{T}_X$ denote the infinitesimal action $B$ on $Y$ and $X$; since $\mathcal{L}$ is in particular $B$-equivariant, we have $\beta=\rho \circ i_{\mathfrak{b}}$. By descending we obtain maps $\tilde{\beta}:r\mathfrak{b} \to \widetilde{\mathcal{T}}_X$ and $\tilde{i_{\mathfrak{b}}}:r\mathfrak{b} \to \widetilde{\mathcal{L}}$ such that $\tilde{\beta}= \tilde{\rho} \circ \tilde{i_{\mathfrak{b}}}$. Since the actions of $G$ and $B$ commute, we also get $\gamma=\sigma \circ \tilde{\beta}$ so $ \gamma= \sigma \circ \tilde{ \beta}$. Therefore combining this two we get:

$$ \gamma= (\sigma \circ \tilde{\rho}) \circ \tilde{i_{\mathfrak{b}}}.$$

Therefore we get, since $\mathcal{L}$ is $B$-equivariant, that the Lie algebroid $(\widetilde{\mathcal{L}},\sigma \circ \tilde{\rho}, \tilde{i_{\mathfrak{b}}})$ is also $B$-equivariant. By the proof of Lemma \ref{d1descenteqpicalg}, to show that $f_{\#} \mathcal{L}^G$ is $B$-equivariant it suffices to prove that $i_{\mathfrak{g}}(r\widetilde{\mathfrak{g}_X})$ is $B$-equivariant as an $\mathcal{O}_X$-module. Further, by using Lemma \ref{d1Omoduledescentpreserveequivariance} this is equivalent to proving $i_{\mathfrak{g}}(\mathcal{O}_Y \otimes r\mathfrak{g})$ is a $B$-equivariant $\mathcal{O}_Y$-module. This is true since $i_{\mathfrak{g} \times \mathfrak{b}}$ is in particular $B$-equivariant, so $i_{\mathfrak{g}}$ sends a $B$-equivariant module to a $B$-equivariant module.

Now, we prove the second claim. We may endow $\mathcal{K}$ with a trivial $G$-action $g.k=k$ for all $k \in \mathcal{K}$ and with the zero map $j_{\mathfrak{g}} \to \mathcal{K}$ so that $(\mathcal{K},\rho_X, j_{\mathfrak{g}} \times j_{\mathfrak{b}})$ becomes $G \times B$-equivariant. The claim follows from Lemma \ref{d1algrebroideqpullback}. 
\end{proof}

\begin{corollary}
The functors $f_{\#}(-)^G$ and $f^{\#}(-)$ induce  quasi-inverse equivalences of categories between the category of $r$-deformed $G \times B$-equivariant Picard algebroids on $Y$ and the category of $r$-deformed $B$-equivariant Picard algebroids on $X$.

\end{corollary}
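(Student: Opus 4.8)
The plan is to upgrade the non-equivariant equivalence of Corollary \ref{d1noneqdescentpicalgcorr} by carrying the extra $B$-equivariance through each step, exactly as Corollary \ref{d1BequivarianteqdescentOmodules} was deduced from Proposition \ref{d1Omodulesequivariantdescent} in the $\mathcal{O}$-module setting. First I would note that Lemma \ref{d1Bequivariantdescentpicalg} already guarantees that the two functors land in the correct categories: $f_{\#}(-)^G$ sends an $r$-deformed $G \times B$-equivariant Picard algebroid on $Y$ to an $r$-deformed $B$-equivariant Picard algebroid on $X$, and $f^{\#}(-)$ sends an $r$-deformed $B$-equivariant Picard algebroid on $X$ to an $r$-deformed $G \times B$-equivariant Picard algebroid on $Y$. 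Hence both functors are well-defined between the two stated categories.

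Next I would recall the unit and counit natural isomorphisms already produced in the non-equivariant setting: Lemma \ref{d1descentinverseseqdesliealg} gives $f^{\#}(f_{\#} \mathcal{L}^G) \cong \mathcal{L}$ via the explicit map $\varphi(l) = (l + i_{\mathfrak{g}}(\mathcal{O}_Y \otimes r\mathfrak{g}), \rho(l))$, and Lemma \ref{d1eqdescentPicardrightinverse} gives $f_{\#}(f^{\#} \mathcal{P})^G \cong \mathcal{P}$ through a chain of isomorphisms built from $(f_*)^G$, $f^*$, and fibre products. Since these are already isomorphisms of Picard algebroids, the only thing left to verify is that they are morphisms in the respective $B$-equivariant categories, i.e. that they intertwine the $B$-actions.

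The key step, and the only place any verification is needed, is thus the $B$-equivariance of these two isomorphisms. For $\varphi$, the $G \times B$-action on $\mathcal{L}$ induces the $B$-action on $\widetilde{\mathcal{L}} = (f_* \mathcal{L})^G$ and on its quotient $f_{\#} \mathcal{L}^G$ as in the proof of Lemma \ref{d1Bequivariantdescentpicalg}, and $f^{\#}$ carries this $B$-action back up the torsor by Lemma \ref{d1algrebroideqpullback}. Because the $G$- and $B$-actions on $Y$ commute and $f$ is $B$-equivariant, the quotient map $\mathcal{L} \to \mathcal{L}/i_{\mathfrak{g}}(\mathcal{O}_Y \otimes r\mathfrak{g})$ and the anchor $\rho$ are both $B$-equivariant, so each component of $\varphi$ intertwines the $B$-actions and $\varphi$ is $B$-equivariant. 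The same reasoning applied to the isomorphisms of Lemma \ref{d1eqdescentPicardrightinverse}, every constituent map of which is assembled from $(f_*)^G$, $f^*$, and fibre products along $B$-equivariant maps, shows that the counit isomorphism is $B$-equivariant as well. I expect no genuine obstacle here: the commuting of the two actions forces every map in sight to respect the $B$-structure, so this is bookkeeping of equivariance data rather than a new computation. With both functors well-defined on the $B$-equivariant categories and both natural isomorphisms checked to live in them, the equivalence follows as in Corollary \ref{d1noneqdescentpicalgcorr}, now read in the $B$-equivariant setting.
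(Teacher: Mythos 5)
Your proposal is correct and follows essentially the same route as the paper, which proves the corollary in one line by combining Corollary \ref{d1noneqdescentpicalgcorr} (the non-equivariant equivalence) with Lemma \ref{d1Bequivariantdescentpicalg} (well-definedness of both functors on the equivariant categories). Your extra step of checking that the unit isomorphism $\varphi$ and the counit isomorphism intertwine the $B$-actions is precisely the bookkeeping the paper's citation-style proof leaves implicit, and your argument for it is sound.
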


\begin{proof}
This follows from Corollary \ref{d1noneqdescentpicalgcorr} and Lemma \ref{d1Bequivariantdescentpicalg}.
\end{proof}

\textbf{Descent of twisted differential operators}

We keep the notations from the start of the section. Further, we let $\mathcal{D}$ be a sheaf of $r$-deformed $G$-homogeneous twisted differential operators on $Y$.

\begin{definition}

We define the descent of $\mathcal{D}$ under the torsor $f:Y \to X$ to be the sheaf 

 $$f_{\#} \mathcal{D}^G:= \mathscr{T}( f_{\#} (\Lie \mathcal{D})^G).$$

\end{definition}

\begin{lemma}
\label{d1inversedescenthtdo}
Let the notations be as above. Then:
\begin{enumerate}[label=\roman*)]
\item{$f_{\#} \mathcal{D}^G$ is an $r$-deformed tdo on $X$.}
\item{$f^{\#}(f_{\#} \mathcal{D}^G) \cong \mathcal{D}$.}

\end{enumerate}

\end{lemma}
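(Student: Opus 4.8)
The plan is to reduce both claims to the three equivalences already established, without constructing anything new: the correspondence between $r$-deformed Picard algebroids and $r$-deformed tdo's given by $\mathscr{T}$ and $\Lie$ (Corollary \ref{d1Picardalgtdocorresp}), its equivariant refinement for $G$-htdo's (Corollary \ref{d1htdoPicalgcorresp}), and the descent equivalence for Picard algebroids (Corollary \ref{d1noneqdescentpicalgcorr}). Since by definition $f_{\#}\mathcal{D}^G = \mathscr{T}(f_{\#}(\Lie\mathcal{D})^G)$, all the work lies in chaining these functors together and keeping track of the isomorphisms they produce.

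For part i), I would first apply Lemma \ref{d1htdotoeqPicalg} to see that $\Lie\mathcal{D} = F_1\mathcal{D}$ is an $r$-deformed $G$-equivariant Picard algebroid on $Y$. Lemma \ref{d1descenteqpicalg} then shows that its descent $f_{\#}(\Lie\mathcal{D})^G$ is an $r$-deformed Picard algebroid on $X$. Finally, Proposition \ref{d1liealgebroidtotdo} guarantees that $\mathscr{T}$ sends any $r$-deformed Picard algebroid to an $r$-deformed tdo, so $f_{\#}\mathcal{D}^G = \mathscr{T}(f_{\#}(\Lie\mathcal{D})^G)$ is one. This step is immediate.

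For part ii), the strategy is to unfold the definition of the pullback of a tdo and then cancel the functors in pairs. By definition $f^{\#}(f_{\#}\mathcal{D}^G) = \mathscr{T}(f^{\#}\Lie(f_{\#}\mathcal{D}^G))$. Lemma \ref{d1tdotoliealgebroid} gives $\Lie(f_{\#}\mathcal{D}^G) = \Lie(\mathscr{T}(f_{\#}(\Lie\mathcal{D})^G)) \cong f_{\#}(\Lie\mathcal{D})^G$, whence $f^{\#}(f_{\#}\mathcal{D}^G) \cong \mathscr{T}(f^{\#}(f_{\#}(\Lie\mathcal{D})^G))$. Applying Lemma \ref{d1descentinverseseqdesliealg} to the $G$-equivariant Picard algebroid $\Lie\mathcal{D}$ yields $f^{\#}(f_{\#}(\Lie\mathcal{D})^G) \cong \Lie\mathcal{D}$, and one further application of Lemma \ref{d1tdotoliealgebroid} gives $\mathscr{T}(\Lie\mathcal{D}) \cong \mathcal{D}$. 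Composing the displayed isomorphisms produces $f^{\#}(f_{\#}\mathcal{D}^G) \cong \mathcal{D}$.

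The argument is essentially formal, so the only genuine point of care — and the main obstacle — is checking that the isomorphisms obtained at each stage are compatible with the full structure, i.e. that they are isomorphisms of $r$-deformed tdo's (respectively of $G$-htdo's via the trivial-action convention used in Lemma \ref{d1eqdescentPicardrightinverse}) rather than merely of the underlying $\mathcal{O}_X$-modules or Lie algebroids. Because Corollaries \ref{d1Picardalgtdocorresp}, \ref{d1htdoPicalgcorresp} and \ref{d1noneqdescentpicalgcorr} are stated as equivalences of categories, each intermediate isomorphism is natural and respects the relevant anchor maps, brackets, filtrations, and $i_{\mathfrak{g}}$-maps, so I would simply record that the composite respects all of them. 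No new $r$-torsion or deformation-theoretic subtlety arises beyond those already dealt with in Section \ref{d1sectiondescentliealg}.
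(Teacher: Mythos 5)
Your proof is correct and takes essentially the same route as the paper: part i) is Lemma \ref{d1descenteqpicalg} combined with the Picard-algebroid/tdo correspondence (Corollary \ref{d1Picardalgtdocorresp}, via Proposition \ref{d1liealgebroidtotdo}), and part ii) is Lemma \ref{d1descentinverseseqdesliealg} combined with the equivalences of Corollaries \ref{d1Picardalgtdocorresp} and \ref{d1htdoPicalgcorresp}. The paper's own proof is just a citation of these same results; your write-up merely makes the functor-chasing and the structure-compatibility of the intermediate isomorphisms explicit.
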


\begin{proof}

The first claim follows from Lemma \ref{d1descenteqpicalg} and Corollary \ref{d1Picardalgtdocorresp}. The second claim follows from Lemma \ref{d1descentinverseseqdesliealg} and Corollary \ref{d1htdoPicalgcorresp}.
\end{proof}

\begin{lemma}
\label{d1eqdescenthtdorightinverse}
Let $\mathcal{A}$ be an $r$-deformed tdo on $X$. Then $(f_{\#} f^{\#} \mathcal{A})^G \cong \mathcal{A}$.
\end{lemma}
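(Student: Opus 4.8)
The plan is to reduce the statement entirely to the corresponding descent result for Picard algebroids, namely Lemma \ref{d1eqdescentPicardrightinverse}, by transporting everything through the quasi-inverse functors $\Lie$ and $\mathscr{T}$. The point is that the descent of a $G$-htdo and the pullback of a tdo are both \emph{defined} by first passing to Picard algebroids, applying the corresponding operation there, and then applying $\mathscr{T}$; so once the claim is known for Picard algebroids, it transfers formally.

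Concretely, I would set $\mathcal{P}:=\Lie(\mathcal{A})$, which is an $r$-deformed Picard algebroid on $X$ by Lemma \ref{d1tdotoliealgebroid}, and which satisfies $\mathscr{T}(\mathcal{P})\cong\mathcal{A}$. I would first endow $\mathcal{A}$ with the trivial $G$-equivariant structure (as in the proof of Lemma \ref{d1eqdescentPicardrightinverse}), so that $\mathcal{P}$ becomes an $r$-deformed $G$-equivariant Picard algebroid and, by Corollary \ref{d1cc} together with Lemma \ref{d1algrebroideqpullback}, $f^{\#}\mathcal{A}$ is a genuine $r$-deformed $G$-htdo; this is what makes the descent $f_{\#}(f^{\#}\mathcal{A})^G$ well defined. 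By the definition of the pullback of a tdo we have $f^{\#}\mathcal{A}=\mathscr{T}(f^{\#}\mathcal{P})$, and applying Lemma \ref{d1tdotoliealgebroid} once more gives $\Lie(f^{\#}\mathcal{A})\cong f^{\#}\mathcal{P}$.

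It then remains to unwind the definition of the descent of a $G$-htdo and chain the equivalences:
$$
f_{\#}(f^{\#}\mathcal{A})^G
=\mathscr{T}\!\big(f_{\#}(\Lie(f^{\#}\mathcal{A}))^G\big)
\cong\mathscr{T}\!\big(f_{\#}(f^{\#}\mathcal{P})^G\big)
\cong\mathscr{T}(\mathcal{P})
\cong\mathcal{A},
$$
where the second isomorphism is Lemma \ref{d1eqdescentPicardrightinverse} and the last is Lemma \ref{d1tdotoliealgebroid}. The only points requiring a word of care are that $\mathscr{T}$ carries an isomorphism of $r$-deformed Picard algebroids to an isomorphism of $r$-deformed tdo's, which holds because $\mathscr{T}$ is part of an equivalence by Corollary \ref{d1Picardalgtdocorresp} (and respects the $G$-equivariant structures by Corollary \ref{d1htdoPicalgcorresp}), and that the $\Lie$ of $f^{\#}\mathcal{A}$ is identified with $f^{\#}\mathcal{P}$ compatibly with their $G$-equivariant Picard structures.

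I do not expect a genuine obstacle here: the content of the result is already contained in Lemma \ref{d1eqdescentPicardrightinverse}, and the present lemma is the translation of that statement across the $\Lie/\mathscr{T}$ dictionary. The main thing to be careful about is purely bookkeeping, namely verifying at each step that the isomorphisms produced are isomorphisms of the appropriate structured objects (equivariant Picard algebroids, then $G$-htdo's, then tdo's) rather than merely of underlying $\mathcal{O}$-modules, so that the final identification $f_{\#}(f^{\#}\mathcal{A})^G\cong\mathcal{A}$ is an isomorphism of $r$-deformed tdo's.
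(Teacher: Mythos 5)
Your proposal is correct and follows essentially the same route as the paper: the paper's proof also endows $\mathcal{A}$ with the trivial $G$-equivariant structure and then reduces immediately to Lemma \ref{d1eqdescentPicardrightinverse} via the $\Lie$/$\mathscr{T}$ equivalence of Corollary \ref{d1Picardalgtdocorresp}. Your version merely spells out explicitly the chain of identifications $f_{\#}(f^{\#}\mathcal{A})^G=\mathscr{T}\bigl(f_{\#}(\Lie(f^{\#}\mathcal{A}))^G\bigr)\cong\mathscr{T}\bigl(f_{\#}(f^{\#}\mathcal{P})^G\bigr)\cong\mathscr{T}(\mathcal{P})\cong\mathcal{A}$ that the paper leaves implicit.
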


\begin{proof}

We view $\mathcal{A}$ as an $r$-deformed $G$-equivariant htdo on $X$ with the trivial $G$-action, so that $\Lie(\mathcal{A})$ is an $r$-deformed $G$-equivariant Picard algebroid with the trivial $G$-action. The claim follows from Lemma \ref{d1eqdescentPicardrightinverse} and Corollary \ref{d1Picardalgtdocorresp}.
\end{proof}

Similarly to the deformed Picard algebroids case we obtain:

\begin{corollary}
\label{d1descentGhtdocorresp}
The functors $f_{\#}(-)^G$ and $f^{\#}(-)$ induce quasi-inverse equivalences between the category of $r$-deformed $G$-equivariant htdo's on $Y$ and the category of $r$-deformed tdo's on $X$.
\end{corollary}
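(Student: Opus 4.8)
The plan is to obtain the statement by assembling the two one-sided inverse isomorphisms already established, exactly as in the proof of Corollary \ref{d1noneqdescentpicalgcorr} for Picard algebroids. First I would record that both functors are well defined on objects: the descent $f_{\#}(-)^G$ sends an $r$-deformed $G$-htdo on $Y$ to an $r$-deformed tdo on $X$ by Lemma \ref{d1inversedescenthtdo}(i), while the pullback $f^{\#}(-)$ sends an $r$-deformed tdo $\mathcal{A}$ on $X$ to an $r$-deformed $G$-htdo on $Y$: one views $\mathcal{A}$ as carrying the trivial $G$-equivariant structure (legitimate because $f$ is $G$-equivariant for the trivial $G$-action on the base $X$ of the torsor), and then $f^{\#}\mathcal{A}$ is a $G$-htdo by Lemma \ref{d1algrebroideqpullback} together with Lemma \ref{d1eqPicalgtohtdo}, equivalently Corollary \ref{d1cc}. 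Functoriality on morphisms is inherited from the functoriality of $\mathscr{T}$, $\Lie$, and the Picard-algebroid descent of Corollary \ref{d1noneqdescentpicalgcorr}.

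The two composites are then pinned down by the two preceding lemmas. Lemma \ref{d1inversedescenthtdo}(ii) gives $f^{\#}(f_{\#}\mathcal{D}^G) \cong \mathcal{D}$ for every $G$-htdo $\mathcal{D}$ on $Y$, so $f^{\#} \circ f_{\#}(-)^G$ is isomorphic to the identity; Lemma \ref{d1eqdescenthtdorightinverse} gives $(f_{\#} f^{\#}\mathcal{A})^G \cong \mathcal{A}$ for every tdo $\mathcal{A}$ on $X$, so $f_{\#}(-)^G \circ f^{\#}$ is isomorphic to the identity. Together these show the two functors are quasi-inverse, which is precisely the assertion of the corollary.

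Conceptually, the cleanest packaging is to note that by construction $f_{\#}\mathcal{D}^G := \mathscr{T}(f_{\#}(\Lie \mathcal{D})^G)$, so the htdo descent functor is the conjugate of the Picard-algebroid descent functor by the equivalences $\mathscr{T}$ and $\Lie$ of Corollaries \ref{d1htdoPicalgcorresp} and \ref{d1Picardalgtdocorresp}; composing the equivalence of Corollary \ref{d1noneqdescentpicalgcorr} with these equivalences on either side automatically yields an equivalence. The only point requiring care, and the place where I expect the mild friction to lie, is checking that the isomorphisms of the two lemmas are natural in $\mathcal{D}$ and in $\mathcal{A}$ and assemble into genuine natural transformations of functors rather than merely object-wise isomorphisms. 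This naturality is however forced by the functoriality of the intermediate equivalences, so no verification beyond tracking the constructions through $\mathscr{T}$ and $\Lie$ is needed.
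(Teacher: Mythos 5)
Your proposal is correct and matches the paper's own proof, which likewise deduces the corollary directly from Lemma \ref{d1inversedescenthtdo} (both that $f_{\#}\mathcal{D}^G$ is an $r$-deformed tdo and that $f^{\#}(f_{\#}\mathcal{D}^G)\cong\mathcal{D}$) and Lemma \ref{d1eqdescenthtdorightinverse} (that $(f_{\#}f^{\#}\mathcal{A})^G\cong\mathcal{A}$, using the trivial $G$-structure on $\mathcal{A}$). Your additional remarks on well-definedness via Corollary \ref{d1cc} and on conjugating the Picard-algebroid equivalence by $\mathscr{T}$ and $\Lie$ are exactly the structure the paper has already set up, so the approaches coincide.
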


\begin{proof}
This follows from the Lemmas \ref{d1inversedescenthtdo} and \ref{d1eqdescenthtdorightinverse}.
\end{proof}

\begin{corollary}
\label{d1eqdescentliealg}
Assume that $f$ is also $B$-equivariant. The functors $f_{\#}(-)^G$ and $f^{\#}(-)$ induce quasi-inverse equivalences between the category $r$-deformed $G \times B$-equivariant htdo's on $Y$ and the category $r$-deformed $B$-equivariant htdo's on $X$.
\end{corollary}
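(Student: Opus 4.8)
The plan is to mimic the deduction of Corollary \ref{d1descentGhtdocorresp}, feeding in the extra $B$-equivariance at each stage. The key point is that both functors are built from the $\mathscr{T}$/$\Lie$ correspondence: by definition of the descent of an htdo we have $f_{\#}(-)^G = \mathscr{T}\circ f_{\#}(\Lie(-))^G$, and by definition of the pullback of a tdo we have $f^{\#}(-) = \mathscr{T}\circ f^{\#}\circ \Lie$. Hence it is enough to sandwich the already-established $B$-equivariant Picard-algebroid equivalence between two instances of the $\mathscr{T}$/$\Lie$ equivalence.

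First I would observe that Corollary \ref{d1htdoPicalgcorresp}, although stated for the fixed group $G$, is valid verbatim for any smooth affine algebraic group of finite type. Applying it to $G \times B$ acting on $Y$ shows that $\Lie$ and $\mathscr{T}$ are quasi-inverse equivalences between $r$-deformed $G \times B$-equivariant Picard algebroids on $Y$ and $r$-deformed $G \times B$-htdo's on $Y$; applying it to $B$ acting on $X$ gives the analogous equivalence between $r$-deformed $B$-equivariant Picard algebroids on $X$ and $r$-deformed $B$-htdo's on $X$.

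Next I would invoke the preceding Corollary (the $B$-equivariant analogue of Corollary \ref{d1noneqdescentpicalgcorr}), which asserts that $f_{\#}(-)^G$ and $f^{\#}(-)$ are quasi-inverse equivalences between $r$-deformed $G \times B$-equivariant Picard algebroids on $Y$ and $r$-deformed $B$-equivariant Picard algebroids on $X$, and which rests on Lemma \ref{d1Bequivariantdescentpicalg}. Composing these three equivalences and using the two factorizations above identifies the composite with the htdo-level functors $f_{\#}(-)^G$ and $f^{\#}(-)$, so these are quasi-inverse equivalences between $r$-deformed $G \times B$-htdo's on $Y$ and $r$-deformed $B$-htdo's on $X$.

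The only substantive thing to check, beyond this formal composition, is that the descent functor really produces a $B$-htdo: that is, that applying $\mathscr{T}$ to the $B$-equivariant Picard algebroid $f_{\#}(\Lie \mathcal{D})^G$ furnished by Lemma \ref{d1Bequivariantdescentpicalg} yields an object satisfying every axiom of Definition \ref{d1definitiondeformedhtdo} for the group $B$. This is precisely Corollary \ref{d1htdoPicalgcorresp} for $B$, so no genuinely new verification is needed; the remaining naturality statements and the compatibility of the unit and counit with the $B$-structure are inherited term by term from the three constituent equivalences.
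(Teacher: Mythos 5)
Your proposal is correct and follows essentially the same route as the paper: the paper's proof likewise combines Lemma \ref{d1Bequivariantdescentpicalg} and Corollary \ref{d1htdoPicalgcorresp} with the already-established descent equivalence, letting the $\mathscr{T}$/$\Lie$ correspondence transport the $B$-equivariant Picard-algebroid statement to the htdo level. The only cosmetic difference is that the paper cites the htdo-level Corollary \ref{d1descentGhtdocorresp} where you instead compose through the unnamed $B$-equivariant Picard-algebroid corollary following Lemma \ref{d1Bequivariantdescentpicalg}; both reduce to the same underlying lemmas.
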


\begin{proof}
This follows from the previous corollary, along with Lemma \ref{d1Bequivariantdescentpicalg} and Corollary \ref{d1htdoPicalgcorresp}.
\end{proof}

\section{Equivariant descent for equivariant Picard algebroids and htdo's modules}
\label{d1sectiontdomoddescent}

We keep the notations from the previous section: recall that $G$ is a smooth affine algebraic group of finite type, $X$ and $Y$ are $R$-varieties, and $f:Y \to X$ is a locally trivial $G$-torsor. Further, we assume that $(\mathcal{L},\rho,i_{\mathfrak{g}}$ is an $r$-deformed $G$-equivariant Picard algebroid on $Y$.

\begin{lemma}
Let $\mathcal{M}$ be a $G$-equivariant Picard $\mathcal{L}$-module. Then $(f_* \mathcal{M})^G$ is a Picard $f_{\#} \mathcal{L}^G$-module. We call $(f_* \mathcal{M})^G$ the \emph{descent} of $\mathcal{M}$. 

\end{lemma}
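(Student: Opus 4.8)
The plan is to produce the $f_{\#}\mathcal{L}^G$-action on $(f_*\mathcal{M})^G$ by descending the $\mathcal{L}$-action on $\mathcal{M}$ and then checking it factors through the defining quotient $f_{\#}\mathcal{L}^G=(f_*\mathcal{L})^G/\tig(r\widetilde{\mathfrak{g}_X})$. Since the assertion is local on $X$ and every structure in sight is compatible with restriction, I would work over opens $U$ trivialising the torsor, so that sections of $(f_*\mathcal{L})^G$ and $(f_*\mathcal{M})^G$ are the $G$-invariant sections of $\mathcal{L}$ and $\mathcal{M}$ over $f^{-1}(U)$. Write $\widetilde{\mathcal{L}}=(f_*\mathcal{L})^G$ and $\widetilde{\mathcal{M}}=(f_*\mathcal{M})^G$; by Proposition \ref{d1Omodulesequivariantdescent} the latter is a quasi-coherent $\mathcal{O}_X$-module and $(f_*\mathcal{O}_Y)^G=\mathcal{O}_X$. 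For $l\in\widetilde{\mathcal{L}}$ and $m\in\widetilde{\mathcal{M}}$ I set $l.m$ to be the given $\mathcal{L}$-action; the first thing to verify is that $l.m\in\widetilde{\mathcal{M}}$, which follows from axiom ii) of Definition \ref{d1defliealgebroidequivrep}: one has $g.(l.m)=(g.l).(g.m)=l.m$ because $l,m$ are $G$-invariant. This gives an action of $\widetilde{\mathcal{L}}$ on $\widetilde{\mathcal{M}}$.

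The crucial step is to show that this action kills $\tig(r\widetilde{\mathfrak{g}_X})$, so that it descends to $f_{\#}\mathcal{L}^G$. A local section of $r\widetilde{\mathfrak{g}_X}\cong(f_*\mathcal{O}_Y\otimes r\mathfrak{g})^G$ is a $G$-invariant sum $\omega=\sum_j h_j\otimes\psi_j$ with $h_j\in\mathcal{O}_Y$ and $\psi_j\in r\mathfrak{g}$, and $\tig(\omega)=\sum_j h_j\, i_{\mathfrak{g}}(\psi_j)$. Using the module axiom $(h\cdot l).m=h.(l.m)$ I reduce to showing $i_{\mathfrak{g}}(\psi).m=0$ for $\psi\in r\mathfrak{g}$ and $m\in\widetilde{\mathcal{M}}$. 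This is exactly where axiom iii) of Definition \ref{d1defliealgebroidequivrep} enters: the $r\mathfrak{g}$-action through $i_{\mathfrak{g}}$ coincides with the one coming from the derivative of the $G$-action, and the latter annihilates any $G$-invariant section $m$ (its coaction is $1\otimes m$, so the infinitesimal action is zero). Hence $\tig(\omega).m=0$ and the action descends to a well-defined action of $f_{\#}\mathcal{L}^G$ on $(f_*\mathcal{M})^G$.

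It then remains to check the Picard-module axioms, all of which I expect to descend routinely from those for $\mathcal{M}$ over $\mathcal{L}$. The bracket-compatibility and $(a\cdot\ell).m=a.(\ell.m)$ for $a\in\mathcal{O}_X$ are immediate from the corresponding identities on $\widetilde{\mathcal{L}}$. For the Leibniz rule $a.(\ell.m)=\ell.(a.m)-\rho_X(\ell)(a).m$ I would use that the anchor $\rho_X$ of $f_{\#}\mathcal{L}^G$ is $\bar{\sigma}\circ\bar{\widetilde{\rho}}$ from Lemma \ref{d1descenteqpicalg}: for a function $a$ pulled back from $X$ and a $G$-invariant $l$, the vector field $\rho(l)$ is $G$-invariant and $\rho(l)(a)$ agrees with the pullback of $\sigma(\widetilde{\rho}(l))(a)$, so the $Y$-level Leibniz identity restricts to the desired one on $X$. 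Finally, for the Picard condition I observe that the inclusion $\mathcal{O}_X\hookrightarrow f_{\#}\mathcal{L}^G$ is the descent of $\mathcal{O}_Y\hookrightarrow\mathcal{L}$, $1\mapsto 1_{\mathcal{L}}$; since $\mathcal{M}$ is a Picard $\mathcal{L}$-module we have $(a\cdot 1_{\mathcal{L}}).m=a.m$, and restricting to invariants and passing to the quotient shows that the structure-sheaf action through $f_{\#}\mathcal{L}^G$ agrees with the native $\mathcal{O}_X$-action.

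The main obstacle is the middle step: verifying that $\tig(r\widetilde{\mathfrak{g}_X})$ acts by zero. This is the one place where the full force of $G$-equivariance of the module is needed, specifically the matching in axiom iii) between $i_{\mathfrak{g}}$ and the differentiated $G$-action, together with the vanishing of the infinitesimal action on invariant sections, and it is what makes the quotient defining $f_{\#}\mathcal{L}^G$ the correct object to act on $(f_*\mathcal{M})^G$. The remaining verifications are bookkeeping with the descent of the short exact sequences already established in Lemmas \ref{d1sesliealgeqdes} and \ref{d1descenteqpicalg}.
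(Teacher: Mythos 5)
Your proposal is correct and takes essentially the same route as the paper's proof: both descend the $\mathcal{L}$-action to $G$-invariant sections and then establish the key point that $\tig(r\widetilde{\mathfrak{g}_X})$ acts by zero on $(f_*\mathcal{M})^G$, because the differentiated $G$-action vanishes on invariant sections and, by axiom iii) of Definition \ref{d1defliealgebroidequivrep}, coincides with the action through $i_{\mathfrak{g}}$. The only difference is one of detail: the paper asserts without comment that $(f_*\mathcal{M})^G$ is a Picard $(f_*\mathcal{L})^G$-module and leaves the descent of the module axioms implicit, whereas you verify these routine compatibilities explicitly.
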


\begin{proof}
Let $\widetilde{\mathcal{L}}=(f_* \mathcal{L})^G$. Then $(f_* \mathcal{M})^G$ is a Picard $\widetilde{\mathcal{L}}$-module, so it remains to prove that the action of $i_{\mathfrak{g}}(r\widetilde{\mathfrak{g}_X})$ kills $(f_* \mathcal{M})^G$. 

Since the $G$ action on $(f_* \mathcal{M})^G$ is constant, by differentiating this action we obtain a trivial $\mathfrak{g}$ action, so a trivial $r \mathfrak{g}$ action. But by our assumption on $\mathcal{M}$ this coincides with the $r\mathfrak{g}$ action induced from $ i_{\mathfrak{g}}: r\mathfrak{g} \to \mathcal{L}$; the conclusion follows.
\end{proof}

\begin{proposition}
\label{d1descentrepalgeqcat}

Let $\mathcal{L}$ be an $r$-deformed $G$-equivariant Picard algebroid on $Y$. The functors

\begin{equation}
\begin{split}
&f_* (-)^G: \Mod(\mathcal{L},G) \to \Mod(f_{\#} \mathcal{L}^G),\\
&f^{\#}(-): \Mod(f_{\#} \mathcal{L}^G) \to \Mod(\mathcal{L},G),
\end{split} 
\end{equation}

are quasi-inverse equivalences of categories.

\end{proposition}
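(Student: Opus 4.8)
The plan is to build this equivalence directly on top of the $\mathcal{O}$-module descent equivalence of Proposition \ref{d1Omodulesequivariantdescent}, upgrading its unit and counit to isomorphisms of modules over the relevant Picard algebroids. First I would check that the two functors land in the stated categories. That $f_*(-)^G$ sends a $G$-equivariant Picard $\mathcal{L}$-module to a Picard $f_{\#}\mathcal{L}^G$-module is exactly the lemma immediately preceding this proposition. For $f^{\#}(-)$, I regard $f_{\#}\mathcal{L}^G$ as a $G$-equivariant Picard algebroid on $X$ equipped with the trivial $G$-action (the device already used in Lemma \ref{d1eqdescentPicardrightinverse}); since $f$ is a $G$-torsor it is $G$-equivariant for this trivial action, so Lemma \ref{d1pullbackofequivLiealgrep} shows $f^{\#}\mathcal{N}$ is a $G$-equivariant $f^{\#}(f_{\#}\mathcal{L}^G)$-module. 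Composing with the canonical isomorphism $f^{\#}(f_{\#}\mathcal{L}^G) \cong \mathcal{L}$ of Lemma \ref{d1descentinverseseqdesliealg} turns it into a $G$-equivariant $\mathcal{L}$-module, and a short check using the pullback action formula (the unit $(0, 1 \otimes 1_{\mathcal{L}})$ acts as the identity on $Q \otimes m$ precisely because $\mathcal{N}$ is Picard) shows the result is again Picard, so the functor lands in $\Mod(\mathcal{L},G)$.

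Next I would produce the candidate unit and counit. Because $f^{\#}\mathcal{M}=f^*\mathcal{M}$ as $\mathcal{O}_Y$-modules and $(f_*(-))^G$ agrees on underlying $\mathcal{O}$-modules with the descent functor of Proposition \ref{d1Omodulesequivariantdescent}, that proposition already supplies natural $\mathcal{O}$-module isomorphisms $\nu_{\mathcal{M}}: f^{\#}\big((f_*\mathcal{M})^G\big) \xrightarrow{\sim} \mathcal{M}$ and $\eta_{\mathcal{N}}: \mathcal{N} \xrightarrow{\sim} \big(f_*(f^{\#}\mathcal{N})\big)^G$, natural in $\mathcal{M}$ and $\mathcal{N}$. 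It then remains only to show that these underlying maps are morphisms in the module categories, namely that $\nu_{\mathcal{M}}$ is $\mathcal{L}$-linear and $G$-equivariant and that $\eta_{\mathcal{N}}$ is $f_{\#}\mathcal{L}^G$-linear. Linearity is a local condition on a map of sheaves, so I would reduce to the trivial torsor $Y=G\times X$ with $f$ the second projection, where $\nu$ and $\eta$ are given by the explicit formulas $f\otimes m \mapsto f\cdot m$ and $n \mapsto 1\otimes n$ recalled in the proof of Proposition \ref{d1Omodulesequivariantdescent}.

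The compatibility verification is the heart of the argument. On the pullback side I would take a local section $(\psi, P\otimes l)$ of $f^{\#}\mathcal{L}$ and apply the action formula $(\psi, P\otimes l).(Q\otimes m)=\psi(Q)\otimes m + PQ\otimes l.m$ to $\eta_{\mathcal{N}}(n)=1\otimes n$, comparing the result with the action of the image of $(\psi,P\otimes l)$ under $f^{\#}(f_{\#}\mathcal{L}^G)\cong\mathcal{L}$ on $n\in\mathcal{N}$; the terms match because the $r\mathfrak{g}$-direction is exactly the piece quotiented out in $f_{\#}\mathcal{L}^G=\widetilde{\mathcal{L}}/\tig(r\widetilde{\mathfrak{g}_X})$, and by the preceding lemma it acts as zero on the descended module. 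On the equivariant side I would check that $\nu_{\mathcal{M}}(f\otimes m)=f\cdot m$ intertwines the pullback action on $f^{\#}\big((f_*\mathcal{M})^G\big)$ with the native $\mathcal{L}$-action on $\mathcal{M}$, and that it respects the $G$-actions via $g.(f\otimes m)=(g.f)\otimes(g.m)$; here the Picard condition forces the two $\mathcal{O}_Y$-actions to coincide, so the tangent-direction term $\psi(Q)\otimes m$ in the action formula is correctly absorbed into the $\mathcal{O}_Y$-module structure.

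The main obstacle I anticipate is the bookkeeping in this last check: one must track how the pullback action—whose formula mixes an $r\mathcal{T}_Y$-component with an $f^*\mathcal{L}$-component through the fibered product—reassembles, under $f^{\#}(f_{\#}\mathcal{L}^G)\cong\mathcal{L}$, into the single native $\mathcal{L}$-action, and in particular how the infinitesimal $r\mathfrak{g}$-action (which is killed on invariants and quotiented out of the descended algebroid) is regenerated on the pullback via the tangent component. Once the formulas are matched on a trivializing affine cover and seen to be compatible with restriction, naturality of $\nu$ and $\eta$ in the module categories is inherited from Proposition \ref{d1Omodulesequivariantdescent}, since morphisms of $\mathcal{L}$-modules and of $f_{\#}\mathcal{L}^G$-modules are in particular $\mathcal{O}$-module morphisms. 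This exhibits $f_*(-)^G$ and $f^{\#}(-)$ as quasi-inverse equivalences, completing the proof.
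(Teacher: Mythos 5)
Your proposal is correct and follows essentially the same route as the paper, whose proof simply cites the lemma that $(f_*\mathcal{M})^G$ is a Picard $f_{\#}\mathcal{L}^G$-module, the isomorphism $f^{\#}(f_{\#}\mathcal{L}^G)\cong\mathcal{L}$ of Lemma \ref{d1descentinverseseqdesliealg}, and the $\mathcal{O}$-module descent of Proposition \ref{d1Omodulesequivariantdescent}. You have merely filled in the details the paper leaves implicit (the trivial-action device for the pullback functor and the local verification that the unit and counit of the $\mathcal{O}$-module equivalence are module morphisms), which is a faithful expansion of the same argument.
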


\begin{proof}

This follows from  the Lemma above, Lemma \ref{d1descentinverseseqdesliealg} and Proposition \ref{d1Omodulesequivariantdescent}.
\end{proof}

\begin{corollary}
\label{d1noncohdescenttorsorequivalence}

Let $\mathcal{D}$ be an $r$-deformed $G$-htdo on $Y$. The functors

\begin{equation}
\begin{split}
&f_* (-)^G: \Mod(\mathcal{D},G) \to \Mod(f_{\#} \mathcal{D}^G),\\
&f^{\#}(-): \Mod(f_{\#} \mathcal{D}^G) \to \Mod(\mathcal{D},G),
\end{split} 
\end{equation}

are quasi-inverse equivalences of categories.
\end{corollary}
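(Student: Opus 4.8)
The plan is to deduce this statement from the corresponding equivalence for Picard modules, Proposition \ref{d1descentrepalgeqcat}, by transporting everything through the $\Lie$/$\mathscr{T}$ dictionary, in exactly the same way that Corollary \ref{d1descentGhtdocorresp} was deduced from Corollary \ref{d1noneqdescentpicalgcorr}. All of the genuine content has already been established; what remains is formal bookkeeping relating the htdo picture to the Picard-algebroid picture.

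First I would set $\mathcal{L} := \Lie(\mathcal{D}) = F_1(\mathcal{D})$. By Corollary \ref{d1htdoPicalgcorresp} this is an $r$-deformed $G$-equivariant Picard algebroid on $Y$ with $\mathscr{T}(\mathcal{L}) \cong \mathcal{D}$, and by the very definition of the htdo descent we have $f_{\#}\mathcal{D}^G = \mathscr{T}(f_{\#}\mathcal{L}^G)$, where $f_{\#}\mathcal{L}^G$ is an $r$-deformed Picard algebroid on $X$ by Lemma \ref{d1descenteqpicalg}. Next I would identify the two module categories in the statement with their Picard counterparts: Corollary \ref{d1repequivalentequivliealgtdo} gives an identification $\Mod(\mathcal{D},G) = \Mod(\mathcal{L},G)$, and Corollary \ref{d1repequivalencenonequivliealgtdo} applied to $f_{\#}\mathcal{L}^G$ (whose associated tdo is precisely $f_{\#}\mathcal{D}^G$) gives $\Mod(f_{\#}\mathcal{D}^G) = \Mod(f_{\#}\mathcal{L}^G)$. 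Crucially, both identifications are realised by the identity map on the underlying $\mathcal{O}$-modules, merely reinterpreting the same action.

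It then remains to check that, under these identifications, the two functors in the statement agree with the functors of Proposition \ref{d1descentrepalgeqcat}. This is immediate from the constructions: the functor $f_*(-)^G$ is built from the $\mathcal{O}$-module descent of \ref{d1pushforwardwelldefined}, which is insensitive to whether one remembers the $\mathcal{D}$-action or the equivalent $\mathcal{L}$-action, while $f^{\#}(-)$ is by definition the $\mathcal{O}$-module pullback $f^*$ equipped with the action of Lemma \ref{d1repliealgpullbackwelldef} — the same sheaf whether regarded as an $f^{\#}\mathcal{D}$-module or as a Picard $f^{\#}\mathcal{L}$-module, as recorded in the remark following the definition of the module pullback. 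Hence the two squares of functors commute with the identity identifications, and Proposition \ref{d1descentrepalgeqcat} yields at once that $f_*(-)^G$ and $f^{\#}(-)$ are quasi-inverse equivalences between $\Mod(\mathcal{D},G)$ and $\Mod(f_{\#}\mathcal{D}^G)$, as claimed.

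Since the whole argument is a translation along equivalences that have already been proved, there is no real obstacle here; the only point requiring a moment's care is confirming that the pullback functor $f^{\#}$ for htdo-modules coincides with the pullback $f^{\#}$ for Picard-modules, but this is built into the definitions and so causes no difficulty.
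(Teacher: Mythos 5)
Your proposal is correct and follows essentially the same route as the paper, which deduces the corollary from Proposition \ref{d1descentrepalgeqcat} together with Corollaries \ref{d1repequivalencenonequivliealgtdo} and \ref{d1repequivalentequivliealgtdo}. The paper's proof is a one-line citation of these results; you have merely spelled out the bookkeeping (that the identifications are identities on underlying $\mathcal{O}$-modules and that the descent/pullback functors match under them), which is exactly the implicit content of that citation.
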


\begin{proof}

This follows from the Proposition above and Corollaries \ref{d1repequivalencenonequivliealgtdo} and \ref{d1repequivalentequivliealgtdo}.
\end{proof}

Let $B$ is another smooth affine algebraic group acting on $Y$ such that the actions of $G$ and $B$ commute and $f:Y \to X$ is $B$-equivariant. Let $\mathcal{D}$ be an $r$-deformed $G \times B$-htdo. Similar to the $\mathcal{O}$-module case (Corollary \ref{d1BequivarianteqdescentOmodules}), we obtain using Corollary \ref{d1noncohdescenttorsorequivalence}:

\begin{corollary}
\label{d1eqnoncohdescenttorsorequivalence}
Let $\mathcal{D}$ be an $r$-deformed $G \times B$-htdo on $Y$ and assume $f$ is $B$-equivariant. The functors

\begin{equation}
\begin{split}
&f_* (-)^G: \Mod(\mathcal{D},G \times B) \to \Mod(f_{\#} \mathcal{D}^G,B),\\
&f^{\#}(-): \Mod(f_{\#} \mathcal{D}^G,B) \to \Mod(\mathcal{D},G \times B),
\end{split} 
\end{equation}
are quasi-inverse equivalences of categories.
\end{corollary}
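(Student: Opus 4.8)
The plan is to deduce this corollary from the non-$B$-equivariant equivalence of Corollary \ref{d1noncohdescenttorsorequivalence} in exactly the way Corollary \ref{d1BequivarianteqdescentOmodules} was obtained from Proposition \ref{d1Omodulesequivariantdescent}. The underlying functors $f_*(-)^G$ and $f^{\#}(-)$ on quasi-coherent $\mathcal{O}$-modules are precisely those of Corollary \ref{d1BequivarianteqdescentOmodules}, so they already restrict to mutually quasi-inverse equivalences between $G \times B$-equivariant $\mathcal{O}_Y$-modules and $B$-equivariant $\mathcal{O}_X$-modules, and the unit and counit isomorphisms $\mathcal{M} \cong f^{\#}(f_* \mathcal{M})^G$ and $(f_* f^{\#} \mathcal{N})^G \cong \mathcal{N}$ produced by Corollary \ref{d1noncohdescenttorsorequivalence} are the $\mathcal{O}$-module isomorphisms of Corollary \ref{d1BequivarianteqdescentOmodules}, hence $B$-equivariant. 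Consequently the only new points to establish are that $f_*(-)^G$ sends $\Mod(\mathcal{D}, G \times B)$ into $\Mod(f_{\#} \mathcal{D}^G, B)$, that $f^{\#}(-)$ sends $\Mod(f_{\#} \mathcal{D}^G, B)$ into $\Mod(\mathcal{D}, G \times B)$, and that both preserve $B$-equivariance of morphisms.

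For well-definedness I would pass to the Picard algebroid picture. Write $\mathcal{L} := \Lie(\mathcal{D})$, which is a $G \times B$-equivariant Picard algebroid by Lemma \ref{d1htdotoeqPicalg} applied with the group $G \times B$; by Corollary \ref{d1repequivalentequivliealgtdo} we may identify $\Mod(\mathcal{D}, G \times B) \cong \Mod(\mathcal{L}, G \times B)$ and $\Mod(f_{\#} \mathcal{D}^G, B) \cong \Mod(f_{\#} \mathcal{L}^G, B)$, where $f_{\#} \mathcal{L}^G$ is $B$-equivariant Picard by Lemma \ref{d1Bequivariantdescentpicalg}. Given a $G \times B$-equivariant Picard $\mathcal{L}$-module $\mathcal{M}$, the descent $(f_* \mathcal{M})^G$ is $B$-equivariant as an $\mathcal{O}_X$-module by Lemma \ref{d1Omoduledescentpreserveequivariance}, and conditions ii) and iii) of Definition \ref{d1defliealgebroidequivrep} for the group $B$ over $f_{\#} \mathcal{L}^G$ follow from the corresponding conditions for $\mathcal{M}$ together with the fact, established in the proof of Lemma \ref{d1Bequivariantdescentpicalg}, that the bracket, the anchor $\sigma \circ \tilde{\rho}$ and the map $\tig$ defining the descent are all $B$-equivariant and that the $G$- and $B$-actions commute. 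Symmetrically, for a $B$-equivariant Picard $f_{\#} \mathcal{L}^G$-module $\mathcal{N}$, the pullback $f^{\#} \mathcal{N}$ carries a $G$-equivariant $\mathcal{L}$-module structure from Corollary \ref{d1noncohdescenttorsorequivalence} and a $B$-equivariant one from Lemma \ref{d1pullbackofequivLiealgrep} applied with the group $B$ (using that $f$ is $B$-equivariant and that $f^{\#}(f_{\#} \mathcal{L}^G) \cong \mathcal{L}$ by Lemma \ref{d1descentinverseseqdesliealg}); these two structures commute because they already do so at the level of $\mathcal{O}$-modules by Lemma \ref{d1Omoduledescentpreserveequivariance}, and therefore assemble into a $G \times B$-equivariant structure.

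Preservation of $B$-equivariance of morphisms follows from Observation \ref{d1easyobservation} and the functoriality of $f^*$ and $(f_*)^G$ for $B$-equivariant maps. Once well-definedness and morphism-compatibility are in place, the two restricted functors are quasi-inverse because their underlying $\mathcal{O}$-module functors are and the unit/counit isomorphisms above are $B$-equivariant, hence isomorphisms inside the $B$-equivariant categories. I expect the main obstacle to be the verification of conditions ii) and iii) for the descended and pulled-back module actions: this is the only genuinely new content, and it is precisely where one must use that the $G$- and $B$-actions on $Y$ commute and that the structure maps of the descent are $B$-equivariant, as recorded in Lemma \ref{d1Bequivariantdescentpicalg}.
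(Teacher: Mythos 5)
Your proposal is correct and follows essentially the same route as the paper: the paper's own justification is precisely to invoke Corollary \ref{d1noncohdescenttorsorequivalence} and argue ``as in the $\mathcal{O}$-module case'' of Corollary \ref{d1BequivarianteqdescentOmodules}, i.e.\ to reduce to the non-$B$-equivariant equivalence and then track $B$-equivariance through Lemma \ref{d1Omoduledescentpreserveequivariance}, Observation \ref{d1easyobservation} and Lemma \ref{d1Bequivariantdescentpicalg}. Your write-up simply makes explicit (via the Picard-algebroid dictionary of Corollary \ref{d1repequivalentequivliealgtdo} and Lemma \ref{d1pullbackofequivLiealgrep}) the verifications that the paper leaves implicit.
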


To prove the main theorem of the paper, we need two easy results. Recall that for an $R$-variety $Z$ we denote $\mathcal{D}_Z$ the sheaf of crystalline differential operators on $Z$. For $r \in R$ regular element, we call $\mathcal{D}_{Z,r}=\mathscr{T}(\mathcal{O}_Z \oplus r \mathcal{T}_Z)$ the sheaf of $r$-deformed differential operators and $\mathcal{D}(Z)_r=\Gamma(Z,\mathcal{D}_{Z,r})$. It is clear by construction that $\mathcal{D}_{Z,r}$ is an $r$-deformed tdo. We also make the following assumption:

\begin{assumption}
The base ring $R$ is Noetherian.
\end{assumption}

\begin{lemma}
Let $\mathcal{M}$ be a $\mathcal{D}_{X,r}$-module. Then $f^* \mathcal{M}$ is a coherent $\mathcal{D}_{Y,r}$-module if and only $\mathcal{M}$ is a coherent $\mathcal{D}_{X,r}$-module. 
\end{lemma}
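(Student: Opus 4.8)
The plan is to prove both implications locally, using that $\mathcal{D}_{X,r}$ and $\mathcal{D}_{Y,r}$ are locally Noetherian (Lemma \ref{d1crystallinedifoplocnoether}), so that coherence is equivalent to being locally finitely generated. First I would record the structural inputs. Since $f$ is a locally trivial $G$-torsor it is faithfully flat, and, being locally of the form $G \times U \to U$ with $G$ smooth, it is smooth; hence the canonical comparison $\alpha : \mathcal{T}_Y \to f^*\mathcal{T}_X$ is surjective. Moreover $\Lie(\mathcal{D}_{X,r}) = \mathcal{O}_X \oplus r\mathcal{T}_X$, and a direct computation of the fibre product defining the pullback gives $f^{\#}(\mathcal{O}_X \oplus r\mathcal{T}_X) \cong \mathcal{O}_Y \oplus r\mathcal{T}_Y$, so that $f^{\#}\mathcal{D}_{X,r} \cong \mathcal{D}_{Y,r}$ and the $\mathcal{D}_{Y,r}$-module $f^*\mathcal{M} = f^{\#}\mathcal{M}$ appearing in the statement is exactly the pullback module of Lemma \ref{d1repliealgpullbackwelldef}.

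For the implication ``$\mathcal{M}$ coherent $\Rightarrow f^*\mathcal{M}$ coherent'' I would work on an open where $\mathcal{M}$ is generated over $\mathcal{D}_{X,r}$ by finitely many sections $m_1,\dots,m_n$ and show that $f^*\mathcal{M}$ is generated over $\mathcal{D}_{Y,r}$ by $1 \otimes m_1,\dots,1\otimes m_n$. Let $N$ be the $\mathcal{D}_{Y,r}$-submodule they generate. Using the action formula $(\psi, P \otimes l).(Q \otimes m) = \psi(Q) \otimes m + PQ \otimes l.m$ from the definition of the pullback, together with surjectivity of $\alpha$ (which lets me lift any local $l \in f^{-1}\Lie(\mathcal{D}_{X,r})$ to an element $(\psi, 1 \otimes l) \in F_1(\mathcal{D}_{Y,r})$), one checks the one-step claim: if $Q \otimes m \in N$ for all $Q \in \mathcal{O}_Y$, then $Q \otimes l.m \in N$ for all $Q$ and all $l \in F_1(\mathcal{D}_{X,r})$. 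Since $F_k = F_1 \cdot F_{k-1}$, an induction on the order of operators then yields $Q \otimes D m_i \in N$ for every local $D \in f^{-1}\mathcal{D}_{X,r}$; as such elements span $f^*\mathcal{M}$ over $\mathcal{O}_Y$, I conclude $N = f^*\mathcal{M}$. Being locally finitely generated over the locally Noetherian sheaf $\mathcal{D}_{Y,r}$, $f^*\mathcal{M}$ is coherent.

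For the converse I would use faithfully flat descent combined with the Noetherian property. Restricting to an affine open $U \subseteq X$ that trivialises the torsor and has Noetherian coordinate ring, write $\mathcal{M}|_U = \varinjlim_\lambda \mathcal{N}_\lambda$ as the directed union of its coherent $\mathcal{D}_{X,r}$-submodules (possible since $\mathcal{D}_{X,r}$ is locally Noetherian and $U$ is Noetherian). As $f^*$ is exact and commutes with colimits, $f^*\mathcal{M}|_{f^{-1}U} = \varinjlim_\lambda f^*\mathcal{N}_\lambda$ is the directed union of the submodules $f^*\mathcal{N}_\lambda$, each coherent by the first part. But $f^*\mathcal{M}$ is coherent by hypothesis, hence a Noetherian object over the locally Noetherian $\mathcal{D}_{Y,r}$ on the Noetherian scheme $f^{-1}U$; therefore the chain stabilises and $f^*\mathcal{M}|_{f^{-1}U} = f^*\mathcal{N}_{\lambda_0}$ for some $\lambda_0$. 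Faithful flatness of $f$ then forces $\mathcal{M}|_U = \mathcal{N}_{\lambda_0}$, since the cokernel of $\mathcal{N}_{\lambda_0} \hookrightarrow \mathcal{M}|_U$ pulls back to $0$ and hence vanishes; thus $\mathcal{M}|_U$ is coherent.

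I expect the main obstacle to be the direction ``$\mathcal{M}$ coherent $\Rightarrow f^*\mathcal{M}$ coherent'': the naive pullback of generators must be shown to generate the pullback \emph{$\mathcal{D}$-module}, and this is precisely where the non-formal nature of $f^{\#}$ enters, requiring the explicit action formula and the surjectivity of the anchor comparison $\alpha$ to lift first-order operators, after which the order induction must be set up with care. The descent direction is comparatively formal once one grants the standard fact that a quasi-coherent module over a locally Noetherian sheaf of rings on a Noetherian scheme is the directed union of its coherent submodules.
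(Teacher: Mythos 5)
Your proposal is correct, but it takes a genuinely different route from the paper's proof. The paper localises to the trivial torsor $Y \cong G \times X$, identifies $f^*\mathcal{M}$ with $\mathcal{O}(G) \otimes M$, and invokes a K\"unneth-type decomposition $\mathcal{D}(Y)_r \cong \mathcal{D}(G)_r \otimes \mathcal{D}(X)_r$ (``deformations commute with tensor products''); with that in hand the forward implication is immediate (generators $1 \otimes m_i$ work trivially), and the converse is proved contrapositively: an infinite strictly ascending chain of $\mathcal{D}(X)_r$-submodules of $M$ remains strictly ascending after tensoring with the faithfully flat $R$-module $\mathcal{O}(G)$, so $\mathcal{O}(G) \otimes M$ cannot be finitely generated over the Noetherian ring $\mathcal{D}(Y)_r$. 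You never use the trivialisation at the level of differential operators: your forward direction is an explicit generation argument through the fibre-product description of $f^{\#}$, lifting first-order operators via surjectivity of $\mathcal{T}_Y \to f^*\mathcal{T}_X$ (valid for any smooth $f$, not just a torsor) and inducting on the filtration $F_k = F_1 \cdot F_{k-1}$, while your converse replaces the chain argument by exhaustion of $\mathcal{M}$ by coherent $\mathcal{D}_{X,r}$-submodules, stabilisation of the pulled-back directed union against coherence of $f^*\mathcal{M}$, and faithfully flat descent of the vanishing of the cokernel. What each approach buys: the paper's is shorter once the torsor is trivialised, but it leans on the K\"unneth claim for deformed crystalline differential operators, which is asserted rather than proved; yours avoids that claim entirely, works in greater generality (smoothness suffices for one direction, faithful flatness for the other), and stays within machinery the paper has already established, namely the pullback of Picard algebroids and their modules (Lemma \ref{d1repliealgpullbackwelldef}) and local Noetherianity (Lemma \ref{d1crystallinedifoplocnoether}). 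In a final write-up you should make explicit the isomorphism $f^{\#}\mathcal{D}_{X,r} \cong \mathcal{D}_{Y,r}$ that your computation of the fibre product sketches (alternatively it follows from Lemma \ref{d1inversedescenthtdo}), and the standard affine correspondence identifying coherence of a quasi-coherent $\mathcal{D}$-module over an affine open with finite generation of its sections, which both your proof and the paper's use silently.
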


\begin{proof}

Since coherence is a local problem, we may assume that $X$ is affine, $Y=G \times X$, $f$ is the projection onto the second factor and $G$ acts on $Y$ via left multiplication on the first factor.  Let $M$ be a $\mathcal{D}(X)_r$ module, so $f^*M \cong \mathcal{O}(G) \otimes M$.

We may view $D(Y)_r$ as an $r$-deformation of the ring $D(Y)$. Since $D(Y) \cong D(G) \otimes D(X)$ and deformations commutes with tensor product, we obtain that the ring $\mathcal{D}(Y)_r \cong \mathcal{D}(G)_r \otimes \mathcal{D}(X)_r$ acts naturally on $f^* M$. Further, since our base ring  $R$ is Noetherian we have by Lemma \ref{d1crystallinedifoplocnoether} that $D(Y)_r$ and $D(X)_r$ are Noetherian rings, so coherence is equivalent to local finite generation. Thus it is enough to prove the following claim:

\textbf{Claim}. $M$ is a finitely generated $\mathcal{D}(X)_r$ module if and only if $\mathcal{O}(G) \otimes M$ is a finitely generated $\mathcal{D}(G)_r \otimes \mathcal{D}(X)_r$ module. 

The direct implication is trivial. For the reverse implication, we will prove the contrapositive: if $M$ is not Noetherian, neither is $\mathcal{O}(G) \otimes M$. Let $M_1 \subsetneq M_2 \subsetneq M_3 \ldots \subsetneq $ be an infinite ascending of $M$-submodules. Since $G$ is a smooth group, $\mathcal{O}(G)$ is a faithfully flat $R$-module, thus tensoring with $\mathcal{O}(G)$ preserves injections. In particular, we obtain an infinite chain

$$ \mathcal{O}(G) \otimes M_1 \subsetneq \mathcal{O}(G) \otimes  M_2 \subsetneq \mathcal{O}(G) \otimes M_3 \ldots \subsetneq $$

of submodules of $\mathcal{O}(G) \otimes M$. Therefore, the claim is proven.
\end{proof}

\begin{corollary}
\label{d1tdopullbackpreservecoherence}
Let $\mathcal{D}$ be an $r$-deformed $G$-htdo on $Y$ and let $\mathcal{M}$ be a $(f_{\#} \mathcal{D})^G$-module. Then $f^{\#} \mathcal{M}$ is a coherent $\mathcal{D}$-module if and only if $\mathcal{M}$ is coherent.

\end{corollary}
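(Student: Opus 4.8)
The plan is to generalise the preceding Lemma, which treats the case $\mathcal{D} = \mathcal{D}_{Y,r}$, to an arbitrary twist. Since coherence is a local condition, I would first reduce to the situation $X = \Spec A$ affine, $Y = G \times X$, with $f$ the projection and $G$ acting by left multiplication on the first factor. Writing $\mathcal{A} := f_{\#}\mathcal{D}^G$, Lemma \ref{d1inversedescenthtdo} gives $\mathcal{D} \cong f^{\#}\mathcal{A}$, and by Lemma \ref{d1crystallinedifoplocnoether} both $\mathcal{A}$ and $\mathcal{D}$ are locally Noetherian, so that in each case coherence is equivalent to local finite generation. The statement then becomes: a module $M$ over $\mathcal{A}(X)$ is finitely generated if and only if $f^{\#}M = \mathcal{O}(G) \otimes M$ is finitely generated over $\mathcal{D}$.

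For the implication that $M$ finitely generated implies $f^{\#}M$ finitely generated, I would show that if $m_1, \dots, m_n$ generate $M$ over $\mathcal{A}$ then $1 \otimes m_1, \dots, 1 \otimes m_n$ generate $f^{\#}M$ over $\mathcal{D}$. The key computation uses the explicit action from Lemma \ref{d1repliealgpullbackwelldef}: for $l \in \Lie(\mathcal{A})$ choose any $\psi \in r\mathcal{T}_Y$ with $(\psi, 1 \otimes l) \in f^{\#}\Lie(\mathcal{A})$, and note that $(\psi, 1 \otimes l).(1 \otimes m) = \psi(1)\otimes m + 1 \otimes l.m = 1 \otimes l.m$ since $\psi(1) = 0$. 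Iterating shows that the $\mathcal{D}$-submodule generated by the $1 \otimes m_i$ contains $1 \otimes (\sum_i \mathcal{A}.m_i) = 1 \otimes M$, and multiplying by $\mathcal{O}(G) \subseteq F_0\mathcal{D}$ then recovers all of $\mathcal{O}(G) \otimes M = f^{\#}M$.

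For the converse I would argue by contrapositive exactly as in the preceding Lemma: if $M$ is not finitely generated, pick a strictly ascending chain $M_1 \subsetneq M_2 \subsetneq \cdots$ of $\mathcal{A}(X)$-submodules. Because $f$ is flat the functor $f^{\#}$ is exact, and because $\mathcal{O}(G)$ is faithfully flat over $R$ it preserves strict inclusions, so $f^{\#}M_1 \subsetneq f^{\#}M_2 \subsetneq \cdots$ is a strictly ascending chain of $\mathcal{D}$-submodules of $f^{\#}M$, whence $f^{\#}M$ is not Noetherian and hence not coherent.

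The main obstacle is that one cannot simply quote the preceding Lemma or the categorical equivalence of Corollary \ref{d1noncohdescenttorsorequivalence}: the twist makes the $\mathcal{D}$-action on $f^{\#}M$ genuinely different from the $\mathcal{D}_{Y,r}$-action, so the two notions of coherent module do not coincide a priori, and the equivalence of Corollary \ref{d1noncohdescenttorsorequivalence} only controls $G$-equivariant submodules, whereas coherence here concerns all $\mathcal{D}$-submodules. The point is therefore to redo the finite-generation argument at the level of the explicit twisted action; once the formula $(\psi, P \otimes l).(Q \otimes m) = \psi(Q)\otimes m + PQ\otimes l.m$ is in hand, the computation is routine, the only subtlety being that the vertical lift $\psi$ is harmless because derivations annihilate $1$.
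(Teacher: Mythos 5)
Your proof is correct, but it takes a genuinely different route from the paper's. The paper disposes of this corollary in two sentences: it observes that the claim is local, asserts that one may therefore assume $\mathcal{D}=\mathcal{D}_{Y,r}$ (hence $f_{\#}\mathcal{D}^G=\mathcal{D}_{X,r}$), and quotes the preceding untwisted lemma together with Lemma \ref{d1inversedescenthtdo}; in other words, it reduces to the untwisted case via an implicit appeal to Zariski-local triviality of $r$-deformed (h)tdo's. You instead keep the twist and rerun the finite-generation argument directly for $\mathcal{D}\cong f^{\#}\mathcal{A}$, using the action formula of Lemma \ref{d1repliealgpullbackwelldef}: the identity $(\psi,1\otimes l).(1\otimes m)=1\otimes l.m$ (valid since $\psi(1)=0$) shows generators of $M$ pull back to generators of $f^{\#}M$, and exactness of $f^{\#}$ plus faithful flatness of $\mathcal{O}(G)$ over $R$ gives the contrapositive of the converse, exactly as in the paper's untwisted lemma. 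What your route buys is rigor at precisely the point you flagged: restricting to an affine open $U$ trivialising the torsor only yields $\mathcal{D}|_{f^{-1}(U)}\cong f^{\#}(\mathcal{A}|_U)$ by Lemma \ref{d1inversedescenthtdo}; it does \emph{not} make $\mathcal{A}|_U$ isomorphic to $\mathcal{D}_{U,r}$, because even though the extension $0\to\mathcal{O}_U\to\Lie(\mathcal{A}|_U)\to r\mathcal{T}_U\to 0$ splits as $\mathcal{O}$-modules on an affine, the bracket is still twisted by a closed $2$-cocycle which need not be exact (already over $R=\mathbb{C}$, the tdo attached to $\frac{dx}{x}\wedge\frac{dy}{y}$ on $\mathbb{G}_m^2$ is nontrivial). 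So the paper's reduction is an unproved (and in general unavailable) assertion, whereas your direct computation bridges the twisted and untwisted situations honestly; the only cost is length, since given the untwisted lemma the paper's argument would otherwise be immediate. Two details to make explicit in a final write-up: the lift $\psi$ of $\beta(1\otimes l)$ exists because $r\mathcal{T}_Y\to rf^{*}\mathcal{T}_X$ is surjective for the projection $G\times X\to X$, and the terms $f^{\#}M_i$ of your chain are genuinely $\mathcal{D}$-submodules of $f^{\#}M$ because $f^{\#}$ is an exact functor on $\mathcal{A}$-modules (or by inspection of the action formula).
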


\begin{proof}

The claim is local, thus we may assume that $\mathcal{D}=\mathcal{D}_{Y,r}$ so that $(f_{\#} \mathcal{D})^G= \mathcal{D}_{X,r}$. The claim follows by the Lemma above and Lemma \ref{d1inversedescenthtdo}.
\end{proof}

We may now prove the main theorem.

\begin{theorem}
\label{d1maintheorem}

Assume the base ring $R$ is Noetherian. Let $G$ be a smooth affine algebraic group of finite type. Let $X,Y$ be $R$-varieties and let $f:Y \to X$ be a locally trivial $G$-torsor. Further, let $\mathcal{D}$ be a sheaf of $r$-deformed $G$-homogeneous twisted differential operators on $Y$. The functors:

\begin{equation}
\begin{split}
&f_* (-)^G: \Coh(\mathcal{D},G) \to \Coh(f_{\#} \mathcal{D}^G),\\
&f^{\#}(-): \Coh(f_{\#} \mathcal{D}^G) \to \Coh(\mathcal{D},G),
\end{split} 
\end{equation}

are quasi-inverse equivalences of categories between coherent $G$-equivariant $\mathcal{D}$-modules and coherent $(f_{\#} \mathcal{D})^G$-modules.

\end{theorem}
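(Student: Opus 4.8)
The plan is to deduce the theorem from the quasi-coherent equivalence of Corollary \ref{d1noncohdescenttorsorequivalence} by simply restricting both functors to the full subcategories of coherent objects. Since $\Coh(\mathcal{D},G)$ and $\Coh(f_{\#}\mathcal{D}^G)$ are by definition full subcategories of $\Mod(\mathcal{D},G)$ and $\Mod(f_{\#}\mathcal{D}^G)$, and the two natural isomorphisms witnessing that $f_*(-)^G$ and $f^{\#}(-)$ are quasi-inverse are already supplied at the quasi-coherent level, the only thing left to verify is that each functor carries coherent modules to coherent modules. Once that is established, the natural isomorphisms restrict verbatim and the proof is complete.

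First I would treat the pullback functor $f^{\#}(-)$. If $\mathcal{M}$ is a coherent $(f_{\#}\mathcal{D})^G$-module, then the implication ``$\mathcal{M}$ coherent $\Rightarrow f^{\#}\mathcal{M}$ coherent'' of Corollary \ref{d1tdopullbackpreservecoherence} shows that $f^{\#}\mathcal{M}$ is a coherent $\mathcal{D}$-module, and by Corollary \ref{d1noncohdescenttorsorequivalence} it is $G$-equivariant. Hence $f^{\#}(-)$ maps $\Coh(f_{\#}\mathcal{D}^G)$ into $\Coh(\mathcal{D},G)$.

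The step I expect to be the only subtle point is showing that the descent functor $f_*(-)^G$ preserves coherence, since there is no direct finiteness argument available for the $G$-invariant pushforward. The idea is to exploit the equivalence itself rather than argue directly: given $\mathcal{N} \in \Coh(\mathcal{D},G)$, the natural isomorphism of Corollary \ref{d1noncohdescenttorsorequivalence} yields $\mathcal{N} \cong f^{\#}\big((f_*\mathcal{N})^G\big)$. Since $\mathcal{N}$ is coherent, the converse implication ``$f^{\#}\mathcal{M}$ coherent $\Rightarrow \mathcal{M}$ coherent'' of Corollary \ref{d1tdopullbackpreservecoherence}, applied with $\mathcal{M} = (f_*\mathcal{N})^G$, forces $(f_*\mathcal{N})^G$ to be coherent. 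Thus $f_*(-)^G$ maps $\Coh(\mathcal{D},G)$ into $\Coh(f_{\#}\mathcal{D}^G)$, and the restricted pair of functors is a quasi-inverse equivalence, which proves the theorem.
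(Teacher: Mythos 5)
Your proposal is correct and follows essentially the same route as the paper, whose proof is simply to cite Corollaries \ref{d1noncohdescenttorsorequivalence} and \ref{d1tdopullbackpreservecoherence}; your write-up just makes explicit how the two directions of the coherence equivalence (pullback preserves coherence directly, descent preserves coherence via $\mathcal{N} \cong f^{\#}\bigl((f_*\mathcal{N})^G\bigr)$ and the reverse implication) combine to restrict the quasi-coherent equivalence to the coherent subcategories.
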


\begin{proof}
This follows from Corollaries \ref{d1noncohdescenttorsorequivalence} and \ref{d1tdopullbackpreservecoherence}.
\end{proof}
As a corollary, we obtain using Corollary \ref{d1eqnoncohdescenttorsorequivalence}:
\begin{corollary}
\label{d1eqdescentrep}
Let the assumptions be as above. Further, assume that $B$ is a smooth affine algebraic group of finite type such that $f:Y \to X$ is $B$-equivariant. Further assume that $\mathcal{D}$ is an $r$-deformed $G \times B$-htdo on $Y$. The functors:

\begin{equation}
\begin{split}
&f_* (-)^G: \Coh(\mathcal{D},G \times B) \to \Coh(f_{\#} \mathcal{D}^G,B),\\
&f^{\#}(-): \Coh(f_{\#} \mathcal{D}^G,B) \to \Coh(\mathcal{D},G \times B),
\end{split} 
\end{equation}

are quasi-inverse equivalences of categories. 
\end{corollary}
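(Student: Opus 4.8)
The plan is to deduce the coherent equivalence from the already-established quasi-coherent equivalence of Corollary \ref{d1eqnoncohdescenttorsorequivalence} by checking that both functors restrict to the coherent subcategories, i.e. that they preserve and reflect coherence. First I would recall that Corollary \ref{d1eqnoncohdescenttorsorequivalence} supplies quasi-inverse equivalences $f_*(-)^G:\Mod(\mathcal{D},G\times B)\to\Mod(f_{\#}\mathcal{D}^G,B)$ and $f^{\#}(-):\Mod(f_{\#}\mathcal{D}^G,B)\to\Mod(\mathcal{D},G\times B)$, together with natural isomorphisms $\mathrm{id}\cong f^{\#}f_*(-)^G$ and $f_*(f^{\#}(-))^G\cong\mathrm{id}$. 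Since such natural isomorphisms automatically restrict to any full subcategory stable under both functors, it suffices to prove that $f_*(-)^G$ and $f^{\#}(-)$ each send coherent objects to coherent objects.

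The key observation is that coherence is a property of the underlying module over the (non-equivariant) twisted differential operators, so it is insensitive to the $B$-equivariant structure. Concretely, a $G\times B$-htdo $\mathcal{D}$ is in particular a $G$-htdo once we forget the $B$-action, and its descent $f_{\#}\mathcal{D}^G$ is in particular a tdo once we forget the $B$-equivariance; an object of $\Coh(\mathcal{D},G\times B)$ (respectively $\Coh(f_{\#}\mathcal{D}^G,B)$) is by definition one whose underlying $\mathcal{D}$-module (respectively $(f_{\#}\mathcal{D}^G)$-module) is coherent. We may therefore apply Corollary \ref{d1tdopullbackpreservecoherence}, which was proved for the underlying $G$-htdo, directly to these underlying modules.

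With this in hand, the two directions follow immediately. For $f^{\#}(-)$: given $\mathcal{M}\in\Coh(f_{\#}\mathcal{D}^G,B)$, its underlying module is coherent, so by one direction of Corollary \ref{d1tdopullbackpreservecoherence} the underlying module of $f^{\#}\mathcal{M}$ is coherent, whence $f^{\#}\mathcal{M}\in\Coh(\mathcal{D},G\times B)$. For $f_*(-)^G$: given $\mathcal{N}\in\Coh(\mathcal{D},G\times B)$, the natural isomorphism $\mathcal{N}\cong f^{\#}(f_*\mathcal{N}^G)$ identifies the coherent module $\mathcal{N}$ with $f^{\#}$ applied to $f_*\mathcal{N}^G$; the converse direction of Corollary \ref{d1tdopullbackpreservecoherence} then forces $f_*\mathcal{N}^G$ to be coherent, so $f_*\mathcal{N}^G\in\Coh(f_{\#}\mathcal{D}^G,B)$. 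Thus both functors restrict to the coherent subcategories and, carrying along the restricted natural isomorphisms, remain quasi-inverse equivalences.

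I do not expect a genuine obstacle here: the entire content has been front-loaded into Corollary \ref{d1eqnoncohdescenttorsorequivalence} (the equivalence with the extra $B$-symmetry) and Corollary \ref{d1tdopullbackpreservecoherence} (coherence preservation), exactly paralleling the proof of Theorem \ref{d1maintheorem}. The only point requiring a moment's care — and the closest thing to a subtlety — is verifying that coherence may be tested on underlying modules, so that the coherence criterion, stated only for $G$-htdo's, applies verbatim in the present $G\times B$-equivariant situation.
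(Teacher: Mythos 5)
Your proposal is correct and follows essentially the same route as the paper, which obtains this corollary by combining the quasi-coherent $G\times B$-equivariant equivalence (Corollary \ref{d1eqnoncohdescenttorsorequivalence}) with the coherence criterion (Corollary \ref{d1tdopullbackpreservecoherence}), exactly as in the proof of Theorem \ref{d1maintheorem}. Your explicit verification that coherence is tested on the underlying non-equivariant modules, so that the criterion stated for $G$-htdo's applies after forgetting the $B$-structure, is precisely the detail the paper leaves implicit.
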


\bibliography{duflo1bibl}

\begin{thebibliography}{10}

\bibitem{Annals}
Konstantin Ardakov and Simon Wadsley.
\newblock On irreducible representation of compact $p$-adic analytic groups.
\newblock {\em Annals of Mathematics}, 178:453--557, 2013.

\bibitem{Munster}
Konstantin Ardakov and Simon Wadsley.
\newblock Verma modules for {Iwasawa} algebras are faithful.
\newblock {\em Münster Journal of Mathematics}, 7(1):5--26, 2014.

\bibitem{BB}
Alexandre Beilinson and Joseph Bernstein.
\newblock Localisation de $\mathfrak{g}$-modules.
\newblock {\em C. R. Acad. Sci., Paris}, 292:15--18, 1981.

\bibitem{BB2}
Alexandre Beilinson and Joseph Bernstein.
\newblock A proof of {Jantzen's} conjectures.
\newblock {\em Adv. Soviet Math}, 16:1--50, 1993.

\bibitem{MvdB}
{Michel van der} Bergh.
\newblock Some generalities on {$G$}-equivariant quasi-coherent
  {$\mathcal{O}_X$} and {$\mathcal{D}_X$}-modules.
\newblock \url{http://hardy.uhasselt.be/personal/vdbergh/Publications/Geq.pdf}.

\bibitem{BoBr}
Walter Borho and Jean-Luc Brylinski.
\newblock Differential operators on homogeneous spaces. {III}.
\newblock {\em Inventiones Mathematicae}, 80(1), 1985.

\bibitem{FundHopf}
Michiel Hazewinkel, editor.
\newblock {\em Handbook of Algebra}.
\newblock North Holland, 2009.

\bibitem{Jan1}
Jens~Carsten Jantzen.
\newblock {\em Representations of Algebraic Groups}, volume 107 of {\em
  Mathematical Surveys and Monographs}.
\newblock American Mathematical Society, 2 edition, 2003.

\bibitem{Mil2}
Dragan Milicic.
\newblock {Localization and Representation Theory of Reductive Lie Groups}.
\newblock \url{http://www.math.utah.edu/~milicic/Eprints/book.pdf}.

\bibitem{Rinehart}
George Rinehart.
\newblock Differential forms on general commutative algebras.
\newblock {\em Transactions of the American Mathematical Society},
  108(2):195--222, 1963.

\bibitem{StackProject}
The {Stacks Project Authors}.
\newblock {Stacks Project}.
\newblock \url{https://stacks.math.columbia.edu/}, 2020.

\bibitem{Sta2}
Ioan Stanciu.
\newblock {Towards affinoid Duflo's theorem II: The localisation mechanism and
  a geometric proof of classical Duflo's theorem}.
\newblock submitted.

\bibitem{Sta3}
Ioan Stanciu.
\newblock Towards affinoid {Duflo's} theorem {III}: Primitive ideals in the
  affinoid enveloping algebra of a semisimple algebra.
\newblock submitted.

\bibitem{Wan}
Jonathan Wang.
\newblock Introduction to $\mathcal{D}$-modules and representation theory.
\newblock Master's thesis, University of Cambridge, 2012.

\end{thebibliography}
\bibliographystyle{plain}

\end{document}